\documentclass[10pt]{article}

\usepackage[letterpaper, top=25.4mm, bottom=25.4mm, left=25.4mm, right=25.4mm, includefoot]{geometry}

\usepackage[utf8]{inputenc}
\usepackage[T1]{fontenc}
\usepackage{lmodern}
\usepackage{alphabeta}

\usepackage{titling}
\usepackage{xspace}
\usepackage{soul} % for \st to strike out text
\usepackage{ifthen}

\usepackage{ifthen}

\usepackage{tikz}
\usepackage{xparse}

\newcommand{\colorname}[1]{%
  \ifnum#1=1 red\else%
    \ifnum#1=2 blue\else%
      \ifnum#1=3 green\else%
        \ifnum#1=4 magenta\else%
          \ifnum#1=5 orange\else%
            black%
          \fi%
        \fi%
      \fi%
    \fi%
  \fi%
}

\NewDocumentCommand{\colordisk}{mm}{%
\raisebox{-.6mm}{
  \begin{tikzpicture}
    \fill[\colorname{#1}]
      (0,0) arc[start angle=180, end angle=360, radius=0.172] -- (0.5,0) -- cycle;
    \fill[\colorname{#2}]
      (0.344,0) arc[start angle=0, end angle=180, radius=0.172] -- (-0.5,0) -- cycle;
  \end{tikzpicture}
}}

\newcommand{\pica}[4]{%
\mbox{
\{\!\!\!\!\!\!\!\!\!%
\mbox{\colordisk{#1}{#2}\!\!\!\!\!\!\!\!\!\!\colordisk{#3}{#4}}\!\!\!\!,\!\!\!\!\!\!\!\!\!
\mbox{\colordisk{#1}{#4}\!\!\!\!\!\!\!\!\!\!\colordisk{#3}{#2}}\!\!\!\!,\!\!\!\!\!\!\!\!\!
\mbox{\colordisk{#1}{#3}\!\!\!\!\!\!\!\!\!\!\colordisk{#2}{#4}}\!\!\!\!\}}}

\newcommand{\descriptionfont}[1]{{\bfseries\sffamily #1}}

\usepackage[inline]{enumitem}

\usepackage{dsfont}
\usepackage{setspace}

\usepackage{bm}
\usepackage{microtype}
\usepackage{amsmath}
\usepackage{amssymb}
\usepackage{amsfonts}
\usepackage{mathtools}
\usepackage[mathscr]{euscript}

\usepackage[hyphens]{url}
\usepackage{amsthm}

\usepackage{tikz}
\usepackage{xcolor}
\usepackage{subcaption}
\usepackage{graphicx}
\usepackage{wrapfig}

\usepackage{hyperref}
\usepackage{zref-clever}

\usepackage{nicefrac}
\usepackage[textwidth=2.2cm]{todonotes}

\colorlet{myGreen}{green!50!black}
\colorlet{myLightgreen}{green}
\colorlet{myRed}{red!90!black}
\definecolor{myBlue}{rgb}{0.25, 0.0, 1.0}
\definecolor{myLightBlue}{rgb}{0.39, 0.58, 0.93}
\colorlet{myViolet}{myBlue!55!myRed}
\definecolor{myOrange}{rgb}{1.0, 0.66, 0.07}

\definecolor{CornflowerBlue}{rgb}{0.39, 0.58, 0.93}
\definecolor{DarkGoldenrod}{rgb}{0.72, 0.53, 0.04}
\definecolor{BritishRacingGreen}{rgb}{0.0, 0.26, 0.15}
\definecolor{DarkMagenta}{rgb}{0.55, 0.0, 0.55}
\definecolor{AO}{rgb}{0.0, 0.5, 0.0}
\definecolor{BostonUniversityRed}{rgb}{0.8, 0.0, 0.0}
\definecolor{myRed}{rgb}{0.8, 0.0, 0.0}
\definecolor{DarkMidnightBlue}{rgb}{0.0, 0.2, 0.4}
\definecolor{DarkTangerine}{rgb}{1.0, 0.66, 0.07}
\definecolor{AppleGreen}{rgb}{0.55, 0.71, 0.0}
\definecolor{BrightUbe}{rgb}{0.82, 0.62, 0.91}
\definecolor{Amethyst}{rgb}{0.6, 0.4, 0.8}
\definecolor{DarkGray}{rgb}{0.52, 0.52, 0.51}
\definecolor{Gray}{rgb}{0.66, 0.66, 0.66}
\definecolor{BananaYellow}{rgb}{1.0, 0.88, 0.21}
\definecolor{Amber}{rgb}{1.0, 0.75, 0.0}
\definecolor{LightGray}{rgb}{0.83, 0.83, 0.83}
\definecolor{PrincetonOrange}{rgb}{1.0, 0.56, 0.0}
\definecolor{DeepCarrotOrange}{rgb}{0.91, 0.41, 0.17}
\definecolor{CarrotOrange}{rgb}{0.93, 0.57, 0.13}
\definecolor{MidnightBlue}{rgb}{0.1, 0.1, 0.44}
\definecolor{Magenta}{rgb}{0.50, 0.0, 0.50}
\definecolor{BrightPink}{rgb}{1.0, 0.0, 0.5}
\definecolor{BrilliantRose}{rgb}{1.0, 0.33, 0.64}
\definecolor{ChromeYellow}{rgb}{1.0, 0.65, 0.0}
\definecolor{HotMagenta}{rgb}{1.0, 0.11, 0.81}

\definecolor{DarkTangerine}{rgb}{1.0, 0.66, 0.07}
\definecolor{darkyellow}{rgb}{.7, .6, 0.0}
\definecolor{CornflowerBlue}{rgb}{0.39, 0.58, 0.93}
\definecolor{DarkGoldenrod}{rgb}{0.72, 0.53, 0.04}
\definecolor{BritishRacingGreen}{rgb}{0.0, 0.26, 0.15}
\definecolor{AO}{rgb}{0.0, 0.5, 0.0}
\definecolor{MidnightBlack}{rgb}{0.1,0.1,.34}
\definecolor{MidnightBlue}{rgb}{0.1,0.1,0.43}
\definecolor{Black}{rgb}{0,0, 0}
\definecolor{Blue}{rgb}{0, 0 ,1}
\definecolor{Red}{rgb}{1, 0 ,0}
\definecolor{White}{rgb}{1, 1, 1}
\definecolor{DeepMagenta}{rgb}{0.8, 0.0, 0.8}
\definecolor{grey}{rgb}{.6, .6, .6}
\definecolor{darkgrey}{rgb}{.33, .33, .33}
\definecolor{Mygreen}{rgb}{.0, .7, .0}
\definecolor{Yellow}{rgb}{.55,.55,0}
\definecolor{Mustard}{rgb}{1.0, 0.86, 0.35}
\definecolor{applegreen}{rgb}{0.55, 0.71, 0.0}
\definecolor{darkturquoise}{rgb}{0.0, 0.81, 0.82}
\definecolor{celestialblue}{rgb}{0.29, 0.59, 0.82}
\definecolor{green_yellow}{rgb}{0.68, 1.0, 0.18}
\definecolor{crimsonglory}{rgb}{0.75, 0.0, 0.2}
\definecolor{darkmagenta}{rgb}{0.30, 0.0, 0.30}
\definecolor{magenta}{rgb}{0.50, 0.0, 0.50}
\definecolor{internationalorange}{rgb}{1.0, 0.31, 0.0}
\definecolor{darkorange}{rgb}{1.0, 0.55, 0.0}
\definecolor{ao}{rgb}{0.0, 0.5, 0.0}
\definecolor{awesome}{rgb}{1.0, 0.13, 0.32}
\definecolor{darkcyan}{rgb}{0.0, 0.50, 0.50}
\definecolor{violet}{rgb}{0.93, 0.51, 0.93}
\definecolor{brown}{rgb}{0.65, 0.16, 0.16}
\definecolor{orange}{rgb}{1.0, 0.65, 0.0}
\definecolor{DarkGreen}{rgb}{0,.5,0}
\definecolor{BostonUniversityRed}{rgb}{0.8, 0.0, 0.0}
\definecolor{violet}{RGB}{155,110,240}

\newcommand{\darkgreen}[1]{{\color{DarkGreen}#1}}
\newcommand{\midnightblue}[1]{{\color{MidnightBlue}#1}}
\newcommand{\yellow}[1]{{\color{Yellow}#1}}
\newcommand{\black}[1]{{\color{Black}#1}}
\newcommand{\other}[1]{{\color{Other}#1}}
\newcommand{\darkmagenta}[1]{{\color{darkmagenta}#1}}
\newcommand{\darkmidnightblue}[1]{{\color{DarkMidnightBlue}#1}}
\newcommand{\magenta}[1]{{\color{magenta}#1}}
\newcommand{\mnb}[1]{{\color{MidnightBlue}#1}}
\newcommand{\blue}[1]{{\color{Blue}#1}}
\newcommand{\red}[1]{{\color{Red}#1}}
\newcommand{\green}[1]{{\color{Mygreen}#1}}
\newcommand{\ao}[1]{{\color{awesome}#1}}
\newcommand{\white}[1]{{\color{White}#1}}
\newcommand{\grey}[1]{{\color{grey}#1}}
\newcommand{\darkgrey}[1]{{\color{darkgrey}#1}}
\newcommand{\darkcyan}[1]{{\color{darkcyan}#1}}
\newcommand{\darkorange}[1]{{\color{darkorange}#1}}
\newcommand{\violet}[1]{{\color{violet}#1}}
\newcommand{\brown}[1]{{\color{brown}#1}}
\newcommand{\orange}[1]{{\color{orange}#1}}
\newcommand{\darkyellow}[1]{{\color{darkyellow}#1}}

\vfuzz \hfuzz
\setlist[itemize]{topsep=0pt,partopsep=0pt,itemsep=0pt,parsep=0pt}
\setlist[itemize,1]{label={\small\textbullet}}
\setlist[itemize,2]{label={\tiny\textbullet}}
\setlist[itemize,3]{label=$\cdot$}
\setlist[enumerate]{topsep=0pt,partopsep=0pt,itemsep=0pt,parsep=0pt}
\setlist[enumerate,1]{label=\roman*)}
\setlist[enumerate,2]{label=\alph*)}
\setlist[enumerate,3]{label=\arabic*)}

\hypersetup{
colorlinks=true,
linkcolor=AO!65!black,
citecolor=AO!65!black,
urlcolor=AppleGreen!65!black,
bookmarksopen=true,
bookmarksnumbered,
bookmarksopenlevel=2,
bookmarksdepth=3
}

\newenvironment{claimproof}[1][Proof of claim]
  {\renewcommand{\qedsymbol}{$\dashv$}\begin{proof}[#1]}
  {\end{proof}}

\theoremstyle{definition}

\newtheorem{environment}{Environment}[section]

\newtheorem{lemma}[environment]{Lemma}
\AddToHook{env/lemma/begin}{\zcsetup{countertype={environment=lemma}}}
\zcRefTypeSetup{lemma}{
Name-sg = Lemma ,
name-sg = Lemma ,
Name-pl = Lemmas ,
name-pl = Lemmas ,
}

\newtheorem*{lemma*}{Lemma}
\AddToHook{env/lemma*/begin}{\zcsetup{countertype={environment=lemma*}}}
\zcRefTypeSetup{lemma*}{
Name-sg = Lemma ,
name-sg = Lemma ,
Name-pl = Lemmas ,
name-pl = Lemmas ,
}

\newtheorem{proposition}[environment]{Proposition}
\AddToHook{env/proposition/begin}{\zcsetup{countertype={environment=proposition}}}
\zcRefTypeSetup{proposition}{
Name-sg = Proposition ,
name-sg = Proposition ,
Name-pl = Propositions ,
name-pl = Propositions ,
}

\newtheorem{corollary}[environment]{Corollary}
\AddToHook{env/corollary/begin}{\zcsetup{countertype={environment=corollary}}}
\zcRefTypeSetup{corollary}{
Name-sg = Corollary ,
name-sg = Corollary ,
Name-pl = Corollaries ,
name-pl = Corollaries ,
}

\newtheorem{theorem}[environment]{Theorem}
\AddToHook{env/theorem/begin}{\zcsetup{countertype={environment=theorem}}}
\zcRefTypeSetup{theorem}{
Name-sg = Theorem ,
name-sg = Theorem ,
Name-pl = Theorems ,
name-pl = Theorems ,
}

\newtheorem*{theorem*}{Theorem}
\AddToHook{env/theorem*/begin}{\zcsetup{countertype={environment=theorem*}}}
\zcRefTypeSetup{theorem*}{
Name-sg = Theorem ,
name-sg = Theorem ,
Name-pl = Theorems ,
name-pl = Theorems ,
}

\AddToHook{env/conjecture/begin}{\zcsetup{countertype={environment=conjecture}}}
\zcRefTypeSetup{conjecture}{
Name-sg = Conjecture ,
name-sg = Conjecture ,
Name-pl = Conjectures ,
name-pl = Conjectures ,
}

\newtheorem*{hypothesis*}{Hypothesis}
\AddToHook{env/hypothesis*/begin}{\zcsetup{countertype={environment=hypothesis*}}}
\zcRefTypeSetup{hypothesis*}{
Name-sg = Hypothesis ,
name-sg = Hypothesis ,
Name-pl = Hypotheses ,
name-pl = Hypotheses ,
}

\newtheorem{observation}[environment]{Observation}
\AddToHook{env/observation/begin}{\zcsetup{countertype={environment=observation}}}
\zcRefTypeSetup{observation}{
Name-sg = Observation ,
name-sg = Observation ,
Name-pl = Observations ,
name-pl = Observations ,
}

\AddToHook{env/example/begin}{\zcsetup{countertype={environment=example}}}
\zcRefTypeSetup{example}{
Name-sg = Example ,
name-sg = Example ,
Name-pl = Examples ,
name-pl = Examples ,
}

\AddToHook{env/remark/begin}{\zcsetup{countertype={environment=remark}}}
\zcRefTypeSetup{remark}{
Name-sg = Remark ,
name-sg = Remark ,
Name-pl = Remarks ,
name-pl = Remarks ,
}

\zcRefTypeSetup{equation}{
Name-sg = Equation ,
name-sg = Equation ,
Name-pl = Equations ,
name-pl = Equations ,
}

\zcRefTypeSetup{chapter}{
Name-sg = Chapter ,
name-sg = Chapter ,
Name-pl = Chapters ,
name-pl = Chapters ,
}

\zcRefTypeSetup{section}{
Name-sg = Section ,
name-sg = Section ,
Name-pl = Sections ,
name-pl = Sections ,
}

\zcRefTypeSetup{algorithm}{
Name-sg = Algorithm ,
name-sg = Algorithm ,
Name-pl = Algorithms ,
name-pl = Algorithms ,
}

\AddToHook{env/notation/begin}{\zcsetup{countertype={environment=notation}}}
\zcRefTypeSetup{notation}{
Name-sg = Notation ,
name-sg = Notation ,
Name-pl = Notations ,
name-pl = Notations ,
}

\AddToHook{env/question/begin}{\zcsetup{countertype={environment=question}}}
\zcRefTypeSetup{question}{
Name-sg = Question ,
name-sg = Question ,
Name-pl = Questions ,
name-pl = Questions ,
}

\newtheorem{problem}[environment]{Problem}
\AddToHook{env/problem/begin}{\zcsetup{countertype={environment=problem}}}
\zcRefTypeSetup{problem}{
Name-sg = Problem ,
name-sg = Problem ,
Name-pl = Problems ,
name-pl = Problems ,
}

\AddToHook{env/definition/begin}{\zcsetup{countertype={environment=definition}}}
\zcRefTypeSetup{definition}{
Name-sg = Definition ,
name-sg = Definition ,
Name-pl = Definitions ,
name-pl = Definitions ,
}

\zcRefTypeSetup{figure}{
Name-sg = Figure ,
name-sg = Figure ,
Name-pl = Figures ,
name-pl = Figures ,
}

\newtheoremstyle{beautypalour}% ⟨name⟩
{3pt}% ⟨Space above⟩1
{3pt}% ⟨Space below⟩1
{}% ⟨Body font⟩
{}% ⟨Indent amount⟩2
{\itshape}% ⟨Theorem head font⟩
{.}% ⟨Punctuation after theorem head⟩
{.5em}% ⟨Space after theorem head⟩3
{}% ⟨Theorem head spec (can be left empty, meaning ‘normal’)⟩
\theoremstyle{beautypalour}

\newtheorem{claim}{Claim}[environment]
\AddToHook{env/claim/begin}{\zcsetup{countertype={environment=claim}}}
\zcRefTypeSetup{claim}{
Name-sg = Claim ,
name-sg = Claim ,
Name-pl = Claims ,
name-pl = Claims ,
}

\usetikzlibrary{calc}
\usetikzlibrary{fit}
\usetikzlibrary{decorations}
\usetikzlibrary{decorations.pathmorphing}
\usetikzlibrary{decorations.text}
\usetikzlibrary{shapes,hobby}
\usetikzlibrary{positioning}

\tikzset{
	position/.style args={#1:#2 from #3}{
		at=($(#3)+(#1:#2)$)
	}
}

\tikzset{
  v:main/.style = {draw, circle, scale=0.8, thick,fill=black,inner sep=0.7mm},
  v:ghost/.style = {inner sep=0pt,scale=1},
  >={latex},
  e:marker/.style = {line width=8.5pt,line cap=round,opacity=0.35,color=DarkGoldenrod},
  e:main/.style = {line width=1pt},
}

\newcommand{\Acal}{\mathcal{A}}
\newcommand{\Bcal}{\mathcal{B}}
\newcommand{\Ccal}{\mathcal{C}}
\newcommand{\Dcal}{\mathcal{D}}
\newcommand{\Ecal}{\mathcal{E}}
\newcommand{\Fcal}{\mathcal{F}}
\newcommand{\Gcal}{\mathcal{G}}
\newcommand{\Hcal}{\mathcal{H}}
\newcommand{\Ical}{\mathcal{I}}
\newcommand{\Jcal}{\mathcal{J}}
\newcommand{\Kcal}{\mathcal{K}}
\newcommand{\Lcal}{\mathcal{L}}
\newcommand{\Mcal}{\mathcal{M}}
\newcommand{\Ncal}{\mathcal{N}}
\newcommand{\Ocal}{\mathcal{O}}
\newcommand{\Pcal}{\mathcal{P}}
\newcommand{\Qcal}{\mathcal{Q}}
\newcommand\Rcal{\mathcal{R}}
\newcommand{\Scal}{\mathcal{S}}
\newcommand{\Tcal}{\mathcal{T}}
\newcommand{\Ucal}{\mathcal{U}}
\newcommand{\Vcal}{\mathcal{V}}
\newcommand{\Wcal}{\mathcal{W}}
\newcommand{\Xcal}{\mathcal{X}}
\newcommand{\Ycal}{\mathcal{Y}}
\newcommand{\Zcal}{\mathcal{Z}}

\newcommand{\Abbb}{\mathbb{A}}
\newcommand{\Bbbb}{\mathbb{B}}
\newcommand{\Cbbb}{\mathbb{C}}
\newcommand{\Dbbb}{\mathbb{D}}
\newcommand{\Ebbb}{\mathbb{E}}
\newcommand{\Fbbb}{\mathbb{F}}
\newcommand{\Gbbb}{\mathbb{G}}
\newcommand{\Hbbb}{\mathbb{H}}
\newcommand{\Ibbb}{\mathbb{I}}
\newcommand{\Jbbb}{\mathbb{J}}
\newcommand{\Kbbb}{\mathbb{K}}
\newcommand{\Lbbb}{\mathbb{L}}
\newcommand{\Mbbb}{\mathbb{M}}
\newcommand{\Nbbb}{\mathbb{N}}
\newcommand{\Obbb}{\mathbb{O}}
\newcommand{\Pbbb}{\mathbb{P}}
\newcommand{\Qbbb}{\mathbb{Q}}
\newcommand{\Rbbb}{\mathbb{R}}
\newcommand{\Sbbb}{\mathbb{S}}
\newcommand{\Tbbb}{\mathbb{T}}
\newcommand{\Ubbb}{\mathbb{U}}
\newcommand{\Vbbb}{\mathbb{V}}
\newcommand{\Wbbb}{\mathbb{W}}
\newcommand{\Xbbb}{\mathbb{X}}
\newcommand{\Ybbb}{\mathbb{Y}}
\newcommand{\Zbbb}{\mathbb{Z}}

\newcommand{\finsub}{\subseteq_{\text{fin}}}% I \finsub \Nbbb_{\geq1}: I is a finite set of colors

\RequirePackage{stmaryrd}
\usepackage{textcomp}
\DeclareUnicodeCharacter{2286}{\subseteq}
\DeclareUnicodeCharacter{2192}{\ifmmode\to\else\textrightarrow\fi}
\DeclareUnicodeCharacter{2203}{\ensuremath\exists}
\DeclareUnicodeCharacter{183}{\cdot}
\DeclareUnicodeCharacter{2200}{\forall}
\DeclareUnicodeCharacter{2264}{\leq}
\DeclareUnicodeCharacter{2265}{\geq}
\DeclareUnicodeCharacter{8614}{\mathbin{\mapsto}}
\DeclareUnicodeCharacter{8656}{\Leftarrow}
\DeclareUnicodeCharacter{8657}{\Uparrow}
\DeclareUnicodeCharacter{8658}{\Rightarrow}
\DeclareUnicodeCharacter{8659}{\Downarrow}
\DeclareUnicodeCharacter{8669}{\rightsquigarrow}
\newcommand{\eqdef}{\stackrel{{\scriptsize\rm def}}{=}}
\DeclareUnicodeCharacter{8797}{\eqdef}
\DeclareUnicodeCharacter{8870}{\vdash}
\DeclareUnicodeCharacter{8873}{\Vdash}
\DeclareUnicodeCharacter{22A7}{\models}
\DeclareUnicodeCharacter{9121}{\lceil}
\DeclareUnicodeCharacter{9123}{\lfloor}
\DeclareUnicodeCharacter{9124}{\rceil}
\DeclareUnicodeCharacter{2208}{\in}
\DeclareUnicodeCharacter{9126}{\rfloor}
\DeclareUnicodeCharacter{9655}{\triangleright}
\DeclareUnicodeCharacter{9665}{\triangleleft}
\DeclareUnicodeCharacter{9671}{\diamond}
\DeclareUnicodeCharacter{9675}{\circ}
\DeclareUnicodeCharacter{10178}{\bot}
\DeclareUnicodeCharacter{10214}{} % needs stmaryrd
\DeclareUnicodeCharacter{10215}{} % needs stmaryrd
\DeclareUnicodeCharacter{10229}{\longleftarrow}
\DeclareUnicodeCharacter{10230}{\longrightarrow}
\DeclareUnicodeCharacter{10231}{\longleftrightarrow}
\DeclareUnicodeCharacter{10232}{\Longleftarrow}
\DeclareUnicodeCharacter{10233}{\Longrightarrow}
\DeclareUnicodeCharacter{10234}{\Longleftrightarrow}
\DeclareUnicodeCharacter{10236}{\longmapsto}
\DeclareUnicodeCharacter{10238}{\Longmapsto} % needs stmaryrd
\DeclareUnicodeCharacter{10503}{\Mapsto}    % needs stmaryrd
\DeclareUnicodeCharacter{10971}{\mathrel{\not\hspace{-0.2em}\cap}}
\DeclareUnicodeCharacter{65294}{\ldotp}
\DeclareUnicodeCharacter{65372}{\mid}

\newcommand{\YMB}{\raisebox{-3pt}{\begin{tikzpicture}
  \pgftext{\includegraphics[width=12pt]{Figures/ymb.pdf}};
\end{tikzpicture}}}
\newcommand{\YGB}{\raisebox{-3pt}{\begin{tikzpicture}
  \pgftext{\includegraphics[width=12pt]{Figures/ygb.pdf}};
\end{tikzpicture}}}
\newcommand{\YMG}{\raisebox{-3pt}{\begin{tikzpicture}
  \pgftext{\includegraphics[width=12pt]{Figures/ymg.pdf}};
\end{tikzpicture}}}
\newcommand{\GMB}{\raisebox{-3pt}{\begin{tikzpicture}
  \pgftext{\includegraphics[width=12pt]{Figures/gmb.pdf}};
\end{tikzpicture}}}

\newcommand{\NodeYM}{\raisebox{-3pt}{\begin{tikzpicture}
  \pgftext{\includegraphics[width=11.5pt]{Figures/ym.pdf}};
\end{tikzpicture}}}
\newcommand{\NodeYB}{\raisebox{-3pt}{\begin{tikzpicture}
  \pgftext{\includegraphics[width=11.5pt]{Figures/yb.pdf}};
\end{tikzpicture}}}
\newcommand{\NodeYG}{\raisebox{-3pt}{\begin{tikzpicture}
  \pgftext{\includegraphics[width=11.5pt]{Figures/yg.pdf}};
\end{tikzpicture}}}
\newcommand{\NodeYP}{\raisebox{-3pt}{\begin{tikzpicture}
  \pgftext{\includegraphics[width=11.5pt]{Figures/yp.pdf}};
\end{tikzpicture}}}

\newcommand{\NodeMB}{\raisebox{-3pt}{\begin{tikzpicture}
  \pgftext{\includegraphics[width=11.5pt]{Figures/mb.pdf}};
\end{tikzpicture}}}
\newcommand{\NodeMG}{\raisebox{-3pt}{\begin{tikzpicture}
  \pgftext{\includegraphics[width=11.5pt]{Figures/mg.pdf}};
\end{tikzpicture}}}
\newcommand{\NodeMP}{\raisebox{-3pt}{\begin{tikzpicture}
  \pgftext{\includegraphics[width=11.5pt]{Figures/mp.pdf}};
\end{tikzpicture}}}

\newcommand{\NodeBG}{\raisebox{-3pt}{\begin{tikzpicture}
  \pgftext{\includegraphics[width=11.5pt]{Figures/bg.pdf}};
\end{tikzpicture}}}
\newcommand{\NodeBP}{\raisebox{-3pt}{\begin{tikzpicture}
  \pgftext{\includegraphics[width=11.5pt]{Figures/bp.pdf}};
\end{tikzpicture}}}

\newcommand{\NodeGP}{\raisebox{-3pt}{\begin{tikzpicture}
  \pgftext{\includegraphics[width=11.5pt]{Figures/gp.pdf}};
\end{tikzpicture}}}

\newcommand{\bpw}{\mathsf{bpw}\xspace}%biconnected pathwidth
\newcommand{\pw}{\mathsf{pw}\xspace}%pathwidth
\newcommand{\tw}{\mathsf{tw}\xspace}%treewidth
\newcommand{\td}{\mathsf{td}\xspace}%treedepth
\newcommand{\size}{\mathsf{size}\xspace}%size
\newcommand{\obs}{\mathsf{obs}\xspace}%obs
\newcommand{\cobs}{\mathsf{cobs}\xspace}%obs
\newcommand{\hw}{\mathsf{hw}\xspace}%Hadwiger number
\newcommand{\torso}{\mathsf{torso}\xspace}%torso
\newcommand{\aall}{\Acal_{\text{all}}\xspace}
\newcommand{\ttw}{\mathsf{ttw}\xspace}
\newcommand{\expp}{\mathsf{exp}\xspace}
\newcommand{\funp}{\mathsf{f}\xspace}
\newcommand{\poly}{\mathsf{poly}\xspace}
\newcommand{\tfolio}{\mathsf{tfolio}\xspace}
\newcommand{\rep}{\mathsf{rep}\xspace}
\newcommand{\bg}{\mathsf{bg}\xspace}
\newcommand{\sbg}{\mathsf{sbg}\xspace}
\newcommand{\MSO}{\mbox{\sf MSO}\xspace}
\newcommand{\CMSO}{\mbox{\sf CMSO}\xspace}
\newcommand{\FO}{{\sf FO}\xspace}
\newcommand{\DP}{{\sf dp}\xspace}
\newcommand{\SDP}{{\sf sdp}\xspace}
\newcommand{\FOLDP}{\FOL[\tau{\normalfont+}\DP]\xspace}
\newcommand{\NTMC}{{\sf EM}\xspace}
\newcommand{\bool}{{\bf PB}\xspace}
\newcommand{\labels}[1]{\label{#1}}
\newcommand{\sig}{{\sf sig}\xspace}
\newcommand{\bd}{{\sf bd}\xspace}
\newcommand{\yes}{{\sf yes}\xspace}
\newcommand{\no}{{\sf no}\xspace}
\newcommand{\ann}{{\sf ann}\xspace}
\newcommand{\inter}{{\sf int}\xspace}
\newcommand{\remove}[1]{}
\newcommand{\removed}[1]{#1}
\newcommand{\bigmid}{\;\big|\;\xspace}
\newcommand{\capall}{\pmb{\pmb{\bigcap}}\xspace}
\newcommand{\disk}{{\sf disk}\xspace}
\newcommand{\dehe}{{\sf dehe}\xspace}
\newcommand{\cae}{{\sf cae}\xspace}
\newcommand{\comp}{{\sf comp}\xspace}
\newcommand{\col}{{\sf col}\xspace}
\newcommand{\perim}{{\sf perim}\xspace}
\newcommand{\bnd}{{\sf bnd}\xspace}
\newcommand{\thw}{\mathsf{thw}}
\newcommand{\marg}[1]{\marginpar{\footnotesize #1}}
\newcommand{\pretp}{\preceq_{\sf tm}}
\newcommand{\prem}{\preceq_{\sf m}}
\newcommand{\defterm}[1]{\emph{#1}}
\newcommand{\bigO}[1]{\mathcal O(#1)}
\newcommand{\NP}{{\sf NP}\xspace}
\newcommand{\FPT}{{\sf FPT}\xspace}
\newcommand{\XP}{{\sf XP}\xspace}
\newcommand{\W}{{\sf W}\xspace}
\newcommand{\frR}{{\mathfrak{R}}}
\newcommand{\bidim}{{\sf bidim}\xspace}
\newcommand{\sbidim}{{\sf sbidim}\xspace}
\newcommand{\excl}{{\sf excl}\xspace}
\newcommand{\imprint}{{\sf imp}}
\newcommand{\tritri}{\mbox{\footnotesize $\,\blacktriangleright\,$}}
\newcommand{\p}{\mathsf{p}\xspace}
\newcommand{\q}{\mathsf{q}\xspace}
\newcommand{\lt}[1]{\textlatin{#1}}
\newcommand{\cy}[1]{\foreignlanguage{russian}{#1}}
\newcommand{\gr}[1]{\foreignlanguage{greek}{#1}}
\newcommand{\injection}{{\bf inj}\xspace}
\newcommand{\repact}{\mathcal{L}}
\newcommand{\gall}{\mathcal{G}_{{\text{\rm  \textsf{all}}}}}
\newcommand{\opt}{\mathsf{opt}\xspace}
\newcommand{\vc}{\mathsf{vc}\xspace}
\newcommand{\rg}{\midnightblue{\textsl{rg}}\xspace}
\newcommand{\sg}{\midnightblue{\textsl{sg}}\xspace}
\newcommand{\rc}{\midnightblue{\textsl{rc}}\xspace}

\newcommand{\wall}{\midnightblue{\textsl{wall}}\xspace}
\newcommand{\apex}{\midnightblue{\textsl{apex}}\xspace}
\newcommand{\breadth}{\midnightblue{\textsl{breadth}}\xspace}
\newcommand{\link}{\midnightblue{\textsl{link}}\xspace}
\newcommand{\depth}{\midnightblue{\textsl{depth}}\xspace}
\newcommand{\vortex}{\midnightblue{\textsl{vortex}}\xspace}
\newcommand{\hfw}{\midnightblue{\textsl{hfw}}\xspace}
\newcommand{\genus}{\midnightblue{\textsl{genus}}\xspace}

\newcommand{\say}[1]{``#1''}
\newcommand{\bdim}{{\sf bdim}\xspace}%Bidimensionality

\newcommand{\adhesion}{\midnightblue{\textsl{adhesion}}\xspace}
\newcommand{\palette}{\mathsf{palette}\xspace}
\newcommand{\cover}{\mathsf{cover}\xspace}
\newcommand{\folio}{\mathsf{folio}\xspace}
\newcommand{\pack}{\mathsf{pack}\xspace}
\providecommand{\eg}{\mathsf{eg}}% Euler genus
\newcommand{\rh}{\mathsf{rh}\xspace}
\newcommand{\srh}{\mathsf{srh}\xspace}
\newcommand{\srtw}{\mathsf{srtw}\xspace}
\newcommand{\sbsg}{\mathsf{sbsg}\xspace}
\newcommand{\rtw}{\mathsf{rtw}\xspace}

\usetikzlibrary{decorations.pathreplacing}

\graphicspath{{Figures/}}
\makeatletter
\providecommand*{\input@path}{}
\edef\input@path{{Figures/}}
\newcommand{\figmode}{}
\newcommand{\figscale}[2]{\expandafter\gdef\csname uf@s@#1\endcsname{#2}}
\newcommand{\usefigure}[2]{%
  \edef\uf@z{#2}%
  \edef\uf@m{\figmode}%
  \ifx\uf@m\@empty\else\let\uf@z\uf@m\fi
  \ifnum\uf@z=1\relax
    \input{#1.tex}%
  \else
    \@ifundefined{uf@s@#1}{\def\uf@f{1}}{\edef\uf@f{\csname uf@s@#1\endcsname}}%
    \scalebox{\uf@f}{\includegraphics{#1.jpg}}%
  \fi}
\makeatother
\figscale{fig_1segregatedGridsPack}{0.99979}
\figscale{fig_C4Interval}{0.99984}
\figscale{fig_Case4EP}{0.99987}
\figscale{fig_ColorfulMinorIntro}{0.99937}
\figscale{fig_EPIntro}{0.99986}
\figscale{fig_K3K13Interval}{0.99980}
\figscale{fig_OneEdge}{0.99978}
\figscale{fig_Oq1}{0.99967}
\figscale{fig_Oq2}{0.99945}
\figscale{fig_SegregatedGrids}{0.99911}
\figscale{fig_Tgrids}{0.99918}
\figscale{fig_Thosting}{0.99994}
\figscale{fig_UsGrids}{0.99999}
\figscale{fig_UsGrids_change}{0.99961}
\figscale{fig_componentwiseBicolordGrids}{0.99947}
\figscale{fig_construction1}{0.99909}
\figscale{fig_construction2}{0.99909}
\figscale{fig_halfIntegral_segregated}{0.99973}
\figscale{fig_vortexConstruction}{0.99944}
\figscale{manu_ole}{0.99975}
\definecolor{clY}{RGB}{255,180,  0}     % yellow / amber
\definecolor{clM}{RGB}{255,  0,180}     % magenta
\definecolor{clB}{RGB}{  0,210,255}     % blue
\definecolor{clG}{RGB}{ 48,202,  0}     % green
\definecolor{clP}{RGB}{149,  0,255}     % purple
\definecolor{clYpale}{RGB}{255,210,103} % pale amber (highlight boxes)
\definecolor{clMpale}{RGB}{255, 99,210} % pale magenta
\definecolor{clGpale}{RGB}{136,255, 99} % pale green
\definecolor{plategray}{gray}{0.827}

\newcommand{\halfv}[5]{%
  \begin{scope}
    \clip (#1,#2) circle (#3);
    \fill[#4] (#1-#3,#2-#3) rectangle (#1,#2+#3);
    \fill[#5] (#1,#2-#3) rectangle (#1+#3,#2+#3);
  \end{scope}
  \draw[line width=1.5pt] (#1,#2-#3) -- (#1,#2+#3);
  \draw[line width=1.5pt] (#1,#2) circle (#3);}

\newcommand{\thirdv}[6]{%
  \fill[#5] (#1,#2) -- ++(-30:#3) arc[start angle=-30,end angle= 90,radius=#3] -- cycle;
  \fill[#4] (#1,#2) -- ++( 90:#3) arc[start angle= 90,end angle=210,radius=#3] -- cycle;
  \fill[#6] (#1,#2) -- ++(210:#3) arc[start angle=210,end angle=330,radius=#3] -- cycle;
  \foreach \a in {90,210,330}{\draw[line width=1.5pt] (#1,#2) -- ++(\a:#3);}
  \draw[line width=1.5pt] (#1,#2) circle (#3);}
\definecolor{clY}{RGB}{255,180,  0}
\definecolor{clM}{RGB}{255,  0,180}
\definecolor{clB}{RGB}{  0,210,255}
\definecolor{clG}{RGB}{ 48,202,  0}
\definecolor{clP}{RGB}{149,  0,255}

\newcommand{\NodeTwoC}[2]{\raisebox{-3pt}{\begin{tikzpicture}[x=1pt,y=1pt]
  \begin{scope}
    \clip (0,0) circle (5.50);
    \fill[#1] (-5.5,0) rectangle (5.5,5.5);
    \fill[#2] (-5.5,-5.5) rectangle (5.5,0);
  \end{scope}
  \draw[line width=.50pt] (-5.07,0) -- (5.07,0);
  \draw[line width=.86pt] (0,0) circle (5.07);
\end{tikzpicture}}}

\newcommand{\NodeThreeC}[3]{\raisebox{-3pt}{\begin{tikzpicture}[x=1pt,y=1pt]
  \fill[#2] (0,0) -- (-30:5.74) arc[start angle=-30,end angle= 90,radius=5.74] -- cycle;
  \fill[#1] (0,0) -- ( 90:5.74) arc[start angle= 90,end angle=210,radius=5.74] -- cycle;
  \fill[#3] (0,0) -- (210:5.74) arc[start angle=210,end angle=330,radius=5.74] -- cycle;
  \foreach \a in {90,210,330}{\draw[line width=.53pt] (0,0) -- (\a:5.29);}
  \draw[line width=.90pt] (0,0) circle (5.29);
\end{tikzpicture}}}

\renewcommand{\NodeYM}{\NodeTwoC{clY}{clM}}
\renewcommand{\NodeYB}{\NodeTwoC{clY}{clB}}
\renewcommand{\NodeYG}{\NodeTwoC{clY}{clG}}
\renewcommand{\NodeYP}{\NodeTwoC{clY}{clP}}
\renewcommand{\NodeMB}{\NodeTwoC{clM}{clB}}
\renewcommand{\NodeMG}{\NodeTwoC{clM}{clG}}
\renewcommand{\NodeMP}{\NodeTwoC{clM}{clP}}
\renewcommand{\NodeBG}{\NodeTwoC{clB}{clG}}
\renewcommand{\NodeBP}{\NodeTwoC{clB}{clP}}
\renewcommand{\NodeGP}{\NodeTwoC{clG}{clP}}

\renewcommand{\YMB}{\NodeThreeC{clY}{clM}{clB}}
\renewcommand{\YGB}{\NodeThreeC{clY}{clG}{clB}}
\renewcommand{\YMG}{\NodeThreeC{clY}{clM}{clG}}
\renewcommand{\GMB}{\NodeThreeC{clG}{clM}{clB}}
 \newcommand{\rev}[1]{%
\todo[linecolor=red,bordercolor=white,color=BrightPink,
 backgroundcolor=Blue!20,size=\scriptsize]{%
 \textsf{\scriptsize REV: #1}}}

 \newcommand{\ans}[1]{%
 \todo[linecolor=red,bordercolor=white,color=BrilliantRose,
 backgroundcolor=BrilliantRose!50,size=\scriptsize]{%
 \textsf{\scriptsize ANS: #1}}}

 \newboolean{appear}%%%%%%%%%% Change the appear to "true"/"false" in order to see/not see all the comments!

\setboolean{appear}{true}

 \ifthenelse{\boolean{appear}}{%

 \newcommand{\SW}[1]{%
 \todo[linecolor=red,bordercolor=white,color=Magenta,
 backgroundcolor=CornflowerBlue!40,textcolor=BrightPink,
 size=\footnotesize]{\textsf{SW: #1}}}

 \newcommand{\SWin}[1]{%
 \todo[inline,bordercolor=white,color=Magenta,
 backgroundcolor=CornflowerBlue!40,textcolor=BrightPink,
 size=\footnotesize]{\textsf{SW: #1}}}

 \newcommand{\EP}[1]{%
 \todo[linecolor=red,bordercolor=white,color=DeepCarrotOrange,
 backgroundcolor=BananaYellow!40,textcolor=DeepCarrotOrange,
 size=\footnotesize]{\textsf{EP: #1}}}

 \newcommand{\EPin}[1]{%
 \todo[inline,bordercolor=white,color=DeepCarrotOrange,
 backgroundcolor=BananaYellow!40,textcolor=DeepCarrotOrange,
 size=\footnotesize]{\textsf{EP: #1}}}

 \newcommand{\sed}[1]{%
 \todo[linecolor=red,bordercolor=white,color=BrilliantRose,
 backgroundcolor=BrilliantRose!50,size=\footnotesize]{%
 \textsf{\footnotesize SED: #1}}}

 \newcommand{\VP}[1]{%
 \todo[linecolor=red,bordercolor=white,color=purple!40,
 backgroundcolor=cyan!20,textcolor=magenta,
 size=\footnotesize]{\sf Vagelis: #1}}

 \newcommand{\VPin}[1]{%
 \todo[inline,bordercolor=white,color=purple!40,
 backgroundcolor=cyan!20,textcolor=magenta,
 size=\footnotesize]{\sf Vagelis: #1}}

 \newcommand{\claude}[1]{%
 \todo[linecolor=AO,bordercolor=white,color=AO,
 backgroundcolor=AppleGreen!25,textcolor=BritishRacingGreen,
 size=\scriptsize]{%
 \raggedright\hyphenpenalty=0\exhyphenpenalty=0\tolerance=9999
 \emergencystretch=2em
 \textsf{\scriptsize CLAUDE: #1}}}

 \newcommand{\claudein}[1]{%
 \todo[inline,bordercolor=white,color=AO,
 backgroundcolor=AppleGreen!25,textcolor=BritishRacingGreen,
 size=\scriptsize]{%
 \textsf{\scriptsize CLAUDE: #1}}}

}{%

 \newcommand{\SW}[1]{}
 \newcommand{\SWin}[1]{}
 \newcommand{\EP}[1]{}
 \newcommand{\EPin}[1]{}
 \newcommand{\sed}[1]{}
 \newcommand{\VP}[1]{}
 \newcommand{\VPin}[1]{}
 \newcommand{\claude}[1]{}
 \newcommand{\claudein}[1]{}

}

\ifthenelse{\boolean{appear}}{\newcommand{\rred}[1]{\red{#1}}}{\newcommand{\rred}[1]{#1}}

\definecolor{framepurple}{RGB}{160,32,240}
\definecolor{clYbg}{rgb}{1,0.968,0.890}
\definecolor{hc0}{rgb}{0.75,0.396,1.0}
\definecolor{kvpurple}{RGB}{160,32,240}
\definecolor{KVc0}{rgb}{0.63,0.12,0.94}
\definecolor{KVc1}{rgb}{0,0,0}
\definecolor{KVc2}{rgb}{1,0,0.71}
\definecolor{KVc3}{rgb}{0,0.82,1}
\definecolor{2K2Vortexc0}{rgb}{0.97,0.66,0.44}
\definecolor{2K2Vortexc1}{rgb}{0.96,0.46,0.13}
\definecolor{2K2Vortexc2}{rgb}{1,1,1}
\definecolor{2K2Vortexc3}{rgb}{0,0,0}
\definecolor{2K2Vortexc4}{rgb}{0.03,0.02,0.02}
\definecolor{2K2Vortexc5}{rgb}{0.47,0.32,0.63}
\definecolor{2K2Vortexc6}{rgb}{0.89,0.23,0.59}
\definecolor{2K2Vortexc7}{rgb}{0.26,0.78,0.95}
\definecolor{2K2Vortexc8}{rgb}{0.99,0.7,0.08}
\definecolor{2K2Vortexc9}{rgb}{0.29,0.72,0.28}
\definecolor{Figure11c0}{rgb}{0.32,0.31,0.31}
\definecolor{Figure11c1}{rgb}{0.58,0.59,0.6}
\definecolor{Figure11c2}{rgb}{0.26,0.25,0.25}
\definecolor{Figure11c3}{rgb}{0.26,0.25,0.25}
\definecolor{Figure11c4}{rgb}{0.89,0.23,0.59}
\definecolor{Figure11c5}{rgb}{1,1,1}
\definecolor{Figure11c6}{rgb}{0,0,0}
\definecolor{Figure11c7}{rgb}{0.62,0.8,0.31}
\definecolor{Figure12c0}{rgb}{0.31,0.3,0.3}
\definecolor{Figure12c1}{rgb}{0.31,0.31,0.3}
\definecolor{Figure12c2}{rgb}{0.61,0.43,0.69}
\definecolor{Figure12c3}{rgb}{0.99,0.7,0.08}
\definecolor{Figure12c4}{rgb}{0.26,0.78,0.95}
\definecolor{Figure12c5}{rgb}{0.89,0.23,0.59}
\definecolor{Figure12c6}{rgb}{0,0,0}
\definecolor{Figure12c7}{rgb}{1,1,1}
\definecolor{Figure12c8}{rgb}{0.47,0.32,0.63}
\definecolor{Figure12c9}{rgb}{0.62,0.8,0.31}
\definecolor{Figure13c0}{rgb}{0,0,0}
\definecolor{Figure13c1}{rgb}{1,1,1}
\definecolor{Figure13c2}{rgb}{0.47,0.32,0.63}
\definecolor{Figure13c3}{rgb}{0.31,0.3,0.3}
\definecolor{Figure13c4}{rgb}{0.26,0.78,0.95}
\definecolor{Figure13c5}{rgb}{0.31,0.3,0.3}
\definecolor{Figure13c6}{rgb}{0.89,0.23,0.59}
\definecolor{Figure13c7}{rgb}{0.62,0.8,0.31}
\definecolor{Vortex1c0}{rgb}{0.32,0.31,0.31}
\definecolor{Vortex1c1}{rgb}{0.99,0.8,0.7}
\definecolor{Vortex1c2}{rgb}{0.58,0.59,0.6}
\definecolor{Vortex1c3}{rgb}{0.26,0.25,0.25}
\definecolor{Vortex1c4}{rgb}{0.26,0.25,0.25}
\definecolor{Vortex1c5}{rgb}{0.89,0.23,0.59}
\definecolor{Vortex1c6}{rgb}{0.94,0.35,0.13}
\definecolor{Vortex1c7}{rgb}{1,1,1}
\definecolor{Vortex1c8}{rgb}{0,0,0}
\definecolor{Vortex1c9}{rgb}{0.62,0.8,0.31}
\definecolor{Vortex2c0}{rgb}{0.31,0.3,0.3}
\definecolor{Vortex2c1}{rgb}{0.31,0.31,0.3}
\definecolor{Vortex2c2}{rgb}{0.61,0.43,0.69}
\definecolor{Vortex2c3}{rgb}{0.99,0.8,0.7}
\definecolor{Vortex2c4}{rgb}{0.94,0.35,0.13}
\definecolor{Vortex2c5}{rgb}{0.99,0.7,0.08}
\definecolor{Vortex2c6}{rgb}{0.26,0.78,0.95}
\definecolor{Vortex2c7}{rgb}{0.89,0.23,0.59}
\definecolor{Vortex2c8}{rgb}{0,0,0}
\definecolor{Vortex2c9}{rgb}{1,1,1}
\definecolor{Vortex2c10}{rgb}{0.47,0.32,0.63}
\definecolor{Vortex2c11}{rgb}{0.62,0.8,0.31}
\definecolor{Vortex3c0}{rgb}{0.31,0.3,0.3}
\definecolor{Vortex3c1}{rgb}{0.31,0.31,0.3}
\definecolor{Vortex3c2}{rgb}{0.61,0.43,0.69}
\definecolor{Vortex3c3}{rgb}{0.99,0.8,0.7}
\definecolor{Vortex3c4}{rgb}{0.94,0.35,0.13}
\definecolor{Vortex3c5}{rgb}{0.99,0.7,0.08}
\definecolor{Vortex3c6}{rgb}{0.26,0.78,0.95}
\definecolor{Vortex3c7}{rgb}{0.89,0.23,0.59}
\definecolor{Vortex3c8}{rgb}{0,0,0}
\definecolor{Vortex3c9}{rgb}{1,1,1}
\definecolor{Vortex3c10}{rgb}{0.47,0.32,0.63}
\definecolor{Vortex3c11}{rgb}{0.62,0.8,0.31}
\definecolor{cM}{RGB}{255,  0,180}  % magenta
\definecolor{cC}{RGB}{  0,210,255}  % cyan
\definecolor{cO}{RGB}{255,180,  0}  % orange
\definecolor{cbranch}{RGB}{217,217,217}  % branch-set halo
\definecolor{uA}{RGB}{234, 51,176}  % colour 1  (pink)
\definecolor{uB}{RGB}{ 95,207,250}  % colour 2  (light blue)
\definecolor{uC}{RGB}{244,183, 63}  % colour 3  (amber)
\definecolor{uD}{RGB}{101,199, 59}  % colour 4  (green)
\definecolor{uE}{RGB}{155,110,240}  % colour 5  (violet)
\definecolor{pkamber}{RGB}{255,180,  0}
\definecolor{pkgray}{gray}{0.851}
\definecolor{pkmagenta}{RGB}{255,  0,181}
\definecolor{pkcyan}{RGB}{  0,210,255}
\definecolor{mo0}{rgb}{0.827,0.827,0.827}
\definecolor{mo1}{rgb}{0.0,0.0,0.545}
\definecolor{mo2}{rgb}{0.843,0.271,0.674}
\definecolor{mo3}{rgb}{0.49,0.8,0.965}
\definecolor{mo4}{rgb}{0.921,0.729,0.341}
\definecolor{mo5}{rgb}{0.494,0.772,0.322}
\definecolor{mo6}{rgb}{1.0,0.0,0.0}
\definecolor{HotMagenta}{RGB}{255, 29,206}      % color 1  (magenta)
\definecolor{CornflowerBlue}{RGB}{100,149,237}  % color 2  (blue)
\definecolor{ChromeYellow}{RGB}{255,167,  0}    % color 3  (yellow)
\definecolor{AppleGreen}{RGB}{141,182,  0}      % color 4  (green)
\definecolor{Violet}{RGB}{127,  0,255}          % color 5  (violet)
\colorlet{violet}{Violet}
\colorlet{uA}{HotMagenta}\colorlet{uB}{CornflowerBlue}\colorlet{uC}{ChromeYellow}
\colorlet{uD}{AppleGreen}\colorlet{uE}{Violet}
\colorlet{clM}{HotMagenta}\colorlet{clB}{CornflowerBlue}\colorlet{clY}{ChromeYellow}
\colorlet{clG}{AppleGreen}\colorlet{clP}{Violet}
\colorlet{cM}{HotMagenta}\colorlet{cC}{CornflowerBlue}\colorlet{cO}{ChromeYellow}
\colorlet{pkmagenta}{HotMagenta}\colorlet{pkcyan}{CornflowerBlue}\colorlet{pkamber}{ChromeYellow}
\colorlet{mo2}{HotMagenta}\colorlet{mo3}{CornflowerBlue}\colorlet{mo4}{ChromeYellow}
\colorlet{mo5}{AppleGreen}
\colorlet{clMpale}{HotMagenta!45}\colorlet{clYpale}{ChromeYellow!45}\colorlet{clGpale}{AppleGreen!45}

\title{The Erd\H{o}s-P\'osa Property for Colorful Minors\thanks{This paper is an extended version of some part of the combinatorial results of \cite{ProtopapasTW2025Colorful}, presented  in ICALP 2026.}}

 \author{%
 Evangelos Protopapas\thanks{Faculty of Mathematics, Informatics and Mechanics, University of Warsaw, Poland.\\ Email: \texttt{eprotopapas@mimuw.edu.pl}}~$^{,}$\thanks{Supported by the ERC project BUKA (n°\! 101126229) and the French-German Collaboration ANR/DFG Project UTMA (ANR-20-CE92-0027).}\and 
 Dimitrios M. Thilikos\thanks{LIRMM, Univ Montpellier, CNRS, Montpellier, France.\\ Email: \texttt{sedthilk@thilikos.info}.}~$^{,}$\thanks{
Supported by the Franco-Norwegian project PHC AURORA 2024-25 (Projet n°\! 51260WL) and the French National Research Agency (ANR) under project GODASse ANR-24-CE48-4377 and under the France 2030 grant reference number ANR-24-RRII-0002 operated by the Inria Quadrant Program.}\and 
 Sebastian Wiederrecht\thanks{KAIST, School of Computing, Daejeon, South Korea. \\ Email:
 \texttt{wiederrecht@kaist.ac.kr}}~$^{,}$\thanks{Supported by the Institute for Basic Science (IBS-R029-C1)}}

\date{}

\begin{document}

\maketitle

\begin{abstract}
\noindent A \emph{colorful graph} is a pair $(G,\chi)$ where $G$ is a graph and $\chi$ assigns to each vertex of $G$ a   finite set of colors.
The \emph{colorful minor} relation enhances the minor relation by merging color sets along contractions and by allowing the removal of colors; it generalizes rooted minors and models problems on graphs with several, possibly overlapping, annotated vertex sets.
A graph has the Erd\H{o}s-P\'osa property for minors if and only if it is planar, by a classical theorem of Robertson and Seymour.
In this work we determine, for the colorful minor relation, exactly which colorful graphs have the Erd\H{o}s-P\'osa property.
Our characterization takes three equivalent forms.
The first is \emph{structural}: the colorful graphs with the property are those that can be drawn with all their colored vertices on one face and whose colors are, in a precise sense, laid out along that face without interleaving.
The second is given by an \emph{obstruction set}: they are those excluding every member of an explicit infinite family $\Ocal,$ of which only $\bigO{|I|^{4}}$ members have  colors that are  a subset of $I,$ for every finite set $I$ of colors.
The third is \emph{grid-like}: they are exactly the colorful minors of unions of particular families of segregated grids, the colorful analogues of the grids that drive the classical proof.
\end{abstract}

\noindent\textbf{Keywords:} Graph Minors, Colorful Minors, Annotated Graphs, Rooted Minors, Erd\H{o}s-P\'osa property, Structural Graph Theory, Obstruction sets.

\thispagestyle{empty}

\newpage
\thispagestyle{empty}

\tableofcontents
\thispagestyle{empty}
\newpage

\section{Introduction}
Packing and covering are among the oldest opposing questions in combinatorics and the theorems that tie them together are among the most useful. This paper is about one such tie, in a setting where the objects being packed and covered are not merely subgraphs but subgraphs anchored to prescribed parts of the host. A graph may come with a set of terminals to be connected, a set of faces to be covered, a set of vertices that a cycle must visit; in each case the structures one wants to find are constrained not only in shape but in where and how  they attach. Colorful graphs are a way of capturing all such constraints at once: each vertex carries a finite set of colors, several annotated sets may overlap and a pattern is found in a host only if every color it prescribes is traced back. The question we settle is which patterns, in this generality, admit a packing-covering duality.

\subsection{The Erd\H{o}s-P{\'o}sa property}
A family of subgraphs of a graph may be viewed from two perspectives: one may look for many copies of its members that are pairwise disjoint, a \emph{packing}, or for few vertices that meet all such copies, a \emph{covering}.
A covering is at least as large as a packing and the family is said to have the \emph{Erd\H{o}s-P{\'o}sa property} when the reverse inequality holds up to a function, that is, when a bound on the packing number implies a bound on the covering number.
The name comes from the theorem of Erd\H{o}s and P{\'o}sa \cite{ErdosP1965independent}, later sharpened by Simonovits \cite{Simonovits1967New}, asserting that a graph either contains $k$ pairwise vertex-disjoint cycles or has a set of $\bigO{k\log k}$ vertices meeting all of its cycles.
Since a cycle is a $K_{3}$-minor model, the natural generalization is to ask, for a fixed graph $H,$ when the $H$-minor models of a graph enjoy the same duality.
Robertson and Seymour answered this in the fifth paper of the Graph Minors series \cite{RobertsonS1986Grapha}: the models of $H$ have the Erd\H{o}s-P{\'o}sa property if and only if $H$ is planar.
The two directions of their answer are of very different natures.
The positive one is a consequence of the Grid Theorem, as a graph of large treewidth contains a large grid and hence many disjoint copies of any fixed planar $H,$ while a graph of small treewidth can be treated  along a tree decomposition.
The negative one is a construction: for non-planar $H$ one embeds $H$ in a surface of minimum Euler genus  and superimposes many copies of the resulting drawing, so that the copies interlock and no small vertex set can destroy them all.
Both mechanisms reappear, in a colorful form, in the present paper.

The property has since been studied in a large number of variants (see \cite{RaymondT2017Recent} for an overview).
Among the directions closest to this paper, the \emph{half-integral} relaxation, in which a packing may use every vertex twice, was conjectured by Thomas to hold for every $H$ and was confirmed by Liu \cite{Liu2022Packing}, who proved it even for topological minors; the threshold at which half-integrality becomes necessary has begun to be delineated by Paul, Protopapas, Thilikos, and Wiederrecht \cite{PaulPTW2024Delineating,PaulPTW2024Obstructions}.

\subsection{Annotated versions}
The variant that this paper generalizes asks for models that are anchored: a set of vertices of the host is distinguished and the minor models are required to interact with it.
Kakimura, Kawarabayashi, and Marx \cite{kakimuraKM2011} proved that cycles through a prescribed vertex set have the Erd\H{o}s-P{\'o}sa property, and Pontecorvi and Wollan \cite{PontecorviW2012Disjoint} gave a version with a bound independent of the prescribed set.
For minors rather than cycles, Kwon and Marx \cite{KwonM2019ErdosPosa} studied the property for minor models with prescribed vertex sets, and Bruhn, Joos, and Schaudt \cite{BruhnJS2021Labelled} characterized the $2$-connected labeled minors that have it.
All of these are instances of a single setting, the colorful minor relation introduced by Protopapas, Thilikos, and Wiederrecht \cite{ProtopapasTW2025Colorful}. A colorful graph carries, at each of its vertices, a finite set of colors. A colorful minor must trace every color of the pattern back to the host and the annotated settings above are the cases in which a single color is used.
The purpose of this paper is to determine, in the colorful minor setting, exactly which patterns have the Erd\H{o}s-P{\'o}sa property.

\subsection{Colorful graphs}
A \emph{colorful graph} is a pair $(G,\chi)$ where $G$ is a finite graph and\footnote{For $n,m\in\mathbb{Z},$ we denote $\{ x \mid 1\leq x \leq n\text{, }x\in\mathbb{Z}\}$ by $[n]$ and $\{ x \mid n \leq x \leq m\text{, }x\in\mathbb{Z} \}$ by $[n,m].$ Also, we write $I\finsub \Nbbb_{\geq1}$ in order to denote that $I$ is a finite set of colors, that is, a finite subset of $\Nbbb_{\geq1}$.} $\chi$
is a function mapping each vertex $v$ of $G$ to a \textsl{finite} subset of $\Nbbb_{\geq1}$; that is, colors are positive integers, so that every finite set of colors is contained in $[q]$ for some $q\in\Nbbb.$  
We refer to the set $\chi(v)$ 
as the \emph{palette}  of $v$ in $(G,\chi)$.
Also we define the \emph{palette} of $(G,\chi)$ 
as the set $\chi(G)\coloneqq \bigcup_{v\in V(G)}\chi(v)$. Clearly, the palette of every colorful graph is always a finite set of colors.
Given a vertex set $X\subseteq V(G),$ 
we define $\chi(X)\coloneqq \bigcup_{v \in X} \chi(v)$.

For illustrating the colors, we use \textcolor{HotMagenta}{magenta} for $1,$ \textcolor{CornflowerBlue}{blue} for $2$, \textcolor{ChromeYellow}{yellow} for $3,$ \textcolor{AppleGreen}{green} for $4,$ and \textcolor{violet}{violet} for $5$.

Given some $I\finsub \Nbbb_{\geq1}$, a colorful graph $(G,\chi)$ is an \emph{$I$-colorful graph} if $\chi(G)= I$ and is called \emph{$I$-restricted} if either $I\neq\emptyset$ and  $\chi(G)\subsetneq I$ 
or $I=\emptyset$ and $V(G)=\emptyset$. 
A colorful graph $(G,\chi)$ is said to be \emph{$I$-rainbow}\footnote{Notice that -- in slight violation of the English language -- we use the word \say{rainbow} as an adjective here.} if $\chi(v)=I$ for all $v\in V(G).$ 
We denote by $\rho_{I}$ the coloring given by $\rho_{I}(v)\coloneqq I$ for every vertex $v,$ so that $(G,\rho_{I})$ is the $I$-rainbow colorful graph on $G.$
That way, every graph $G$ can be seen as the $\emptyset$-rainbow colorful graph $(G,\rho_{\emptyset}).$

A colorful graph $(H,\psi)$ is a \emph{(colorful) subgraph} of a colorful graph $(G,\chi),$ denoted by $(H,\psi)\subseteq (G,\chi),$ if $H$ is a subgraph of $G$ and $\psi(v)\subseteq \chi(v)$ for all $v\in V(H).$
If $(G,\chi)$ is a colorful graph and $H\subseteq G$ is a subgraph of $G$ we write, in slight abuse of notation, $(H,\chi)$ for the colorful subgraph of $(G,\chi)$ where all vertices of $H$ receive precisely the colors they received in $(G,\chi),$
i.e., we use $(H,\chi)$ for the more precise but   overloaded notation $(H,\chi|_{V(H)})$.
Given a graph class $\Gcal$ and some colorful graph 
$(G,\chi)$ we say that $(G,\chi)$ belongs to $\Gcal$ if $G$ belongs to $\Gcal$. For instance by saying that $(G,\chi)$ is planar/outerplanar/a clique  we mean that $G$ is planar/outerplanar/a clique.

\subsection{Colorful minors}
A colorful graph $(H,\psi)$ is a \emph{colorful minor} of a colorful graph $(G,\chi)$ if $(H,\psi)$ can be obtained from $(G,\chi)$ by means of the following operations:
\begin{itemize}
 \item deleting an edge $e\in E(G),$
 \item deleting a vertex $v\in V(G)$ and restricting the domain of $\chi$ to $V(G)\setminus\{ v\},$
 \item for some $v\in V(G)$ and $i\in \chi(v)$ overwrite $\chi(v)\coloneqq \chi(v)\setminus\{ i\}$ (we refer to this operation as \emph{removing a color} (\emph{from $v$})), and
 \item contracting an edge $uv\in E(G),$ that is introducing a new vertex $x_{uv}$ with neighborhood $N_G(u)\cup N_G(v),$ setting $\chi(x_{uv})\coloneqq \chi(u) \cup \chi(v),$ and then deleting both $u$ and $v$ from the resulting colorful graph.
\end{itemize}
See \zcref{fig_ColorfulMinorIntro} for an example.
Notice that colorful graphs whose palette is a subset of $\{1\}$ are exactly annotated graphs and the notions of colorful minors and rooted minors coincide on annotated graphs.
We use $(H,\psi)≤(G,\chi)$ in order to denote the 
fact that $(H,\psi)$ is a colorful minor of $(G,\chi)$.

\begin{figure}[ht]
 \vspace{-8pt}
 \centering
\scalebox{.8}{%
\def\vr{6.75}                          % vertex radius
\def\vw{1.6}                           % vertex outline width

\newcommand{\halfvtx}[3]{%  #1 = coordinate, #2 = left colour, #3 = right colour
  \begin{scope}
    \clip (#1) circle (\vr);
    \fill[#2] ([shift={(-\vr,-\vr)}]#1) rectangle ([shift={(0,\vr)}]#1);
    \fill[#3] ([shift={(0,-\vr)}]#1) rectangle ([shift={(\vr,\vr)}]#1);
  \end{scope}
  \draw[line width=1pt] ([shift={(0,-\vr)}]#1) -- ([shift={(0,\vr)}]#1);
  \draw[line width=\vw pt] (#1) circle (\vr);}
\newcommand{\thirdvtx}[4]{% #1 = coordinate, #2 = upper right, #3 = upper left, #4 = bottom
  \fill[#2] (#1) -- ++(-30:\vr) arc[start angle=-30,end angle= 90,radius=\vr] -- cycle;
  \fill[#3] (#1) -- ++( 90:\vr) arc[start angle= 90,end angle=210,radius=\vr] -- cycle;
  \fill[#4] (#1) -- ++(210:\vr) arc[start angle=210,end angle=330,radius=\vr] -- cycle;
  \foreach \a in {90,210,330}{\draw[line width=1pt] (#1) -- ++(\a:\vr);}
  \draw[line width=\vw pt] (#1) circle (\vr);}

\scalebox{.5}{
\begin{tikzpicture}[x=1pt,y=1pt,
    branch/.style={cbranch, line width=24.5pt, line cap=round, line join=round},
    edg/.style   ={line width=3pt, line cap=round},
    vtx/.style   ={circle, draw, line width=\vw pt, inner sep=0pt, minimum size=13.5pt}]

  \coordinate (a1) at (105.6,227.7);  \coordinate (a2) at ( 59.4,178.8);
  \coordinate (a3) at ( 57.9,109.8);
  \coordinate (b1) at (174.3,229.0);  \coordinate (b2) at (210.6,187.4);
  \coordinate (c1) at (314.6,256.9);  \coordinate (c2) at (271.4,207.3);
  \coordinate (c3) at (328.4,189.7);
  \coordinate (d1) at ( 68.2, 17.3);  \coordinate (d2) at (113.3, 65.8);
  \coordinate (d3) at (174.2, 71.0);  \coordinate (d4) at (216.8,123.1);
  \coordinate (d5) at (271.9, 95.5);  \coordinate (d6) at (298.5, 41.8);
  \coordinate (h1) at (404.9,147.0);  \coordinate (h2) at (453.0, 89.2);
  \coordinate (h3) at (477.1,157.0);  \coordinate (h4) at (521.9,196.2);

  \draw[branch] (a1) -- (a2) -- (a3);
  \draw[branch] (b1) -- (b2);
  \draw[branch] (c1) -- (c2) -- (c3);
  \draw[branch] (d1) -- (d2) -- (d3) -- (d4) -- (d5) -- (d6);

  \draw[edg] (a1) -- (a2) -- (a3);
  \draw[edg] (b1) -- (b2);
  \draw[edg] (c1) -- (c2) -- (c3);
  \draw[edg] (d1) -- (d2) -- (d3) -- (d4) -- (d5) -- (d6);
  \draw[edg] (a1) -- (b1);  \draw[edg] (a3) -- (d2);
  \draw[edg] (b2) -- (d4);  \draw[edg] (b2) -- (c2);
  \draw[edg] (h1) -- (h2) -- (h3) -- (h1);  \draw[edg] (h3) -- (h4);

  \foreach \v in {a1,a2,a3,b1,c2,d3,h3}{\node[vtx,fill=cM] at (\v) {};}
  \foreach \v in {c1,d6}            {\node[vtx,fill=cC] at (\v) {};}
  \foreach \v in {d1}               {\node[vtx,fill=cO] at (\v) {};}
  \foreach \v in {b2,d2,d4,d5,h1}   {\node[vtx,fill=black] at (\v) {};}
  \halfvtx{c3}{cO}{cM}
  \halfvtx{h4}{cO}{cC}
  \thirdvtx{h2}{cM}{cO}{cC}

\end{tikzpicture}
}
\medskip
}
 \caption{Two colorful graphs $(G,\chi)$ and $(H,\psi)$ such that $(H,\psi)$ is a colorful minor of $(G,\chi).$
 The gray subgraphs of $(G,\chi)$ indicate the connected vertex sets that have to be contracted in order to form $(H,\psi).$ Notice that it is necessary to remove some colors from some of the vertices of $(G,\chi).$}
 \label{fig_ColorfulMinorIntro}
\end{figure}

\subsection{The Erd\H{o}s-P{\'o}sa property}\label{subsec_EP_def}
Let $(H,\psi)$ and $(G,\chi)$ be colorful graphs and let $k\in\Nbbb$.
\begin{itemize}
\item A \emph{packing of $(H,\psi)$ in $(G,\chi)$ of size $k$}
is a family $(G_{1},\ldots,G_{k})$ of pairwise vertex-disjoint subgraphs of $G$
such that $(H,\psi)$ is a colorful minor of $(G_{i},\chi)$ for every $i\in[k]$.
Note that a packing is an indexed family and not a set of subgraphs; the distinction only matters when $V(H)=\emptyset,$ in which case the subgraph of $G$ with no vertices is vertex-disjoint from itself and $(G,\chi)$ has packings of $(H,\psi)$ of every size.

\item A \emph{covering of $(H,\psi)$ in $(G,\chi)$ of size $k$}
is a set $S\subseteq V(G)$ such that $|S|\leq k$ and $(G-S,\chi)$ does not contain $(H,\psi)$ as a colorful minor.
\end{itemize}

We say that the colorful graph $(H,\psi)$ has the \emph{Erd\H{o}s-P{\'o}sa property} \cite{ErdosP1965independent}, in short the \emph{EP-property}, if 
there is a function $f\colon\Nbbb\to\Nbbb$ such that, for every $k\in\Nbbb$
and every colorful graph $(G,\chi)$, if $(G,\chi)$ has no
packing of $(H,\psi)$ of size $k$, then $(G,\chi)$ has a covering of $(H,\psi)$ of size at most $f(k)$.
We refer to the function $f$ as \emph{the gap} of the EP-property for $(H,\psi)$.
We also write $\pack_{H,\psi}(G,\chi)$ for the maximum size of a packing of $(H,\psi)$ in $(G,\chi)$ and $\cover_{H,\psi}(G,\chi)$ for the minimum size of a covering of $(H,\psi)$ in $(G,\chi)$.
In these terms, $(H,\psi)$ has the EP-property if and only if there is a function $f\colon\Nbbb\to\Nbbb$ such that $\cover_{H,\psi}(G,\chi)\leq f(\pack_{H,\psi}(G,\chi))$ for every colorful graph $(G,\chi)$.\medskip

In this paper we give a complete characterization of the colorful graphs that have the Erd\H{o}s-P{\'o}sa property.
For $\emptyset$-colorful graphs, i.e., for graphs without colors, such a characterization is provided by the result of Robertson and Seymour \cite{RobertsonS1986Grapha} asserting that an $\emptyset$-colorful graph has the EP-property if and only if it is planar.
Our characterization extends this result to all colorful graphs.
As we will see, in the presence of colors the boundary  
changes in subtle ways that are worth highlighting.

\section{Basic definitions and statement of the main result}

\paragraph{Domains and their obstructions.}

For every set $\Zcal$ of colorful graphs and every $I\finsub \Nbbb_{\geq1}$, we define $\Zcal|_I$ as 
the restriction of $\Zcal$  to its colorful graphs whose 
palette is a subset of $I$, i.e.,  $\Zcal|_I\coloneqq \{(G,\chi)\mid (G,\chi)\in\Zcal \text{~and~} \chi(G)\subseteq I\}$.

In this paper we study classes of colorful graphs that are closed under colorful minors, i.e., they contain all colorful minors of their members.
For simplicity, we refer to every such class as a \emph{domain}.
Notice that  if $\Zcal$ is a domain then so is $\Zcal|_I$, 
for every $I\finsub \Nbbb_{\geq1}$.

Given some domain $\Dcal$, we use $\obs(\Dcal)$ as the 
set of all colorful graphs that do not belong in $\Dcal$
and whose  proper colorful minors belong in $\Dcal$. 
We refer to the colorful graphs in $\obs(\Dcal)$ as \emph{obstructions} 
of $\Dcal$ and to the set $\obs(\Dcal)$  as the \emph{obstruction set} of $\Dcal$. 
Notice that $\obs(\Dcal)$ is not necessarily a finite set, however, given that we assumed that each colorful graph carries 
a palette of a finite number of colors, this set is always countable.
For an example of an infinite obstruction, if $\Dcal$ is the domain of all $\emptyset$-colorful graphs, i.e., the set of all graphs without colors, then 
$\obs(\Dcal)=\{(K_{1},\chi_{i})\mid i\in\Nbbb\}$ where 
$\chi_{i}$ assigns to the unique vertex in $K_{1}$ the color $i$.
On the other side, if $\Dcal$ is the domain of all colorful graphs, then $\obs(\Dcal)$ is finite as it is  the empty set.

The above imply that $\obs(\Dcal)$ characterizes $\Dcal$ completely, in the sense that $\Dcal$ consists precisely of the colorful graphs that contain no member of $\obs(\Dcal)$ as a colorful minor.

\subsection{Torso treewidth} For a vertex set $X\subseteq V(G)$ in a graph $G$ we define the \emph{torso} of $X$ in $G,$ denoted by $\torso(G,X),$ as the graph obtained from $G[X]$ by turning the neighborhood of $J$ in $X$ into a clique for every component $J$ of $G-X.$

A \emph{tree-decomposition} for a graph $G$ is a pair $\Tcal=(T,\beta)$ where $T$ is a tree and $\beta\colon V(T)\to2^{V(G)},$ called the \emph{bags} of $\Tcal,$ assigns a set of vertices of $G$ to every node of $T$ such that $\bigcup_{t\in V(T)}\beta(t)=V(G),$ for every $e\in E(G)$ there is $t\in V(T)$ with $e\subseteq \beta(t),$ and for every $v\in V(G)$ the set $\{ t\in V(T) \mid v\in\beta(t)\}$ is connected.
The \emph{adhesion} of $\Tcal$ is $\max_{dt\in E(T)}|\beta(d)\cap \beta(t)|$ and the width of $\Tcal$ is defined as $\max_{t\in V(T)}|\beta(t)|-1.$
The \emph{treewidth} of a graph $G,$ denoted by $\tw(G),$ is the smallest integer $k$ such that there is a tree-decomposition of width at most $k$ for $G.$

Given some $I\finsub \Nbbb_{\geq1}$,  the \emph{$I$-torso treewidth} of a colorful graph $(G,\chi)$ is the smallest integer $k$ such that there exists a set $X\subseteq V(G)$ such that the treewidth of the torso of $X$ in $G$ is at most $k$ and for every component $J$ of $G-X$ the colorful graph $(J,\chi)$ is $I$-restricted.

\subsection{Segregated grids}
Consider some $I\finsub \Nbbb_{\geq1}$ where $q\coloneqq |I|>0$.
Let also $k\in\Nbbb_{≥1}$.
An \emph{$(I,k)$-segregated grid} is an $I$-colorful graph $(G,\chi)$ where $G$ is the $(qk\times qk)$-grid where the vertices of the first column can be numbered as $v_1,\dots,v_{qk}$ in order of their appearance and we have $\chi(u)=\emptyset$ for all $u\in V(G)\setminus\{ v_1,\dots,v_{qk}\}$ and there exists a bijection $\iota\colon[q]\to I$ such that for every $i\in [q]$ and every $j\in[(i-1)k+1,ik],$ $\chi(v_j)=\{ \iota(i)\}.$
See \zcref{fig_SegregatedGrids} for an example.

\begin{figure}[ht]
 \vspace{0pt}
 \centering
 \scalebox{1}{
 \begin{tikzpicture}

 \pgfdeclarelayer{background}
		\pgfdeclarelayer{foreground}
			
		\pgfsetlayers{background,main,foreground}
			
 \begin{pgfonlayer}{main}
 \node (M) [v:ghost] {};

 \end{pgfonlayer}

 \begin{pgfonlayer}{background}
 \pgftext{\scalebox{0.3763}{%
\begin{tikzpicture}[x=19.84pt,y=19.76pt,line width=3pt,
    dot/.style ={circle,fill,inner sep=0pt,minimum size=9.92pt},
    cdot/.style={circle,fill=#1,inner sep=0pt,minimum size=14.18pt}]
  \foreach \off/\seq in {0/{clM,clM,clM,clB,clB,clB,clY,clY,clY,clG,clG,clG},
                        14/{clM,clM,clM,clB,clB,clB,clG,clG,clG,clY,clY,clY},
                        28/{clM,clM,clM,clG,clG,clG,clB,clB,clB,clY,clY,clY}}{%
    \foreach \r in {0,...,11}{\draw (\off,\r) -- (\off+11,\r);}
    \foreach \c in {0,...,11}{\draw (\off+\c,0) -- (\off+\c,11);}
    \foreach \c in {1,...,11}{\foreach \r in {0,...,11}{\node[dot] at (\off+\c,\r) {};}}
    \foreach \cl [count=\i from 0] in \seq {\node[cdot=\cl] at (\off,11-\i) {};}}
\end{tikzpicture}
}} at (M.center);
 \end{pgfonlayer}
 
 \begin{pgfonlayer}{foreground}
 \end{pgfonlayer}

 \end{tikzpicture}}

 \vspace{-1pt}
 \caption{Three non-isomorphic $([4],3)$-segregated grids.}
 \vspace{-4pt}
 \label{fig_SegregatedGrids}
\end{figure}

The $(\emptyset,k)$-segregated grid is the $(k\times k)$-grid without colors. 
Notice that there are $\lceil |I|!/2\rceil$ different 
$(I,k)$-segregated grids.

\begin{figure}[ht]
\begin{center}
\scalebox{.45}{%
\scalebox{1.87}{%
\newcommand{\ugrid}[4]{%
  \foreach \r in {0,...,5}{\draw (#1,#2+\r) -- (#1+5,#2+\r);}%
  \foreach \c in {0,...,5}{\draw (#1+\c,#2) -- (#1+\c,#2+5);}%
  \foreach \c in {1,...,5}{\foreach \r in {0,...,5}{\node[dot] at (#1+\c,#2+\r) {};}}%
  \foreach \r in {0,...,5}{\ifnum\r<3 \node[cdot=#4] at (#1,#2+\r) {};%
                           \else      \node[cdot=#3] at (#1,#2+\r) {};\fi}}
\newcommand{\ugridplain}[2]{%
  \foreach \r in {0,...,5}{\draw (#1,#2+\r) -- (#1+5,#2+\r);}%
  \foreach \c in {0,...,5}{\draw (#1+\c,#2) -- (#1+\c,#2+5);}%
  \foreach \c in {0,...,5}{\foreach \r in {0,...,5}{\node[dot] at (#1+\c,#2+\r) {};}}}
\newcommand{\uplate}[3]{\fill[plate] (#1-1,#2-.5) rectangle (#1+6*#3,#2+5.5);}
\newcommand{\uframe}[5]{%
  \draw[framepurple] (#1-1.5,#2-1) rectangle (#1+6*#3+.5,#2+7*#4-1);%
  \node[anchor=base west, inner sep=0pt] at (#1-1.5,#2+7*#4) {\Large$#5$};}
\resizebox{\textwidth}{!}{%
\begin{tikzpicture}[
    x=8pt, y=8pt, line width=.4pt,
    dot/.style  ={circle, fill, inner sep=0pt, minimum size=3.6pt},
    cdot/.style ={circle, fill=#1, inner sep=0pt, minimum size=6pt},
    plate/.style={fill=plategray, fill opacity=.75}]

  \def\xA{0}\def\xB{11}\def\xC{28}\def\xD{51}   % horizontal position of the four columns

  \foreach \py/\topc/\bots in {2/uA/{uB,uC},1/uB/{uA,uC},0/uC/{uA,uB}}{%
    \pgfmathtruncatemacro{\Y}{7*\py}\uplate{\xB}{\Y}{2}%
    \foreach \cb [count=\g from 0] in \bots {%
      \pgfmathtruncatemacro{\GX}{\xB+6*\g}\ugrid{\GX}{\Y}{\topc}{\cb}}}
  \uframe{\xB}{0}{2}{3}{\mathcal{U}^{3}_{3}}

  \foreach \py/\topc/\bots in {3/uA/{uB,uC,uD},2/uB/{uA,uC,uD},
                               1/uC/{uA,uB,uD},0/uD/{uA,uB,uC}}{%
    \pgfmathtruncatemacro{\Y}{7*\py}\uplate{\xC}{\Y}{3}%
    \foreach \cb [count=\g from 0] in \bots {%
      \pgfmathtruncatemacro{\GX}{\xC+6*\g}\ugrid{\GX}{\Y}{\topc}{\cb}}}
  \uframe{\xC}{0}{3}{4}{\mathcal{U}^{4}_{3}}

  \foreach \py/\topc/\bots in {4/uA/{uB,uC,uD,uE},3/uB/{uA,uC,uD,uE},
                               2/uC/{uA,uB,uD,uE},1/uD/{uA,uB,uC,uE},
                               0/uE/{uA,uB,uC,uD}}{%
    \pgfmathtruncatemacro{\Y}{7*\py}\uplate{\xD}{\Y}{4}%
    \foreach \cb [count=\g from 0] in \bots {%
      \pgfmathtruncatemacro{\GX}{\xD+6*\g}\ugrid{\GX}{\Y}{\topc}{\cb}}}
  \uframe{\xD}{0}{4}{5}{\mathcal{U}^{5}_{3}}
\end{tikzpicture}}}
}
\end{center}
\caption{The sets   $\Ucal_{3}^{3},$ $\Ucal_{3}^{4},$
and $\Ucal_{3}^{5}.$}
\label{fig_UsGrids}
\end{figure}

\subsection{The domain \texorpdfstring{$\Ucal$}{U}}
Consider a $q\in\Nbbb_{≥3}$  and a $k\in\Nbbb_{≥1}$.
For every $i\in [q]$, we define the colorful graph $U^{q}_{i,k}$ as the disjoint union
of the $(I,k)$-segregated grids for all $I\in \{\{i,h\}\mid h\in [q]\setminus\{i\}\}$.
Moreover, for every $Z\subseteq[q]$ with $|Z|=3$, we define the colorful graph $T^{q}_{Z,k}$ as the disjoint union
of the $(I,k)$-segregated grids for all $I\in \{P\subseteq Z\mid |P|=2\}\cup\{\{d\}\mid d\in [q]\}$.

Given some $q\in\Nbbb_{≥3}$  and a $k\in\Nbbb_{≥1}$ we also define  
$\Ucal_{k}^{q}=\{U^{q}_{i,k}\mid i\in[q]\}\cup\{T^{q}_{Z,k}\mid Z\subseteq[q],\ |Z|=3\}$.
See \zcref{fig_UsGrids} for some examples of the colorful graphs $U^{q}_{i,k}$ and \zcref{fig_Tgrids} for examples of the colorful graphs $T^{q}_{Z,k}$,   for various values of $q$ and $k$.
Notice that $|\Ucal_{k}^{q}|=q+\binom{q}{3}$.
For every  $q\in \Nbbb_{≥3}$, we define the domain $\Ucal_{q}$ as  follows 
\begin{align}
\Ucal_{q}  =  \{(G,\chi)\mid \text{$(G,\chi)$ is a colorful minor of some  colorful graph in $\bigcup_{k\in\Nbbb_{≥1}}\Ucal_{k}^{q}$}\}
\label{eq_U_d}
\end{align}
and we set 
\begin{align}
\Ucal =   \bigcup_{q\in\Nbbb_{≥3}}\Ucal_{q}
\end{align}
As we prove in \zcref{subsect_U_crucial}, the domain $\Ucal$ is precisely the domain of all crucial colorful graphs. We postpone the technical definition of this particular domain to \zcref{subsect_obs_cru}. We need this notion for stating the main result of this paper. Apart from this, 
\begin{figure}[ht]
\centering
\scalebox{.39}{%
\scalebox{1.87}{%
\newcommand{\tugrid}[4]{%
  \foreach \r in {0,...,5}{\draw (#1,#2+\r) -- (#1+5,#2+\r);}%
  \foreach \c in {0,...,5}{\draw (#1+\c,#2) -- (#1+\c,#2+5);}%
  \foreach \c in {1,...,5}{\foreach \r in {0,...,5}{\node[dot] at (#1+\c,#2+\r) {};}}%
  \foreach \r in {0,...,5}{\ifnum\r<3 \node[cdot=#4] at (#1,#2+\r) {};%
                           \else      \node[cdot=#3] at (#1,#2+\r) {};\fi}}
\newcommand{\tsgrid}[3]{%
  \foreach \r in {0,...,2}{\draw (#1,#2+\r) -- (#1+2,#2+\r);}%
  \foreach \c in {0,...,2}{\draw (#1+\c,#2) -- (#1+\c,#2+2);}%
  \foreach \c in {1,2}{\foreach \r in {0,...,2}{\node[dot] at (#1+\c,#2+\r) {};}}%
  \foreach \r in {0,...,2}{\node[cdot=#3] at (#1,#2+\r) {};}}
\newcommand{\tplate}[3]{\fill[plate] (#1-1,#2-.5) rectangle (#1+#3,#2+5.5);}
\newcommand{\tframe}[4]{%
  \draw[framepurple] (#1-1.5,#2-1) rectangle (#1+#3+.5,#2+6);%
  \node[anchor=base west, inner sep=0pt] at (#1-1.5,#2+7.1) {\Large$#4$};}
\begin{tikzpicture}[
    x=8pt, y=8pt, line width=.4pt,
    dot/.style  ={circle, fill, inner sep=0pt, minimum size=3.6pt},
    cdot/.style ={circle, fill=#1, inner sep=0pt, minimum size=6pt},
    plate/.style={fill=plategray, fill opacity=.75}]
  \tplate{0}{0}{28.5}
  \tugrid{0}{0}{uA}{uB}
  \tugrid{6}{0}{uA}{uC}
  \tugrid{12}{0}{uB}{uC}
  \tsgrid{18.5}{1.5}{uA}
  \tsgrid{22}{1.5}{uB}
  \tsgrid{25.5}{1.5}{uC}
  \tframe{0}{0}{28.5}{T^{3}_{[3],3}}
  \tplate{33}{0}{32}
  \tugrid{33}{0}{uA}{uB}
  \tugrid{39}{0}{uA}{uC}
  \tugrid{45}{0}{uB}{uC}
  \tsgrid{51.5}{1.5}{uA}
  \tsgrid{55}{1.5}{uB}
  \tsgrid{58.5}{1.5}{uC}
  \tsgrid{62}{1.5}{uD}
  \tframe{33}{0}{32}{T^{4}_{[3],3}}
\end{tikzpicture}}
}
\caption{The colorful graphs $T^{3}_{[3],3}$ and $T^{4}_{[3],3}$.}
\label{fig_Tgrids}
\end{figure}
we need two more ingredients that we define below. We consider a set $\Ocal$ of colorful graphs and, for every $I\finsub \Nbbb_{\geq1}$, we set $\Ocal_{I}\coloneqq \Ocal|_{I}$, that is, $\Ocal_{I}$ consists of the colorful graphs of $\Ocal$ whose palette is a subset of $I$.
Each $\Ocal_{I}$ is non-empty and finite.

This set is  schematically described 
in \zcref{fig_EPIntro} and is formally defined in \zcref{subsect_obs_cru}.

We  stress that the size of $\Ocal_{I}$ grows polynomially in the number of colors $q\coloneqq |I|$. 
In fact, 
$$|\Ocal_{[q]}|=  \frac{1}{24} \left( 3q^4 - 14q^3 + 129q^2 - 22q + 48 \right)\in\bigO{q^4}.$$

\begin{figure}[ht]
 \vspace{-8pt}
 \centering
 \scalebox{.94}{\begin{tikzpicture}

 \pgfdeclarelayer{background}
		\pgfdeclarelayer{foreground}
			
		\pgfsetlayers{background,main,foreground}
			
 \begin{pgfonlayer}{main}
 \node (M) [v:ghost] {};
 
 \node (MBottom) [v:ghost,position=270:30mm from M] {};

 \node (R) [v:ghost,position=0:71mm from MBottom] {};

 \node (q4) [v:ghost,position=90:4mm from R] {$p=4$};
 \node (q3) [v:ghost,position=90:11mm from R] {$p=3$};
 \node (q2) [v:ghost,position=90:22mm from R] {$p=2$};
 \node (q1) [v:ghost,position=90:36mm from R] {$p=1$};
 \node (q0) [v:ghost,position=90:51mm from R] {$p=0$};

 \end{pgfonlayer}

 \begin{pgfonlayer}{background}
 \pgftext{\scalebox{0.4210}{%
\begin{tikzpicture}[x=1pt,y=1pt,line width=2pt,
    v/.style ={circle,draw,line width=1.5pt,inner sep=0pt,minimum size=8.4pt,fill=#1},
    b/.style ={circle,draw,line width=1pt,inner sep=0pt,minimum size=7.5pt,fill=black}]
  \fill[black!10] (-0.2,2.3) rectangle (897.8,290.8);
  \fill[black!16] (-0.2,2.3) rectangle (889.7,191.2);
  \fill[black!22] (-0.2,2.3) rectangle (885,93.2);
  \fill[black!28] (-0.2,2.3) rectangle (882,46);
  \draw (11.3,347) -- (51,381.4);
  \draw (51,381.4) -- (89,348.3);
  \draw (89,348.3) -- (75.9,310.5);
  \draw (75.9,310.5) -- (28.5,309.4);
  \draw (28.5,309.4) -- (11.3,347);
  \draw (28.2,309.2) -- (50.7,381.5);
  \draw (50.7,381.5) -- (75.9,310.6);
  \draw (75.9,310.6) -- (11.7,347.1);
  \draw (11.7,347.1) -- (89.2,348.3);
  \draw (89.2,348.3) -- (28.2,309.2);
  \draw (150.7,381.3) -- (114.6,359.5);
  \draw (114.6,359.5) -- (114.8,322.5);
  \draw (114.8,322.5) -- (150.8,302.1);
  \draw (150.8,302.1) -- (184.7,322.8);
  \draw (184.7,322.8) -- (185,359.7);
  \draw (185,359.7) -- (150.7,381.3);
  \draw (150.6,381.2) -- (150.5,302);
  \draw (114.6,322.2) -- (184.8,359.7);
  \draw (114.7,360.1) -- (184.7,323.6);
  \draw (15.6,203.7) -- (50.1,277.9);
  \draw (50.1,277.9) -- (84.2,203.7);
  \draw (84.2,203.7) -- (15.6,203.7);
  \draw (15.9,203.8) -- (50.1,237.6);
  \draw (50.1,237.6) -- (50.2,277.4);
  \draw (50.1,237.4) -- (83.1,203.2);
  \draw (111.5,201) -- (151.5,279.9);
  \draw (151.5,279.9) -- (188.5,200.6);
  \draw (188.5,200.6) -- (111.5,201);
  \draw (111.5,201) -- (151.9,226.8);
  \draw (151.9,226.8) -- (188.6,200.4);
  \draw (151.4,226.7) -- (151.5,280);
  \draw (111.5,200.9) -- (151.5,249.5);
  \draw (151.5,249.5) -- (188.4,200.8);
  \draw (210.8,200.9) -- (287.9,201.2);
  \draw (287.9,201.2) -- (251.2,281);
  \draw (251.2,281) -- (210.8,200.9);
  \draw (251.1,281.4) -- (250.5,226.2);
  \draw (210.8,200.8) -- (250.7,226.1);
  \draw (250.7,226.1) -- (287.6,201);
  \draw (348.5,281.4) -- (348.2,201);
  \draw (348.4,281.4) -- (313.9,240);
  \draw (313.9,240) -- (348.4,200.9);
  \draw (348.4,200.9) -- (382.2,240.1);
  \draw (382.2,240.1) -- (348.4,281.4);
  \draw (49.8,178.7) -- (15.6,104.6);
  \draw (15.6,104.6) -- (83.9,104.4);
  \draw (83.9,104.4) -- (49.8,178.7);
  \draw (49.7,178.5) -- (49.8,137.2);
  \draw (49.8,137.2) -- (16.9,104.2);
  \draw (49.8,136.8) -- (83.3,103.7);
  \draw (210.8,103.5) -- (250.9,180.6);
  \draw (250.9,180.6) -- (287.6,103.5);
  \draw (287.6,103.5) -- (210.8,103.5);
  \draw (250.8,180.3) -- (250.5,127);
  \draw (210.5,103.1) -- (250.5,128.6);
  \draw (250.5,128.6) -- (287.7,103.2);
  \draw (210.8,102.8) -- (250.5,151.8);
  \draw (250.5,151.8) -- (287.6,102.4);
  \draw (350,183.4) -- (349.6,128);
  \draw (310.1,102.8) -- (350.2,128.2);
  \draw (350.2,128.2) -- (387,102.8);
  \draw (310.1,102.9) -- (386.8,103);
  \draw (309.7,103.1) -- (350.1,184.1);
  \draw (350.1,184.1) -- (386.8,103.1);
  \draw (547,182.4) -- (546.5,102.5);
  \draw (447.7,182.4) -- (447.3,102.5);
  \draw (546.9,182.7) -- (512.4,140.3);
  \draw (512.4,140.3) -- (546.7,102);
  \draw (447.6,182.7) -- (413.2,140.3);
  \draw (413.2,140.3) -- (447.5,102);
  \draw (546.9,101.8) -- (580.7,140.7);
  \draw (580.7,140.7) -- (547.1,182.6);
  \draw (447.7,101.8) -- (481.4,140.7);
  \draw (481.4,140.7) -- (447.9,182.6);
  \draw (646.1,174.7) -- (612,140.8);
  \draw (612,140.8) -- (646.2,106.1);
  \draw (646.2,106.1) -- (680.4,140.5);
  \draw (680.4,140.5) -- (646.1,174.7);
  \draw (744.5,177.2) -- (710.7,106.9);
  \draw (710.7,106.9) -- (778.5,106.9);
  \draw (778.5,106.9) -- (744.5,177.2);
  \draw (836.4,176.9) -- (836.3,140.5);
  \draw (836.6,140.1) -- (805.7,107.4);
  \draw (836.6,140.3) -- (867.9,107.3);
  \draw (148.9,178.7) -- (114.7,104.6);
  \draw (114.7,104.6) -- (183,104.4);
  \draw (183,104.4) -- (148.9,178.7);
  \draw (148.8,178.5) -- (148.9,137.2);
  \draw (148.9,137.2) -- (115.9,104.2);
  \draw (148.9,136.8) -- (182.4,103.7);
  \node[b] at (50.9,380.8) {};
  \node[b] at (11.8,346.9) {};
  \node[b] at (28.7,309.1) {};
  \node[b] at (75.8,310.1) {};
  \node[b] at (150.7,302.1) {};
  \node[b] at (151.7,249.1) {};
  \node[b] at (250.9,151.6) {};
  \node[b] at (251,226.2) {};
  \node[b] at (350.1,128.1) {};
  \node[b] at (251,281) {};
  \node[b] at (350.1,182.8) {};
  \node[b] at (111.8,200.8) {};
  \node[b] at (210.9,103.4) {};
  \node[b] at (211,200.8) {};
  \node[b] at (310.1,102.7) {};
  \node[b] at (188.3,200.8) {};
  \node[b] at (287.4,103.4) {};
  \node[b] at (287.5,200.8) {};
  \node[b] at (386.6,102.7) {};
  \node[b] at (348.5,200.8) {};
  \node[b] at (546.9,102) {};
  \node[b] at (447.7,102) {};
  \node[b] at (836.6,140.1) {};
  \node[b] at (348.5,281.3) {};
  \node[b] at (546.9,182.4) {};
  \node[b] at (447.7,182.4) {};
  \node[b] at (150.7,380.8) {};
  \node[b] at (89.3,347.9) {};
  \node[b] at (114.9,322.5) {};
  \node[b] at (184.5,322.5) {};
  \node[b] at (114.9,359.5) {};
  \node[b] at (184.5,359.5) {};
  \node[v=clM] at (50,277.6) {};
  \node[v=clB] at (50,178.4) {};
  \node[v=clM] at (50,237.4) {};
  \node[v=clM] at (50,138.2) {};
  \node[v=clM] at (16,203.9) {};
  \node[v=clM] at (16,104.7) {};
  \thirdv{50.6}{69.6}{7.75}{clM}{clB}{clY}
  \node[v=clM] at (84,203.9) {};
  \node[v=clM] at (84,104.7) {};
  \node[v=clM] at (151.8,226.6) {};
  \node[v=clB] at (251,127.2) {};
  \node[v=clM] at (250.4,249.8) {};
  \node[v=clM] at (349.5,151.3) {};
  \node[v=clM] at (348.2,239.9) {};
  \node[v=clM] at (546.6,140.5) {};
  \node[v=clB] at (447.4,140.5) {};
  \node[v=clM] at (382.1,239.9) {};
  \node[v=clB] at (580.5,140.5) {};
  \node[v=clM] at (481.3,140.5) {};
  \node[v=clM] at (680.3,140.5) {};
  \halfv{744.3}{177.2}{4.21}{clM}{clB}
  \halfv{836.4}{177.2}{4.21}{clM}{clB}
  \halfv{778.4}{107.1}{4.21}{clM}{clB}
  \halfv{867.7}{107.1}{4.21}{clM}{clB}
  \halfv{710.3}{107.1}{4.21}{clM}{clB}
  \halfv{31.4}{23.4}{7.69}{clM}{clB}
  \halfv{67.3}{23.4}{7.69}{clY}{clG}
  \halfv{805.3}{107.1}{4.21}{clM}{clB}
  \node[v=clB] at (646.3,174.5) {};
  \node[v=clM] at (314.1,239.9) {};
  \node[v=clB] at (512.5,140.5) {};
  \node[v=clM] at (413.3,140.5) {};
  \node[v=clM] at (612.3,140.5) {};
  \node[v=clB] at (646.3,106.5) {};
  \node[v=clM] at (250.4,201.4) {};
  \node[v=clB] at (349.5,102.9) {};
  \node[v=clM] at (151.8,279.7) {};
  \node[v=clM] at (251,180.3) {};
  \node[v=clM] at (149,178.4) {};
  \node[v=clB] at (149,138.2) {};
  \node[v=clB] at (115.1,104.7) {};
  \node[v=clB] at (183.1,104.7) {};
\end{tikzpicture}
}} at (M.center);
 \end{pgfonlayer}
 
 \begin{pgfonlayer}{foreground}
 \end{pgfonlayer}

 \end{tikzpicture}
 }

 \caption{The obstructions to the Erd\H{o}s-P{\'o}sa property:
 For $q\in\mathbb{N},$ the family $\Ocal_{[q]}$ consists, for $p\in[0,4],$ of all the colorful graphs displayed in the corresponding level above, where the levels for $p\in[4]$ are copied for each of the $\binom{q}{p}$ possible choices of $p$ colors out of $[q]$.}
 \label{fig_EPIntro}
\end{figure}

\subsection{Delineating the Erd\H{o}s-P{\'o}sa property}\label{subsec_Results}

Our main result is a complete characterization of the colorful graphs that have the Erd\H{o}s-P{\'o}sa property.

\begin{theorem}\label{th_EP_single}
For every colorful graph $(H,\psi)$ with $V(H)\neq\emptyset,$ the following are equivalent.
\begin{enumerate}
\item\label{it_s_ep} $(H,\psi)$ has the Erd\H{o}s-P{\'o}sa property,
\item\label{it_s_cru} $(H,\psi)$ is crucial,
\item\label{it_s_U} $(H,\psi)$ belongs to $\Ucal$, and
\item\label{it_s_O} $(H,\psi)$ does not contain any member of $\Ocal$ as a colorful minor.
\end{enumerate}
\end{theorem}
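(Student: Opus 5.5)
We will prove the cycle of implications \ref{it_s_ep}$\Rightarrow$\ref{it_s_O}$\Rightarrow$\ref{it_s_cru}$\Rightarrow$\ref{it_s_U}$\Rightarrow$\ref{it_s_ep}.

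The middle equivalences are structural. The step \ref{it_s_cru}$\Leftrightarrow$\ref{it_s_U} is what \zcref{subsect_U_crucial} establishes. For \ref{it_s_O}$\Leftrightarrow$\ref{it_s_cru} we will show that $\Ocal$ is exactly $\obs$ of the crucial domain. First, we check directly from the definition in \zcref{subsect_obs_cru} that every member of $\Ocal$ is non-crucial while each of its proper colorful minors is crucial. Second, we show that any non-crucial $(H,\psi)$ contains a member of $\Ocal$, by a case analysis on the witness of non-cruciality. The cases are:
\begin{itemize}
\item non-planarity, giving the level-$0$ members $K_5,K_{3,3}$;
\item colored vertices not all on a common face, giving the level-$1$ members, i.e.\ a $K_4$ or $K_{2,3}$ with colored vertices forced off the outer face;
\item interleaving of two or more colors along that face, giving the levels $p=2,3,4$, including the interleaved $C_4$ and the vertex carrying three colors.
\end{itemize}
In each case we extract the obstruction by standard rooted-minor arguments: Menger paths to the face, and contracting the colored boundary into the configurations drawn in \zcref{fig_EPIntro}.

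For \ref{it_s_U}$\Rightarrow$\ref{it_s_ep} we follow the Robertson--Seymour template. Fix $(H,\psi)\in\Ucal_q$, so that $(H,\psi)$ is a colorful minor of some member of $\Ucal^q_k$ for some $k$, and let $I=\psi(H)$. Suppose $(G,\chi)$ has no packing of size $k'$. We aim to show that $(G,\chi)$ has bounded $I$-torso treewidth; this needs a colorful grid theorem from the paper's earlier part. If the $I$-torso treewidth were huge, we would find many disjoint segregated grids for pairs or triples of colors, or rainbow-type grids. By Ramsey-type pigeonholing over the colour pairs, these would combine to give many disjoint copies of some member of $\Ucal^q_{k}$, and hence a large packing. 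Once the $I$-torso treewidth is bounded, we run the usual induction along a tree-decomposition of the torso: choose a deepest node whose subtree contains a model, delete its bag (of bounded size), and recurse. Components of $G-X$ are $I$-restricted, so by themselves they cannot host a model of $(H,\psi)$ when $\psi(H)=I$. We handle this carefully when $H$ uses fewer colors, by working with $I=\psi(H)$ and noting that models must touch $X$. The result is a cover of size $f(k')$.

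For \ref{it_s_ep}$\Rightarrow$\ref{it_s_O} we argue by contrapositive. First we show that the EP-property is closed under colorful minors in the needed sense: if $(H,\psi)$ contains $(O,\phi)\in\Ocal$, then EP for $(H,\psi)$ implies EP for a suitable construction. This does not hold naively, so instead we give, for each member $(O,\phi)$ of $\Ocal$, a family of hosts $(G_n,\chi_n)$ with packing number $1$ and unbounded cover number for \emph{every} $(H,\psi)$ containing $(O,\phi)$. These are colorful versions of the Robertson--Seymour construction: a large grid or wall into which one patch is glued — a handle for the non-planar cases, or a vortex-like crossing region realizing the interleaving of colors. The colors are placed on the grid boundary so that any model of $H$ must use the patch, while the patch can be rerouted around any bounded deletion. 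The main obstacle is making one construction per obstruction robust for all supergraphs $H$. To do this we will embed $H$ into the construction (replacing grid vertices by copies of $H$'s drawing, as in Robertson--Seymour's superposition) and use the fact that two models would force the colors to appear non-interleaved twice, contradicting the topology.
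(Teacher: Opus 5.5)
\textbf{Upper bound (membership in $\Ucal$ implies the EP-property).} Your argument breaks down for disconnected patterns, and these are the typical members of $\Ucal$.

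\emph{Models need not touch $X$.} You claim a model must touch $X$ because the components of $G-X$ are $I$-restricted. That holds only when $H$ is connected: a model of a disconnected $H$ can be spread over several components of $G-X$, each missing a different color. For example, $(K_{1},\{1,2\})\sqcup(K_{1},\{1,3\})$ is crucial. Take a host that is a disjoint union of components with palettes $\{1,2\}$ and $\{1,3\}$. Then $X=\emptyset$ witnesses torso treewidth $0$, yet every model avoids $X$, so no deletion of torso bags ever hits one. Likewise, ``delete a bag and the packing number drops'' needs the model to be connected, so that it lies on one side of the separation.

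\emph{How the paper handles this.} It uses the tree-decomposition scheme (\zcref{lemma_classic_scheme}) only to obtain $S_{0}$ such that no single component of $G-S_{0}$ contains all of $(H,\psi)$. It then inducts on the number $r$ of components of $H$ (\zcref{thm_EP_positive}). An assembly argument shows that if every piece $(H_{R},\psi_{R})$, $R\subsetneq[r]$, occurred in at least $rk$ components, then $k$ disjoint copies of $(H,\psi)$ could be assembled from complementary pieces. So some untreated piece is scarce. Inside each of the few components containing it, the induction hypothesis gives either $k$ disjoint copies of the piece, which serve later assemblies, or a small cover. This is iterated over fewer than $2^{r}$ pieces.

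\emph{The packing step is also incomplete.} \zcref{prop_torsoGrid} gives one $(Q,K)$-segregated grid whose color blocks appear in an arbitrary order. What must be shown is that $k$ disjoint copies of $(H,\psi)$ fit into every such grid (\zcref{lemma_gridHosting}). This is where cruciality is actually used: pairwise intersecting two-element palettes form a star or a triangle (\zcref{obs_starTriangle}), so the pieces can be nested along the first column. Two pieces with disjoint interleaving palettes could not be packed at all. ``Ramsey-type pigeonholing over colour pairs'' does not supply this.

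\textbf{Lower bound (EP-property implies cruciality).} Your eventual construction is the right one: superpose copies of a drawing of $(H,\psi)$ so that every vertex lies in at most two of $2k$ copies, which gives covering number at least $k$. But the packing bound is the whole difficulty, and it is not argued; ``colors non-interleaved twice, contradicting the topology'' is not a proof. After reducing to the obstruction via $\pack_{H,\psi}\leq\pack_{Z,\zeta}$ (\zcref{obs_packMonotone}), the paper needs a different mechanism for each type. (The covering inequality points the wrong way, so covers must be bounded below for $(H,\psi)$ itself.)
\begin{itemize}
\item Planar but non-color-facial $H$: draw $H$ plus an apex over its colored vertices in a surface of minimum Euler genus $g$, remove a disk around the apex, and multiply. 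Disjoint realizations, each made non-planar by re-adding the apex, share only that vertex, so genus additivity (\zcref{prop_genusAdds}) bounds the packing.
\item $\tilde{\Ocal}^{2}$ and $\Ocal^{3}$: uncontract so that every colored vertex carries a single color on the boundary of a disk. Then prove that every realization of $(Z,\zeta)$ meets at least three monochromatic boundary intervals; otherwise three auxiliary vertices outside the disk yield a planar graph with a $K_{5}$- or $K_{3,3}$-minor. Finally count via Euler's formula (\zcref{lemma_noncrossing}).
\item $\Ocal^{4}$, where no interleaving happens inside a component: confine all crossings of the two linkages to a vortex of bounded depth, and show that disjoint realizations form a transaction through it.
\end{itemize}
Without these arguments the lower bound is not established.
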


The equivalence of \ref{it_s_cru}, \ref{it_s_U}, and \ref{it_s_O} is established in \zcref{sec_obstructions} (see \zcref{obs_CrucialMinimal,lemma_U_equals_Q}) and the implication from \ref{it_s_cru} to \ref{it_s_ep} is proved in \zcref{sec_EP}.
The remaining implication, from \ref{it_s_ep} to \ref{it_s_cru}, is proved in \zcref{sec_EP_to_crucial}.

\begin{corollary}\label{cor_EPminorClosed}
The class of colorful graphs with at least one vertex that have the
Erd\H{o}s-P{\'o}sa property is closed under colorful minors.
\end{corollary}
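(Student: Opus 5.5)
The plan is to derive the corollary directly from \zcref{th_EP_single}, through the equivalence of \ref{it_s_ep} and \ref{it_s_O}, since ``excluding every member of a fixed set as a colorful minor'' is automatically inherited by colorful minors. Let $(H,\psi)$ be a colorful graph with $V(H)\neq\emptyset$ that has the Erd\H{o}s-P{\'o}sa property, and let $(H',\psi')\leq(H,\psi)$ with $V(H')\neq\emptyset$. We must show that $(H',\psi')$ also has the property.

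First, by the implication \ref{it_s_ep}$\Rightarrow$\ref{it_s_O} of \zcref{th_EP_single}, applied to $(H,\psi)$, no member of $\Ocal$ is a colorful minor of $(H,\psi)$. Second, suppose some $(O,\phi)\in\Ocal$ satisfied $(O,\phi)\leq(H',\psi')$. The colorful minor relation is transitive: a sequence of the four elementary operations taking $(H,\psi)$ to $(H',\psi')$ can be concatenated with a sequence taking $(H',\psi')$ to $(O,\phi)$. This would give $(O,\phi)\leq(H,\psi)$, a contradiction. Hence $(H',\psi')$ excludes every member of $\Ocal$. Third, since $V(H')\neq\emptyset$, the hypothesis of \zcref{th_EP_single} holds for $(H',\psi')$, and the implication \ref{it_s_O}$\Rightarrow$\ref{it_s_ep} shows that $(H',\psi')$ has the Erd\H{o}s-P{\'o}sa property.

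The same argument works through item \ref{it_s_U}, because $\Ucal$ is by definition a union of domains and is therefore closed under colorful minors. There is no real obstacle here; all the difficulty is contained in \zcref{th_EP_single}. The only point needing care is the nonemptiness restriction. The class in the corollary is taken within colorful graphs with at least one vertex, so minors with empty vertex set are excluded by definition. Such minors trivially have packings of every size, as noted in \zcref{subsec_EP_def}, and so they are simply not in the scope of the statement.
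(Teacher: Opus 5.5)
Your proof is correct and is essentially the paper's argument. The paper also derives the corollary immediately from \zcref{th_EP_single}, but it uses the equivalence with item \ref{it_s_cru} together with the minor-closedness of crucial colorful graphs (\zcref{obs_CrucialClosed}), rather than item \ref{it_s_O} and transitivity; this difference is inessential, since either characterization is trivially inherited by colorful minors.
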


\begin{proof}
Immediate from the equivalence of \ref{it_s_ep} and \ref{it_s_cru} in
\zcref{th_EP_single} together with \zcref{obs_CrucialClosed}.
\end{proof}

We know of no direct proof of \zcref{cor_EPminorClosed} that avoids \zcref{th_EP_single}; see the discussion at the beginning of \zcref{sec_EP_to_crucial}.

As a special case, \zcref{th_EP_single} has an interesting specialization for the $[q]$-rainbow colorful graphs, $q\in \Nbbb$.
We call a graph \emph{a linear forest} if it is the disjoint union of paths.

\begin{corollary}\label{cor_final_EP}
Let  $q\in\Nbbb$ and let $(H,\psi)$ be a  $[q]$-rainbow colorful graph where $|V(H)|>0$.
The following are equivalent.
\begin{itemize}
\item $(H,\psi)$ has the Erd\H{o}s-P{\'o}sa property,
\item $q≤2$ and, 
\begin{itemize}
\item if $q=0$ then $H$ is a  planar graph,
\item if $q=1$ then $H$ is an outerplanar graph,
\item if $q=2$ then $H$ is a linear forest, 
\end{itemize} 
\item $q≤2$ and $H$ excludes $K_{5-q}$ and $K_{3-q,3}$ as minors.
\end{itemize}
\end{corollary}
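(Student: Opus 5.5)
We derive this from \zcref{th_EP_single}: for a $[q]$-rainbow $(H,\psi)$ with $V(H)\neq\emptyset$, having the EP-property is equivalent to belonging to $\Ucal$ (item \ref{it_s_U}) and to excluding every member of $\Ocal$ (item \ref{it_s_O}). We show that both conditions reduce to the second bullet, and then that the second and third bullets agree.

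\emph{Ruling out $q\geq 3$.} Suppose $q\geq 3$ and pick three colors $a,b,c\in[q]$. In any member of $\Ucal_{k}^{q'}$, every vertex carries at most one color, since only the first columns of the segregated grids are colored. Colorful minor operations preserve this property on each contracted branch set: a branch set lies inside a single grid component, and the colored first column of an $(I,k)$-segregated grid with $|I|=2$ carries only two colors. Hence every vertex of a member of $\Ucal$ has palette of size at most $2$. A $[q]$-rainbow graph with $q\geq3$ has a vertex with palette of size $q\geq 3$, so it is not in $\Ucal$ and, by \zcref{th_EP_single}, lacks the EP-property. Equivalently, the $p=3$ obstruction in $\Ocal$ (a single vertex colored $\{a,b,c\}$) is a colorful minor of $(H,\psi)$, obtained by deleting all other vertices and removing colors.

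\emph{The cases $q\leq 2$.} Here we read off the relevant obstructions from $\Ocal_{[q]}$.
\begin{itemize}
\item For $q=0$: $\Ocal_{\emptyset}$ consists of $K_5$ and $K_{3,3}$ (the $p=0$ level), so the condition is planarity, which matches Robertson--Seymour.
\item For $q=1$: the obstructions with palette in $\{1\}$ are $K_5$, $K_{3,3}$ and the $p=1$ level. A $\{1\}$-rainbow graph containing any of these contains, after removing colors, one of $K_4$ or $K_{2,3}$ with all vertices colored; and conversely, all-colored $K_4$ and $K_{2,3}$ contain a $p=1$ member (the members shown are $K_4$ and $K_{2,3}$ with some colored vertices). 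So the condition is that $H$ excludes $K_4$ and $K_{2,3}$, i.e.\ $H$ is outerplanar.
\item For $q=2$: in a $\{1,2\}$-rainbow graph, every $p\leq1$ obstruction and every $p=2$ obstruction that is a minor of a rainbow graph reduces to a rainbow $K_3$ or $K_{1,3}$ (the $p=2$ level contains $K_3$ with a vertex colored $\{1,2\}$ and $K_{1,3}$ with colored leaves). Conversely, a rainbow $K_3$ or $K_{1,3}$ contains these, and the two-disjoint-path obstructions with split colors are also minors of them.
\end{itemize}
Checking each pictured member against the rainbow coloring is the fiddly step. For $q=2$ one must confirm that every obstruction with palette in $\{1,2\}$ forces a $K_3$ or $K_{1,3}$ minor in $H$ when $H$ is rainbow, and that the $p=2$ members include ones that are minors of rainbow $K_3$ and $K_{1,3}$. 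This amounts to reading off \zcref{fig_EPIntro} and the formal definition of $\Ocal$. Excluding $K_3$ and $K_{1,3}$ as minors means $H$ is acyclic with maximum degree at most $2$, i.e.\ a linear forest.

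\emph{Equivalence of the second and third bullets.} For $q\leq2$ the forbidden pairs are $\{K_5,K_{3,3}\}$, $\{K_4,K_{2,3}\}$ and $\{K_3,K_{1,3}\}$ respectively. These are exactly the classical minor characterizations of planar graphs (Wagner), outerplanar graphs, and linear forests. For $q\geq3$ the third bullet fails by definition, as does the second. Thus all three bullets are equivalent.
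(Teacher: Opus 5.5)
Your argument is correct, but it takes a different route from the paper's. You go through items \ref{it_s_U} and \ref{it_s_O} of \zcref{th_EP_single}; the paper goes through cruciality (item \ref{it_s_cru}) and checks the four defining conditions directly on the rainbow coloring:
\begin{itemize}
\item component-wise bicoloredness forces $q\leq2$;
\item for $q\leq1$ the conditions A, B, C hold trivially, and color-faciality is planarity, respectively outerplanarity, since all vertices are colored;
\item for $q=2$, condition B forces $H$ to be acyclic and condition C forces maximum degree at most two;
\item the excluded-minor bullet then follows from the classical characterizations.
\end{itemize}

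Your route rests on a principle you use implicitly and should state. For a $[q]$-rainbow host, $(Z,\zeta)\leq(H,\rho_{[q]})$ holds if and only if $\zeta(Z)\subseteq[q]$ and $Z$ is a minor of $H$, because every branch set of a model carries all of $[q]$. The paper uses the same observation in Case~1 of \zcref{thm_EPiffCrucial}. With this principle, excluding $\Ocal$ amounts to $H$ excluding the minor-minimal underlying graphs of $\Ocal_{[q]}$. You therefore obtain the third bullet first and the second from the classical characterizations, the reverse of the paper's order.

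Your approach is more mechanical and produces the excluded minors directly. Its cost is that it needs $\Ocal_{[q]}$ enumerated explicitly, whereas the paper needs only the definitions of A, B, C and color-faciality. The step you call fiddly is in fact immediate. The underlying graphs of the members of $\Ocal_{[2]}$ are $K_{5},K_{3,3},K_{5}^{-},K_{4},K_{3,3}^{-},K_{2,3},C_{4},K_{3},K_{1,3}$, and each of these either contains a cycle or is $K_{1,3}$.

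Three small inaccuracies do not affect the logic:
\begin{itemize}
\item The $K_{3}$-member of $\tilde{\Ocal}^{2}$ has all three vertices colored $\{1,2\}$, not just one.
\item There are no ``two-disjoint-path obstructions'' with palette in $[2]$; the members of $\Ocal^{4}$ need four colors, so that remark should be dropped.
\item In the $q\geq3$ paragraph, colorful minors do not preserve ``at most one color per vertex''. What they preserve is that every component carries at most two colors, and that is what your argument actually uses.
\end{itemize}
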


The proof of \zcref{cor_final_EP} has been deferred in \pageref{page_cor_final_EP}.
\subsection{The structural input: torso treewidth}\label{subsec_structuralInput}

It is easy to verify that, for every $I\finsub \Nbbb_{\geq1}$, the $I$-torso treewidth 
of $(I,k)$-segregated grids is lower bounded by an increasing  function of $k$ (see \cite{ProtopapasTW2025Colorful}). 
It was proved in \cite{ProtopapasTW2025Colorful} that the converse 
also holds, in the sense that if all $(I,k)$-segregated grids
are excluded as colorful minors then the $I$-torso treewidth is upper bounded. 
The structural ingredient that we import from \cite{ProtopapasTW2025Colorful} is the following.

\begin{proposition}[\cite{ProtopapasTW2025Colorful}]\label{prop_torsoGrid}
There exists a function $\sg\colon\mathbb{N}^2\to\mathbb{N}$ such that, for every $I\finsub \Nbbb_{\geq1}$ and $k\in\Nbbb_{\geq 1},$ if an $I$-colorful graph $(G,\chi)$
has $I$-torso treewidth $≥\sg(|I|,k)$, then $(G,\chi)$ contains as a colorful minor some $(I,k)$-segregated grid.
\end{proposition}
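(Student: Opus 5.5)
We prove the contrapositive by induction on $q\coloneqq|I|$: if $(G,\chi)$ has no $(I,k)$-segregated grid as a colorful minor, then its $I$-torso treewidth is bounded by a function of $q$ and $k$. For $q=0$ the only admissible $X$ is $V(G)$, since $\emptyset$-restricted components must be empty. So the $\emptyset$-torso treewidth is just $\tw(G)$, and the statement is the Grid Theorem. For $q\geq1$ the plan has two parts.

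\emph{Step 1: a wall to which every color is richly linked.} Suppose the $I$-torso treewidth is huge. Then in particular $\tw(G)$ is huge, so $G$ contains a large wall $W$, and $W$ defines a tangle $\Tcal$. For each color $i\in I$, Menger's theorem leaves two cases.
\begin{itemize}
\item There are $N$ vertex-disjoint paths from $\chi^{-1}(i)$ to $W$, where $N$ is a large function of $q$ and $k$.
\item There is a separation $(A,B)$ of order less than $N$ with $\chi^{-1}(i)\subseteq A$ and the $\Tcal$-large side equal to $B$.
\end{itemize}
We want to use the second case to decrease torso treewidth. The standard tangle-based decomposition argument (as in Robertson--Seymour's construction of decompositions from the absence of highly linked sets) should do this. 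We recursively split along small separations that cut some color away from the wall, or that are balanced with respect to a well-linked set. Leaf parts missing a color are placed in $G-X$, where they are $I$-restricted components. The remaining pieces form a tree-decomposition of the torso of $X$ with adhesion less than $N$, and its width is bounded once no well-linked set of size about $N$ survives with all colors linked to it.

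\emph{Main obstacle.} Carrying out this decomposition rigorously is the step I expect to be the main obstacle. The parts separated off for different colors may overlap, and the torso cliques created on their boundaries must not destroy the width bound. I would handle this by processing one tangle at a time with a uniform order bound on the separations, and by ordering the separations in the style of a tangle tree. In the process I would use the induction hypothesis on sub-palettes to control pieces that see only some of the colors.

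\emph{Step 2: sorting the colors on the wall.} In the first case for every color, greedily choose disjoint linkages for all colors simultaneously, rerouting through the large wall so that the total number of paths remains at least $N/q$. Taking a suitable subwall, we may assume the paths land on distinct vertices of one side of the perimeter, say the first column, and are otherwise disjoint from the subwall. Reading these landing points in order along that column gives a word over $I$ in which every color occurs at least $M\geq qk$ times (with its paths contracted onto the landing points).

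We now extract a subsequence of the form $\iota(1)^k\iota(2)^k\cdots\iota(q)^k$, as follows. Let $p_1$ be the first position at which some color $c_1$ has occurred $k$ times, and keep those $k$ occurrences of $c_1$. Every other color has occurred fewer than $k$ times up to $p_1$, so it still has at least $M-k$ occurrences after $p_1$. Repeat on the suffix after $p_1$ with the remaining colors; this requires only $M\geq qk$.

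Finally, delete the unused paths and contract the used paths onto their landing points. Remove all colors except the designated one from the endpoints, and remove all colors from every other vertex. Together with a $(qk\times qk)$-grid minor of the subwall whose first column uses exactly the chosen landing points, this yields an $(I,k)$-segregated grid.
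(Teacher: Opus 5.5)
The paper does not prove this proposition; it imports it from \cite{ProtopapasTW2025Colorful}, so there is no in-paper argument to compare with. Your proposal has to stand on its own, and it has genuine gaps. Its overall shape is sensible: large $I$-torso treewidth, then a highly connected region to which every color is linked, then a wall with all colors attached to its boundary, then sorting. Two parts are correct: the case $q=0$, and the greedy extraction of $\iota(1)^k\cdots\iota(q)^k$ from a word with at least $qk$ occurrences of each color.

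\textbf{Step~1 is the theorem itself, and you only announce it.} You write that the standard tangle-based decomposition ``should do this''. What must be proved is a duality. Suppose $G$ has no tangle $\Tcal$ of order $N$ such that no color class $\chi^{-1}(i)$ lies on the small side of a separation of order less than $N$ in $\Tcal$. Then the $I$-torso treewidth is bounded in terms of $N$.

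Branching on the single wall $W$ returned by the Grid Theorem cannot yield this. A color cut off from that particular wall says nothing about the other highly connected parts of $G$. The argument has to range over all such regions. A natural route is a recursion on pieces with a bounded interface $Y$:
\begin{itemize}
\item either $Y$ has a small balanced separation, and you split along it;
\item or $Y$ induces a tangle. By assumption some color $i$ is then cut off by a small separation $(A,B)$. The components of $G[B\setminus A]$ away from the interface miss $i$, so they are $I$-restricted and can be placed in $G-X$ with bounded attachment, and you recurse on $A$.
\end{itemize}
Controlling the interface sizes in this recursion is exactly the missing work. Induction on $|I|$ is not needed, since anything missing a color is already $I$-restricted. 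You then also need the tangle form of the Grid Theorem, so that the wall you take is controlled by this tangle and the colors stay linked to it.

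\textbf{Step~2 relies on a lemma you treat as a normalization.} The sentence ``taking a suitable subwall, we may assume the paths land on distinct vertices of the first column and are otherwise disjoint from the subwall'' hides two problems.
\begin{itemize}
\item Menger towards $V(W)$ only gives paths ending somewhere in $W$. They may all end inside a region whose boundary in $W$ has size $O(\sqrt{N})$, so only $O(\sqrt{N})$ of them can be continued disjointly inside $W$. This also shows that your first Menger case is strictly weaker than tangle-linkedness of the color class; the case distinction should be made with respect to the tangle.
\item Interior vertices of a subwall may have arbitrary further neighbors in $G$. So you cannot delete the interior of a subwall and reapply Menger towards its first column.
\end{itemize}
Making a linkage end on the perimeter of a subwall and avoid its interior, for all colors at once with many paths each, needs a genuine rerouting argument. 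Even for one color, this is the kind of statement for which the rooted-grid results of \cite{MarxSW2017Rooted} are needed; the paper cites them for the case $|I|=1$.

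One smaller point: for choosing many paths of every color simultaneously, replace the vague greedy rerouting by Rado's theorem in the gammoid of vertex sets linkable into the target. Every color class has rank at least $N\geq qM$, which gives a single linkage with $M$ paths of each color.

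Until Step~1 and the Step~2 lemma are supplied, the proposal is an outline rather than a proof.
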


Given that the $I$-torso treewidth of $(I,k)$-segregated grids is unbounded as $k$ grows,
\zcref{prop_torsoGrid} can be seen as a min-max duality 
between $I$-torso treewidth and segregated grids.
Clearly, for $I=\emptyset$ this is the celebrated grid theorem \cite{RobertsonS1986Grapha}.
For the case $|I|=1,$ the $I$-torso treewidth of $(G,\chi)$ has been independently defined by Jansen and Swennenhuis \cite{JansenS2024SteinerTree} and by Hodor, La, Micek, and Rambaud \cite{HodorLMR24quick} under different names.
When $|I|=1$, \zcref{prop_torsoGrid} also follows from the results in \cite{MarxSW2017Rooted} and \cite{HodorLMR24quick}.

\section{Crucial colorful graphs and their obstructions}

\label{sec_obstructions}

We first define, in \zcref{subsec_Crucial}, the sets
$\Ocal^{0},$ $\tilde{\Ocal}^{1},$ $\tilde{\Ocal}^{2},$ $\Ocal^{3},$ and $\Ocal^{4}$; we then introduce in \zcref{subsec_crucial_define} the class of \textsl{crucial} colorful graphs.
Finally, in \zcref{subsect_obs_cru}, we define $$\Ocal\coloneqq \Ocal^{0} \cup \tilde{\Ocal}^{1} \cup \tilde{\Ocal}^{2} \cup \Ocal^{3} \cup \Ocal^{4}$$ 
and we prove that if a colorful graph excludes all the graphs in $\Ocal$ as colorful minors, then it is crucial.
In fact we prove that $\Ocal$ is the obstruction set of 
the domain of the crucial colorful graphs.
This is the structural input of \zcref{sec_EP}, together with the results of \zcref{subsect_U_crucial} on how the members of $\Ucal$ sit inside segregated grids.
In \zcref{subsect_U_crucial} we prove that the domain $\Ucal$ of \zcref{subsec_Results} is precisely the domain of the crucial colorful graphs and that every $I$-colorful member of $\Ucal$ is a colorful minor of every large enough $(I,k)$-segregated grid.

\subsection{The sets \texorpdfstring{$\Ocal^{0},$ $\tilde{\Ocal}^{1},$ $\tilde{\Ocal}^{2},$ $\Ocal^{3},$ and $\Ocal^{4}$}{O0, O1, O2, O3, and O4}}\label{subsec_Crucial}

 \paragraph{Planarity.}
We start by defining the set $\Ocal^{0}\coloneqq \{(K_{5},\rho_{\emptyset}),(K_{3,3},\rho_{\emptyset})\}.$
The following is the Kuratowski-Pontryagin-Wagner characterization of planar graphs \cite{Wagner1937Komplexe}.

\begin{observation}\label{obs_Wagner}%
A colorful graph $(G,\chi)$ is planar if and only if  $(G,\chi)$ excludes  every member of $\Ocal^{0}$ as a colorful minor.
\end{observation}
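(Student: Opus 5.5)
The plan is to reduce the statement to the classical Kuratowski--Wagner theorem by showing that, for $\Ocal^{0}$, colorful minors behave exactly like ordinary minors. Recall that a colorful graph $(G,\chi)$ is planar by definition exactly when $G$ is planar. Wagner's theorem \cite{Wagner1937Komplexe} says that $G$ is planar if and only if neither $K_{5}$ nor $K_{3,3}$ is a minor of $G$. It therefore suffices to prove, for $H\in\{K_{5},K_{3,3}\}$, that $(H,\rho_{\emptyset})\leq(G,\chi)$ if and only if $H$ is a minor of $G$.

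\emph{Forward direction.} Take any sequence of colorful minor operations turning $(G,\chi)$ into $(H,\rho_{\emptyset})$. Forget the colorings. Deletion of edges and vertices and contraction of edges are then exactly the ordinary minor operations on the underlying graph. Removal of a color does not change the underlying graph at all. Hence $H$ is obtained from $G$ by ordinary minor operations.

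\emph{Backward direction.} Suppose $H$ is a minor of $G$, and apply the same sequence of deletions and contractions to $(G,\chi)$. Along the way the palettes merge under contractions, so we end with a colorful graph $(H,\psi)$ for some coloring $\psi$. Now remove, one by one, every color from every vertex. Each vertex has a finite palette and $H$ is finite, so this takes finitely many steps. The result is $(H,\rho_{\emptyset})$, so $(H,\rho_{\emptyset})\leq(G,\chi)$.

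Combining the two directions with Wagner's theorem gives the observation. There is no real obstacle here. The only point to check is that color removal can erase every palette, which holds because palettes are finite sets.
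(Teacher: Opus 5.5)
Your proof is correct and follows the paper's approach. The paper states this observation by simply citing the Kuratowski--Wagner theorem; you additionally spell out the routine fact that $(H,\rho_{\emptyset})\leq(G,\chi)$ holds if and only if $H$ is a minor of $G$, by forgetting colors in one direction and, in the other, erasing all of the finitely many colors at the end.
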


 \paragraph{Color facial graphs.}
By excluding the members of $\Ocal^{0}$ we first control the global underlying structure.
Next, we begin to further restrict the structure with respect to the colors.
We say that a colorful graph $(H,\psi)$ is \emph{color-facial} if it has a plane embedding where all colored vertices are on the boundary of a \textsl{single} face.

\begin{observation}\label{obs_colorFacialClosed}%
The class of color-facial colorful graphs is closed under colorful minors.
\end{observation}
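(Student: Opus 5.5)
It suffices to check each colorful minor operation once: the general case then follows by induction on the number of operations. So fix a color-facial colorful graph $(H,\psi)$, a plane embedding of $H$, and a face $f$ whose boundary contains every vertex $v$ with $\psi(v)\neq\emptyset$. For each single operation I will produce a plane embedding of the resulting colorful graph $(H',\psi')$, together with a face $f'$ whose boundary contains all colored vertices of $(H',\psi')$.

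The first three operations keep the embedding essentially unchanged.
\begin{enumerate}
\item \emph{Removing a color from a vertex.} The graph and the embedding stay the same. The set of colored vertices can only shrink, so $f'=f$ works.
\item \emph{Deleting a vertex.} I restrict the embedding to $H-v$. Every face of $H$ is contained in a face of $H-v$, so $f$ lies inside some face $f'$ of $H-v$. A vertex of $H-v$ on the boundary of $f$ is a boundary point of $f$ that is not in $f'$, hence it is also on the boundary of $f'$.
\item \emph{Deleting an edge.} The same argument applies: $f$ is contained in a face $f'$ of $H-e$, and every surviving vertex incident to $f$ stays incident to $f'$. The remaining colored vertices are unchanged.
\end{enumerate}

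\emph{Contracting an edge $uv$} is the only operation that changes the colors, so it needs more care. I realize the contraction in the plane by shrinking the edge $uv$ continuously onto the point $u$. This is done inside a thin neighborhood of the arc $uv$. The edges at $v$ are rerouted along the arc, preserving their cyclic order, and parallel edges and loops created in the process are deleted.

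Every face of $H$ other than the faces incident to $uv$ survives, up to homeomorphism. A face incident to $uv$ survives as a face incident to the new vertex $x_{uv}$; it does not disappear unless it was bounded only by edges that become loops or parallel edges, in which case it is merged into a neighboring face. Deleting parallel edges or loops only merges faces, so by item 2 boundary incidences are kept.

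Hence $f$ corresponds to a face $f'$ of the new embedding. Every vertex $w\notin\{u,v\}$ that was on the boundary of $f$ is still on the boundary of $f'$. The new vertex $x_{uv}$ is colored only if $u$ or $v$ was colored. In that case one of $u,v$ lies on the boundary of $f$, and since the closure of the edge $uv$ is collapsed to the single point $x_{uv}$, this vertex $x_{uv}$ lies on the boundary of $f'$.

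The main technical point is to justify that the face $f$ "survives" under contraction. The cleanest route is to avoid topology: add a new vertex $z$ inside $f$ adjacent to all vertices on its boundary. Then $(H,\psi)$ is color-facial if and only if $H+z$, with $z$ joined to all colored vertices, is planar.

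Contraction of $uv$ commutes with this construction: after contracting, $z$ stays adjacent to $x_{uv}$ whenever it was adjacent to $u$ or $v$. Planarity is preserved by contraction, and the resulting embedding has all neighbors of $z$, in particular all colored vertices, on one face obtained by deleting $z$. The same reformulation also handles the deletions and color removals uniformly, since each of them yields a subgraph of a minor of $H+z$. Planarity of minors is standard (for instance via \zcref{obs_Wagner}, since $K_5$ and $K_{3,3}$-minors lift from minors back to the original graph). I would write the proof using this apex-vertex reformulation.
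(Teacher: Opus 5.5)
Your proof is correct. Your final apex-vertex argument is the natural formalization of what the paper leaves unproved: color-facial iff $H^{+}$ is planar, and each colorful-minor operation on $(H,\psi)$ induces an ordinary minor operation on $H^{+}$ (deleting $zv$ when $v$ loses its last color, and contracting $uv$ so that $z$ becomes adjacent to $x_{uv}$ exactly when $x_{uv}$ is colored). It uses the same graph $H^{+}$ that the paper employs in the proof of \zcref{lemma_colorFacial}. The topological first pass can be dropped; the only inputs needed are that planarity is minor-closed and the standard fact that a vertex placed inside a face can be joined to every vertex on that face's boundary without crossings.
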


\begin{figure}[ht]
 \centering
 \scalebox{1}{
 \begin{tikzpicture}

 \pgfdeclarelayer{background}
		\pgfdeclarelayer{foreground}
			
		\pgfsetlayers{background,main,foreground}
			
 \begin{pgfonlayer}{main}
 \node (M) [v:ghost] {};

 \end{pgfonlayer}

 \begin{pgfonlayer}{background}
 \pgftext{\scalebox{0.4780}{%
\begin{tikzpicture}[x=1pt,y=1pt,line width=2pt,
    v/.style ={circle,draw,line width=1.5pt,inner sep=0pt,minimum size=8.4pt,fill=#1},
    b/.style ={circle,draw,line width=1pt,inner sep=0pt,minimum size=7.5pt,fill=black}]
  \draw[clYpale,fill=clYbg,line width=2pt,rounded corners=7pt,
        dash pattern=on 6.5pt off 4.5pt] (1.6,5) rectangle (99.9,105.1);
  \draw (15.5,332.6) -- (50,406.8);
  \draw (50,406.8) -- (84.1,332.7);
  \draw (84.1,332.7) -- (15.5,332.6);
  \draw (15.5,227.7) -- (50,302);
  \draw (50,302) -- (84.1,227.8);
  \draw (84.1,227.8) -- (15.5,227.7);
  \draw (15.8,332.8) -- (50,366.5);
  \draw (50,366.5) -- (50,406.4);
  \draw (15.8,227.9) -- (50,261.6);
  \draw (50,261.6) -- (50,301.5);
  \draw (50,366.4) -- (82.9,332.1);
  \draw (50,261.5) -- (82.9,227.2);
  \draw (111.4,329.9) -- (151.4,408.8);
  \draw (151.4,408.8) -- (188.4,329.6);
  \draw (188.4,329.6) -- (111.4,329.9);
  \draw (111.4,225) -- (151.4,304);
  \draw (151.4,304) -- (188.4,224.7);
  \draw (188.4,224.7) -- (111.4,225);
  \draw (111.3,329.9) -- (151.7,355.7);
  \draw (151.7,355.7) -- (188.4,329.3);
  \draw (111.3,225) -- (151.7,250.8);
  \draw (151.7,250.8) -- (188.4,224.5);
  \draw (151.3,355.7) -- (151.3,408.9);
  \draw (151.3,250.8) -- (151.3,304);
  \draw (111.4,329.8) -- (151.3,378.4);
  \draw (151.3,378.4) -- (188.3,329.8);
  \draw (111.4,225) -- (151.3,273.6);
  \draw (151.3,273.6) -- (188.3,224.9);
  \draw (210.7,329.8) -- (287.7,330.1);
  \draw (287.7,330.1) -- (251.1,410);
  \draw (251.1,410) -- (210.7,329.8);
  \draw (210.7,225) -- (287.7,225.2);
  \draw (287.7,225.2) -- (251.1,305.1);
  \draw (251.1,305.1) -- (210.7,225);
  \draw (251,410.3) -- (250.4,355.1);
  \draw (251,305.4) -- (250.4,250.2);
  \draw (210.7,329.7) -- (250.6,355.1);
  \draw (250.6,355.1) -- (287.5,330);
  \draw (210.7,224.8) -- (250.6,250.2);
  \draw (250.6,250.2) -- (287.5,225.1);
  \draw (348.4,410.4) -- (348.1,330);
  \draw (348.4,305.5) -- (348.1,225.1);
  \draw (348.3,410.4) -- (313.8,368.9);
  \draw (313.8,368.9) -- (348.3,329.9);
  \draw (348.3,329.9) -- (382.1,369.1);
  \draw (382.1,369.1) -- (348.3,410.4);
  \draw (348.3,305.5) -- (313.8,264);
  \draw (313.8,264) -- (348.3,225);
  \draw (348.3,225) -- (382.1,264.2);
  \draw (382.1,264.2) -- (348.3,305.5);
  \draw (49.7,194.9) -- (15.5,120.9);
  \draw (15.5,120.9) -- (83.8,120.7);
  \draw (83.8,120.7) -- (49.7,194.9);
  \draw (49.6,194.7) -- (49.7,153.4);
  \draw (49.7,153.4) -- (16.7,120.5);
  \draw (49.7,153.1) -- (83.2,120);
  \draw (210.6,119.8) -- (250.8,196.9);
  \draw (250.8,196.9) -- (287.5,119.7);
  \draw (287.5,119.7) -- (210.6,119.8);
  \draw (250.6,196.5) -- (250.4,143.3);
  \draw (210.4,119.3) -- (250.4,144.8);
  \draw (250.4,144.8) -- (287.6,119.5);
  \draw (210.7,119.1) -- (250.3,168.1);
  \draw (250.3,168.1) -- (287.5,118.7);
  \draw (349.9,199.7) -- (349.5,144.3);
  \draw (310,119.1) -- (350.1,144.5);
  \draw (350.1,144.5) -- (386.9,119);
  \draw (310,119.2) -- (386.7,119.3);
  \draw (309.5,119.4) -- (350,200.3);
  \draw (350,200.3) -- (386.7,119.4);
  \draw (546.8,198.7) -- (546.4,118.7);
  \draw (447.6,198.7) -- (447.2,118.7);
  \draw (546.8,198.9) -- (512.3,156.6);
  \draw (512.3,156.6) -- (546.6,118.3);
  \draw (447.5,198.9) -- (413,156.6);
  \draw (413,156.6) -- (447.4,118.3);
  \draw (546.8,118) -- (580.5,156.9);
  \draw (580.5,156.9) -- (547,198.9);
  \draw (447.6,118) -- (481.3,156.9);
  \draw (481.3,156.9) -- (447.7,198.9);
  \draw (148.8,194.3) -- (114.6,120.3);
  \draw (114.6,120.3) -- (182.9,120);
  \draw (182.9,120) -- (148.8,194.3);
  \draw (50.1,91.2) -- (16,17.2);
  \draw (16,17.2) -- (84.2,16.9);
  \draw (84.2,16.9) -- (50.1,91.2);
  \draw (148.7,194.1) -- (148.8,152.8);
  \draw (148.8,152.8) -- (115.9,119.8);
  \draw (50,91) -- (50.1,49.7);
  \draw (50.1,49.7) -- (17.2,16.7);
  \draw (148.8,152.5) -- (182.3,119.4);
  \draw (50.1,49.4) -- (83.6,16.3);
  \node[b] at (151.6,377.4) {};
  \node[b] at (151.6,272.5) {};
  \node[b] at (250.8,167.3) {};
  \node[b] at (250.9,354.6) {};
  \node[b] at (250.9,249.7) {};
  \node[b] at (350,143.7) {};
  \node[b] at (250.9,409.3) {};
  \node[b] at (250.9,304.4) {};
  \node[b] at (350,198.5) {};
  \node[b] at (111.7,329.2) {};
  \node[b] at (111.7,224.3) {};
  \node[b] at (210.8,119) {};
  \node[b] at (210.9,329.2) {};
  \node[b] at (210.9,224.3) {};
  \node[b] at (310,118.3) {};
  \node[b] at (188.2,329.2) {};
  \node[b] at (188.2,224.3) {};
  \node[b] at (287.3,119) {};
  \node[b] at (287.5,329.2) {};
  \node[b] at (287.5,224.3) {};
  \node[b] at (386.6,118.3) {};
  \node[b] at (348.4,329.2) {};
  \node[b] at (348.4,224.3) {};
  \node[b] at (546.8,117.6) {};
  \node[b] at (447.6,117.6) {};
  \node[b] at (348.4,409.6) {};
  \node[b] at (348.4,304.7) {};
  \node[b] at (546.8,198.1) {};
  \node[b] at (447.6,198.1) {};
  \node[v=clM] at (49.9,405.9) {};
  \node[v=clB] at (49.9,301) {};
  \node[v=clB] at (49.9,194) {};
  \node[v=clM] at (49.9,365.7) {};
  \node[v=clB] at (49.9,260.8) {};
  \node[v=clM] at (49.9,153.8) {};
  \node[v=clM] at (15.9,332.2) {};
  \node[v=clB] at (15.9,227.3) {};
  \node[v=clM] at (15.9,120.3) {};
  \node[v=clM] at (83.9,332.2) {};
  \node[v=clB] at (83.9,227.3) {};
  \node[v=clM] at (83.9,120.3) {};
  \node[v=clM] at (151.7,354.9) {};
  \node[v=clB] at (151.7,250) {};
  \node[v=clB] at (250.9,142.8) {};
  \node[v=clM] at (250.3,378.1) {};
  \node[v=clB] at (250.3,273.2) {};
  \node[v=clM] at (349.4,166.9) {};
  \node[v=clM] at (348.1,368.2) {};
  \node[v=clB] at (348.1,263.3) {};
  \node[v=clM] at (546.5,156.1) {};
  \node[v=clB] at (447.3,156.1) {};
  \node[v=clM] at (382,368.2) {};
  \node[v=clB] at (382,263.3) {};
  \node[v=clB] at (580.5,156.1) {};
  \node[v=clM] at (481.2,156.1) {};
  \node[v=clM] at (314,368.2) {};
  \node[v=clB] at (314,263.3) {};
  \node[v=clB] at (512.4,156.1) {};
  \node[v=clM] at (413.2,156.1) {};
  \node[v=clM] at (250.3,329.7) {};
  \node[v=clB] at (250.3,224.9) {};
  \node[v=clB] at (349.4,118.6) {};
  \node[v=clM] at (151.7,408) {};
  \node[v=clB] at (151.7,303.2) {};
  \node[v=clM] at (250.9,196) {};
  \node[v=clM] at (148.9,194) {};
  \node[v=clM] at (50.3,90.9) {};
  \node[v=clB] at (148.9,153.8) {};
  \node[v=clM] at (50.3,50.7) {};
  \node[v=clB] at (115,120.3) {};
  \node[v=clB] at (16.3,17.2) {};
  \node[v=clB] at (183,120.3) {};
  \node[v=clB] at (84.3,17.2) {};
\end{tikzpicture}
}} at (M.center);
 \end{pgfonlayer}
 
 \begin{pgfonlayer}{foreground}
 \end{pgfonlayer}

 \end{tikzpicture}}

 \caption{The $15$ colorful graphs in $\Ocal^{1}|_{[2]}.$ The unique colorful graph in $\Ocal^{1}|_{[2]}\setminus \tilde{\Ocal}^{1}$ is highlighted in the dashed square.
 The first row depicts the four colorful graphs in $\Ocal^{1}|_{[1]}$.}
 \label{fig_Oq1}%
\end{figure}

We denote by $K_{5}^{-}$ (resp. $K_{3,3}^-$) the graph obtained from $K_{5}$ (resp. $K_{3,3}$) by deleting a single edge.
We define $\Ocal^{1}$ to be the set of all the colorful graphs $(K_{5}^{-},\pi_{1}),$ $(K_{4},\pi_{2}), (K_{3,3}^{-},\pi_{3}),$ and $(K_{2,3},\pi_{4}),$ where for each $i\in[4]$ the $\pi_i$ are defined so that,
\begin{itemize}
\item $\pi_{1}$ assigns one or two colors to the two vertices of $K_{5}^{-}$ that have degree $3$ and no vertex receives two colors at once -- all other vertices are assigned $\emptyset,$
\item $\pi_{2}$ assigns one, two, three, or four colors to the vertices of $K_{4}$ such that every vertex receives exactly one color,
\item $\pi_{3}$ assigns one or two colors to the two vertices of $K_{3,3}^{-}$ of degree $2$ such that no vertex receives $2$ colors at once -- all other vertices are assigned $\emptyset,$ and
\item $\pi_{4}$ assigns one, two, or three colors to the three vertices of $K_{2,3}$ of degree $2$ such that no vertex receives $2$ colors at once -- all other vertices are assigned $\emptyset.$
\end{itemize}

\begin{observation}\label{obs_Oq1anti_chain}
The family $\Ocal^{0} \cup\Ocal^{1}$ is an anti-chain for the colorful minor relation.
\end{observation}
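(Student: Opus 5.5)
To show that $\Ocal^{0}\cup\Ocal^{1}$ is an anti-chain, I must check that no member is a colorful minor of a \emph{different} member. I will use two monotone invariants of the colorful minor relation: the number of vertices plus edges, and the total multiset of colors, since colors can only be merged or removed. If $(H,\psi)\le(G,\chi)$ with $(H,\psi)\neq(G,\chi)$, then either $|V(H)|+|E(H)|<|V(G)|+|E(G)|$, or the underlying graphs coincide and some color was removed. In the latter case $\chi$ strictly dominates $\psi$ pointwise, up to isomorphism.

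\textbf{Step 1: equal underlying graphs.} Members sharing an underlying graph differ only in their colorings. In each $\Ocal^{1}$ family, the colored vertices carry exactly one color each, and the set of colored vertices is fixed:
\begin{itemize}
\item the two degree-$3$ vertices of $K_{5}^{-}$,
\item all four vertices of $K_{4}$,
\item the two degree-$2$ vertices of $K_{3,3}^{-}$,
\item the three degree-$2$ vertices of $K_{2,3}$.
\end{itemize}
Hence no coloring pointwise dominates another up to automorphism: every designated vertex has exactly one color, so removing any color leaves a designated vertex empty. The uncolored $K_{5}$ and $K_{3,3}$ have no other members with the same underlying graph.

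\textbf{Step 2: uncolored members.} $(K_{5},\rho_{\emptyset})$ and $(K_{3,3},\rho_{\emptyset})$ are incomparable by Wagner's theorem. Every member of $\Ocal^{1}$ has a nonempty palette, so it cannot be a colorful minor of an uncolored graph. Conversely, $K_{5}$ and $K_{3,3}$ are not minors of $K_{5}^{-}$, $K_{4}$, $K_{3,3}^{-}$ or $K_{2,3}$, because those graphs are planar. Thus $\Ocal^{0}$ is incomparable with $\Ocal^{1}$.

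\textbf{Step 3: cross-family pairs within $\Ocal^{1}$.} The underlying graphs have sizes (vertices, edges):
\begin{itemize}
\item $K_{5}^{-}$: $(5,9)$,
\item $K_{3,3}^{-}$: $(6,8)$,
\item $K_{4}$: $(4,6)$,
\item $K_{2,3}$: $(5,6)$.
\end{itemize}
By the counting invariant, only four containments are possible: $K_{4}$ or $K_{2,3}$ inside $K_{5}^{-}$ or inside $K_{3,3}^{-}$. For each I take a minor model and argue that the colors cannot be realized.

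\textbf{Target $K_{4}$.} It needs four branch sets, each containing a colored vertex. Hosts $K_{5}^{-}$ and $K_{3,3}^{-}$ have only two colored vertices, so this is impossible.

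\textbf{Target $K_{2,3}$.} It needs three distinct branch sets for its degree-$2$ vertices, each containing a colored vertex. Again the hosts have only two colored vertices, so this is impossible.

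So colored-vertex counting alone settles every cross case. The main obstacle is Step 1, specifically handling isomorphism carefully: I must confirm that a colorful minor of a graph onto itself, with the same vertex count, can only delete colors and apply an automorphism. This holds because any contraction or deletion strictly decreases $|V|+|E|$.
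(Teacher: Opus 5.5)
Since the paper states this observation without proof, there is no alternative route to compare against, and your verification is the natural one. Steps~1 and~2 are correct. In Step~2, though, the incomparability of $(K_{5},\rho_{\emptyset})$ and $(K_{3,3},\rho_{\emptyset})$ should rest on your own counting invariant rather than on Wagner's theorem: $K_{3,3}$ has six vertices and $K_{5}$ has ten edges. Wagner's theorem characterizes planarity, but it does not by itself say that $K_{5}$ and $K_{3,3}$ are incomparable.

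The gap is in Step~3. Your assertion that only four cross-family containments survive the counting invariant is false. The pair $(K_{4},\pi_{2})\leq(K_{2,3},\pi_{4})$ also survives. The sizes are $(4,6)$ and $(5,6)$, so neither $|V|$, nor $|E|$, nor $|V|+|E|$ (which is $10$ versus $11$) excludes it, and your proposal never treats this case.

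Your own colored-vertex count repairs it at once. Every vertex of $K_{4}$ is colored under $\pi_{2}$, so a model would need four pairwise disjoint branch sets, each containing a colored vertex. But $(K_{2,3},\pi_{4})$ has only three colored vertices.

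Alternatively, $K_{2,3}$ has no $K_{4}$-minor at all. Passing from five to four vertices requires either deleting a vertex, which removes at least two edges, or contracting an edge, which removes at least one. So at most five edges remain, fewer than the six of $K_{4}$.

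With this fifth case added, the argument is complete.
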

 
\begin{lemma}\label{lemma_colorFacial}%
A colorful graph $(H,\psi)$ is color-facial if and only if it does not contain a member of $\Ocal^{0} \cup \Ocal^{1}$ as a colorful minor.
\end{lemma}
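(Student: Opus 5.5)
The plan is the standard apex reduction. Given $(H,\psi)$, let $H^{+}$ be the graph obtained from $H$ by adding a new vertex $a$ adjacent to every colored vertex, i.e., to every $v$ with $\psi(v)\neq\emptyset$. A planar graph has an embedding with a prescribed vertex set on one face exactly when adding an apex adjacent to that set keeps it planar. Hence $(H,\psi)$ is color-facial if and only if $H^{+}$ is planar. If the set of colored vertices is empty, color-facial simply means planar, and the statement reduces to \zcref{obs_Wagner}, since no member of $\Ocal^{1}$ is a colorful minor of an uncolored graph (every member of $\Ocal^{1}$ carries colors). So we may assume that some vertex is colored.

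The ``only if'' direction has two steps. First, by \zcref{obs_colorFacialClosed} it suffices to check that no member of $\Ocal^{0}\cup\Ocal^{1}$ is color-facial. For $\Ocal^{0}$ this is non-planarity. For each member of $\Ocal^{1}$, adding the apex to the colored vertices produces a non-planar graph:
\begin{itemize}
\item $K_{5}^{-}$ with an apex on the two non-adjacent vertices contains a $K_{5}$-subdivision;
\item $K_{4}$ plus an apex on all four vertices is $K_{5}$;
\item $K_{3,3}^{-}$ plus an apex on the two degree-$2$ vertices contains a $K_{3,3}$-subdivision, with the path through $a$ replacing the missing edge;
\item $K_{2,3}$ plus an apex on the three degree-$2$ vertices is $K_{3,3}$.
\end{itemize}
The number of distinct colors assigned in these members is irrelevant for this direction.

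The ``if'' direction is where the real work lies. Suppose $(H,\psi)$ is not color-facial. If $H$ is non-planar we are done by \zcref{obs_Wagner}, so assume $H$ is planar and $H^{+}$ is not. By Wagner's theorem, $H^{+}$ has a $K_{5}$- or $K_{3,3}$-minor model. Since $H$ itself is planar, $a$ must belong to some branch set $B_{a}$ of this model. We then translate the model back into $H$. Deleting $B_{a}$ leaves a minor model in $H$ of $K_{5}$ or $K_{3,3}$ minus one vertex, namely $K_{4}$ or $K_{2,3}$. Each branch set that was adjacent to $B_{a}$ is adjacent either via an edge of $H$ to $B_{a}\setminus\{a\}$ or via $a$ itself. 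In the latter case the branch set contains a colored vertex, and after contractions it becomes a colored vertex of the minor.

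The main obstacle is handling $B_{a}\setminus\{a\}\neq\emptyset$ cleanly. The first attempt is to choose the model so that $B_{a}$ is minimal. When $B_{a}=\{a\}$, each neighbor branch set contains a colored vertex. Contracting and then removing surplus colors (keeping exactly one color per relevant vertex) yields $(K_{4},\pi_{2})$ from $K_{5}$ or $(K_{2,3},\pi_{4})$ from $K_{3,3}$. Here $K_{2,3}$ is the part left after removing one side vertex, whose three neighbors are the degree-$2$ vertices.

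When $B_{a}$ contains further vertices of $H$, some branch sets attach to $B_{a}$ through $H$-edges rather than through $a$. We split $B_{a}$: the vertices of $B_{a}\setminus\{a\}$ form components, each attached to $a$ through a colored vertex. Keep one such component $C$ as a genuine vertex of the minor, rerouted so that it carries a color. The branch sets that must reach $a$ itself then number fewer, and the missing adjacency is exactly the deleted edge. This should produce $(K_{5}^{-},\pi_{1})$, with the two degree-$3$ ends colored (one being $C$, the other a neighbor via $a$), or $(K_{3,3}^{-},\pi_{3})$ analogously. Alternatively it degenerates to the previous case.

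A careful case analysis on how many components $B_{a}\setminus\{a\}$ has, and how the neighbor branch sets attach to them, should close the proof. We expect to handle this by a counting argument: if at least two of the neighbors of $B_{a}$ in the model attach via $a$, then the $\Ocal^{1}$ patterns above appear. Otherwise $a$ is essentially a leaf of $B_{a}$, and removing it yields a non-planar minor in $H$, which contradicts the planarity of $H$.
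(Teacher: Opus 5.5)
Your apex reformulation and your ``only if'' direction are correct and match the paper. The ``if'' direction, however, has a genuine gap. You never carry out the case $B_{a}\setminus\{a\}\neq\emptyset$, and the dichotomy you propose for closing it is false.

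Take $H$ to be a $K_{4}$ on $b_{1},\dots,b_{4}$, together with a vertex $c_{1}$ adjacent to $b_{1},b_{2}$ and a vertex $c_{2}$ adjacent to $b_{3},b_{4}$, where only $c_{1}$ and $c_{2}$ are colored. Then $H$ is planar.
\begin{itemize}
\item The sets $\{b_{1}\},\{b_{2}\},\{b_{3}\},\{b_{4}\},\{c_{1},a,c_{2}\}$ form a $K_{5}$-model in $H^{+}$ in which no other branch set is adjacent to $a$. Nevertheless $a$ is a cut vertex of $H^{+}[B_{a}]$, so deleting it does not leave a non-planar minor of $H$.
\item In the $K_{3,3}$-model with sides $\{b_{1}\},\{b_{2}\},\{c_{2}\}$ and $\{b_{3}\},\{b_{4}\},\{c_{1},a\}$, exactly one branch set attaches through $a$. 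Removing $a$ destroys precisely that adjacency, so you get $(K_{3,3}^{-},\pi_{3})$, not a non-planar minor of $H$.
\item $(K_{5}^{-},\pi_{1})$ is not a colorful minor of this $(H,\psi)$ at all. So a case analysis that turns a $K_{5}$-model into $(K_{5}^{-},\pi_{1})$ cannot work.
\item Since $a$ has degree $2$ here, no $K_{5}$- or $K_{3,3}$-model in $H^{+}$ has $B_{a}=\{a\}$. Choosing $B_{a}$ minimal therefore does not rescue the argument.
\end{itemize}

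The missing idea is to apply Kuratowski's theorem in its subdivision form, as the paper does: $H^{+}$ contains a subgraph $S$ that is a subdivision of $K_{5}$ or of $K_{3,3}$.
\begin{itemize}
\item If $a\notin V(S)$, then $H$ is non-planar and contains a member of $\Ocal^{0}$.
\item Otherwise $a$ is either a branch vertex or a subdivision vertex of $S$, and each of its $4$, $3$, or $2$ neighbors in $S$ is colored.
\item Delete $a$ and contract the remaining subdivision paths, so that each colored neighbor of $a$ is absorbed into the branch vertex at the far end of its path. After removing surplus colors, this yields $(K_{4},\pi_{2})$ or $(K_{5}^{-},\pi_{1})$ from $K_{5}$, and $(K_{2,3},\pi_{4})$ or $(K_{3,3}^{-},\pi_{3})$ from $K_{3,3}$.
\end{itemize}
With a subdivision instead of a minor model there is no branch set of $a$ to split, and the case analysis you anticipated never arises.
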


\begin{proof}
Let $H^+$ be the graph obtained from $H$ by adding a new vertex $v$ and making it adjacent to all vertices $u\in V(H)$ with $\psi(u)\neq\emptyset.$

($\Rightarrow$) Suppose $(H,\psi)$ is color-facial.
Since the property of being color-facial is closed under colorful minors, every colorful minor of $(H,\psi)$ should also be color-facial.
Notice that none of the colorful graphs in $\Ocal^{0} \cup \Ocal^{1}$ are color-facial, therefore all of them are excluded from $(H,\psi)$ as colorful minors.

($\Leftarrow$)
We prove the contrapositive and assume that $(H,\psi)$ is not color-facial.
Then $H^+$ is not planar, since otherwise, by considering a plane embedding of $H^+$ and removing $v,$ all colored vertices of $(H,\psi)$ would lie on the boundary of a single face, that is, $(H,\psi)$ would be color-facial, a contradiction.
As $H^+$ is not planar, \zcref{obs_Wagner} implies that it contains a subgraph $S$ that is either a subdivision of $K_{5}$ or a subdivision of $K_{3,3}.$
{We call a vertex of $S$ a \emph{branch vertex} if it has degree at least $3$ in $S,$ otherwise we call it a \emph{subdivision vertex}.}
If $v$ is not a vertex of $S,$ then $S$ should be a subgraph of $H$ as well, therefore $H$ contains $K_{5}$ or $K_{3,3}$ as a minor, which means that $(H,\psi)$ contains some of the graphs in $\Ocal^{0}$ as a colorful minor.
Therefore we may assume that $v\in V(S)$ and distinguish cases:

Suppose first that $S$ is a subdivision of $K_{5}$ and that $v$ is a subdivision vertex of $S,$ respectively a branch vertex of $S.$
Then the $2,$ respectively $4,$ neighbors of $v$ in $S$ have a non-empty palette and, after deleting $v$ from $S$ and contracting the remaining subdivision paths, $(H,\psi)$ contains some $(K_{5}^{-},\pi_{1}),$ respectively some $(K_{4},\pi_{2}),$ as a colorful minor.

Suppose now that $S$ is a subdivision of $K_{3,3}$ and that $v$ is a subdivision vertex of $S,$ respectively a branch vertex of $S.$
Then the $2,$ respectively $3,$ neighbors of $v$ in $S$ have a non-empty palette and, in the same way, $(H,\psi)$ contains some $(K^{-}_{3,3},\pi_{3}),$ respectively some $(K_{2,3},\pi_{4}),$ as a colorful minor.
\end{proof}

We define $\tilde{\Ocal}^{1}\subseteq \Ocal^{1}$ by discarding all colorful graphs $(H,\psi)\in\Ocal^{1}$ with $|\psi(V(H))|\geq 3,$ and every $(K_{4},\pi_{2})$ whose vertex set can be partitioned into two sets $X_1$ and $X_2$ of size $2$ such that there are two distinct colors $i$ and $j$ with $\pi_{2}(X_1)=\{i\}$ and $\pi_{2}(X_2)=\{j\}.$
For example, the only discarded colorful graph with palette a subset of $[2]$ is the one in the dashed square in \zcref{fig_Oq1}.

Notice that, for every $I\finsub \Nbbb_{\geq1}$ with $q\coloneqq |I|,$ the set $\tilde{\Ocal}^{1}|_{I}$ contains $q+\binom{q}{2}$ variants of the graphs $(K_{5}^{-},\pi_{1})$ and $(K_{3,3}^{-},\pi_{3})$ each.
Moreover, $(K_{4},\pi_{2})$ and $(K_{2,3},\pi_{4})$ give rise to a total of $q+2\binom{q}{2}$ colorful graphs each.
In total, $\big|\tilde{\Ocal}^{1}|_{I}\big|=4q+6\binom{q}{2}.$

 \paragraph{Color-segmentation.}
We call a set $X\subseteq V(H)$ \emph{connected} if $H[X]$ is connected, and we say that two disjoint sets $X,Y\subseteq V(H)$ are \emph{adjacent} if $H$ has an edge with one endpoint in $X$ and the other in $Y.$
We say that a colorful graph $(H,\psi)$ is \emph{color-segmented}
if the three following conditions are satisfied:
\begin{itemize}
\item[A.] there are no four pairwise disjoint non-empty connected sets $B_{1},B_{2},B_{3},B_{4}\subseteq V(H),$ where $B_{i}$ and $B_{i+1}$ are adjacent for every $i\in[4]$ and indices are taken modulo $4,$ and no colors $c_{i}\in\psi(B_{i}),$ $i\in[4],$ such that $\{c_{1},c_{3}\}\cap\{c_{2},c_{4}\}=\emptyset,$
\item[B.] there are no three pairwise disjoint, pairwise adjacent, non-empty connected sets $B_{1},B_{2},B_{3}\subseteq V(H)$ such that $|\psi(B_{i})|\geq 2$ for every $i\in[3],$ and 
\item[C.] there are no four pairwise disjoint non-empty connected sets $B_{0},B_{1},B_{2},B_{3}\subseteq V(H)$ such that, for every $i\in[3],$ the sets $B_{0}$ and $B_{i}$ are adjacent and $|\psi(B_{i})|\geq 2.$
\end{itemize}

\begin{figure}[ht]
 \centering
\scalebox{.95}{%
\resizebox{\textwidth}{!}{%
\begin{tikzpicture}[x=1pt,y=1pt,line width=2pt,
    v/.style ={circle,draw,line width=1.5pt,inner sep=0pt,minimum size=8.4pt,fill=#1},
    b/.style ={circle,draw,line width=1pt,inner sep=0pt,minimum size=7.5pt,fill=black},
    fam/.style={anchor=east,inner sep=0pt,font=\LARGE}]
  \draw[clYpale,fill=clYbg,line width=2pt,rounded corners=7pt,
        dash pattern=on 6.5pt off 4.5pt] (230.0,-174.0) rectangle (506.0,-82.0);
  \draw[clYpale,fill=clYbg,line width=2pt,rounded corners=7pt,
        dash pattern=on 6.5pt off 4.5pt] (230.0,-273.0) rectangle (874.0,-180.0);
  \draw[clYpale,fill=clYbg,line width=2pt,rounded corners=7pt,
        dash pattern=on 6.5pt off 4.5pt] (230.0,-373.0) rectangle (874.0,-280.0);
  \draw (0.0,-94.0) -- (34.0,-128.0) -- (0.0,-162.0) -- (-34.0,-128.0) -- cycle;
  \node[v=clM] at (0.0,-94.0) {};
  \node[v=clB] at (34.0,-128.0) {};
  \node[v=clM] at (0.0,-162.0) {};
  \node[v=clB] at (-34.0,-128.0) {};
  \draw (92.0,-94.0) -- (126.0,-128.0) -- (92.0,-162.0) -- (58.0,-128.0) -- cycle;
  \node[v=clM] at (92.0,-94.0) {};
  \node[v=clY] at (126.0,-128.0) {};
  \node[v=clM] at (92.0,-162.0) {};
  \node[v=clY] at (58.0,-128.0) {};
  \draw (184.0,-94.0) -- (218.0,-128.0) -- (184.0,-162.0) -- (150.0,-128.0) -- cycle;
  \node[v=clB] at (184.0,-94.0) {};
  \node[v=clY] at (218.0,-128.0) {};
  \node[v=clB] at (184.0,-162.0) {};
  \node[v=clY] at (150.0,-128.0) {};
  \draw (276.0,-94.0) -- (310.0,-128.0) -- (276.0,-162.0) -- (242.0,-128.0) -- cycle;
  \node[v=clM] at (276.0,-94.0) {};
  \node[v=clY] at (310.0,-128.0) {};
  \node[v=clB] at (276.0,-162.0) {};
  \node[v=clY] at (242.0,-128.0) {};
  \draw (368.0,-94.0) -- (402.0,-128.0) -- (368.0,-162.0) -- (334.0,-128.0) -- cycle;
  \node[v=clM] at (368.0,-94.0) {};
  \node[v=clB] at (402.0,-128.0) {};
  \node[v=clY] at (368.0,-162.0) {};
  \node[v=clB] at (334.0,-128.0) {};
  \draw (460.0,-94.0) -- (494.0,-128.0) -- (460.0,-162.0) -- (426.0,-128.0) -- cycle;
  \node[v=clB] at (460.0,-94.0) {};
  \node[v=clM] at (494.0,-128.0) {};
  \node[v=clY] at (460.0,-162.0) {};
  \node[v=clM] at (426.0,-128.0) {};
  \draw (0.0,-191.0) -- (-34.0,-261.5) -- (34.0,-261.5) -- cycle;
  \halfv{0.0}{-191.0}{5.63}{clM}{clB}
  \halfv{-34.0}{-261.5}{5.63}{clM}{clB}
  \halfv{34.0}{-261.5}{5.63}{clM}{clB}
  \draw (92.0,-191.0) -- (58.0,-261.5) -- (126.0,-261.5) -- cycle;
  \halfv{92.0}{-191.0}{5.63}{clM}{clY}
  \halfv{58.0}{-261.5}{5.63}{clM}{clY}
  \halfv{126.0}{-261.5}{5.63}{clM}{clY}
  \draw (184.0,-191.0) -- (150.0,-261.5) -- (218.0,-261.5) -- cycle;
  \halfv{184.0}{-191.0}{5.63}{clB}{clY}
  \halfv{150.0}{-261.5}{5.63}{clB}{clY}
  \halfv{218.0}{-261.5}{5.63}{clB}{clY}
  \draw (276.0,-191.0) -- (242.0,-261.5) -- (310.0,-261.5) -- cycle;
  \halfv{276.0}{-191.0}{5.63}{clM}{clB}
  \halfv{242.0}{-261.5}{5.63}{clM}{clB}
  \halfv{310.0}{-261.5}{5.63}{clM}{clY}
  \draw (368.0,-191.0) -- (334.0,-261.5) -- (402.0,-261.5) -- cycle;
  \halfv{368.0}{-191.0}{5.63}{clM}{clB}
  \halfv{334.0}{-261.5}{5.63}{clM}{clB}
  \halfv{402.0}{-261.5}{5.63}{clB}{clY}
  \draw (460.0,-191.0) -- (426.0,-261.5) -- (494.0,-261.5) -- cycle;
  \halfv{460.0}{-191.0}{5.63}{clM}{clB}
  \halfv{426.0}{-261.5}{5.63}{clM}{clY}
  \halfv{494.0}{-261.5}{5.63}{clM}{clY}
  \draw (552.0,-191.0) -- (518.0,-261.5) -- (586.0,-261.5) -- cycle;
  \halfv{552.0}{-191.0}{5.63}{clM}{clB}
  \halfv{518.0}{-261.5}{5.63}{clM}{clY}
  \halfv{586.0}{-261.5}{5.63}{clB}{clY}
  \draw (644.0,-191.0) -- (610.0,-261.5) -- (678.0,-261.5) -- cycle;
  \halfv{644.0}{-191.0}{5.63}{clM}{clB}
  \halfv{610.0}{-261.5}{5.63}{clB}{clY}
  \halfv{678.0}{-261.5}{5.63}{clB}{clY}
  \draw (736.0,-191.0) -- (702.0,-261.5) -- (770.0,-261.5) -- cycle;
  \halfv{736.0}{-191.0}{5.63}{clM}{clY}
  \halfv{702.0}{-261.5}{5.63}{clM}{clY}
  \halfv{770.0}{-261.5}{5.63}{clB}{clY}
  \draw (828.0,-191.0) -- (794.0,-261.5) -- (862.0,-261.5) -- cycle;
  \halfv{828.0}{-191.0}{5.63}{clM}{clY}
  \halfv{794.0}{-261.5}{5.63}{clB}{clY}
  \halfv{862.0}{-261.5}{5.63}{clB}{clY}
  \draw (0.0,-328.0) -- (0.0,-291.0);
  \draw (0.0,-328.0) -- (-33.5,-362.0);
  \draw (0.0,-328.0) -- (33.5,-362.0);
  \node[b] at (0.0,-328.0) {};
  \halfv{0.0}{-291.0}{5.63}{clM}{clB}
  \halfv{-33.5}{-362.0}{5.63}{clM}{clB}
  \halfv{33.5}{-362.0}{5.63}{clM}{clB}
  \draw (92.0,-328.0) -- (92.0,-291.0);
  \draw (92.0,-328.0) -- (58.5,-362.0);
  \draw (92.0,-328.0) -- (125.5,-362.0);
  \node[b] at (92.0,-328.0) {};
  \halfv{92.0}{-291.0}{5.63}{clM}{clY}
  \halfv{58.5}{-362.0}{5.63}{clM}{clY}
  \halfv{125.5}{-362.0}{5.63}{clM}{clY}
  \draw (184.0,-328.0) -- (184.0,-291.0);
  \draw (184.0,-328.0) -- (150.5,-362.0);
  \draw (184.0,-328.0) -- (217.5,-362.0);
  \node[b] at (184.0,-328.0) {};
  \halfv{184.0}{-291.0}{5.63}{clB}{clY}
  \halfv{150.5}{-362.0}{5.63}{clB}{clY}
  \halfv{217.5}{-362.0}{5.63}{clB}{clY}
  \draw (276.0,-328.0) -- (276.0,-291.0);
  \draw (276.0,-328.0) -- (242.5,-362.0);
  \draw (276.0,-328.0) -- (309.5,-362.0);
  \node[b] at (276.0,-328.0) {};
  \halfv{276.0}{-291.0}{5.63}{clM}{clB}
  \halfv{242.5}{-362.0}{5.63}{clM}{clB}
  \halfv{309.5}{-362.0}{5.63}{clM}{clY}
  \draw (368.0,-328.0) -- (368.0,-291.0);
  \draw (368.0,-328.0) -- (334.5,-362.0);
  \draw (368.0,-328.0) -- (401.5,-362.0);
  \node[b] at (368.0,-328.0) {};
  \halfv{368.0}{-291.0}{5.63}{clM}{clB}
  \halfv{334.5}{-362.0}{5.63}{clM}{clB}
  \halfv{401.5}{-362.0}{5.63}{clB}{clY}
  \draw (460.0,-328.0) -- (460.0,-291.0);
  \draw (460.0,-328.0) -- (426.5,-362.0);
  \draw (460.0,-328.0) -- (493.5,-362.0);
  \node[b] at (460.0,-328.0) {};
  \halfv{460.0}{-291.0}{5.63}{clM}{clB}
  \halfv{426.5}{-362.0}{5.63}{clM}{clY}
  \halfv{493.5}{-362.0}{5.63}{clM}{clY}
  \draw (552.0,-328.0) -- (552.0,-291.0);
  \draw (552.0,-328.0) -- (518.5,-362.0);
  \draw (552.0,-328.0) -- (585.5,-362.0);
  \node[b] at (552.0,-328.0) {};
  \halfv{552.0}{-291.0}{5.63}{clM}{clB}
  \halfv{518.5}{-362.0}{5.63}{clM}{clY}
  \halfv{585.5}{-362.0}{5.63}{clB}{clY}
  \draw (644.0,-328.0) -- (644.0,-291.0);
  \draw (644.0,-328.0) -- (610.5,-362.0);
  \draw (644.0,-328.0) -- (677.5,-362.0);
  \node[b] at (644.0,-328.0) {};
  \halfv{644.0}{-291.0}{5.63}{clM}{clB}
  \halfv{610.5}{-362.0}{5.63}{clB}{clY}
  \halfv{677.5}{-362.0}{5.63}{clB}{clY}
  \draw (736.0,-328.0) -- (736.0,-291.0);
  \draw (736.0,-328.0) -- (702.5,-362.0);
  \draw (736.0,-328.0) -- (769.5,-362.0);
  \node[b] at (736.0,-328.0) {};
  \halfv{736.0}{-291.0}{5.63}{clM}{clY}
  \halfv{702.5}{-362.0}{5.63}{clM}{clY}
  \halfv{769.5}{-362.0}{5.63}{clB}{clY}
  \draw (828.0,-328.0) -- (828.0,-291.0);
  \draw (828.0,-328.0) -- (794.5,-362.0);
  \draw (828.0,-328.0) -- (861.5,-362.0);
  \node[b] at (828.0,-328.0) {};
  \halfv{828.0}{-291.0}{5.63}{clM}{clY}
  \halfv{794.5}{-362.0}{5.63}{clB}{clY}
  \halfv{861.5}{-362.0}{5.63}{clB}{clY}
\end{tikzpicture}}
}
 \caption{The $26$ colorful graphs in $\Ocal^{2}|_{[3]}$. The  colorful graphs in $\Ocal^{2}|_{[3]}\setminus \tilde{\Ocal}^{2}$ are highlighted in the dashed boxes. By picking the first colorful graph of each row we collect the colorful graphs in $\Ocal^{2}|_{[2]}$.}
 
 \label{fig_Oq2}%
\end{figure}

Let $\Ocal^{2}$ be the set containing
\begin{itemize}
\item every colorful graph $(C_{4},\sigma_1),$ where each vertex carries exactly one color and such that, for any two non-adjacent vertices, their palettes are disjoint from those carried by the other two.
 
\item every colorful graph $(K_{3},\sigma_2)$ where for each $v\in V(K_{3}),$ $|\sigma_{2}(v)|=2.$

\item every colorful graph $(K_{1,3},\sigma_3)$ where for each leaf $v$ of $K_{1,3},$ $|\sigma_{3}(v)|=2,$ and the center of $K_{1,3}$ has an empty palette.
\end{itemize}

See \zcref{fig_Oq2} for an illustration.

We now characterize color-segmented colorful graphs and establish closure.

\begin{observation}\label{obs_Oq2anti_chain}
The family $\Ocal^{2}$ is an anti-chain for the colorful minor relation.
\end{observation}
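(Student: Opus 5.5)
To show that $\Ocal^{2}$ is an anti-chain, I will take two distinct members $(H_1,\psi_1),(H_2,\psi_2)\in\Ocal^{2}$ and show that $(H_1,\psi_1)$ is not a colorful minor of $(H_2,\psi_2)$. The members come in three shapes: $(C_4,\sigma_1)$, $(K_3,\sigma_2)$ and $(K_{1,3},\sigma_3)$. The argument uses two monotone invariants of the colorful minor relation, one graph-theoretic and one about palettes, and splits according to the pair of underlying graphs.

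\textbf{Step 1 (different underlying graphs).} I first compare the uncolored graphs.
\begin{itemize}
\item $K_3$ and $C_4$ each have $4$ and $3$ vertices, and neither $K_{1,3}$ nor $C_4$ is a minor of $K_3$, which has only $3$ vertices.
\item $K_3$ is not a minor of $K_{1,3}$, since a tree has no cycle minor.
\item $C_4$ is not a minor of $K_{1,3}$, for the same reason.
\item $K_{1,3}$ is not a minor of $C_4$, since $C_4$ has maximum degree $2$ and both are on $4$ vertices, so only edge deletions are possible.
\end{itemize}
That leaves the cases where $H_1$ is a proper minor of $H_2$, namely $K_3\le C_4$. Here colors decide it. In $(C_4,\sigma_1)$ every vertex carries exactly one color. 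To get $K_3$ from it, one must contract exactly one edge, so at most one branch set has size $2$ and the other two branch sets are single vertices with palettes of size at most $1$. But $(K_3,\sigma_2)$ needs every vertex palette of size $2$, a contradiction.

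\textbf{Step 2 (same underlying graph).} Here no contraction or vertex deletion is possible, since the minor has as many vertices as the host. Only edge deletions occur, and these are impossible as the edge counts match, together with color removals. So $(H_1,\psi_1)\le(H_2,\psi_2)$ forces an isomorphism $\phi$ with $\psi_1(v)\subseteq\psi_2(\phi(v))$ for every vertex $v$.

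In each family the palette sizes are fixed: exactly $1$ on $C_4$, exactly $2$ on $K_3$, and exactly $2$ on the leaves and $0$ on the center of $K_{1,3}$. Inclusion between sets of equal finite size is equality, so $\psi_1=\psi_2\circ\phi$. The two colorful graphs are then isomorphic, i.e.\ the same member of $\Ocal^{2}$ (members are taken up to isomorphism), contradicting distinctness.

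The only genuinely non-trivial point is the $K_3$ versus $C_4$ comparison in Step 1, where a contraction could in principle enlarge palettes. The counting argument on branch sets settles it. Everything else follows from vertex- and edge-count monotonicity together with fixed palette sizes.
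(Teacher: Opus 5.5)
Your proof is correct and follows essentially the paper's argument. You rule out mismatched underlying graphs by vertex counts, acyclicity of $K_{1,3}$ and maximum degree of $C_4$, and you handle $K_3\le C_4$ by observing that at least two branch sets of any model in $(C_4,\sigma_1)$ are singletons carrying a single color. In the equal-graph case you use, as the paper does, that an inclusion between palettes of equal size is an equality. (Minor slips: your first bullet swaps the vertex counts of $K_3$ and $C_4$. The claim that one must contract exactly one edge is stated without justification, but it is unnecessary, since your bound on singleton branch sets already covers the all-singleton case.)
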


\begin{proof}
Let $(Z,\zeta),(Z',\zeta')\in\Ocal^{2}$ be such that $(Z,\zeta)\leq(Z',\zeta').$
As colorful minors do not increase the number of vertices and every member of $\Ocal^{2}$ has three or four vertices, we distinguish two cases.

Assume first that $Z=K_{3}.$
A model of $K_{3}$ consists of three pairwise adjacent branch sets, hence $Z'\neq K_{1,3},$ as $K_{1,3}$ is a forest and every colorful minor of a forest is a forest.
If $Z'=C_{4},$ then three singleton branch sets would induce a subgraph of $C_{4}$ on three vertices, which has at most two edges and is therefore not a triangle; hence one branch set consists of two vertices and the other two are singletons.
A singleton branch set of $(C_{4},\sigma_{1})$ carries exactly one color, while $\zeta$ assigns two colors to every vertex of $K_{3},$ and colors can only be removed, a contradiction.
Hence $Z'=K_{3},$ every branch set is a singleton, and $\zeta(v)\subseteq\zeta'(v)$ for the corresponding vertices.
As both sides have two elements, $(Z,\zeta)$ and $(Z',\zeta')$ are isomorphic.

Assume now that $Z\in\{C_{4},K_{1,3}\}.$
Then $|V(Z)|=|V(Z')|=4,$ so every branch set is a singleton, $Z$ is a spanning subgraph of $Z',$ and $\zeta(v)\subseteq\zeta'(v)$ for the corresponding vertices.
As $C_{4}$ contains a cycle while $K_{1,3}$ does not, and as $K_{1,3}$ has a vertex of degree three while $C_{4}$ has maximum degree two, this yields $Z=Z'.$
Both graphs have the same number of edges, hence $Z$ is $Z'$ itself, and the palettes, having the same size on every vertex, coincide.
\end{proof}

\begin{observation}\label{obs_colorSegmentation}
A colorful graph $(H,\psi)$ is color-segmented if and only if it excludes $\Ocal^{2}$ as a colorful minor.
\end{observation}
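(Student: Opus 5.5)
The plan is to show that each of the three conditions A, B, C is equivalent to excluding one of the three types of graphs in $\Ocal^{2}$: A corresponds to the $(C_{4},\sigma_1)$'s, B to the $(K_{3},\sigma_2)$'s, and C to the $(K_{1,3},\sigma_3)$'s. Both directions reduce to one basic fact about models. A colorful graph $(Z,\zeta)$ on vertices $z_1,\dots,z_t$ is a colorful minor of $(H,\psi)$ if and only if there are pairwise disjoint non-empty connected sets $B_{1},\dots,B_{t}\subseteq V(H)$ such that $B_i$ and $B_j$ are adjacent whenever $z_iz_j\in E(Z)$, and $\zeta(z_i)\subseteq\psi(B_i)$ for every $i$. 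This holds because contracting each $B_i$ merges palettes into $\psi(B_i)$. We then delete the surplus vertices and edges and remove surplus colors. Conversely, any sequence of minor operations yields such branch sets, since colors can only be merged or removed. I would state this correspondence once and then treat the three types.

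\emph{Forward direction (contrapositive).} Suppose $(H,\psi)$ contains a member of $\Ocal^{2}$, and take its branch sets.
\begin{itemize}
\item For $(C_{4},\sigma_1)$ with cyclically ordered vertices $z_1,\dots,z_4$ and $\sigma_1(z_i)=\{c_i\}$, the sets $B_1,\dots,B_4$ are cyclically adjacent and $c_i\in\psi(B_i)$. The defining condition of $\sigma_1$ says exactly that $\{c_1,c_3\}\cap\{c_2,c_4\}=\emptyset$, so A is violated.
\item For $(K_{3},\sigma_2)$ we obtain three pairwise adjacent sets, each with $|\psi(B_i)|\ge 2$, so B is violated.
\item For $(K_{1,3},\sigma_3)$, the branch set of the center serves as $B_0$, and we obtain a violation of C.
\end{itemize}

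\emph{Backward direction.} Conversely, suppose one of A, B, C fails, witnessed by sets $B_i$.
\begin{itemize}
\item If A fails, contract each $B_i$ to a vertex, delete all other vertices, and delete edges so that only the four cycle edges remain. This is possible since the $B_i$ are disjoint, connected, and cyclically adjacent; chords between $B_1,B_3$ or $B_2,B_4$ are simply deleted. Then keep only the color $c_i$ at the $i$-th vertex. The condition $\{c_1,c_3\}\cap\{c_2,c_4\}=\emptyset$ is precisely the condition defining $\sigma_1$. Note that $c_1=c_3$ is allowed, which matches the pictures with two or three colors.
\item If B fails, proceed the same way, keeping an arbitrary pair of colors from each $\psi(B_i)$; this produces some $(K_{3},\sigma_2)$.
\item If C fails, the contracted $B_0$ becomes the center. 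We remove all its colors and keep two colors at each leaf, giving some $(K_{1,3},\sigma_3)$. Adjacencies among $B_1,B_2,B_3$ are deleted as edges.
\end{itemize}

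There is no real obstacle here; the statement is essentially a translation of the definitions. The only point needing care is checking that the color condition in A matches the definition of $\sigma_1$ exactly in both directions. The definition of $\sigma_1$ says that for any two non-adjacent vertices, their palettes are disjoint from those carried by the other two. In $C_4$ the non-adjacent pairs are $\{z_1,z_3\}$ and $\{z_2,z_4\}$, so this reads $\{c_1,c_3\}\cap\{c_2,c_4\}=\emptyset$, exactly as in A. I would spell this out explicitly.
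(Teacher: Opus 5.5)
Your proposal is correct and follows the paper's proof essentially step for step. The paper likewise first recalls the branch-set characterization of colorful minors. It then reads A, B, and C as the negations of containing $(C_{4},\sigma_1)$, $(K_{3},\sigma_2)$, and $(K_{1,3},\sigma_3)$ respectively, checking as you do that $\{c_{1},c_{3}\}\cap\{c_{2},c_{4}\}=\emptyset$ is exactly the defining condition of $\sigma_1$.
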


\begin{proof}
We first recall that a colorful graph $(Z,\zeta)$ is a colorful minor of $(H,\psi)$ if and only if there are pairwise disjoint non-empty connected sets $B_{v}\subseteq V(H),$ one for every $v\in V(Z),$ such that $B_{u}$ and $B_{v}$ are adjacent for every $uv\in E(Z)$ and $\zeta(v)\subseteq\psi(B_{v})$ for every $v\in V(Z).$
Indeed, contracting each $B_{v}$ to a single vertex yields a colorful graph in which that vertex carries the palette $\psi(B_{v}),$ after which the vertices outside $\bigcup_{v\in V(Z)}B_{v},$ the edges that are not edges of $Z,$ and the superfluous colors are deleted, respectively removed.

With this description, each of the three conditions is the negation of the containment of one of the three types of $\Ocal^{2}.$
The four sets of A are the branch sets of a model of $C_{4},$ in which $B_{i}$ retains the single color $c_{i};$ the resulting colorful graph belongs to $\Ocal^{2}$ precisely because $\{c_{1},c_{3}\}\cap\{c_{2},c_{4}\}=\emptyset,$ which is the condition imposed on $\sigma_{1}.$
The three sets of B are the branch sets of a model of $K_{3},$ in which each $B_{i}$ retains two of its colors, and every such choice yields a member of $\Ocal^{2}$ of the second type.
The four sets of C are the branch sets of a model of $K_{1,3},$ with $B_{0}$ that of the center, from which all colors are removed, and $B_{1},B_{2},B_{3}$ those of the leaves, each retaining two of its colors.
Conversely, the branch sets of a model of a member of $\Ocal^{2}$ of the first, second, or third type witness the failure of A, B, or C respectively.
Hence $(H,\psi)$ satisfies A, B, and C if and only if it excludes $\Ocal^{2}$ as a colorful minor.
\end{proof}

\begin{observation}\label{obs_segmentationClosed}
The class of color-segmented colorful graphs is closed under colorful minors.
\end{observation}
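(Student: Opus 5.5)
The quickest route goes through \zcref{obs_colorSegmentation}: $(H,\psi)$ is color-segmented exactly when it excludes every member of $\Ocal^{2}$ as a colorful minor. The colorful minor relation is transitive, since a sequence of operations producing $(H',\psi')$ from $(H,\psi)$, followed by a sequence producing $(Z,\zeta)$ from $(H',\psi')$, is itself a sequence producing $(Z,\zeta)$ from $(H,\psi)$. So let $(H,\psi)$ be color-segmented and $(H',\psi')\leq(H,\psi)$. If some $(Z,\zeta)\in\Ocal^{2}$ were a colorful minor of $(H',\psi')$, then by transitivity it would be a colorful minor of $(H,\psi)$, contradicting \zcref{obs_colorSegmentation}. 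Hence $(H',\psi')$ excludes $\Ocal^{2}$, and applying \zcref{obs_colorSegmentation} once more shows that it is color-segmented.

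For a proof that does not lean on the obstruction characterization, I would argue directly with branch sets. It suffices to check that each single operation (edge deletion, vertex deletion, color removal, contraction) preserves conditions A, B, and C. Equivalently, any violating family of connected sets in the minor lifts to a violating family in the original graph. The map I would use sends each vertex of $(H',\psi')$ to its preimage, which is either a single vertex or, for the contracted vertex $x_{uv}$, the pair $\{u,v\}$. A set $B'$ then lifts to the union of the preimages of its vertices. Under this lift, pairwise disjointness is preserved. Connectivity is preserved because every edge of $H'$ comes from an edge of $H$ and $\{u,v\}$ induces a connected subgraph. Adjacency is preserved for the same reason. Palettes can only grow: $\psi(\text{lift of }B')\supseteq\psi'(B')$, because contraction takes the union $\psi(u)\cup\psi(v)$ and the other operations only shrink palettes.

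All three conditions are monotone in these data: the forbidden configurations require only disjointness, connectivity, adjacency, lower bounds on $|\psi(B_i)|$, and the existence of colors $c_i\in\psi(B_i)$. A violating configuration in $(H',\psi')$ therefore lifts to one in $(H,\psi)$, and the conditions pass to the minor. The only point needing care is the contraction step, specifically that the lift of a connected set through $x_{uv}$ is still connected. This holds because $uv$ is an edge of $H$.
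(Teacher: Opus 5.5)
Your argument is correct, and your first paragraph is exactly the paper's proof: by \zcref{obs_colorSegmentation}, being color-segmented is equivalent to excluding $\Ocal^{2}$, and transitivity of the colorful minor relation finishes it. Your direct lifting argument is also sound. Each operation either merges two vertices along an edge, deletes something, or shrinks palettes, and the conditions A, B, and C are monotone in disjointness, connectivity, adjacency, and palettes. That argument spells out what the paper only remarks after the observation: phrasing A, B, and C in terms of connected sets rather than single vertices is what gives closure under colorful minors.
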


\begin{proof}
By \zcref{obs_colorSegmentation}, color-segmented graphs exclude $\Ocal^2$. Since the 
colorful minor relation is transitive, any colorful minor of such a graph also excludes 
$\Ocal^2$, hence is color-segmented.
\end{proof}

Notice that A, B, and C are stated in terms of pairwise disjoint connected sets and not of single vertices, and that this is what makes them closed under colorful minors.
Stated for single vertices, they would be satisfied by the colorful graph obtained from a cycle on four uncolored vertices by attaching to its vertices, in cyclic order, four pendant vertices carrying the colors $1,$ $2,$ $1,$ and $2,$ whereas contracting the four pendant edges produces a member of $\tilde{\Ocal}^{2}.$\medskip

In analogy to the definition of $\tilde{\Ocal}^{1},$ we define $\tilde{\Ocal}^{2}$ to be the set obtained from $\Ocal^{2}$ by discarding all $(H,\psi)\in\Ocal^{2}$ with $|\psi(V(H))|\geq 3.$

It follows that, for every $I\finsub \Nbbb_{\geq1}$ with $q\coloneqq |I|,$ each of the basic types, $(C_{4},\sigma_1),$ $(K_{3},\sigma_2),$
and $(K_{1,3},\sigma_3),$ contributes a total of $\binom{q}{2}$ colorful graphs to the set $\tilde{\Ocal}^{2}|_{I}.$
Therefore, $\big|\tilde{\Ocal}^{2}|_{I}\big|=3\binom{q}{2}.$

 \paragraph{Excluding three colors from a component.}
We say that a colorful graph $(G,\chi)$ is \emph{component-wise bicolored} if for every component $C$ of $G,$ $|\chi(V(C))|\leq 2.$

We define $\Ocal^{3}$ as the set containing every colorful graph $(K_{1},\pi)$ where $\pi$ assigns three colors to the unique vertex of $K_{1}.$
For instance, a visualization of $\Ocal^{3}|_{[4]}$ is
$$\Ocal^{3}|_{[4]} = \{\YMB,\YGB,\YMG,\GMB\}. $$

Clearly, for every $I\finsub \Nbbb_{\geq1}$ with $q\coloneqq |I|,$ it holds that $\big|\Ocal^{3}|_{I}\big|=\binom{q}{3}.$
The next observations are straightforward.

\begin{observation}\label{obs_SinglecomBicoloredClosed}%
The class of component-wise bicolored colorful graphs is closed under colorful minors.
\end{observation}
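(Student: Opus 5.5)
We check the four colorful minor operations one at a time. Let $(G,\chi)$ be component-wise bicolored, i.e.\ $|\chi(V(C))|\leq 2$ for every component $C$ of $G$. Let $(G',\chi')$ be obtained from it by a single operation. By transitivity of the colorful minor relation, it suffices to show that $(G',\chi')$ is again component-wise bicolored.

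The key observation is that every operation maps each component $C'$ of $G'$ into a single component $C$ of $G$, in the following sense. Every vertex of $C'$ is either a vertex of $C$ or the contracted vertex $x_{uv}$ with $u,v\in V(C)$. Moreover, the palettes of vertices of $C'$ are contained in $\chi(V(C))$. Given this, $|\chi'(V(C'))|\leq|\chi(V(C))|\leq 2$, as required.

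We verify the observation operation by operation.

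\textbf{Edge or vertex deletion.} Each component of the resulting graph is a connected subgraph of some component of $G$. Palettes are unchanged, or restricted to the remaining vertices.

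\textbf{Color removal.} This only shrinks a palette and does not change the components.

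\textbf{Contraction of an edge $uv$.} The endpoints $u$ and $v$ lie in a common component $C$ of $G$. The components of $G'$ are the components of $G$ other than $C$, which are unchanged, together with the graph obtained from $C$ by contracting $uv$. That graph is connected and its vertex set is $(V(C)\setminus\{u,v\})\cup\{x_{uv}\}$. Since $\chi'(x_{uv})=\chi(u)\cup\chi(v)$, the union of its palettes is exactly $\chi(V(C))$.

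There is no genuine obstacle here. The only point requiring care is the contraction case: the merged vertex carries the union of two palettes, but both come from the same component, so the union of palettes over that component does not grow.

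Alternatively, one could note that $(G,\chi)$ is component-wise bicolored exactly when it excludes every member of $\Ocal^{3}$ as a colorful minor. Indeed, contracting a spanning tree of a component with at least three colors and removing the surplus colors yields such a $(K_1,\pi)$. Closure then follows from transitivity, exactly as in \zcref{obs_segmentationClosed}. The direct argument above is already complete, so this route is not needed.
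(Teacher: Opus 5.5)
Your proof is correct. The paper states this observation without proof, calling it straightforward, and your operation-by-operation check is exactly the routine verification intended: every component of the result sits inside a single component of the original and carries a sub-palette of it, since contraction only merges palettes within one component. The only nitpick is that reducing to a single operation is an induction on the number of operations, not an appeal to transitivity.
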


\begin{observation}\label{obs_Qq3anti_chain}
The family $\Ocal^{3}$ is an anti-chain for the colorful minor relation.
\end{observation}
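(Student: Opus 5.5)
The statement is that $\Ocal^{3}$ is an anti-chain for the colorful minor relation. I would argue directly from the definition of colorful minors and the fact that every member of $\Ocal^{3}$ has exactly one vertex.

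Take two members $(K_{1},\pi)$ and $(K_{1},\pi')$ of $\Ocal^{3}$ and suppose that $(K_{1},\pi)\leq(K_{1},\pi')$. The plan is to show that they are then identical.

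First, look at the operations available when starting from $(K_{1},\pi')$. None of them increases the number of vertices, and $(K_{1},\pi)$ still has one vertex. So the unique vertex is never deleted, and no contraction can take place since there are no edges. The only remaining operation is removing colors from that vertex, so any colorful minor of $(K_{1},\pi')$ with one vertex is a $(K_{1},\pi'')$ with $\pi''\subseteq\pi'$ on the unique vertex. Equivalently, in the branch-set description used in the proof of \zcref{obs_colorSegmentation}, the single branch set must be the whole vertex set, which gives $\pi(v)\subseteq\pi'(v)$.

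Second, compare sizes. Both $\pi(v)$ and $\pi'(v)$ have exactly three elements, so the inclusion $\pi(v)\subseteq\pi'(v)$ forces $\pi(v)=\pi'(v)$. Hence the two colorful graphs coincide.

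It follows that no member of $\Ocal^{3}$ is a proper colorful minor of another, which is exactly the anti-chain property. There is no real obstacle here; the only point that needs care is the remark that a one-vertex target rules out deletion and contraction, so the relation reduces to inclusion of palettes.
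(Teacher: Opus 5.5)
Your proof is correct: colorful minor operations never increase the number of vertices and $K_{1}$ has no edges, so only color removal is available, giving $\pi(v)\subseteq\pi'(v)$, and since both palettes have exactly three elements they are equal. The paper states this observation without proof as straightforward, and your argument is the same reasoning (vertex counting followed by palette inclusion) that the paper writes out for the analogous anti-chain statement about $\Ocal^{2}$.
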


\begin{observation}\label{obs_SinglecomBicolored}%
A colorful graph is component-wise bicolored if and only if it does not contain any member of $\Ocal^{3}$ as a colorful minor.
\end{observation}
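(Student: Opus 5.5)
The argument uses the same branch-set description of colorful minors as the proof of \zcref{obs_colorSegmentation}. Namely, $(Z,\zeta)\leq(G,\chi)$ if and only if there are pairwise disjoint non-empty connected sets $B_{v}\subseteq V(G)$, one for each $v\in V(Z)$, such that $B_{u}$ and $B_{v}$ are adjacent whenever $uv\in E(Z)$, and $\zeta(v)\subseteq\chi(B_{v})$ for every $v\in V(Z)$. For $Z=K_{1}$ this says the following: a member $(K_{1},\pi)$ of $\Ocal^{3}$ with $\pi(v)=\{a,b,c\}$ is a colorful minor of $(G,\chi)$ if and only if some non-empty connected set $B\subseteq V(G)$ satisfies $\{a,b,c\}\subseteq\chi(B)$.

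For the forward direction, suppose $(G,\chi)$ contains some $(K_{1},\pi)\in\Ocal^{3}$ as a colorful minor, witnessed by $B$. Since $B$ is connected, it lies inside a single component $C$ of $G$. Then $|\chi(V(C))|\geq|\chi(B)|\geq 3$, so $(G,\chi)$ is not component-wise bicolored.

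For the converse, suppose some component $C$ of $G$ has $|\chi(V(C))|\geq 3$. Pick three distinct colors $a,b,c\in\chi(V(C))$ and set $B\coloneqq V(C)$; this set is non-empty and connected. Concretely, we contract all edges of a spanning tree of $C$, delete every other vertex of $G$, and then remove all colors except $a,b,c$. The result is $(K_{1},\pi)$ with $\pi(v)=\{a,b,c\}$, which belongs to $\Ocal^{3}$.

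Neither direction has a real obstacle. The only point needing care is that a connected branch set cannot meet two components, and that is exactly what makes the first direction work.
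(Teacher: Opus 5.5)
Your proof is correct. The paper states this observation without proof, calling it straightforward, and your branch-set argument is the routine verification intended: a connected witness lies in a single component, and conversely you contract a spanning tree of a component carrying three colors. This parallels the paper's proof of \zcref{obs_colorSegmentation}, and the direction of the branch-set description you rely on in the first part (a colorful minor yields a model) is established in \zcref{obs_minorModels}.
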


 \paragraph{Further restricting the colors of the components.}
We call a colorful graph $(G,\chi)$ {\emph{multicolored}} if $|\chi(V(G))|\geq 2.$ 

We say that a colorful graph $(G,\chi)$ is \emph{single-component bicolored} if for any two distinct components $C_1$ and $C_2$ of $G$ there are no four distinct colors $i_1,i_2,i_3,i_4$ such that $i_1,i_2\in \chi(V(C_1))$ and $i_3,i_4\in \chi(V(C_2)).$

We define $\Ocal^{4}$ to be the set containing every colorful graph $(2\cdot K_{1},\tau)$ where $\tau$ is such that we have $|\tau(u)|=2$ for both $u\in V(2\cdot K_1)$ and $|\tau(V(2\cdot K_1))|=4.$

As an example, consider the sets
\begin{align*}
\Ocal^{4}|_{[3]} &= \emptyset,\\
\Ocal^{4}|_{[4]} &= \{ \NodeYM~~\!\NodeBG, \NodeYB~~\!\NodeMG, \NodeYG~~\!\NodeMB \},\ \text{and}\\
\Ocal^{4}|_{[5]} &= \{ \NodeYM~~\!\NodeBG, \NodeYM~~\!\NodeBP, \NodeYM~~\!\NodeGP, \NodeYB~~\!\NodeMG, \NodeYB~~\!\NodeMP, \NodeYB~~\!\NodeGP,
              \NodeYG~~\!\NodeMB, \NodeYG~~\!\NodeMP,\\
          &\phantom{{}\!\!=\{}\ \NodeYG~~\!\NodeBP, \NodeYP~~\!\NodeMB, \NodeYP~~\!\NodeMG, \NodeYP~~\!\NodeBG,
              \NodeMB~~\!\NodeGP, \NodeMG~~\!\NodeBP, \NodeMP~~\!\NodeBG \}.
\end{align*}
Observe that, for every $I\finsub \Nbbb_{\geq1}$ with $q\coloneqq |I|,$ it holds that
$\big|\Ocal^{4}|_{I}\big|=\frac{1}{2}\binom{4}{2}\cdot\binom{q}{4}=3\binom{q}{4}.$

The next three observations are immediate.

\begin{observation}\label{obs_comBicolorClosed}%
The class of single-component bicolored colorful graphs is closed under colorful minors.
\end{observation}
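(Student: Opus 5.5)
We prove the closure directly from the definition, following the pattern of \zcref{obs_SinglecomBicoloredClosed}. Let $(G,\chi)$ be single-component bicolored and $(G',\chi')\leq(G,\chi)$. It suffices to check each of the four elementary operations separately, since colorful minors arise as finite sequences of them and the property is then preserved by induction on the length of the sequence.

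The key step is to track how components and their palettes behave under each operation. We claim that for every component $C'$ of the resulting graph there is a component $C$ of the original graph with $V(C')\subseteq V(C)$ (after identifying a contracted vertex $x_{uv}$ with $\{u,v\}$), and that $\chi'(V(C'))\subseteq\chi(V(C))$. Moreover, distinct components $C_1',C_2'$ correspond either to distinct components $C_1,C_2$ or to the same component $C$.

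This claim follows case by case. Deleting an edge or a vertex may split a component into several, but each piece lies inside an old component and keeps its colors or loses some. Removing a color only shrinks palettes. Contracting an edge $uv$ merges $u$ and $v$, which already lie in one component $C$. The new vertex carries $\chi(u)\cup\chi(v)\subseteq\chi(V(C))$, and components other than $C$ are untouched.

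With the claim in hand, suppose $C_1',C_2'$ are distinct components of $G'$ witnessing four distinct colors $i_1,i_2\in\chi'(V(C_1'))$ and $i_3,i_4\in\chi'(V(C_2'))$. If they correspond to distinct old components $C_1,C_2$, then the same four colors contradict that $(G,\chi)$ is single-component bicolored.

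The one genuine subtlety is the case where $C_1'$ and $C_2'$ come from the same component $C$, which happens after deleting a cut vertex or a bridge. Single-component bicoloredness of $(G,\chi)$ gives no information within one component, so this case cannot be handled as above. Here we instead use that $\chi(V(C))$ would contain four distinct colors.

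Taken literally, the definition does not forbid this, so this is the main obstacle. We first try to argue via the obstruction set, as in \zcref{obs_segmentationClosed}: that a colorful graph is single-component bicolored if and only if it excludes $\Ocal^{4}$. By transitivity of $\leq$, that equivalence would give the closure immediately.

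If the equivalence itself fails for a single component with four colors, we resolve it in one of two ways. Either we note that the observation is intended within the class of component-wise bicolored graphs, where $|\chi(V(C))|\leq 2$ makes the same-component case vacuous. Or we note that the property should be read as an $\Ocal^{4}$-exclusion, which is closed under colorful minors by transitivity.
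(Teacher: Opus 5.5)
You found the right obstacle, but it is fatal rather than something to work around. The same-component case is a counterexample, so the observation is false as literally stated. The paper gives no argument here; it only says the observation is "immediate".

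Take $(K_{2},\chi)$ with endpoints $u,v,$ $\chi(u)=\{1,2\}$ and $\chi(v)=\{3,4\}.$ It is connected, so the defining condition, which quantifies over pairs of distinct components, holds vacuously. Deleting the edge $uv$ gives $2\cdot K_{1}$ with palettes $\{1,2\}$ and $\{3,4\}.$ This is a member of $\Ocal^{4}$ and is not single-component bicolored.

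Your first fallback fails on the same example. $(K_{2},\chi)$ is single-component bicolored yet contains a member of $\Ocal^{4},$ so the characterization in \zcref{obs_comBicolored} fails in exactly the direction you need. Only its other direction holds: excluding $\Ocal^{4}$ implies single-component bicolored. Your second option, reading the property as $\Ocal^{4}$-exclusion, does give a minor-closed class, but a different one. It is strictly smaller than the literal class, as $(K_{2},\chi)$ shows. It is strictly larger than the class below, as the single vertex with palette $\{1,2,3,4\}$ shows.

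What is true is your first option: the class of colorful graphs that are both component-wise bicolored and single-component bicolored is closed under colorful minors. Your component-tracking claim proves this once you stop hedging. If $C_{1}'$ and $C_{2}'$ come from distinct components $C_{1}$ and $C_{2},$ the four colors transfer back and contradict the hypothesis. If they come from one component $C,$ then $\chi'(V(C_{1}'))\cup\chi'(V(C_{2}'))\subseteq\chi(V(C)),$ which has at most two colors. The same claim shows that component-wise bicoloredness is preserved, so your induction over elementary operations goes through for the conjunction.

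This corrected form is all the paper uses. \zcref{obs_CrucialClosed} concerns crucial graphs, which are component-wise bicolored. The proof of \zcref{obs_CrucialMinimal} needs only the valid direction of \zcref{obs_comBicolored}, plus the direct fact that members of $\Ocal^{4}$ are not single-component bicolored. You should therefore state the counterexample and prove the restricted statement, rather than offer two alternative readings.
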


\begin{observation}\label{obs_comBicoloranti_chain}
The family $\Ocal^{4}$ is an anti-chain for the colorful minor relation.
\end{observation}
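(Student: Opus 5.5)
The statement to prove is \zcref{obs_comBicoloranti_chain}: no member of $\Ocal^{4}$ is a colorful minor of a different member of $\Ocal^{4}$. I would compare two members directly and pin down what a colorful minor relation between them forces.

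Let $(Z,\zeta),(Z',\zeta')\in\Ocal^{4}$ with $(Z,\zeta)\leq(Z',\zeta')$. I use the branch-set description recalled in the proof of \zcref{obs_colorSegmentation}. There are pairwise disjoint non-empty connected sets $B_u\subseteq V(Z')$, one for each $u\in V(Z)$, such that $\zeta(u)\subseteq\zeta'(B_u)$.

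Since $Z=2\cdot K_{1}$ has two vertices and $Z'=2\cdot K_{1}$ also has exactly two vertices, the two branch sets must be the two distinct singletons of $V(Z')$. In particular, no contraction is possible anyway, because $Z'$ has no edges.

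Write $V(Z)=\{u_1,u_2\}$ and let $w_1,w_2$ be the corresponding vertices of $Z'$, so that $\zeta(u_i)\subseteq\zeta'(w_i)$. Both palettes have exactly two colors by the definition of $\Ocal^{4}$. Hence $\zeta(u_i)=\zeta'(w_i)$ for $i\in[2]$, and the bijection $u_i\mapsto w_i$ is an isomorphism of colorful graphs. So $(Z,\zeta)$ and $(Z',\zeta')$ are isomorphic, which means they are the same member of $\Ocal^{4}$.

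Thus distinct members of $\Ocal^{4}$ are incomparable, and $\Ocal^{4}$ is an anti-chain. There is no real obstacle here. The only point to state explicitly is that a colorful minor never increases the number of vertices, which forces every branch set to be a singleton; the size-two palette condition then does the rest.
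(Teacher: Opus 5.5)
Your proof is correct and takes the approach the paper has in mind. The paper states this observation without proof as immediate, and your reasoning is the same as in its proof of \zcref{obs_Oq2anti_chain}: equal vertex counts force singleton branch sets, and a two-element palette contained in a two-element palette equals it.
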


\begin{observation}\label{obs_comBicolored}%
A colorful graph $(H,\psi)$ is single-component bicolored if and only if $(H,\psi)$ does not contain any member of $\Ocal^{4}$ as a colorful minor.\end{observation}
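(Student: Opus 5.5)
The statement is an equivalence, and I will prove each direction through the branch-set description of colorful minors already used in the proof of \zcref{obs_colorSegmentation}. That description says $(Z,\zeta)\leq(H,\psi)$ if and only if there are pairwise disjoint non-empty connected sets $B_v\subseteq V(H)$, $v\in V(Z)$, such that $B_u$ and $B_v$ are adjacent whenever $uv\in E(Z)$, and $\zeta(v)\subseteq\psi(B_v)$ for every $v$. A member of $\Ocal^{4}$ has no edges, so a model of $(2\cdot K_1,\tau)$ in $(H,\psi)$ amounts to two disjoint non-empty connected sets $B_1,B_2\subseteq V(H)$ with $\tau(u_1)\subseteq\psi(B_1)$ and $\tau(u_2)\subseteq\psi(B_2)$.

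($\Leftarrow$, contrapositive.) Assume $(H,\psi)$ is not single-component bicolored. Then there are distinct components $C_1,C_2$ of $H$ and four distinct colors with $i_1,i_2\in\psi(V(C_1))$ and $i_3,i_4\in\psi(V(C_2))$. Take $B_1=V(C_1)$ and $B_2=V(C_2)$. These are disjoint, non-empty and connected. Let $\tau$ assign $\{i_1,i_2\}$ to one vertex of $2\cdot K_1$ and $\{i_3,i_4\}$ to the other. Then $(2\cdot K_1,\tau)\in\Ocal^{4}$, and the sets $B_1,B_2$ witness that it is a colorful minor of $(H,\psi)$. Concretely, one contracts each component, deletes every other component, and removes the surplus colors.

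($\Rightarrow$, contrapositive.) Assume some $(2\cdot K_1,\tau)\in\Ocal^{4}$ with $\tau(u_1)=\{i_1,i_2\}$ and $\tau(u_2)=\{i_3,i_4\}$ is a colorful minor of $(H,\psi)$, with branch sets $B_1,B_2$. Each $B_j$ is connected, so it lies in a single component $C_j$ of $H$, and hence $\psi(V(C_j))\supseteq\psi(B_j)$. If $C_1\neq C_2$, these two components violate the definition directly.

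The step that needs care is $C_1=C_2=C$. Here $C$ carries four distinct colors, so $|\psi(V(C))|\geq4$. The definition of single-component bicolored quantifies only over two distinct components, so this case does not by itself contradict it. The obstacle is therefore whether the statement is exact as written: a single connected component carrying four colors contains a member of $\Ocal^{4}$ as a colorful minor (split $C$ into two disjoint connected pieces containing the colored vertices, e.g.\ along a spanning tree) yet satisfies the definition vacuously. There are two ways to resolve this.

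\begin{itemize}
\item If the definition is meant to include $C_1=C_2$, that is, no component carries four colors split as two plus two, the same argument goes through verbatim.
\item Otherwise, the intended reading is the equivalence under the standing assumption of \zcref{obs_SinglecomBicolored}'s context, namely that $(H,\psi)$ is component-wise bicolored, i.e.\ excludes $\Ocal^{3}$. Under that assumption $|\psi(V(C))|\leq2$, which rules out four colors in one component, so $C_1\neq C_2$ and the argument concludes.
\end{itemize}

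In the write-up I would state this hypothesis explicitly, or argue via the class defined by excluding $\Ocal^{3}\cup\Ocal^{4}$, since that combined class is the one used afterwards in defining crucial colorful graphs.
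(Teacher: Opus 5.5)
Your proof of the reverse implication is the argument the paper has in mind when it calls the observation immediate. Your suspicion about the forward implication is justified: as stated, it is false, and the paper overlooks exactly the case $C_{1}=C_{2}$ that you isolate.

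A concrete counterexample is $(K_{2},\psi)$ with palettes $\{1,2\}$ and $\{3,4\}$ on its two vertices. It has a single component, so it is vacuously single-component bicolored, yet deleting its edge produces a member of $\Ocal^{4}$. The same example refutes \zcref{obs_comBicolorClosed} as stated. The rest of the paper is unaffected, for three reasons:
\begin{enumerate}
\item the converse of \zcref{obs_CrucialMinimal} uses only your (true) reverse implication;
\item its forward direction applies the observation only to members of $\Ocal^{4}$ themselves, where the conclusion holds directly from the definition;
\item the conjunction of component-wise and single-component bicoloredness is closed under colorful minors, which is all that \zcref{obs_CrucialClosed} needs.
\end{enumerate}

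Two of your supporting claims are wrong, however.

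First, it is not true that every component carrying four colors contains a member of $\Ocal^{4}$, and splitting along a spanning tree does not work in general. Take $K_{1,4}$ with an uncolored center and leaves colored $1,2,3,4$: every connected set carrying two colors contains the center, so no two disjoint such sets exist. Likewise, $(K_{1},\rho_{[4]})$ has only one vertex. Neither contains a member of $\Ocal^{4}$. The counterexample therefore has to be exhibited concretely, as above.

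Second, for the same reason your first repair fails. If the definition allowed $C_{1}=C_{2}$, that is, forbade four colors in a single component, then these two colorful graphs would not be single-component bicolored while still excluding $\Ocal^{4}$. So the reverse implication would break. Your argument would also need $B_{1}=B_{2}=V(C)$ there, and these are not disjoint.

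Only your second repair is sound. For component-wise bicolored $(H,\psi)$ the case $C_{1}=C_{2}$ cannot occur, and your two directions then prove the equivalence. Equivalently, a colorful graph is both component-wise and single-component bicolored if and only if it excludes $\Ocal^{3}\cup\Ocal^{4}$, which is the form in which the paper actually uses this observation.
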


\subsection{Crucial colorful graphs}
\label{subsec_crucial_define}
Let $(H,\psi)$ be a colorful graph.
We say that $(H,\psi)$ is \emph{crucial} if the following conditions are satisfied:
\begin{enumerate}
\item $(H,\psi)$ is color-facial,
\item $(H,\psi)$ is color-segmented,
\item $(H,\psi)$ is component-wise bicolored, and 
\item $(H,\psi)$ is single-component bicolored.
\end{enumerate}

We use $\Qcal$ for the domain containing all 
colorful graphs that are crucial.

Recalling \zcref{obs_colorFacialClosed,obs_segmentationClosed,obs_SinglecomBicoloredClosed,obs_comBicolorClosed}, one can see immediately that crucial colorful graphs are also closed under taking colorful minors.

\begin{observation}\label{obs_CrucialClosed}%
The class of crucial colorful graphs is closed under colorful minors.
\end{observation}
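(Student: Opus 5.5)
The statement is Observation~\ref{obs_CrucialClosed}: crucial colorful graphs are closed under colorful minors. The plan is to verify each of the four defining conditions separately and then intersect. Crucial is defined as the conjunction of being color-facial, color-segmented, component-wise bicolored, and single-component bicolored. A class defined as an intersection of minor-closed classes is itself minor-closed. So it suffices to know that each of the four classes is closed under colorful minors. Then, for a crucial $(H,\psi)$ and any colorful minor $(H',\psi')\leq(H,\psi)$, each of the four properties passes from $(H,\psi)$ to $(H',\psi')$, and so $(H',\psi')$ is crucial.

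Each closure property is already available as an earlier observation:
\begin{itemize}
\item color-facial: \zcref{obs_colorFacialClosed};
\item color-segmented: \zcref{obs_segmentationClosed}, which was derived from the obstruction characterization \zcref{obs_colorSegmentation} and transitivity of $\leq$;
\item component-wise bicolored: \zcref{obs_SinglecomBicoloredClosed};
\item single-component bicolored: \zcref{obs_comBicolorClosed}.
\end{itemize}

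If one wants to avoid relying on the observations stated without proof, the uniform route is through the obstruction characterizations. By \zcref{lemma_colorFacial} and \zcref{obs_colorSegmentation,obs_SinglecomBicolored,obs_comBicolored}, each class is exactly the set of colorful graphs excluding a fixed family: $\Ocal^{0}\cup\Ocal^{1}$, $\Ocal^{2}$, $\Ocal^{3}$, and $\Ocal^{4}$ respectively. Any class of the form ``excludes every member of $\Fcal$'' is minor-closed by transitivity of the colorful minor relation. Indeed, if $(Z,\zeta)\leq(H',\psi')\leq(H,\psi)$ with $(Z,\zeta)\in\Fcal$, then $(Z,\zeta)\leq(H,\psi)$, a contradiction. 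Transitivity holds because a colorful minor is obtained by a sequence of the four allowed operations, and concatenating two such sequences is again such a sequence.

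There is no real obstacle here. The only point needing care is the closure of color-facial graphs, if one does not simply cite \zcref{obs_colorFacialClosed}; either \zcref{lemma_colorFacial} combined with transitivity, or a direct argument, handles it. The direct argument checks that each operation preserves the property. Deleting vertices or edges, and removing colors, keep an embedding in which all colored vertices lie on one face. Contracting an edge $uv$ in a plane embedding yields a plane embedding whose faces are the old faces with $u$ and $v$ identified. The merged vertex lies on the designated face, and so may carry the colors $\chi(u)\cup\chi(v)$, whenever $u$ or $v$ did, and all other colored vertices remain on that face.
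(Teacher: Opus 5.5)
Your proposal is correct and follows the paper's route: the paper likewise obtains the statement by intersecting the four classes, each already known to be closed under colorful minors by \zcref{obs_colorFacialClosed,obs_segmentationClosed,obs_SinglecomBicoloredClosed,obs_comBicolorClosed}. One caution about your alternative route: deriving closure of color-facial graphs from \zcref{lemma_colorFacial} would be circular, because the paper's proof of that lemma's forward direction itself uses \zcref{obs_colorFacialClosed}, so your direct operation-by-operation argument is the right fallback there.
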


We now need to justify why we excluded some colorful graphs from ${\Ocal}^{i}$ in order to define $\tilde{\Ocal}^{i},$ for $i\in[2].$

\begin{lemma}
\label{lemma_Q12qNotMinimal}%
The family $\tilde{\Ocal}^{1}$ contains exactly the members of ${\Ocal}^{1}$ that do not contain any member of ${\Ocal}^{2}\cup {\Ocal}^{3}$ as a colorful minor.
Moreover, $\tilde{\Ocal}^{2}$ contains exactly the members of ${\Ocal}^{2}$ that do not contain any member of ${\Ocal}^{3}$ as a colorful minor.
\end{lemma}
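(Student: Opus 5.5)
We prove both statements by direct inspection, using the branch-set description of colorful minors from the proof of \zcref{obs_colorSegmentation}. The argument runs in two directions. First, every discarded member of $\Ocal^{i}$ contains a member of $\Ocal^{2}\cup\Ocal^{3}$, respectively of $\Ocal^{3}$. Second, no member of $\tilde{\Ocal}^{i}$ contains such a graph.

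\emph{The easy direction for palettes of size at least three.} Let $(H,\psi)$ be in $\Ocal^{1}\cup\Ocal^{2}$ with $|\psi(V(H))|\geq 3$. Every member of these families is connected, so contracting all of $H$ to a single vertex yields a vertex carrying at least three colors. Removing surplus colors gives a member of $\Ocal^{3}$. It remains to handle the discarded $(K_{4},\pi_{2})$ with color classes $X_1=\{a,b\}$ colored $i$ and $X_2=\{c,d\}$ colored $j$. The cycle $a,c,b,d$ in $K_{4}$ alternates colors $i,j,i,j$. Deleting the chords $ab$ and $cd$ therefore gives a $(C_{4},\sigma_1)$ in which opposite vertices share a color and adjacent vertices have disjoint palettes. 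This graph lies in $\Ocal^{2}$.

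\emph{The converse for $\tilde{\Ocal}^{2}$.} Every member of $\tilde{\Ocal}^{2}$ has palette of size at most two. Since colorful minors never enlarge the total palette, it cannot contain a member of $\Ocal^{3}$.

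\emph{The converse for $\tilde{\Ocal}^{1}$.} Again, palettes of size at most two exclude $\Ocal^{3}$. The main work is excluding $\Ocal^{2}$, and we argue by counting colored branch sets. A model of $(K_{3},\sigma_2)$ or $(K_{1,3},\sigma_3)$ needs three disjoint connected branch sets, each seeing both colors. For $K_{5}^{-}$, $K_{3,3}^{-}$ and $K_{2,3}$, at most three vertices are colored, and when two colors occur they sit on distinct vertices. So each such branch set must contain two colored vertices, and three disjoint branch sets would need at least six colored vertices, which is impossible. The same bound handles $K_{4}$, which has only four colored vertices, each with a single color.

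It remains to exclude $(C_{4},\sigma_1)$. This needs two disjoint colors $i,j$ and four disjoint branch sets with colors alternating $i,j,i,j$ around the cycle. Each branch set contains at least one colored vertex, so at least four colored vertices are required. Hence only $K_{4}$ with four colored vertices is relevant, and then every branch set is a singleton. Exactly two vertices must then have color $i$ and two color $j$, which is precisely the discarded configuration. Therefore no member of $\tilde{\Ocal}^{1}$ contains a $C_{4}$-type obstruction.

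This case check on $C_{4}$ is the only step needing care. One must check that branch sets in the $K_{2,3}$, $K_{3,3}^{-}$ and $K_{5}^{-}$ cases cannot be enlarged to produce four disjoint colored sets; this fails simply because those graphs have at most three colored vertices.
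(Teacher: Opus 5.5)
Your proof is correct, and its overall structure is the paper's. The discarded members are handled exactly as there: a connected graph with at least three colors contracts to a member of $\Ocal^{3}$, and the $K_{4}$ whose colors split two and two contains the alternating $4$-cycle. For the kept members, a palette of size at most two rules out $\Ocal^{3}$ in both proofs.

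The difference lies in the remaining step, showing that no member of $\tilde{\Ocal}^{1}$ contains a member of $\Ocal^{2}$. The paper settles this with a one-line remark: every cycle of these graphs carries at most two colors, one of which appears at most once.

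You instead count colored vertices against branch sets. Every colored vertex of a member of $\Ocal^{1}$ carries exactly one color. Hence a $K_{3}$- or $K_{1,3}$-type model needs six colored vertices, but at most four exist. A $C_{4}$-type model needs four colored vertices, which forces the host to be $K_{4}$ with singleton branch sets, and hence with the discarded two-two coloring. (Here you tacitly use that the host has at most two colors, so the $C_{4}$ obstruction must alternate $i,j,i,j$.)

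Your version buys two things:
\begin{enumerate}
\item It treats the $(K_{1,3},\sigma_{3})$ obstruction, whose model need not contain any cycle, on the same footing as the others. The paper's cycle-based remark does not directly address it.
\item It shows explicitly why the two-two colored $K_{4}$ is the unique exception. The paper's phrasing has to be read charitably: taken literally it fails for monochromatic members, which are harmless only because every member of $\Ocal^{2}$ needs two colors.
\end{enumerate}
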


\begin{proof}
By the definition of $\tilde{\Ocal}^{1},$ if a colorful graph $(H,\psi)\in{\Ocal}^{1}$ is not contained in $\tilde{\Ocal}^{1},$ this is because either it is not component-wise bicolored or it is $(K_{4},\pi_{2})$ where two vertices carry the color $c_{1}$ and the other two carry a different color $c_{2}.$
In the first case, $(H,\psi)$ contains a colorful graph from $\Ocal^{3}$ as a colorful minor, while in the second, the colorful graph $(K_{4},\pi_{2})$ contains the colorful graph $(C_{4},\sigma_1)\in\Ocal^{2}$ as a colorful minor, where the two non-adjacent vertices $a,c\in V(C_4)$ satisfy $\sigma_{1}(a)=\sigma_{1}(c)=c_{1}$ and the other two, say $b,d,$ satisfy $\sigma_{1}(b)=\sigma_{1}(d)=c_{2}.$
Notice now that none of the colorful graphs in $\tilde{\Ocal}^{1}$ contain any of the 
colorful graphs in ${\Ocal}^{2}$ as a colorful minor.
This is because every cycle of $(K_{5}^{-},\pi_{1}),$ $(K_{4},\pi_{2}),$ $(K_{3,3}^{-},\pi_{3}),$ and $(K_{2,3},\pi_{4})\in \tilde{\Ocal}^{1}$ carries at most two colors and one of those colors appears at most once.
Also, as the same colorful graphs carry at most two colors, they cannot contain any of the colorful graphs in ${\Ocal}^{3}$ as colorful minors.

For the second statement, recall that in the definition of $\tilde{\Ocal}^{2}$ we discarded all members of ${\Ocal}^{2}$ carrying more than two colors and these are exactly those that contain some member of ${\Ocal}^{3}$ as a colorful minor.
\end{proof}

\subsection{Obstructions to cruciality}
\label{subsect_obs_cru}

We now define 
\begin{align*}
\Ocal\coloneqq\Ocal^{0}\cup \tilde{\Ocal}^{1}\cup\tilde{\Ocal}^{2}\cup\Ocal^{3}\cup\Ocal^{4}.
\end{align*}
Notice that the sets $\tilde{\Ocal}^{1},$ $\tilde{\Ocal}^{2},$ $\Ocal^{3},$ and $\Ocal^{4},$ and therefore also $\Ocal,$ are infinite sets of colorful graphs; however, for every $I\finsub \Nbbb_{\geq1},$ the set $\Ocal_{I}=\Ocal|_{I}$ is finite, as we compute below.

The following lemma is the main ingredient for the equivalence of the statements \ref{it_s_cru} and \ref{it_s_O} of \zcref{th_EP_single}.
\begin{lemma}\label{obs_CrucialMinimal}%
A colorful graph is crucial if and only if it does not contain any member of $\Ocal$ as a colorful minor. 
\end{lemma}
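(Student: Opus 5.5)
By definition, $(H,\psi)$ is crucial exactly when it is color-facial, color-segmented, component-wise bicolored and single-component bicolored. By \zcref{lemma_colorFacial}, \zcref{obs_colorSegmentation}, \zcref{obs_SinglecomBicolored} and \zcref{obs_comBicolored}, these four properties are equivalent to excluding $\Ocal^{0}\cup\Ocal^{1}$, $\Ocal^{2}$, $\Ocal^{3}$ and $\Ocal^{4}$ respectively. So a colorful graph is crucial if and only if it excludes every member of
$$\Ocal^{*}\coloneqq\Ocal^{0}\cup\Ocal^{1}\cup\Ocal^{2}\cup\Ocal^{3}\cup\Ocal^{4}.$$
It therefore suffices to show that excluding $\Ocal^{*}$ is equivalent to excluding $\Ocal=\Ocal^{0}\cup\tilde{\Ocal}^{1}\cup\tilde{\Ocal}^{2}\cup\Ocal^{3}\cup\Ocal^{4}$.

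Since $\Ocal\subseteq\Ocal^{*}$, any colorful graph excluding $\Ocal^{*}$ also excludes $\Ocal$. This gives the direction \say{crucial implies $\Ocal$-free}.

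For the converse, let $(H,\psi)$ exclude all of $\Ocal$, and suppose for contradiction that it contains some $(Z,\zeta)\in\Ocal^{*}\setminus\Ocal$ as a colorful minor. Then $(Z,\zeta)$ lies in $\Ocal^{1}\setminus\tilde{\Ocal}^{1}$ or in $\Ocal^{2}\setminus\tilde{\Ocal}^{2}$. We use \zcref{lemma_Q12qNotMinimal} in two cases.
\begin{itemize}
\item If $(Z,\zeta)\in\Ocal^{2}\setminus\tilde{\Ocal}^{2}$, then $(Z,\zeta)$ contains a member of $\Ocal^{3}\subseteq\Ocal$ as a colorful minor. By transitivity of the colorful minor relation, so does $(H,\psi)$, a contradiction.
\item If $(Z,\zeta)\in\Ocal^{1}\setminus\tilde{\Ocal}^{1}$, then $(Z,\zeta)$ contains some member $(Z',\zeta')$ of $\Ocal^{2}\cup\Ocal^{3}$. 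If $(Z',\zeta')\in\Ocal^{3}$, or $(Z',\zeta')\in\tilde{\Ocal}^{2}$, we are done as before. Otherwise $(Z',\zeta')\in\Ocal^{2}\setminus\tilde{\Ocal}^{2}$, and the first case yields a member of $\Ocal^{3}$ inside $(Z',\zeta')$, hence inside $(H,\psi)$, again a contradiction.
\end{itemize}
The reduction always terminates in $\Ocal$, because $\Ocal^{3}\subseteq\Ocal$ and $\Ocal^{3}$ is never discarded.

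The only point needing care is that \zcref{lemma_Q12qNotMinimal} supplies exactly this forward direction: every discarded member contains a member of a lower-priority set that is itself either kept or further reducible. Its proof gives this explicitly. A member of $\Ocal^{1}$ with at least three colors contains a member of $\Ocal^{3}$, obtained by contracting a connected set meeting three colors. The bicolored $(K_{4},\pi_{2})$ with the $\{i\},\{i\},\{j\},\{j\}$ pattern contains a $(C_{4},\sigma_{1})$ with two colors, which lies in $\tilde{\Ocal}^{2}$.

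If one also wants $\Ocal$ to be exactly $\obs(\Qcal)$, it remains to check minimality: each member of $\Ocal$ is not crucial, while each of its proper colorful minors is. This follows from the anti-chain observations (\zcref{obs_Oq1anti_chain}, \zcref{obs_Oq2anti_chain}, \zcref{obs_Qq3anti_chain}, \zcref{obs_comBicoloranti_chain}) together with the reverse part of \zcref{lemma_Q12qNotMinimal}, which says kept members of $\Ocal^{1}$ and $\Ocal^{2}$ contain no members of the lower sets. The remaining cross-level comparisons, for example that no member of $\Ocal^{3}\cup\Ocal^{4}$ sits below a member of $\tilde{\Ocal}^{1}$, are settled by counting colors, since members of $\tilde{\Ocal}^{1}$ and $\tilde{\Ocal}^{2}$ carry at most two colors. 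I expect this bookkeeping to be the main tedious step, though the stated lemma only requires the equivalence proved above.
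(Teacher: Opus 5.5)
Your proposal is correct and essentially the paper's proof. The converse direction is the same reduction of the discarded members of $\Ocal^{1}$ and $\Ocal^{2}$ to members of $\Ocal^{2}\cup\Ocal^{3}$ via \zcref{lemma_Q12qNotMinimal}. For the forward direction you use $\Ocal\subseteq\Ocal^{0}\cup\Ocal^{1}\cup\Ocal^{2}\cup\Ocal^{3}\cup\Ocal^{4}$ together with the four characterizations, whereas the paper argues via \zcref{obs_CrucialClosed} and the non-cruciality of every member of $\Ocal$, which rests on those same four characterizations (your closing remarks on $\obs(\Qcal)=\Ocal$ correspond to \zcref{cor_obsQ} and are not needed for the lemma).
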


\begin{proof}
For the forward direction, let $(G,\chi)$ be a crucial colorful graph and assume, towards a contradiction, that it contains some member $(Z,\zeta)$ of $\Ocal$ as a colorful minor.
By \zcref{obs_CrucialClosed}, $(Z,\zeta)$ is crucial as well.
However, no member of $\Ocal$ is crucial.
Indeed, as every colorful graph contains itself as a colorful minor, \zcref{lemma_colorFacial} implies that no member of $\Ocal^{0}\cup \tilde{\Ocal}^{1}$ is color-facial, \zcref{obs_colorSegmentation} implies that no member of $\tilde{\Ocal}^{2}$ is color-segmented, \zcref{obs_SinglecomBicolored} implies that no member of $\Ocal^{3}$ is component-wise bicolored, and \zcref{obs_comBicolored} implies that no member of $\Ocal^{4}$ is single-component bicolored.
This contradiction proves that no crucial colorful graph contains a member of $\Ocal$ as a colorful minor.

For the converse direction, let $(G,\chi)$ be a colorful graph that does not contain any member of $\Ocal$ as a colorful minor.
We first prove that $(G,\chi)$ does not contain any member of $\Ocal^{0}\cup\Ocal^{1}\cup\Ocal^{2}\cup\Ocal^{3}\cup\Ocal^{4}$ as a colorful minor.
This is immediate for the members of $\Ocal^{0},$ $\Ocal^{3},$ and $\Ocal^{4},$ as these three sets are subsets of $\Ocal.$
Next, consider some $(Z,\zeta)\in\Ocal^{2}\setminus\tilde{\Ocal}^{2}.$
By \zcref{lemma_Q12qNotMinimal}, $(Z,\zeta)$ contains some member of $\Ocal^{3}$ as a colorful minor and therefore, by the transitivity of the colorful minor relation, $(G,\chi),$ which excludes every member of $\Ocal^{3},$ cannot contain $(Z,\zeta)$ as a colorful minor.
Hence $(G,\chi)$ does not contain any member of $\Ocal^{2}$ as a colorful minor.
Finally, consider some $(Z,\zeta)\in\Ocal^{1}\setminus\tilde{\Ocal}^{1}.$
Again by \zcref{lemma_Q12qNotMinimal}, $(Z,\zeta)$ contains some member of $\Ocal^{2}\cup\Ocal^{3}$ as a colorful minor and, as we have already proved that $(G,\chi)$ excludes every member of $\Ocal^{2}\cup\Ocal^{3},$ we conclude that $(G,\chi)$ does not contain any member of $\Ocal^{1}$ as a colorful minor.
Now \zcref{lemma_colorFacial} implies that $(G,\chi)$ is color-facial, \zcref{obs_colorSegmentation} that it is color-segmented, \zcref{obs_SinglecomBicolored} that it is component-wise bicolored and \zcref{obs_comBicolored} that it is single-component bicolored.
Hence, $(G,\chi)$ is crucial.
\end{proof}

We are also able to obtain the size of the sets $\Ocal_{I}$ precisely.
Our previous discussions on the families $\Ocal^{i}$ and $\tilde{\Ocal}^{i}$ imply that, for every $I\finsub \Nbbb_{\geq1}$ with $q\coloneqq |I|,$
\begin{align*}
|\Ocal_{I}|=2+4q+6\binom{q}{2}+3\binom{q}{2}+\binom{q}{3}
+3\binom{q}{4}= \frac{1}{24} \left( 3q^4 - 14q^3 + 129q^2 - 22q + 48 \right).
\end{align*}

Interestingly, these numbers, being polynomially bounded, are rather tame.
Indeed, for some small values of $q$ we get $|\Ocal_{[0]}|=2,$ $|\Ocal_{[1]}|=6,$ $|\Ocal_{[2]}|=19,$ $|\Ocal_{[3]}|=42,$ $|\Ocal_{[4]}|=79,$ $|\Ocal_{[5]}|=137,$ and $|\Ocal_{[6]}|=226.$

Using \zcref{obs_Oq1anti_chain,obs_Oq2anti_chain,obs_Qq3anti_chain,obs_comBicoloranti_chain} and the fact that none of the colorful
graphs in one of the sets in $\{\Ocal^{0}, \tilde{\Ocal}^{1},\tilde{\Ocal}^{2},\Ocal^{3},\Ocal^{4}\}$ is a colorful minor 
of a colorful graph of another set, we may also conclude the following.

\begin{corollary}\label{cor_obsQ}
The set $\Ocal$
is the obstruction set of the domain $\Qcal$ of crucial colorful graphs, i.e., $\obs(\Qcal)=\Ocal$.
\end{corollary}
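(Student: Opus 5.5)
The statement $\obs(\Qcal)=\Ocal$ splits into two inclusions, and I would prove them separately. \textbf{Inclusion $\obs(\Qcal)\subseteq\Ocal$.} Let $(G,\chi)\in\obs(\Qcal)$. Since $(G,\chi)$ is not crucial, \zcref{obs_CrucialMinimal} gives some $(Z,\zeta)\in\Ocal$ with $(Z,\zeta)\leq(G,\chi)$. If this containment were proper, then $(Z,\zeta)$ would be a proper colorful minor of an obstruction and hence crucial. But \zcref{obs_CrucialMinimal}, applied to $(Z,\zeta)$ itself, shows it is not crucial, a contradiction. So $(G,\chi)=(Z,\zeta)\in\Ocal$.

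\textbf{Inclusion $\Ocal\subseteq\obs(\Qcal)$.} Let $(Z,\zeta)\in\Ocal$. It is not crucial by \zcref{obs_CrucialMinimal}. It remains to show that every proper colorful minor $(Z',\zeta')$ of $(Z,\zeta)$ is crucial. Suppose not. Then, again by \zcref{obs_CrucialMinimal}, $(Z',\zeta')$ contains some $(Y,\upsilon)\in\Ocal$, so $(Y,\upsilon)$ is a proper colorful minor of $(Z,\zeta)$. The whole proof therefore reduces to the claim that $\Ocal$ is an antichain.

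\textbf{$\Ocal$ is an antichain.} Within a single family this holds by \zcref{obs_Oq1anti_chain,obs_Oq2anti_chain,obs_Qq3anti_chain,obs_comBicoloranti_chain}, since $\Ocal^0\cup\tilde{\Ocal}^1\subseteq\Ocal^0\cup\Ocal^1$ and $\tilde{\Ocal}^2\subseteq\Ocal^2$. Across families I would use invariants that are monotone under colorful minors: vertex count, palette size of a single vertex or a single component, planarity, and the number of colored components.
\begin{itemize}
\item Members of $\Ocal^3$ have a vertex with three colors. Every member of $\Ocal^0\cup\tilde{\Ocal}^1\cup\tilde{\Ocal}^2$ has at most two colors in total, and every member of $\Ocal^4$ has at most two colors per component. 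So no member of $\Ocal^3$ embeds in any other family. Conversely, a single vertex can only be a minor of another member of $\Ocal^3$.
\item Members of $\Ocal^4$ have two components each carrying two colors, with four colors in total. Every other member of $\Ocal$ is connected, and a connected graph has no minor with two such components. Conversely, the members of $\Ocal^4$ have no edges and each vertex carries only two colors, so the only member of $\Ocal$ they can contain is another member of $\Ocal^4$.
\item Members of $\Ocal^0$ are uncolored, hence not minors of anything colored in the wrong way. They are also not minors of the planar members of $\tilde{\Ocal}^1$ ($K_5^-$, $K_4$, $K_{3,3}^-$, $K_{2,3}$ are all planar) or of the planar $\tilde{\Ocal}^2$. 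Conversely, $(K_5,\rho_\emptyset)$ and $(K_{3,3},\rho_\emptyset)$ carry no colors, so they contain no colored graph.
\item The only genuinely delicate comparison is between $\tilde{\Ocal}^1$ and $\tilde{\Ocal}^2$. By \zcref{lemma_Q12qNotMinimal}, no member of $\tilde{\Ocal}^1$ contains a member of $\Ocal^2$. Conversely, members of $\tilde{\Ocal}^2$ have at most four vertices and are either a cycle $C_4$, a triangle or a star. A member of $\tilde{\Ocal}^1$ with four vertices is $K_4$, which has six edges, more than $C_4$. The larger members have too many vertices, and a triangle or star has too few edges to contain $K_4$. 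So none of them is a minor of a member of $\tilde{\Ocal}^2$.
\end{itemize}

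I expect this last bookkeeping between $\tilde{\Ocal}^1$ and $\tilde{\Ocal}^2$ to be the main obstacle. It is routine but requires careful use of \zcref{lemma_Q12qNotMinimal}, which is precisely why the discarded members were removed when passing from $\Ocal^1,\Ocal^2$ to $\tilde{\Ocal}^1,\tilde{\Ocal}^2$.
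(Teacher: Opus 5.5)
Your reduction is the paper's: both inclusions come from \zcref{obs_CrucialMinimal}, and the second also needs $\Ocal$ to be an antichain. That antichain property follows from the within-family observations together with cross-family non-containment, which the paper simply asserts and you try to verify. One of your cross-family checks, however, rests on a false claim. It is not true that a connected graph has no colorful minor consisting of two components each carrying two colors, since colorful minors may delete edges.

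A counterexample inside the paper itself is the member $(K_{4},\pi_{2})$ of $\Ocal^{1}$ with four distinct colors. It is connected, yet contracting two disjoint edges and then deleting the remaining edge yields a member of $\Ocal^{4}$. This is precisely one of the graphs discarded when passing from $\Ocal^{1}$ to $\tilde{\Ocal}^{1}$, so connectivity cannot be the reason your claim holds for the members of $\Ocal$.

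The correct reason is a palette count. Every member of $\Ocal^{0}\cup\tilde{\Ocal}^{1}\cup\tilde{\Ocal}^{2}\cup\Ocal^{3}$ carries at most three colors in total: none, at most two, at most two, and three on a single vertex, respectively. Colorful minor operations never create colors, while every member of $\Ocal^{4}$ carries four distinct colors, so no such containment is possible.

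Your ``conversely'' sentence in the $\Ocal^{3}$ item is also garbled. What you need is that a one-vertex colorful graph has no colorful minor with two or more vertices, while every member of $\Ocal\setminus\Ocal^{3}$ has at least two vertices. With these two repairs the argument is complete.
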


\subsection{The domain \texorpdfstring{$\Ucal$}{U} and crucial colorful graphs}
\label{subsect_U_crucial}

We conclude by proving that the domain $\Ucal,$ defined in \zcref{subsec_Results}, is precisely the domain $\Qcal$ of crucial colorful graphs and by proving that every $I$-colorful member of $\Ucal$ is a colorful minor of every large enough $(I,k)$-segregated grid.
We use the following result of \cite{ProtopapasTW2025Colorful}.
Recall that, for every $I\finsub \Nbbb_{\geq1}$ with $|I|\leq 2$ and every $k\in\Nbbb_{\geq 1},$ there is a unique $(I,k)$-segregated grid.

\begin{proposition}[\cite{ProtopapasTW2025Colorful}]
\label{prop_gridFolding}
For every crucial colorful graph $(H,\psi)$ with $|\psi(H)|\leq 2,$ there is some $k\in\Nbbb_{\geq 1}$ such that $(H,\psi)$ is a colorful minor of the $(\psi(H),k)$-segregated grid.
\end{proposition}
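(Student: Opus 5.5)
The plan is to split according to $q\coloneqq|\psi(H)|\in\{0,1,2\}$ and to reduce every case to one statement: a plane graph whose prescribed vertices lie on the outer face is a minor of a large grid, with a model in which the branch sets of those vertices reach the first column in a prescribed cyclic order. Throughout I use the branch-set description of colorful minors from the proof of \zcref{obs_colorSegmentation}: a model of $(H,\psi)$ in $(G,\chi)$ is a family of pairwise disjoint connected sets $B_v$, adjacent along the edges of $H$, with $\psi(v)\subseteq\chi(B_v)$.

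\textbf{The cases $q=0$ and $q=1$.} For $q=0$, cruciality includes color-facial, so $H$ is planar, and the grid theorem of \cite{RobertsonS1986Grapha} gives that $H$ is a minor of some $(k\times k)$-grid, which is the $(\emptyset,k)$-segregated grid. For $q=1$, color-faciality lets me fix a plane embedding of $H$ with all colored vertices on the outer face. I add a cycle $C$ around $H$ and join each colored vertex to its own private vertex of $C$, keeping the graph plane. The result is again planar, so it is a minor of a large grid. By the standard rerouting argument (enlarge the grid by a frame and push the model of $C$ onto the outer ring), $C$ can be modelled along the boundary of the grid, with the private vertices of $C$ landing on the first column. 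Each of these first-column vertices is colored, so the branch set of each colored vertex of $H$ is extended by its private attachment path; contracting these paths then yields $(H,\psi)$.

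\textbf{The case $q=2$, say colors $1$ and $2$.} Here I would first extract a combinatorial normal form from conditions A, B and C together with color-faciality.

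\begin{enumerate}
\item I walk around the outer face of each component and consider its colored vertices. Condition A forbids four pairwise disjoint connected pieces arranged cyclically with colors $1,2,1,2$. Taking the pieces to be arcs of the facial walk, this should force the cyclic sequence of colors to split into one arc carrying color $1$ and one arc carrying color $2$.
\item Vertices carrying both colors must sit at the two transitions between these arcs. Conditions B and C (no triangle and no claw of bicolored connected sets) are what I expect to rule out bicolored vertices occurring elsewhere.
\item Single-component bicolored does not constrain anything when $q=2$. So several components can be placed in disjoint horizontal strips of the grid. Only one strip may contain the color-$1$/color-$2$ transition of the first column; the other strips must be routed through the grid interior to reach their colored blocks, which needs a little care.
\end{enumerate}

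Given this normal form, I place $C$ as in the case $q=1$ so that the color-$1$ arc of the face attaches to the top half of the first column and the color-$2$ arc to the bottom half. A vertex carrying both colors at a transition can be given a branch set that reaches one first-column vertex of each color: either across the middle transition of the column, or around the outer frame to the two ends of the column.

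\textbf{Main obstacle.} The hard part is step 1. Conditions A–C are phrased for connected sets rather than for positions along a face, so I must show that any interleaving or misplaced bicolored vertex on the face produces such sets. This is complicated by two facts: the facial walk may repeat vertices, and a component may have several blocks. My first attempt would be to take the branch sets as maximal facial subwalks between consecutive color changes, using the interior of $H$ only to guarantee adjacency between them. If this fails, I would fall back on induction over the block–cut tree of each component.
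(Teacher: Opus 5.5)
\textbf{There is a genuine gap in the case $q=2$.} Your cases $q=0$ and $q=1$ are fine in outline; modelling a plane graph in a large grid with a prescribed facial cycle on the outer ring is standard. The case $q=2$, however, is the actual content of the statement, and the paper imports it from \cite{ProtopapasTW2025Colorful} without proof, so your argument must stand on its own. It does not: both parts of the normal form you aim for fail.

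\textbf{Step~1 cannot be obtained from facial walks.} For $2$-connected components your arc argument works. For other components, condition~A does not prevent interleaving along the facial walk. Take the path $b_1a_1xa_2b_2$ with $\psi(a_1)=\psi(a_2)=\{1\}$, $\psi(b_1)=\psi(b_2)=\{2\}$ and $\psi(x)=\emptyset$. It is crucial: it has no cycle, and a subpath has at most two disjoint connected sets adjacent to it. Yet its unique facial walk $b_1,a_1,x,a_2,b_2,a_2,x,a_1$ contains $b_1,a_1,b_2,a_2$ in this cyclic order with colors $2,1,2,1$. The maximal facial subwalks you propose as branch sets overlap in $a_1,x,a_2$. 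What can hold is a statement about a disk drawing in which every colored vertex is a single boundary point; here the order $b_1,a_1,a_2,b_2$ works. Proving that such a non-interleaved drawing exists requires choosing flips of blocks and orders of subtrees at cut vertices consistently across the whole component, and that is exactly what you leave open.

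\textbf{Step~2 is false.} Conditions~B and~C do not confine bicolored vertices to the two transitions. The $\{1,2\}$-rainbow path on $n$ vertices is crucial for every $n$ (see the proof of \zcref{cor_final_EP}), and all its vertices are bicolored. Your placement, with one bicolored branch set across the middle of the first column and one around the frame, therefore cannot realize it once $n\geq 3$.

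\textbf{The missing idea} is what a segregated grid does with bicolored vertices:
\begin{enumerate}
\item every branch set carrying both colors must join the $1$-block to the $2$-block;
\item pairwise disjoint such sets are nested like parallel chords, hence linearly ordered;
\item a connected set avoiding them can touch at most two of them, and those two are consecutive.
\end{enumerate}
Conditions~B and~C forbid a triangle or a claw of disjoint bicolored connected sets, which is what makes the bicolored parts of $H$ path-like. That is what allows them to be realized as consecutive nested chords, with the remaining parts placed between consecutive chords. The same nesting, rather than horizontal strips, is what several components need; compare \zcref{lemma_gridHosting}.

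\textbf{A possible repair.} One way to organize this would be to show that, for crucial $(H,\psi)$ with palette $\{1,2\}$, the graph obtained by adding adjacent vertices $a$ and $b$, joined to the vertices carrying $1$ and $2$ respectively, is planar. This is the converse of the argument in \zcref{obs_generatorsCrucial}, presumably via a Kuratowski analysis as in \zcref{lemma_colorFacial}. You would then read the nested-chord model off an embedding with the edge $ab$ on the outer face.
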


We first observe that the graphs generating $\Ucal$ are themselves crucial.

\begin{observation}
\label{obs_generatorsCrucial}
For every $q\in\Nbbb_{\geq 3},$ every $k\in\Nbbb_{\geq 1},$ every $i\in[q],$ and every $Z\subseteq[q]$ with $|Z|=3,$ the colorful graphs $U^{q}_{i,k}$ and $T^{q}_{Z,k}$ are crucial.
\end{observation}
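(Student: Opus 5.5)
The plan is to verify the four conditions of cruciality directly for $U^{q}_{i,k}$ and $T^{q}_{Z,k}$. Since each of these is a disjoint union of segregated grids, and each condition is either about single components (color-facial, color-segmented, component-wise bicolored) or about pairs of components (single-component bicolored), we treat the components first and then the interaction between them.

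\textbf{Components.} Every component is an $(I,k)$-segregated grid with $|I|\le 2$. It is planar, and all its colored vertices lie on the first column, hence on the outer face. Taking one such embedding for each component and placing the components side by side in the outer face gives an embedding of the whole union with all colored vertices on a single face, so the union is color-facial. Each component carries at most two colors, so the union is component-wise bicolored.

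\textbf{Single-component bicolored.} We must check that no two components carry four distinct colors between them. In $U^{q}_{i,k}$ every two-colored component contains the color $i$, so the color sets of any two components always intersect and no four distinct colors can occur. In $T^{q}_{Z,k}$ the two-colored components have palettes inside $Z$, and $|Z|=3$. Any two such palettes therefore lie within three colors, and a single-colored component contributes only one color. Hence in either case two components never supply four distinct colors.

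\textbf{Color-segmented (the main step).} Branch sets are connected, so each lies in a single component. Condition B would need three pairwise adjacent branch sets, and C would need four branch sets around a common $B_0$; in both cases all sets lie in one component, i.e. in a single $(I,k)$-segregated grid with $|I|\le 2$. Condition A also lives in one component, and there the four colors $c_1,\dots,c_4$ with $\{c_1,c_3\}\cap\{c_2,c_4\}=\emptyset$ must come from a two-color palette $\{a,b\}$. This forces $c_1=c_3=a$ and $c_2=c_4=b$, an alternating pattern around a $C_4$-model.

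So the argument reduces to showing that a single $(\{a,b\},k)$-segregated grid (or a one-color grid) excludes every member of $\Ocal^2$ restricted to $\{a,b\}$. For this we use the boundary structure. The colored vertices lie on the first column, with all $a$-vertices forming one contiguous segment and all $b$-vertices forming the complementary segment.

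First consider A. Take the grid plus a new apex vertex adjacent to the first column; this is still planar. The four branch sets $B_1,\dots,B_4$ each touch the first column and form a cyclic adjacency chain. Contracting them together with the apex yields a planar model in which the cyclic order of $B_1,\dots,B_4$ around the apex is forced to be the order along the boundary. Their color witnesses would then have to alternate $a,b,a,b$ along the first column, which is impossible because the $a$'s and $b$'s each form a single segment.

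Next consider B and C. Each involved branch set would need both colors, so each must contain a vertex of the $a$-segment and a vertex of the $b$-segment. Any two such disjoint connected sets in a plane graph with both boundary segments on the outer face cross-cut each other. Using again the planarity of the grid plus an apex adjacent to the first column, at most two disjoint connected sets can each touch both segments while remaining separated in a way compatible with a triangle or with a claw centered at $B_0$.

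The expected obstacle is making this last step rigorous. The cleanest route is probably to avoid the topology entirely: since these grids are exactly the hosts from the earlier paper, we can invoke the cruciality already implicit in \cite{ProtopapasTW2025Colorful} for $|I|\le2$ segregated grids, or argue via the disk model with boundary arcs.
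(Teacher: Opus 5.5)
Your treatment of color-facial, component-wise bicolored and single-component bicolored is correct. So is the reduction of A, B and C to a single $(\{a,b\},k)$-segregated grid. Your argument for A also works once made precise: take one spoke per branch set, at a vertex witnessing its color. In every plane embedding of the wheel $W_{4}$ the spokes leave the hub in rim order, so the witnesses would have to alternate $a,b,a,b$ along the first column.

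The gap is in B and C, which you do not actually prove. The assertion that two disjoint connected sets meeting both blocks ``cross-cut each other'' has no content, since disjoint connected subgraphs of a plane graph do not cross. The assertion that at most two disjoint connected sets can meet both blocks is false: the grid contains arbitrarily many pairwise disjoint nested paths from the $a$-block to the $b$-block. The qualifier ``in a way compatible with a triangle or with a claw'' is exactly the statement that has to be proved. Moreover, a single apex $v$ cannot yield a Kuratowski obstruction here. Contracting the branch sets gives only $K_{4}$ on $\{B_{1},B_{2},B_{3},v\}$ for B, and only $K_{2,3}$ (possibly plus the edge $B_{0}v$) for C, and both are planar. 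Finally, falling back on something ``implicit'' in \cite{ProtopapasTW2025Colorful} is not a proof step. The paper imports from there only the converse direction, \zcref{prop_gridFolding}, and proves this observation directly.

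The missing idea is to give each color its own apex. Add a vertex $a$ adjacent to the whole $a$-block, a vertex $b$ adjacent to the whole $b$-block, and the edge $ab$. The two blocks lie on complementary arcs of the outer boundary. So $a$ and $b$ can be placed on the two sides of an arc through the outer face joining a point between the blocks to a point of the uncolored part of the outer cycle, with $ab$ drawn near the first point; the augmented graph is therefore planar. Every branch set carrying both colors is now adjacent to both $a$ and $b$. Hence:
\begin{itemize}
\item a failure of B contracts to $K_{5}$ on $\{B_{1},B_{2},B_{3},a,b\}$;
\item a failure of C contracts to $K_{3,3}$ with sides $\{B_{1},B_{2},B_{3}\}$ and $\{B_{0},a,b\}$;
\item a failure of A contracts, uniformly, to $K_{3,3}$ with sides $\{B_{1},B_{3},b\}$ and $\{B_{2},B_{4},a\}$.
\end{itemize}
This is the paper's argument.

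If you prefer to keep a single apex $v$, a Jordan-curve argument also closes the gap. For each $B_{m}$ pick witnesses $\alpha_{m}$ and $\beta_{m}$ of the two colors. The cycle through $v$, $\alpha_{m}$, a path in $B_{m}$, and $\beta_{m}$ has every other branch set entirely on one side. So the pairs $\{\alpha_{m},\beta_{m}\}$ are nested along the first column, and the cycle of the middle pair separates the other two branch sets. Those two can then be neither adjacent (B) nor both adjacent to a connected $B_{0}$ that avoids the cycle (C).
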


\begin{proof}
Each of $U^{q}_{i,k}$ and $T^{q}_{Z,k}$ is the disjoint union of $(I,k)$-segregated grids where $|I|\leq 2$ (see \zcref{fig_UsGrids,fig_Tgrids}).
Every such grid has a plane embedding where the first column, which contains all colored vertices, lies on the boundary of the outer face.
Embedding the grids of the disjoint union side by side, we obtain a plane embedding where all colored vertices lie on the boundary of a single face.
Hence both graphs are color-facial.

In each of A, B, and C the sets involved have connected union, as $B_{i}$ and $B_{i+1}$ are adjacent in A, the three sets are pairwise adjacent in B, and $B_{0}$ is adjacent to each of the others in C.
They therefore lie in a single component, so it suffices to verify the three conditions for one $(I,k)$-segregated grid $(G,\chi)$ with $|I|\leq 2.$

If $|I|\leq 1,$ then $|\chi(B)|\leq 1$ for every $B\subseteq V(G),$ so B and C hold, and there are no colors $c_{1},c_{2},c_{3},c_{4}$ in $I$ with $\{c_{1},c_{3}\}\cap\{c_{2},c_{4}\}=\emptyset,$ so A holds as well.
Assume now that $I=\{x,y\}$ and let $X$ and $Y$ be the sets of vertices colored $x$ and $y$ respectively; these are the two blocks of the first column and they contain all colored vertices of $(G,\chi).$
Consider the embedding of $G$ in a closed disk $\Delta$ whose boundary contains the outer cycle of the grid.
The first column is a subpath of that cycle, met in the order $v_{1},\dots,v_{2k},$ so we may choose two points $\alpha$ and $\omega$ of $\partial\Delta,$ neither of them a vertex, such that the two arcs $A$ and $A'$ of $\partial\Delta$ that they determine satisfy $X\subseteq A$ and $Y\subseteq A'$: for $\alpha$ we take a point between $v_{k}$ and $v_{k+1}$ and for $\omega$ a point of the part of the outer cycle that carries no colors.

Let $G^{+}$ be the graph obtained from $G$ by adding two vertices $a$ and $b,$ joining $a$ to every vertex of $G$ on $A,$ joining $b$ to every vertex of $G$ on $A',$ and adding the edge $ab.$
The complement of $\Delta$ in the sphere is an open disk, and an arc $\gamma$ joining $\alpha$ to $\omega$ inside it splits it into two disks bounded by $A\cup\gamma$ and by $A'\cup\gamma.$
Placing $a$ in the first and $b$ in the second, both stars can be drawn without crossings, and the edge $ab$ can be drawn in a neighborhood of $\alpha,$ which carries no vertex.
Hence $G^{+}$ is planar.
Note that every $B\subseteq V(G)$ meeting $X$ is adjacent to $a$ in $G^{+},$ and every $B$ meeting $Y$ is adjacent to $b.$

Suppose that A fails, as witnessed by $B_{1},B_{2},B_{3},B_{4}$ and $c_{1},c_{2},c_{3},c_{4}.$
As $\{c_{1},c_{3}\}\cap\{c_{2},c_{4}\}=\emptyset$ and every color is $x$ or $y,$ we may assume that $c_{1}=c_{3}=x$ and $c_{2}=c_{4}=y.$
Then $B_{1}$ and $B_{3}$ meet $X$ while $B_{2}$ and $B_{4}$ meet $Y,$ so contracting each $B_{\ell}$ in $G^{+}$ yields $K_{3,3}$ with parts $\{B_{1},B_{3},b\}$ and $\{B_{2},B_{4},a\},$ contradicting the planarity of $G^{+}.$

Suppose that B fails, as witnessed by $B_{1},B_{2},B_{3}.$
Then $\chi(B_{\ell})=\{x,y\}$ for every $\ell\in[3],$ so each $B_{\ell}$ meets both $X$ and $Y$ and is therefore adjacent in $G^{+}$ to both $a$ and $b.$
Contracting each $B_{\ell}$ yields $K_{5}$ on $\{B_{1},B_{2},B_{3},a,b\},$ again a contradiction.

Suppose finally that C fails, as witnessed by $B_{0},B_{1},B_{2},B_{3}.$
As above, each of $B_{1},B_{2},B_{3}$ is adjacent in $G^{+}$ to both $a$ and $b,$ and each is adjacent to $B_{0}$ by assumption.
Contracting each $B_{\ell}$ yields $K_{3,3}$ with parts $\{B_{1},B_{2},B_{3}\}$ and $\{B_{0},a,b\},$ a contradiction.

Hence both graphs are color-segmented.

Both graphs are component-wise bicolored, as every component carries at most two colors.
Finally, consider two distinct components $C_{1}$ and $C_{2}.$
In $U^{q}_{i,k},$ the palettes of $C_{1}$ and $C_{2}$ are subsets of $\{i,h_{1}\}$ and $\{i,h_{2}\}$ for some $h_{1},h_{2}\in[q]\setminus\{i\},$ while in $T^{q}_{Z,k}$ the palette of every multicolored component is a two-element subset of $Z.$
In both cases, at most three distinct colors appear in $\chi(V(C_{1}))\cup\chi(V(C_{2})),$ so no four distinct colors can be distributed among $C_{1}$ and $C_{2}$ as in the definition of single-component bicolored colorful graphs.
Hence both graphs are single-component bicolored and, therefore, crucial.
\end{proof}

The next observation isolates the combinatorial dichotomy behind the definition of $\Ucal.$

\begin{observation}
\label{obs_starTriangle}
Let $\Fcal$ be a non-empty family of two-element subsets of $\Nbbb$ where every two members intersect.
Then either some color is contained in every member of $\Fcal,$ or there is a set $Z\subseteq\Nbbb$ with $|Z|=3$ such that every member of $\Fcal$ is a subset of $Z.$
\end{observation}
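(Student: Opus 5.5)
The plan is a case split on whether $\Fcal$ has a common color. If some color lies in every member of $\Fcal$, the first alternative holds and we are done. So assume no color lies in all members of $\Fcal$; we will produce a $3$-set $Z$ containing every member.

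First I would show that $\Fcal$ has two distinct members. If $\Fcal=\{P\}$, then either element of $P$ is a common color. Take distinct $P_1,P_2\in\Fcal$. They are distinct two-element sets that intersect, so $|P_1\cap P_2|=1$. Write $P_1=\{a,b\}$ and $P_2=\{a,c\}$ with $a,b,c$ pairwise distinct.

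Next I would use the assumption that $a$ is not common to all members. This gives a member $P_3\in\Fcal$ with $a\notin P_3$. Now $P_3$ meets $P_1$ and avoids $a$, so $b\in P_3$. Likewise $P_3$ meets $P_2$, so $c\in P_3$. Since $|P_3|=2$, we get $P_3=\{b,c\}$. Set $Z\coloneqq\{a,b,c\}$.

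Finally I would check that every $P\in\Fcal$ is a subset of $Z$. Suppose instead that $P$ contains some $d\notin Z$, and write $P=\{d,e\}$. Then $P$ meets each of $\{a,b\}$, $\{a,c\}$ and $\{b,c\}$ only through $e$. So $e$ would lie in all three sets, but their common intersection is empty. This contradiction shows $P\subseteq Z$, which completes the argument. There is no real obstacle here; the only step needing care is finding $P_3$, which is exactly where the failure of the first alternative is used.
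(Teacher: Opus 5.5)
Your proof is correct and follows essentially the same route as the paper: both build the triangle $\{a,b\},\{a,c\},\{b,c\}$ inside $\Fcal$, and then note that a member not contained in $Z=\{a,b,c\}$ would meet $Z$ in at most one element and so miss one of the three sides. The only difference is cosmetic: the paper starts from a single member $\{a,b\}$ and uses the absence of a common color twice (for $a$ and for $b$), whereas you first take two members sharing $a$ and use it once.
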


\begin{proof}
Assume that no color is contained in every member of $\Fcal$ and pick some $A=\{a,b\}\in\Fcal.$
As $a$ is not contained in every member of $\Fcal,$ there is some $C\in\Fcal$ with $a\notin C$ and, as $A\cap C\neq\emptyset,$ we have $C=\{b,c\}$ for some color $c\notin\{a,b\}.$
Symmetrically, there is some $D\in\Fcal$ with $b\notin D$ and, as $D$ intersects both $A$ and $C,$ we have $D=\{a,c\}.$
Set $Z\coloneqq\{a,b,c\}$ and consider some $E\in\Fcal.$
If $E\not\subseteq Z,$ then $|E\cap Z|\leq 1$ and therefore $E$ is disjoint from one of $A,$ $C,$ and $D,$ a contradiction.
\end{proof}

We first observe that segregated grids contain all their ``sub-grids'' as colorful minors.

\begin{observation}
\label{obs_subgridMinor}
For every $I'\subseteq I\finsub \Nbbb_{\geq1}$ with $|I'|\leq 2$ and every $k'\leq k,$ the $(I',k')$-segregated grid is a colorful minor of every $(I,k)$-segregated grid.
\end{observation}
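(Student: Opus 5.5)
We will exhibit the $(I',k')$-segregated grid as an explicit colorful minor of a given $(I,k)$-segregated grid $(G,\chi)$, split by $|I'|$. Let $q=|I|$, let $v_1,\dots,v_{qk}$ be the first column of $G$, and let $\iota\colon[q]\to I$ be the bijection from the definition. For $c\in I$, let $J_c$ be the block of $k$ consecutive first-column vertices colored $\{c\}$. The degenerate cases are immediate. If $I'=\emptyset$, we take a $(k'\times k')$ subgrid and remove all colors. If $I'=\{c\}$, we take the $(k'\times k')$ subgrid whose first column consists of $k'$ vertices of $J_c$ (the block has $k\ge k'$ rows, and $G$ has $qk\ge k'$ columns). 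We then delete everything else and remove no colors.

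The main case is $I'=\{c,d\}$ with $c\neq d$. Say $J_c$ comes before $J_d$ along the first column, possibly with other blocks $J_e$ in between. Then $J_c$ occupies rows $[(a-1)k+1,ak]$ and $J_d$ occupies rows $[(b-1)k+1,bk]$ with $a<b$. We choose the last $k'$ rows of $J_c$, the first $k'$ rows of $J_d$, and all rows strictly between them. If $b=a+1$, these $2k'$ rows are consecutive and the $(2k'\times 2k')$ subgrid on them with columns $1,\dots,2k'$ is already the required segregated grid ($2k'\le qk$). Otherwise, we take the subgrid on the chosen rows and on columns $1,\dots,2k'+1$. In it we contract, in every column $j\ge 1$, the vertical path formed by the intermediate rows (those of blocks $J_e$, $e\notin\{c,d\}$), together with the last chosen row of $J_c$, into one vertex. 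Column~$1$ needs care: contracting there would merge colors $c$ and $e$, and we cannot remove only part of a palette in a way that harms the pattern. So in column~$1$ we instead delete the intermediate vertices and route the adjacency through column~$2$.

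To avoid this asymmetry, the cleaner plan is a shift. We use the columns to the right as the ``image'' first column. For each $i\in[2k']$, we take a row path that starts at the $i$-th selected colored vertex $u_i$ of column~$1$. For the $J_c$ rows we use their own rows. For the $J_d$ rows we let the row run right until column $m$, where $m$ exceeds the number of intermediate rows. We then make the paths of the $J_d$ rows bend upward monotonically, staircase-style, within the columns $1,\dots,m$, so that they become rows directly below the $J_c$ rows. Each such path is contracted into its endpoint at column $m$. The new colored column then consists of $k'$ vertices with color $c$ followed by $k'$ vertices with color $d$, consecutive and adjacent. The unused columns to the right of $m$ give a $(2k'\times 2k')$ grid, since $qk-m\ge 2k'$ when $m\le (q-2)k+1$ and $k\ge k'$; if necessary, we replace $k$ by the assumption $k\ge k'$ and use that $q\ge 2$ gives enough columns. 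Colors on the intermediate, uncontracted vertices are removed.

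The main obstacle is making the staircase routing disjoint and planar so that the resulting branch sets really induce grid adjacencies. The intended solution is to note that the $k'$ paths from $J_d$ are pairwise vertex-disjoint monotone paths in the region below the $J_c$ rows, that they avoid the $J_c$ rows entirely, and that consecutive paths are joined by vertical edges along their whole horizontal stretch. After contraction, consecutive image rows are therefore adjacent at column $m$. All other grid edges in columns $>m$ are original edges.
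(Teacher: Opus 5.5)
Your final argument (the shift) is correct, but it is a detour around a non-issue: the route you abandoned is essentially the paper's. Removing a single color from a vertex is one of the basic colorful-minor operations, so a merged palette $\{c,e\}$ can simply be trimmed to $\{c\}$. The paper does this up front. It removes all colors of $I\setminus I'$, after which every row starting in a block $J_e$ with $e\notin I'$ is uncolored. Such a row can be merged into a neighbouring row by contracting the matching of vertical edges between the two rows; this yields a grid with one row fewer and creates no new colors in column~$1$. Repeating this until every row starts with a colored vertex, then merging $k-k'$ rows inside each block, and finally deleting boundary columns, finishes the proof in a few lines, with no routing and no column bookkeeping. The hypothesis $|I'|\le 2$ is used only at the end, to identify the result with the unique $(I',k')$-segregated grid. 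Your shift instead produces a model in which each branch set of a column-$m$ vertex is a path containing exactly one colored vertex. So in the two-color case nothing is merged or trimmed; the price is a disjoint-paths routing that you assert rather than construct.

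To close that, set $s\coloneqq(b-a-1)k$ and route the $i$-th chosen row of $J_d$, $i\in[k']$, as follows: run from column~$1$ to column~$i+1$, climb $s$ rows in column $i+1$, then run right in row $ak+i$ to column~$m$. These L-shaped paths are pairwise disjoint and stay strictly below row $ak$. They fit as soon as $m\ge k'+1$, which holds for $m\coloneqq s+1$ because $s\ge k\ge k'$. Parallel unit-step staircases on consecutive rows would collide, so the paths must start climbing in different columns. Only the adjacency of consecutive endpoints in column~$m$ is needed, and it is automatic. Finally, since column~$m$ itself becomes the new first column, the requirement is $m+2k'-1\le qk$, not $qk-m\ge 2k'$. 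With $m\le(q-2)k+1$ you get $qk-m\ge 2k-1\ge 2k'-1$, which suffices, whereas the inequality you state fails for $m=(q-2)k+1$ and $k=k'$.
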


\begin{proof}
We use the following operation: given two consecutive rows of a grid, contracting the matching formed by the vertical edges between them yields a grid with one row less, where every merged vertex receives the union of the palettes of the two vertices it comes from.
Let $(G,\chi)$ be an $(I,k)$-segregated grid and remove from it all colors of $I\setminus I'.$
If $I'=\emptyset,$ it remains to delete boundary rows and columns until a $(k'\times k')$-grid remains.
Otherwise, apply the above operation to merge every row whose first vertex is uncolored into a neighboring row, until every remaining row starts with a colored vertex, and then merge, within each of the $|I'|$ remaining blocks, $k-k'$ of its rows into neighboring rows of the same block.
The first column of the resulting grid consists of $|I'|$ blocks of $k'$ consecutive vertices, one for each color of $I',$ and it remains to delete boundary columns until a $(|I'|k'\times |I'|k')$-grid remains.
As, for $|I'|\leq 2,$ there is a unique $(I',k')$-segregated grid, the claim follows.
\end{proof}

We are now ready to prove the announced characterization of $\Ucal.$

\begin{lemma}
\label{lemma_U_equals_Q}
A colorful graph is crucial if and only if it belongs to $\Ucal.$
In other words, $\Qcal=\Ucal.$
\end{lemma}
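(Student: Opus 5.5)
The inclusion $\Ucal\subseteq\Qcal$ is immediate. By \zcref{obs_generatorsCrucial}, every generator $U^{q}_{i,k}$ and $T^{q}_{Z,k}$ is crucial. By \zcref{obs_CrucialClosed}, crucial colorful graphs are closed under colorful minors, so every member of $\Ucal_{q}$ is crucial for every $q\geq 3$.

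For $\Qcal\subseteq\Ucal$, let $(H,\psi)$ be crucial with components $C_1,\dots,C_m$. Since $(H,\psi)$ is component-wise bicolored, each palette $P_j\coloneqq\psi(V(C_j))$ has at most two colors. Each $(C_j,\psi)$ is crucial, because connected subgraphs are colorful minors. So \zcref{prop_gridFolding} gives some $k_j$ such that $(C_j,\psi)$ is a colorful minor of the $(P_j,k_j)$-segregated grid. Let $\Fcal$ be the family of two-element palettes $P_j$.

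Since $(H,\psi)$ is single-component bicolored, any two palettes of distinct components intersect. A single component may have a two-element palette repeated in another component, and this is harmless. Hence $\Fcal$ is a family of pairwise intersecting two-element sets, and \zcref{obs_starTriangle} applies whenever $\Fcal\neq\emptyset$.

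Fix $q\geq 3$ with $\psi(H)\subseteq[q]$, after relabelling colors (enlarging $q$ if necessary), and take $k$ larger than $m\cdot\max_j k_j$.

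\textbf{Case 1: some color $i$ lies in every member of $\Fcal$.} Then every multicolored component has palette $\{i,h\}$ and embeds in the $(\{i,h\},k)$-grid of $U^{q}_{i,k}$.

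\textbf{Case 2: every member of $\Fcal$ lies in some $Z$ with $|Z|=3$.} Then every multicolored component embeds in a grid of $T^{q}_{Z,k}$. If $\Fcal=\emptyset$, we may pick $Z$ arbitrarily or use Case 1.

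The components with at most one color must also be placed. In $T^{q}_{Z,k}$ there is a $(\{d\},k)$-grid for every $d\in[q]$, and an $(\emptyset,k)$-grid is a colorful minor of any of them. In $U^{q}_{i,k}$, a component with single color $d$ fits in the $(\{i,d\},k)$-grid (or any grid if $d=i$), and an uncolored component fits in any grid after removing colors. All of this uses \zcref{obs_subgridMinor}.

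The main obstacle is that several components may be assigned to the same segregated grid. These must be packed as disjoint colorful minors of a single large grid. I would handle this by proving that, for $|I|\leq2$, the $(I,mk')$-segregated grid contains $m$ disjoint copies of the $(I',k')$-segregated grid for each $I'\subseteq I$. To do this, cut the big grid into horizontal strips by deleting rows. Each strip keeps a segment of the first column meeting both color blocks near the boundary between the two blocks. Alternatively, take consecutive sub-blocks in the $x$ and $y$ halves and connect them via a rerouting inside the grid, then apply the row-contraction argument of \zcref{obs_subgridMinor} to each strip.

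Taking disjoint unions of these models, $(H,\psi)$ is a colorful minor of $U^{q}_{i,k}$ or $T^{q}_{Z,k}$, hence lies in $\Ucal_{q}\subseteq\Ucal$.
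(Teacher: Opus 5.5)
\textbf{Verdict: there is a gap.} Your forward inclusion matches the paper. So does your case split: component-wise bicolored, pairwise intersecting two-element palettes, \zcref{obs_starTriangle}, then $U^{q}_{i,k}$ or $T^{q}_{Z,k}$. The gap is at the step you yourself call the main obstacle, and your concrete plan for it fails. A horizontal strip of consecutive rows meets the first column in a consecutive segment. That segment carries both colors only if it contains the two rows where the first color block meets the second. Hence, among pairwise disjoint strips, at most one can host a two-colored piece. Your alternative, connecting sub-blocks by rerouting inside the grid, is the right idea: nest the teeth around the boundary between the two blocks and put the bodies in distinct column ranges. But that is exactly the nontrivial content of \zcref{lemma_gridHosting}, and you leave it unproved.

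The missing observation is that no packing is needed, because \zcref{prop_gridFolding} has no connectivity hypothesis. Instead of folding each component separately, group the components first. For each target grid, let $J$ be the disjoint union of all components you assign to it. In Case 1 these are the components with palette $\{j,h\}$ or $\{h\}$, with all components of palette $\subseteq\{j\}$ put into one fixed class; Case 2 is analogous. Then:
\begin{itemize}
\item $J$ is a colorful minor of $(H,\psi)$, hence crucial by \zcref{obs_CrucialClosed}, and $|\psi(J)|\leq 2$;
\item so \zcref{prop_gridFolding} places all of $J$ in a single $(\psi(J),k_{J})$-segregated grid at once;
\item and \zcref{obs_subgridMinor} lifts this to the $(\{j,h\},k)$-segregated grid for a common $k$.
\end{itemize}
This is exactly how the paper argues. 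You could instead cite \zcref{lemma_gridHosting}, which does not depend on this lemma, but it is much heavier than necessary.

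One minor correction: it is not true that any two palettes of distinct components intersect, e.g.\ $\{1\}$ and $\{2\}$, or $\{1,2\}$ and $\{3\}$. Single-component bicoloredness only forces two-element palettes to intersect, and that is all you actually use.
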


\begin{proof}
By \zcref{obs_generatorsCrucial}, every colorful graph in $\bigcup_{q\in\Nbbb_{\geq3}}\bigcup_{k\in\Nbbb_{\geq1}}\Ucal^{q}_{k}$ is crucial and therefore, by \zcref{obs_CrucialClosed}, so is every colorful graph in $\Ucal.$

For the other direction, let $(H,\chi)$ be a crucial colorful graph, let $(H_{1},\chi),\ldots,(H_{r},\chi)$ be its components and choose some $q\in\Nbbb_{\geq3}$ with $\chi(H)\subseteq[q].$
Besides \zcref{obs_subgridMinor}, we use the easily verifiable fact that if each colorful graph of a finite collection is a colorful minor of the corresponding member of another collection, then the disjoint union of the former is a colorful minor of the disjoint union of the latter.

As $(H,\chi)$ is component-wise bicolored, $|\chi(V(H_{\ell}))|\leq 2$ for every $\ell\in[r].$
Let $\Fcal\coloneqq\{\chi(V(H_{\ell}))\mid \ell\in[r] \text{ and } |\chi(V(H_{\ell}))|=2\}.$
As $(H,\chi)$ is single-component bicolored, every two members of $\Fcal$ intersect: two disjoint members would be the palettes of two distinct components carrying four distinct colors, distributed as in the definition of single-component bicolored colorful graphs.
If $\Fcal=\emptyset,$ we fix an arbitrary $j\in[q]$ and proceed as in Case 1 below; otherwise, we apply \zcref{obs_starTriangle} to $\Fcal$ and distinguish the two corresponding cases.

\medskip
\noindent\textit{Case 1: there is a color $j$ contained in every member of $\Fcal.$}
Fix some $h_{0}\in[q]\setminus\{j\}.$
For every $h\in[q]\setminus\{j\},$ let $(J_{h},\chi)$ be the disjoint union of all components $(H_{\ell},\chi)$ with $\chi(V(H_{\ell}))\in\{\{j,h\},\{h\}\}$ and, in the case where $h=h_{0},$ additionally of all components with $\chi(V(H_{\ell}))\subseteq\{j\}.$
This defines a partition of the components of $(H,\chi)$ and each $(J_{h},\chi)$ is a colorful minor of $(H,\chi),$ hence crucial by \zcref{obs_CrucialClosed}, with palette a subset of $\{j,h\}.$
By \zcref{prop_gridFolding}, \zcref{obs_subgridMinor}, and the fact above, there is a $k\in\Nbbb_{\geq1}$ such that, for every $h\in[q]\setminus\{j\},$ the colorful graph $(J_{h},\chi)$ is a colorful minor of the $(\{j,h\},k)$-segregated grid.
Therefore $(H,\chi)$ is a colorful minor of $U^{q}_{j,k}.$

\medskip
\noindent\textit{Case 2: there is a set $Z\subseteq[q]$ with $|Z|=3$ such that every member of $\Fcal$ is a subset of $Z.$}
Fix some $d_{0}\in[q].$
For every $P\subseteq Z$ with $|P|=2,$ let $(J_{P},\chi)$ be the disjoint union of all components $(H_{\ell},\chi)$ with $\chi(V(H_{\ell}))=P$ and, for every $d\in[q],$ let $(J_{d},\chi)$ be the disjoint union of all components with $\chi(V(H_{\ell}))=\{d\}$ and, in the case where $d=d_{0},$ additionally of all components with $\chi(V(H_{\ell}))=\emptyset.$
Again, this defines a partition of the components of $(H,\chi)$ into colorful minors of $(H,\chi)$ that are crucial and whose palettes contain at most two colors.
By \zcref{prop_gridFolding}, \zcref{obs_subgridMinor}, and the fact above, there is a $k\in\Nbbb_{\geq1}$ such that $(H,\chi)$ is a colorful minor of $T^{q}_{Z,k}.$

\medskip
In both cases, $(H,\chi)\in\Ucal_{q}\subseteq\Ucal.$
\end{proof}

The next lemma shows that every large enough segregated grid over $I$ hosts the disjoint unions of small segregated grids that arise from crucial colorful graphs.
Notice that the assumption that the two-element sets $I_{t}$ pairwise intersect cannot be dropped: two pieces whose palettes are disjoint two-element sets that interleave along the first column can never be found in pairwise disjoint subgraphs of a segregated grid, by planarity.

\begin{figure}[t]
\centering
\scalebox{1.15}{%
 \scalebox{.9}{%
\begin{tikzpicture}[x=6.8681pt,y=6.8172pt,
    gline/.style ={line width=0.808pt},
    band/.style  ={line width=3.230pt,draw=blue!40},
    dot/.style   ={circle,fill,draw,line width=0.606pt,inner sep=0pt,minimum size=2.253pt},
    cdot/.style  ={circle,fill=#1,draw,line width=0.606pt,inner sep=0pt,minimum size=3.400pt},
    brc/.style   ={decorate,line width=0.606pt,decoration={brace,amplitude=#1}}]
  \begin{scope}[shift={(29.55,0)}]
    \fill[pkgray] (-0.47,15.63) rectangle (14.31,19.35);
    \fill[pkgray] (7.70,3.63) rectangle (15.30,19.34);
    \fill[pkgray] (-0.44,3.63) rectangle (14.31,7.36);
    \fill[pkgray] (-0.38,19.62) rectangle (16.91,23.47);
    \fill[pkgray] (15.71,2.01) rectangle (23.34,20.73);
    \fill[pkgray] (-0.02,-0.43) rectangle (16.01,3.36);
    \fill[blue!25] (-0.47,7.65) rectangle (7.36,15.34);
    \fill[blue!25] (7.70,3.64) rectangle (15.28,7.45);
    \fill[blue!25] (-0.48,3.64) rectangle (0.55,7.36);
    \fill[blue!25] (7.70,15.63) rectangle (15.30,19.35);
    \fill[blue!25] (-0.49,15.64) rectangle (0.56,19.35);
    \fill[blue!25] (15.72,-0.43) rectangle (23.33,3.38);
    \fill[blue!25] (-0.48,-0.43) rectangle (0.55,3.35);
    \fill[blue!25] (15.72,19.61) rectangle (23.34,23.48);
    \fill[blue!25] (-0.48,19.63) rectangle (0.56,23.48);
    \foreach \r in {0,1,2,3,20,21,22,23}{\draw[band] (0.45,\r) -- (15.92,\r);}
    \foreach \r in {4,5,6,7,16,17,18,19}{\draw[band] (0.48,\r) -- (7.63,\r);}
    \foreach \c in {16,...,23}{\draw[band] (\c,3.35) -- (\c,19.62);}
    \foreach \c in {8,...,15}{\draw[band] (\c,7.37) -- (\c,15.63);}
    \foreach \r in {0,...,23}{\draw[gline] (0,\r) -- (23,\r);}
    \foreach \c in {0,...,23}{\draw[gline] (\c,0) -- (\c,23);}
    \foreach \c in {1,...,7}{\foreach \r in {8,...,15}{\node[dot] at (\c,\r) {};}}
    \foreach \c in {8,...,15}{\foreach \r in {4,...,7}{\node[dot] at (\c,\r) {};}}
    \foreach \c in {8,...,15}{\foreach \r in {16,...,19}{\node[dot] at (\c,\r) {};}}
    \foreach \c in {16,...,23}{\foreach \r in {0,...,3}{\node[dot] at (\c,\r) {};}}
    \foreach \c in {16,...,23}{\foreach \r in {20,...,23}{\node[dot] at (\c,\r) {};}}
    \foreach \r in {12,...,23}{\node[cdot=pkmagenta] at (0,\r) {};}
    \foreach \r in {0,...,11}{\node[cdot=pkcyan] at (0,\r) {};}
    \draw[brc=3.84pt] (-.582,-0.43) -- (-.582,3.32);
    \node[anchor=east,font=\footnotesize] at (-1.25,1.44) {$k$};
    \draw[brc=3.84pt] (-.582,3.68) -- (-.582,7.44);
    \node[anchor=east,font=\footnotesize] at (-1.25,5.56) {$k$};
    \draw[brc=3.84pt] (-.582,7.64) -- (-.582,11.39);
    \node[anchor=east,font=\footnotesize] at (-1.25,9.52) {$k$};
    \draw[brc=3.84pt] (-.582,11.60) -- (-.582,15.35);
    \node[anchor=east,font=\footnotesize] at (-1.25,13.47) {$k$};
    \draw[brc=3.84pt] (-.582,15.67) -- (-.582,19.42);
    \node[anchor=east,font=\footnotesize] at (-1.25,17.55) {$k$};
    \draw[brc=3.84pt] (-.582,19.56) -- (-.582,23.31);
    \node[anchor=east,font=\footnotesize] at (-1.25,21.43) {$k$};
    \draw[brc=8.08pt] (-2.32,11.53) -- (-2.32,23.37);
    \node[anchor=east,font=\footnotesize] at (-3.62,17.45) {$mk$};
    \draw[brc=4.64pt] (7.29,-.57) -- (-0.23,-.57);
    \node[anchor=north,font=\small] at (3.53,-1.35) {$2k$};
    \draw[brc=4.64pt] (15.32,-.57) -- (7.80,-.57);
    \node[anchor=north,font=\small] at (11.56,-1.35) {$2k$};
    \draw[brc=4.64pt] (23.35,-.57) -- (15.83,-.57);
    \node[anchor=north,font=\small] at (19.59,-1.35) {$2k$};
  \end{scope}
\end{tikzpicture}}
}
\caption{The $(\{1,2\},mk)$-segregated grid and $m$ pairwise disjoint subgraphs in it, each containing the $(\{1,2\},k)$-segregated grid as a colorful minor, for $k=4$ and $m=3$: the highlighted regions are formed by the teeth, arms, and bodies in the proof of \zcref{lemma_gridHosting}.}
\label{fig_1segregatedGridsPack}
\end{figure}

\begin{lemma}
\label{lemma_gridHosting}
Let $I\finsub\Nbbb_{\geq1}$ with $q\coloneqq|I|\geq 1,$ let $k,m\in\Nbbb_{\geq1},$ and let $(W,\omega)$ be the disjoint union of $m$ colorful graphs $(W_{1},\omega),\ldots,(W_{m},\omega)$ where, for every $t\in[m],$ the colorful graph $(W_{t},\omega)$ is an $(I_{t},k)$-segregated grid for some $I_{t}\subseteq I$ with $1\leq|I_{t}|\leq 2,$ and where every two of the sets $I_{1},\ldots,I_{m}$ with two elements intersect.
Then $(W,\omega)$ is a colorful minor of every $(I,K)$-segregated grid with $K\geq 2mkq+1.$
\end{lemma}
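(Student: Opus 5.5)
The plan is to realize each $(W_{t},\omega)$ explicitly inside a fixed $(I,K)$-segregated grid $(G,\chi)$, on pairwise disjoint vertex sets, and then contract inside each piece. Each piece consists of three parts, as in \zcref{fig_1segregatedGridsPack}:
\begin{itemize}
\item \emph{teeth}: a set $R_{t}$ of rows of $G$ whose first vertices carry the colors of $I_{t}$, namely $k$ consecutive-in-block rows inside the block of each color of $I_{t}$;
\item \emph{arms}: the initial segments of the rows of $R_{t}$, from the first column up to some column $x_{t}$;
\item \emph{body}: the $|I_{t}|k$ consecutive columns $x_{t},\dots,x_{t}+|I_{t}|k-1$, restricted to the vertical range between the topmost and bottommost row of $R_{t}$.
\end{itemize}

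Inside one piece, contract each arm to a single edge. Then contract the vertical segments of the body columns lying strictly between consecutive rows of $R_{t}$. This leaves an $(|I_{t}|k\times|I_{t}|k)$-grid whose first column consists of the teeth, so its colors appear as one block of $k$ for each color of $I_{t}$. We also delete colors from vertices not in the first column; since only teeth carry colors, nothing is lost. Hence the piece contains the $(I_{t},k)$-segregated grid, which for $|I_{t}|\le2$ is unique. Vertices of $G$ not used by any piece are deleted, and the disjoint-union fact used in the proof of \zcref{lemma_U_equals_Q} finishes the argument.

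The combinatorial core is choosing the $R_{t}$ so that the spans $[\min R_{t},\max R_{t}]$ form a \emph{laminar} family, meaning any two spans are disjoint or nested. Given this, order pieces by nesting depth: innermost pieces get the leftmost bodies (smallest $x_{t}$), and outer pieces get bodies further right. Disjointness then follows. The arms of an outer piece $s$ lie in rows outside the span of any inner piece $t$, so they avoid the body of $t$, which lies inside that span. The vertical body columns of $s$ lie to the right of column $x_{t}+2k$, where the arms of $t$ have already ended. Pieces with disjoint spans are trivially disjoint. At most $2mk+1$ columns are used, and at most $mk$ rows inside each color block, both well within $K\ge 2mkq+1$ and the $qK$ columns.

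The main obstacle is the laminarity, which is where the pairwise-intersection hypothesis enters. Apply \zcref{obs_starTriangle} to the two-element $I_{t}$.
\begin{itemize}
\item \emph{Star case} (common color $j$): every two-color piece uses block $j$ and one other block $h$. The pieces going to blocks above $j$ have spans meeting only at block $j$, and likewise those going below. Within block $j$, assign to the upward pieces the topmost rows and to the downward pieces the bottommost rows. Order each group so that a piece whose partner block is farther from $j$ takes rows of $j$ nearer the far side, and rows of its partner block nearer $j$'s side; this nests the spans.
\item \emph{Triangle case} $Z=\{a,b,c\}$, with blocks ordered $a<b<c$ by $\iota$: the spans for $\{a,b\}$ and $\{b,c\}$ can be made disjoint by splitting block $b$. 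The span for $\{a,c\}$ is made to contain both by giving it the outermost rows of blocks $a$ and $c$. Several pieces with the same pair are nested concentrically.
\end{itemize}
Single-color pieces take $k$ fresh rows inside their block, placed so that their spans, which lie within one block, sit in the gaps between the chosen rows of other pieces; they are then nested innermost or disjoint. Checking that these choices never exhaust a block, since each block is used by at most $m$ pieces for $k$ rows each, is routine. I expect the careful bookkeeping of this row assignment to be the delicate step.
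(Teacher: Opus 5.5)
Your proposal is correct and follows essentially the paper's proof: the same teeth, arms and bodies, laminar spans obtained from \zcref{obs_starTriangle} (block $j$ split between the two sides in the star case, extremal teeth for the $\{a,c\}$-pieces in the triangle case), and bodies placed in column bands ordered by nesting depth, as in the paper's sets $D_{t}$. Two points should be tightened: in the star case your rule does not say how to order pieces that share a partner block, and the fix, as in the paper, is to choose the partner teeth arbitrarily and assign the $j$-teeth in the reverse order, which also makes the clause about partner rows ``nearer $j$'s side'' unnecessary; and each arm should be contracted all the way into the first body column, since the two teeth of a piece need not be adjacent in the first column of $G$.
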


\begin{proof}
Let $(G,\chi)$ be an $(I,K)$-segregated grid with $K\geq 2mkq+1,$ together with a plane embedding where the first column, seen as a path $P,$ lies on the boundary of the outer face and, for every $a\in I,$ call the set of vertices of $P$ carrying the color $a$ the \emph{block} of $a.$
We refer to the colorful graphs $(W_{1},\omega),\ldots,(W_{m},\omega)$ as the \emph{pieces}.
For every $t\in[m]$ and every $a\in I_{t},$ we choose an interval of $k$ consecutive vertices of the block of $a,$ called a \emph{tooth} of the $t$-th piece, so that all chosen teeth are pairwise disjoint; this is possible, as every block consists of $K\geq mk$ vertices.
The \emph{span} of a piece is the minimal subpath of $P$ containing its teeth.
We first claim that the teeth can be chosen so that every two spans are either vertex-disjoint or such that one of them contains all teeth of the other.
Indeed, apply \zcref{obs_starTriangle} to the two-element sets among $I_{1},\ldots,I_{m},$ if there are at least two of them; if all of these sets contain a common color $j,$ split the corresponding pieces into two groups, according to whether the block of their second color appears before or after the block of $j$ on $P,$ let the $j$-teeth of the first group precede those of the second and, within each group, nest the pieces by assigning the $j$-teeth in the reverse of the order in which the teeth of the second colors appear on $P$ (see \zcref{fig_manu_ole}).
If, instead, these sets are the three two-element subsets of a set $\{a,b,c\}$ whose blocks appear in this order on $P,$ choose the teeth of the pieces with $I_{t}=\{a,c\}$ extremally, that is, before all other teeth in the block of $a$ and after all other teeth in the block of $c,$ nest these pieces among themselves, nest the pieces with $I_{t}=\{a,b\}$ and, separately, those with $I_{t}=\{b,c\}$ among themselves and let the $b$-teeth of the former precede the $b$-teeth of the latter (see \zcref{fig_Thosting}).
The teeth of the pieces with $|I_{t}|=1$ may be placed in any pairwise disjoint position, as their spans consist of a single tooth and therefore contain no teeth of other pieces.

For every $t\in[m],$ let now $d_{t}$ be the number of pieces whose teeth are all contained in the span of the $t$-th piece, other than the $t$-th piece itself, and let $D_{t}$ be the set of columns of $G$ with indices in $[2+2kd_{t},1+2k(d_{t}+1)].$
As $d_{t}\leq m-1,$ the largest index occurring in $D_{t}$ is at most $1+2km,$ which is at most the number $qK$ of columns of $G,$ by the choice of $K.$
We define $G_{t}$ as the subgraph of $G$ induced by the union of the teeth of the $t$-th piece, the \emph{arms} of the $t$-th piece, that is, for every vertex $v$ of a tooth of the $t$-th piece, the vertices of the row of $v$ lying in the columns $2$ up to $\max(D_{t}),$ and the \emph{body} of the $t$-th piece, that is, the vertices lying in the columns of $D_{t}$ whose rows meet the span of the $t$-th piece; see \zcref{fig_1segregatedGridsPack}.
We claim that $G_{1},\ldots,G_{m}$ are pairwise vertex-disjoint.
Teeth are pairwise disjoint by construction, arms of distinct pieces lie in distinct rows, and bodies of distinct pieces lie either in distinct columns or, if $d_{s}=d_{t},$ in disjoint spans, hence in distinct rows.
Finally, the arm of a vertex $v$ of the $t$-th piece meets the columns of $D_{s}$ of the body of a different piece only when $d_{s}\leq d_{t},$ and in this case the span of the $s$-th piece avoids $v$: if the two spans are disjoint this is clear and otherwise, as $d_{s}\leq d_{t},$ all teeth of the $s$-th piece are contained in the span of the $t$-th piece, so, by the choice of the teeth above, the span of the $s$-th piece lies strictly between the teeth of the $t$-th piece and, in particular, avoids $v.$

It remains to observe that, for every $t\in[m],$ the colorful graph $(G_{t},\chi)$ contains $(W_{t},\omega)$ as a colorful minor: contracting every arm onto its tooth vertex makes the vertices of the teeth adjacent, in the columns of $D_{t},$ to the rows of the body corresponding to their own rows; contracting away the rows of the body that correspond to no tooth, as in the proof of \zcref{obs_subgridMinor} and deleting all superfluous vertices then yields the $(I_{t},k)$-segregated grid.
Therefore $(W,\omega)$ is a colorful minor of $(G,\chi).$
\end{proof}

\begin{figure}[t]
\centering
\scalebox{1}{%
\scalebox{0.505}{%
% [inline block 0: 2 envs, 258327 chars in 2 pieces, piece 1 here, a bare % at each other -> data_tex | \begin{tikzpicture}[x=1pt,y=1pt]   \path[draw=black, line width=0.4pt, line join=round] (5,773.6) --(5,405.6);...]
}
}
\caption{The graphs $U^{4}_{i,2},$ for $i\in[4],$ seen as colorful minors of the $([4],6)$-segregated grid, following the construction in the proof of \zcref{lemma_gridHosting}.}
\label{fig_manu_ole}
\end{figure}

\begin{figure}[t]
\centering
\scalebox{1}{%
\scalebox{1.0}{%
%
}
}
\caption{The colorful graph $T^{3}_{[3],2}$ as a colorful minor of the $([3],9)$-segregated grid, following the construction in the proof of \zcref{lemma_gridHosting}: the three pair grids and the three singleton grids of $T^{3}_{[3],2}$ appear in pairwise disjoint subgraphs and the piece with palette $\{1,3\},$ having the largest depth, is routed around all other pieces.}
\label{fig_Thosting}
\end{figure}

\begin{observation}
\label{obs_U_decomposition}
Let $I\finsub\Nbbb_{\geq1}$ with $q\coloneqq|I|\geq1$ and let $(W,\omega)$ be an $I$-colorful graph in $\Ucal.$
Then there are some $k\in\Nbbb_{\geq1}$ and at most $q+3$ colorful graphs, each of which is an $(I',k)$-segregated grid for some $I'\subseteq I$ with $1\leq|I'|\leq2,$ 
 such that every two of their palettes that have two elements intersect and such that their disjoint union contains $(W,\omega)$ as a colorful minor.
\end{observation}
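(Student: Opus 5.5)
The plan is to retrace the second half of the proof of \zcref{lemma_U_equals_Q}, paying attention to how many pieces are used. Since $(W,\omega)\in\Ucal,$ \zcref{lemma_U_equals_Q} tells us that $(W,\omega)$ is crucial. Let $(W_{1},\omega),\ldots,(W_{r},\omega)$ be its components. Component-wise bicoloredness gives $|\omega(V(W_{\ell}))|\leq 2$ for every $\ell.$ Single-component bicoloredness gives that the two-element palettes among them pairwise intersect. We then apply \zcref{obs_starTriangle} to the family $\Fcal$ of two-element palettes. If $\Fcal=\emptyset,$ we treat it as Case~1 with an arbitrary $j\in I$; this is possible because $q\geq1.$

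In Case~1 (common color $j$), we group the components by palette. For each $h\in I\setminus\{j\},$ the group $J_{h}$ collects the components with palette $\{j,h\}$ or $\{h\}.$ One further group $J_{j}$ collects the components with palette contained in $\{j\}.$ This gives at most $q$ groups. In Case~2 ($Z\subseteq I$ with $|Z|=3$), we take one group $J_{P}$ for each of the three pairs $P\subseteq Z,$ and one group $J_{d}$ for each $d\in I$ collecting the components with palette $\{d\}.$ This gives at most $q+3$ groups. Uncolored components are absorbed into an arbitrary group with nonempty palette; such a group exists because $(W,\omega)$ is $I$-colorful and $q\geq1.$ We then discard empty groups. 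Each remaining group $J$ is a colorful minor of $(W,\omega),$ hence crucial by \zcref{obs_CrucialClosed}, and its palette $I_{J}$ satisfies $1\leq|I_{J}|\leq2$ and $I_{J}\subseteq I.$

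Next, \zcref{prop_gridFolding} gives, for each group $J,$ some $k_{J}$ such that $J$ is a colorful minor of the $(I_{J},k_{J})$-segregated grid. Let $k$ be the maximum of the $k_{J}.$ By \zcref{obs_subgridMinor} (with $I'=I=I_{J}$), each $J$ is then a colorful minor of the $(I_{J},k)$-segregated grid. Taking disjoint unions, as in the fact used in \zcref{lemma_U_equals_Q}, $(W,\omega)$ is a colorful minor of the disjoint union of these at most $q+3$ segregated grids.

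It remains to check that the two-element palettes $I_{J}$ pairwise intersect. In Case~1 every two-element $I_{J}$ is of the form $\{j,h\},$ so they all contain $j.$ In Case~2 every two-element $I_{J}$ is a pair inside $Z,$ and any two pairs in a three-element set intersect. The only delicate point, and it is minor, is the bookkeeping for palettes: groups must have palettes that are nonempty and do not grow beyond their intended pair. This is why uncolored components are merged into an already-nonempty group rather than forming a group of their own.
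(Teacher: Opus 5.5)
Your proof is correct and is essentially the paper's. Both rerun the case split of \zcref{lemma_U_equals_Q} via \zcref{obs_starTriangle} and then invoke \zcref{prop_gridFolding} and \zcref{obs_subgridMinor}. The only difference is how uncolored components are handled: you absorb them into a group that already carries a color, whereas the paper may leave a class of uncolored components and afterwards upgrades its $(\emptyset,k)$-segregated grid to an $(I',k)$-segregated grid with a suitably chosen non-empty $I'$.

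One small fix: for your absorption rule to be consistent, the group $J_{j}$ in Case~1 should collect the components with palette exactly $\{j\}$ rather than contained in $\{j\}$, as you evidently intend.
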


\begin{proof}
By \zcref{lemma_U_equals_Q}, $(W,\omega)$ is crucial and, as $(W,\omega)$ is component-wise bicolored, the palette of every component of $(W,\omega)$ is a subset of $I$ with at most two elements.
If $q=1,$ then, by \zcref{prop_gridFolding}, $(W,\omega)$ is a colorful minor of the $(I,k)$-segregated grid for some $k\in\Nbbb_{\geq1}$ and this single colorful graph is as required.
Assume now that $q\geq 2$ and argue as in the proof of \zcref{lemma_U_equals_Q}: the two-element palettes of the components of $(W,\omega)$ pairwise intersect and, by \zcref{obs_starTriangle}, they either all contain a common color $j\in I,$ in which case we partition the components of $(W,\omega)$ into the classes of Case 1 of that proof, with $[q]$ replaced by $I$ (see \zcref{fig_manu_ole}), or they are the two-element subsets of a set $Z\subseteq I$ with $|Z|=3,$ in which case we partition them into the classes of Case 2 (see \zcref{fig_Thosting}).
In both cases we obtain, by \zcref{prop_gridFolding} and \zcref{obs_subgridMinor}, a $k\in\Nbbb_{\geq1}$ and at most $q+3$ colorful graphs whose palettes are subsets of $I$ with at most two elements and pairwise intersect whenever they have two elements.
It remains to observe that their palettes may be taken non-empty.
Indeed, a class all of whose components carry no color yields, by \zcref{prop_gridFolding}, a colorful graph that is a colorful minor of the $(\emptyset,k)$-segregated grid and, by \zcref{obs_subgridMinor}, the latter is a colorful minor of the $(I',k)$-segregated grid for every non-empty $I'\subseteq I;$ choosing for $I'$ the palette that the class in question would have received had one of its components been colored keeps the required intersection property.
\end{proof}

\begin{lemma}
\label{lemma_U_in_grids}
For every $I\finsub \Nbbb_{\geq1}$ and every $I$-colorful graph $(W,\omega)\in\Ucal,$ there is some $K_{W}\in\Nbbb_{\geq1}$ such that $(W,\omega)$ is a colorful minor of every $(I,K)$-segregated grid with $K\geq K_{W}.$
\end{lemma}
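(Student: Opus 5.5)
The plan is to combine \zcref{obs_U_decomposition} with \zcref{lemma_gridHosting} and then use transitivity of the colorful minor relation. We treat the case $I=\emptyset$ separately. There $(W,\omega)$ is an uncolored graph in $\Ucal$, hence crucial by \zcref{lemma_U_equals_Q}, and in particular planar. A planar graph is a minor of every sufficiently large grid: take \zcref{prop_gridFolding} with $|\psi(H)|=0$, which gives some $(\emptyset,k)$-segregated grid containing $(W,\omega)$. By \zcref{obs_subgridMinor}, that $(\emptyset,k)$-grid is in turn a colorful minor of every $(\emptyset,K)$-grid with $K\geq k$. So $K_W\coloneqq k$ works.

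Now assume $q\coloneqq|I|\geq 1$. By \zcref{obs_U_decomposition}, there are $k\in\Nbbb_{\geq1}$ and $m\leq q+3$ colorful graphs $(W_1,\omega),\dots,(W_m,\omega)$ with the following properties. Each $(W_t,\omega)$ is an $(I_t,k)$-segregated grid with $I_t\subseteq I$ and $1\leq|I_t|\leq 2$. The two-element sets among $I_1,\dots,I_m$ pairwise intersect. Their disjoint union contains $(W,\omega)$ as a colorful minor.

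These are exactly the hypotheses of \zcref{lemma_gridHosting}. It gives that this disjoint union is a colorful minor of every $(I,K)$-segregated grid with $K\geq 2mkq+1$. Setting $K_W\coloneqq 2(q+3)kq+1$, which is at least $2mkq+1$, transitivity then gives that $(W,\omega)$ is a colorful minor of every $(I,K)$-segregated grid with $K\geq K_W$.

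One point needs care. \zcref{lemma_gridHosting} is stated for every $(I,K)$-segregated grid, that is, for every ordering $\iota$ of the color blocks. Its proof handles arbitrary block orders through the nesting argument built on \zcref{obs_starTriangle}, so nothing extra is required here. I therefore expect no genuine obstacle; the only step to double-check is that the decomposition from \zcref{obs_U_decomposition} produces palettes that are non-empty and pairwise intersecting in the two-element case. That observation asserts both explicitly.
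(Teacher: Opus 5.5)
Your proposal is correct and follows the paper's proof essentially verbatim. For $I\neq\emptyset$ you combine \zcref{obs_U_decomposition} with \zcref{lemma_gridHosting} and take the same bound $K_{W}=2(q+3)kq+1$; for $I=\emptyset$ you make the planar-graphs-in-large-grids step explicit through \zcref{prop_gridFolding} and \zcref{obs_subgridMinor}, where the paper simply cites the well-known fact.
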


\begin{proof}
Set $q\coloneqq|I|.$
If $I=\emptyset,$ then $(W,\omega)$ is a planar graph and the claim follows from the well-known fact that every planar graph is a minor of every large enough grid.
Assume now that $I\neq\emptyset$ and let $k\in\Nbbb_{\geq1}$ and the at most $q+3$ colorful graphs be as in \zcref{obs_U_decomposition}.
As their disjoint union contains $(W,\omega)$ as a colorful minor, the claim follows from \zcref{lemma_gridHosting} with $K_{W}\coloneqq 2(q+3)kq+1.$ 
\end{proof}

\section{The Erd\H{o}s-P{\'o}sa property}
\label{sec_EP}

In this section we prove the implication from \ref{it_s_cru} to \ref{it_s_ep} of \zcref{th_EP_single}: every crucial colorful graph has the Erd\H{o}s-P{\'o}sa property (\zcref{thm_EP_positive}).
Combined with the results of \zcref{sec_obstructions}, this establishes all implications of \zcref{th_EP_single} except for the one from \ref{it_s_ep} to \ref{it_s_cru}, which is proved in \zcref{sec_EP_to_crucial}.

\subsection{Models, color restriction, and a reduction to bounded torso treewidth}
\label{subsec_EP_prelim}

Let $(H,\psi)$ and $(G,\chi)$ be colorful graphs with $V(H)\neq\emptyset.$
A \emph{model} of $(H,\psi)$ in $(G,\chi)$ is a family $\{D_{u}\}_{u\in V(H)}$ of pairwise disjoint non-empty sets of vertices of $G,$ called the \emph{branch sets} of the model, such that $G[D_{u}]$ is connected for every $u\in V(H),$ there is an edge of $G$ with one endpoint in $D_{u}$ and one endpoint in $D_{v}$ for every edge $uv\in E(H)$ and, for every $u\in V(H)$ and every $i\in\psi(u),$ some vertex of $D_{u}$ carries the color $i.$

\begin{observation}
\label{obs_minorModels}
Let $(H,\psi)$ and $(G,\chi)$ be colorful graphs with $V(H)\neq\emptyset.$
Then $(H,\psi)$ is a colorful minor of $(G,\chi)$ if and only if there is a model of $(H,\psi)$ in $(G,\chi).$
\end{observation}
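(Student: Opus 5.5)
The plan is to prove the two directions separately, using the operational description of colorful minors. The key device is the same one already used in the proof of \zcref{obs_colorSegmentation}: track, for every vertex of the current colorful graph, the connected set of vertices of $G$ that it "comes from."

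For the forward direction, suppose $(H,\psi)$ is obtained from $(G,\chi)$ by a finite sequence of the four operations. I argue by induction on the length of the sequence. The invariant is that for every intermediate colorful graph $(G',\chi')$ there is a family $\{D_{u}\}_{u\in V(G')}$ with the following properties:
\begin{itemize}
\item the sets $D_{u}$ are pairwise disjoint, non-empty and connected in $G$;
\item every edge $uv$ of $G'$ is witnessed by an edge of $G$ between $D_{u}$ and $D_{v}$;
\item $\chi'(u)\subseteq\chi(D_{u})$ for every $u\in V(G')$.
\end{itemize}

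Initially I take $D_{u}=\{u\}$. Then I check each operation.
\begin{itemize}
\item Deleting an edge preserves the invariant trivially.
\item Deleting a vertex $v$ drops $D_{v}$ from the family.
\item Removing a color only shrinks $\chi'(u)$, so the inclusion still holds.
\item Contracting $uv$ into $x_{uv}$ sets $D_{x_{uv}}=D_{u}\cup D_{v}$. This set is connected because the edge $uv$ is witnessed by an edge of $G$ between $D_{u}$ and $D_{v}$. Its palette is $\chi'(u)\cup\chi'(v)\subseteq\chi(D_{u}\cup D_{v})$. Every neighbor of $x_{uv}$ was a neighbor of $u$ or of $v$, so the adjacency witnesses are inherited.
\end{itemize}
At the end, the family is a model of $(H,\psi)$: each color $i\in\psi(u)$ lies in $\chi(D_{u})$, so some vertex of $D_{u}$ carries $i$.

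For the backward direction, let $\{D_{u}\}_{u\in V(H)}$ be a model. I carry out four steps in order:
\begin{enumerate}
\item Delete every vertex of $G$ outside $\bigcup_{u}D_{u}$.
\item For each $u$, contract the edges of a spanning tree of $G[D_{u}]$ one at a time. Each contraction keeps the merged set connected, and colors are merged by union, so $D_{u}$ becomes a single vertex $x_{u}$ with palette exactly $\chi(D_{u})\supseteq\psi(u)$.
\item Two vertices $x_{u},x_{v}$ are now adjacent precisely when some edge of $G$ joins $D_{u}$ and $D_{v}$. This holds in particular whenever $uv\in E(H)$, so I delete the edges not corresponding to edges of $H$.
\item Remove the colors of $\chi(D_{u})\setminus\psi(u)$ from each $x_{u}$.
\end{enumerate}
The result is isomorphic to $(H,\psi)$ via $u\mapsto x_{u}$.

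The only step needing care is the contraction bookkeeping in the forward direction: showing that connectivity and the adjacency witnesses survive a contraction. Since the new vertex's neighborhood is $N(u)\cup N(v)$, this is immediate. One small point is the convention that contraction may create parallel edges or loops; these are simply ignored, or deleted as edges, which does not affect the argument.
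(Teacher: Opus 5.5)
Your proof is correct and follows the paper's argument. The direction from a model to a minor is the same as the paper's (delete the outside vertices, contract spanning trees of the branch sets, delete superfluous edges, remove superfluous colors), and the converse is, as in the paper, an induction on the number of operations; the only cosmetic difference is that you push the branch sets forward along the sequence, merging $D_u$ and $D_v$ at a contraction, whereas the paper peels off the first operation and lifts a model from $(G',\chi')$ back to $(G,\chi)$ by replacing $x_{vw}$ with $v$ and $w$.
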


\begin{proof}
If there is a model $\{D_{u}\}_{u\in V(H)},$ then we may delete all vertices of $G$ outside its branch sets, contract, within each branch set $D_{u},$ all edges of a spanning tree of $G[D_{u}],$ and then delete all superfluous edges and remove all superfluous colors.
The palette of the vertex resulting from $D_{u}$ is the union of the palettes of the vertices of $D_{u},$ hence contains $\psi(u),$ and the resulting colorful graph is $(H,\psi).$

For the converse, we proceed by induction on the number of operations transforming $(G,\chi)$ into $(H,\psi).$
If no operation is applied, then the singletons $\{u\},$ $u\in V(H),$ form a model.
Otherwise, let $(G',\chi')$ be the colorful graph obtained from $(G,\chi)$ by the first operation; by the induction hypothesis, there is a model $\{D_{u}\}_{u\in V(H)}$ of $(H,\psi)$ in $(G',\chi').$
If the operation is the deletion of a vertex or of an edge, or the removal of a color, then $\{D_{u}\}_{u\in V(H)}$ is also a model in $(G,\chi).$
If the operation contracts an edge $vw$ of $G$ into a vertex $x_{vw},$ then replacing, in every branch set containing it, the vertex $x_{vw}$ by the two vertices $v$ and $w$ yields a model in $(G,\chi)$: the modified branch sets remain connected, every adjacency via $x_{vw}$ is realized via $v$ or $w,$ and every color carried by $x_{vw}$ is carried by $v$ or by $w.$
\end{proof}

For a colorful graph $(G,\chi)$ and a set $I\finsub\Nbbb_{\geq1},$ we denote by $(G,\chi)\cap I$ the colorful graph $(G,\chi_{I})$ where $\chi_{I}(v)\coloneqq\chi(v)\cap I$ for every $v\in V(G).$

\begin{observation}
\label{obs_colorRestriction}
Let $(H,\psi)$ and $(G,\chi)$ be colorful graphs, where $V(H)\neq\emptyset,$ and let $Q\coloneqq\psi(H).$
Then, for every $k\in\Nbbb,$ the packings of $(H,\psi)$ in $(G,\chi)$ of size $k$ are precisely the packings of $(H,\psi)$ in $(G,\chi)\cap Q$ of size $k$ and the coverings of $(H,\psi)$ in $(G,\chi)$ of size $k$ are precisely the coverings of $(H,\psi)$ in $(G,\chi)\cap Q$ of size $k.$
\end{observation}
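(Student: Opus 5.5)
The statement is local: it compares $(G,\chi)$ and $(G,\chi)\cap Q$ subgraph by subgraph, so the plan is to prove one key claim. For every subgraph $G'$ of $G,$ $(H,\psi)$ is a colorful minor of $(G',\chi)$ if and only if it is a colorful minor of $(G',\chi_{Q}).$ Both the packing and covering statements then follow at once. A packing is a family of pairwise vertex-disjoint subgraphs $G_{1},\ldots,G_{k}$ with $(H,\psi)\leq(G_{i},\chi)$ for each $i$. A covering is a set $S$ with $|S|\leq k$ and $(H,\psi)\not\leq(G-S,\chi)$. The graph-theoretic parts of these conditions, namely disjointness, the size of $S,$ and the subgraphs themselves, are identical in $G$ for both colorings. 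So the only thing that can differ is the minor condition, applied with $G'=G_{i}$ or $G'=G-S.$

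For the key claim, I would use the characterization of colorful minors via models in \zcref{obs_minorModels}, which applies since $V(H)\neq\emptyset.$ The branch-set conditions are connectivity, pairwise disjointness, and adjacency for the edges of $H$. They depend only on $G',$ so a family $\{D_{u}\}_{u\in V(H)}$ satisfies them for one coloring exactly when it does for the other. The only remaining condition is on colors: for every $u$ and every $i\in\psi(u)$, some vertex of $D_{u}$ carries $i$.

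Fix such $u$ and $i.$ Then $i\in\psi(u)\subseteq\psi(H)=Q.$ A vertex $v$ has $i\in\chi(v)$ if and only if $i\in\chi(v)\cap Q=\chi_{Q}(v),$ because $i\in Q.$ Hence the color condition holds for $\chi$ exactly when it holds for $\chi_{Q}$. A family is therefore a model in $(G',\chi)$ if and only if it is a model in $(G',\chi_{Q})$, which proves the claim. One could instead argue through the operations: removing colors gives $(G',\chi_{Q})\leq(G',\chi)$ directly, and the converse goes through the model. The model argument is cleaner because it gives both directions at once.

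There is no real obstacle here. The only point needing care is the convention that a packing is an indexed family of subgraphs. The claim is applied to each $G_{i}$ individually, and the family itself is unchanged, so this convention causes no difficulty.
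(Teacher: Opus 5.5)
Your proposal is correct and follows the paper's proof: both reduce, via \zcref{obs_minorModels}, to checking that a family of branch sets is a model in $(J,\chi)$ exactly when it is one in $(J,\chi)\cap Q$, for every subgraph $J$ of $G$. In both, this holds because the only color condition concerns colors $i\in\psi(u)\subseteq Q$, and for those colors $\chi$ and $\chi_{Q}$ agree.
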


\begin{proof}
By \zcref{obs_minorModels}, it suffices to observe that, for every subgraph $J$ of $G,$ a family is a model of $(H,\psi)$ in $(J,\chi)$ if and only if it is a model of $(H,\psi)$ in $(J,\chi)\cap Q.$
Indeed, the only condition of a model that involves colors requires, for every $u\in V(H)$ and every $i\in\psi(u)\subseteq Q,$ that some vertex of $D_{u}$ carries the color $i,$ and, for every $i\in Q,$ a vertex of $J$ carries $i$ in $(J,\chi)$ if and only if it carries $i$ in $(J,\chi)\cap Q.$
\end{proof}

The next lemma reduces the study of packings and coverings of a crucial colorful graph to hosts of bounded torso treewidth.

\begin{lemma}
\label{lemma_EP_reduce}
For every crucial colorful graph $(H,\psi)$ with $V(H)\neq\emptyset$ and every $k\in\Nbbb_{\geq1},$ there is some $c\in\Nbbb$ such that every colorful graph $(G,\chi)$ satisfies one of the following, where $Q\coloneqq\psi(H)$:
\begin{enumerate}
\item\label{it_reduce_pack} $(G,\chi)$ has a packing of $(H,\psi)$ of size $k,$ or
\item\label{it_reduce_torso} there is a set $X\subseteq V(G)$ such that the treewidth of the torso of $X$ in $G$ is at most $c$ and, for every component $J$ of $G-X,$ the colorful graph $(J,\chi)\cap Q$ is $Q$-restricted.
\end{enumerate}
\end{lemma}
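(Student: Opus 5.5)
We apply \zcref{prop_torsoGrid} to the color-restricted host $(G,\chi)\cap Q$. By \zcref{obs_colorRestriction}, packings of $(H,\psi)$ in $(G,\chi)$ and in $(G,\chi)\cap Q$ coincide, so we may replace $(G,\chi)$ by $(G,\chi)\cap Q$ and assume $\chi(G)\subseteq Q$.

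The palette $\chi(G)$ may be a proper subset of $Q$, so we treat that case first. If $\chi(G)\subsetneq Q$, then $(H,\psi)$ cannot be a colorful minor of any subgraph, so there is no packing of size $k\geq 1$. In that case we take $X=\emptyset$: every component $J$ of $G$ then has $(J,\chi)\cap Q$ with palette properly contained in $Q$. This is $Q$-restricted when $Q\neq\emptyset$. When $Q=\emptyset$, $Q$-restricted means the vertex set is empty, so the case $Q=\emptyset$ must be handled separately below. Hence we may assume $\chi(G)=Q$, i.e. $(G,\chi)$ is a $Q$-colorful graph.

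Next, let $(W,\omega)$ be the disjoint union of $k$ copies of $(H,\psi)$. It is a disjoint union of crucial components with the same palettes as $H$, so it is still crucial: component-wise bicoloredness, single-component bicoloredness and color-segmentation are conditions on components or pairs of components whose palettes repeat those of $H$, and color-faciality is preserved by placing copies side by side. For instance, two copies of a component with palette $\{a,b\}$ intersect, so no four distinct colors arise. By \zcref{lemma_U_equals_Q}, $(W,\omega)\in\Ucal$, and its palette is exactly $Q$. If $Q\neq\emptyset$, \zcref{lemma_U_in_grids} yields some $K_W$ such that $(W,\omega)$ is a colorful minor of every $(Q,K)$-segregated grid with $K\geq K_W$. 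We set $c\coloneqq \sg(|Q|,K_W)$.

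Now either the $Q$-torso treewidth of $(G,\chi)$ is less than $c$, which is exactly outcome \ref{it_reduce_torso}, or by \zcref{prop_torsoGrid} it contains a $(Q,K_W)$-segregated grid, hence $(W,\omega)$, as a colorful minor. In the second case a model of $(W,\omega)$ (\zcref{obs_minorModels}) splits into $k$ models of $(H,\psi)$ with pairwise disjoint branch sets. The subgraphs induced by their unions give the packing of outcome \ref{it_reduce_pack}.

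When $Q=\emptyset$, $H$ is a planar graph. In this case $W$ is a minor of a large grid, and \prop_torsoGrid with $I=\emptyset$ is the grid theorem. Small treewidth of $G$ gives outcome \ref{it_reduce_torso} with $X=V(G)$, where there are no components outside $X$. We also need the $\emptyset$-torso treewidth to be witnessed by $X=V(G)$; this holds because components must be empty. The main point needing care is the cruciality of $(W,\omega)$ together with the bookkeeping that its palette is exactly $Q$, so that \zcref{lemma_U_in_grids} applies with $I=Q$.
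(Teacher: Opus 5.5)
Your proof is correct, and its skeleton matches the paper's: you restrict to $(G,\chi)\cap Q$, take $X=\emptyset$ when $\chi(G)\subsetneq Q$, set $c$ via $\sg$, and split a model of a $k$-fold target into a packing. The difference is in how you certify that a $k$-fold target lies in every large $(Q,K)$-segregated grid.

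You take the $k$-fold disjoint union $(W,\omega)$ of $(H,\psi)$ itself, check that it is still crucial, and apply \zcref{lemma_U_equals_Q} and \zcref{lemma_U_in_grids} to it as black boxes. The paper applies \zcref{obs_U_decomposition} once to $(H,\psi)$ and duplicates the resulting collection of at most $q+3$ small segregated grids $k$ times; the pairwise intersection of two-element palettes is trivially preserved. It then feeds this collection directly into \zcref{lemma_gridHosting}, with a separate grid-splitting argument for $q=0$.

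Your route is more modular. It also needs no special treatment of $Q=\emptyset$: \zcref{lemma_U_in_grids} already covers $I=\emptyset$, and $\chi(G)\subsetneq\emptyset$ cannot occur, so your separate paragraph for that case is redundant. The price is the small closure check. For color-segmentation, that check rests on the fact, used in the proof of \zcref{obs_generatorsCrucial}, that the sets in A, B, and C have connected union and so lie in a single component. You should state this rather than just calling these ``conditions on components.''

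The paper's route, in turn, gives the explicit threshold $K'=2k(q+3)k_{0}q+1$, which is linear in $k$ with $k_{0}$ depending only on $(H,\psi)$. Your $K_{W}$ is whatever \zcref{prop_gridFolding} returns for the $k$-fold classes of components of $(W,\omega)$. So the dependence of $c$, and hence of the eventual gap, on $k$ is less transparent in your version.
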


\begin{proof}
Set $q\coloneqq|Q|.$
By \zcref{lemma_U_equals_Q}, $(H,\psi)\in\Ucal$ and, as $(H,\psi)$ is $Q$-colorful, \zcref{lemma_U_in_grids} provides some $K_{H}\in\Nbbb_{\geq1}$ such that $(H,\psi)$ is a colorful minor of every $(Q,K)$-segregated grid with $K\geq K_{H}.$

We first claim that there are some $K'\in\Nbbb_{\geq1}$ and a colorful graph $(W_{k},\omega_{k})$ such that $(W_{k},\omega_{k})$ is the disjoint union of $k$ colorful graphs, each of which contains $(H,\psi)$ as a colorful minor and such that $(W_{k},\omega_{k})$ is a colorful minor of every $(Q,K)$-segregated grid with $K\geq K'.$
If $q=0,$ then the $(Q,K)$-segregated grid is the $(K\times K)$-grid without colors and, for $K\geq kK_{H},$ it contains $k$ pairwise vertex-disjoint $(K_{H}\times K_{H})$-grids, lying in pairwise disjoint sets of columns; as each of them is a $(Q,K_{H})$-segregated grid, the claim follows with $K'\coloneqq kK_{H}$ and $(W_{k},\omega_{k})$ the disjoint union of $k$ copies of the $(Q,K_{H})$-segregated grid.
Assume now that $q\geq1.$
By \zcref{obs_U_decomposition}, applied to $(H,\psi),$ there are some $k_{0}\in\Nbbb_{\geq1}$ and at most $q+3$ colorful graphs, each of which is an $(I',k_{0})$-segregated grid for some $I'\subseteq Q$ with $1\leq|I'|\leq2,$ whose two-element palettes pairwise intersect and whose disjoint union contains $(H,\psi)$ as a colorful minor.
Let $(W_{k},\omega_{k})$ be the disjoint union of $k$ copies of this collection.
It consists of at most $k(q+3)$ segregated grids as above whose two-element palettes still pairwise intersect and therefore, by \zcref{lemma_gridHosting}, $(W_{k},\omega_{k})$ is a colorful minor of every $(Q,K)$-segregated grid with $K\geq K'\coloneqq2k(q+3)k_{0}q+1.$ 
This proves the claim.

Set $c\coloneqq\sg(q,K')$ and let $(G,\chi)$ be a colorful graph; write $(G,\chi_{Q})\coloneqq(G,\chi)\cap Q.$
If $\chi_{Q}(G)\subsetneq Q,$ then every component $J$ of $G$ satisfies $\chi_{Q}(J)\subseteq\chi_{Q}(G)\subsetneq Q,$ so $(J,\chi)\cap Q$ is $Q$-restricted and \ref{it_reduce_torso} holds with $X=\emptyset.$
Otherwise, $(G,\chi_{Q})$ is $Q$-colorful.
If the $Q$-torso treewidth of $(G,\chi_{Q})$ is smaller than $c,$ then a witnessing set $X$ yields \ref{it_reduce_torso}.
Otherwise, by \zcref{prop_torsoGrid}, $(G,\chi_{Q})$ contains some $(Q,K')$-segregated grid as a colorful minor and therefore, by the claim and the transitivity of the colorful minor relation, $(W_{k},\omega_{k})$ is a colorful minor of $(G,\chi_{Q}).$
Fix, by \zcref{obs_minorModels}, a model of $(W_{k},\omega_{k})$ in $(G,\chi_{Q})$ and, for every $t\in[k],$ let $G_{t}$ be the subgraph of $G$ induced by the union of the branch sets of the vertices of the $t$-th of the $k$ colorful graphs forming $(W_{k},\omega_{k}).$
The subgraphs $G_{1},\ldots,G_{k}$ are pairwise vertex-disjoint and, for every $t\in[k],$ the corresponding sub-family of branch sets is a model in $(G_{t},\chi_{Q})$ of the $t$-th of these colorful graphs, which contains $(H,\psi)$ as a colorful minor.
Hence, by \zcref{obs_colorRestriction} applied to the host $(G_{t},\chi)$ with packings of size one, the colorful graph $(H,\psi)$ is a colorful minor of $(G_{t},\chi)$ for every $t\in[k],$ so that $(G_{1},\ldots,G_{k})$ is a packing of $(H,\psi)$ in $(G,\chi)$ of size $k$ and \ref{it_reduce_pack} holds.
\end{proof}

\subsection{The classic scheme}
\label{subsec_EP_scheme}

The engine of our covering arguments is the following lemma, which converts the outcome of \zcref{lemma_EP_reduce} into a covering by a standard argument on a tree decomposition of the torso.

\begin{lemma}
\label{lemma_classic_scheme}
Let $I\finsub\Nbbb_{\geq1},$ let $t\in\Nbbb$ and $k\in\Nbbb_{\geq1},$ let $(H,\psi)$ be a colorful graph with $V(H)\neq\emptyset$ and $\psi(H)\subseteq I,$ and let $(G,\chi)$ be a colorful graph with $\chi(G)\subseteq I,$ together with a set $X\subseteq V(G)$ such that every component of $(G-X,\chi)$ is $I$-restricted and the treewidth of the torso of $X$ in $G$ is at most $t.$
Then one of the following is true:
\begin{enumerate}
\item\label{it_scheme_kill} there is a set $S\subseteq V(G)$ of size at most $(t+1)(k-1)$ such that no component of $(G-S,\chi)$ contains $(H,\psi)$ as a colorful minor,
\item\label{it_scheme_restricted} $I\neq\emptyset,$ $\psi(H)\subsetneq I,$ and there is a set $S\subseteq V(G)$ of size at most $(t+1)(k-1)$ such that every component of $(G-S,\chi)$ is $I$-restricted or does not contain $(H,\psi)$ as a colorful minor, or
\item\label{it_scheme_pack} there is a packing of $(H,\psi)$ in $(G,\chi)$ of size $k.$
\end{enumerate}
\end{lemma}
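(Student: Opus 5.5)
We will prove the lemma by induction on $k$, following the classical Erd\H{o}s-P\'osa argument along a tree-decomposition $(T,\beta)$ of $\torso(G,X)$ of width at most $t$. The key preliminary fact is this: every component $J$ of $G-X$ has $N_G(J)\cap X$ forming a clique in the torso, so $N_G(J)\cap X\subseteq\beta(s)$ for some node $s$. We extend the decomposition to a (non-width-bounded) decomposition of $G$ by attaching, for every component $J$ of $G-X$, a new leaf node with bag $V(J)\cup (N_G(J)\cap X)$ to such a node $s$. Removing a bag $\beta(s)$ of an original node then separates $G$ exactly as the corresponding split of $T$ suggests. Call a subgraph of $G$ \emph{good} if it contains $(H,\psi)$ as a colorful minor. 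The base case $k=1$ is trivial: take $S=\emptyset$; then either no component contains $(H,\psi)$, giving \ref{it_scheme_kill}, or some subgraph does, giving a packing of size $1$.

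For the inductive step, root $T$ and consider two subcases. First suppose some $X$-torso subtree is good. Precisely, choose a node $s$ of $T$ of maximal depth such that the subgraph $G_s$ of $G$ induced by the union of all bags (including attached leaf components) in the subtree rooted at $s$ contains a good subgraph \emph{that is not contained in a single component of $G-X$}. By minimality of depth, every model of $(H,\psi)$ inside $G_s-\beta(s)$ lives in a single component of $G-X$, hence in an $I$-restricted component. We place $\beta(s)$ (size $\leq t+1$) into $S$. The graph $G':=G-V(G_s)$ with $X':=X\setminus V(G_s)$ still satisfies the hypotheses: its torso is a minor-like restriction with treewidth $\leq t$, and components of $G'-X'$ are components of $G-X$, because components attached below $s$ are removed. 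By induction with $k-1$, either $G'$ has a packing of size $k-1$, to which we add the model found in $G_s$ to obtain \ref{it_scheme_pack}, or we obtain a set $S'$ of size $\leq(t+1)(k-2)$, and $S'\cup\beta(s)$ works.

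The main obstacle is the bookkeeping of which outcome \ref{it_scheme_kill} or \ref{it_scheme_restricted} we land in, since models confined to single $I$-restricted components of $G-X$ cannot be killed by bag deletions. The handling is as follows. If $\psi(H)=I$, then no $I$-restricted component can contain $(H,\psi)$, since its palette misses a color of $I$. So every good subgraph meets $X$ in a nontrivial way and the choice of $s$ above covers everything, yielding \ref{it_scheme_kill}. If $\psi(H)\subsetneq I$ (forcing $I\neq\emptyset$), then after deleting $S$, every remaining component of $G-S$ containing a model either lies inside a component of $G-X$, hence is $I$-restricted, or would give a deeper node $s$, contradicting our choice. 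This is exactly \ref{it_scheme_restricted}. We also need to check that components of $G-S$ lying partly in $G_s-\beta(s)$ are themselves $I$-restricted or model-free. To ensure this, we define goodness at $s$ as ``some component of $G_s-\beta(s)$ that is not $I$-restricted contains $(H,\psi)$'' and run the argument with that notion. A component of $G-S$ then either falls under this notion, giving the contradiction, or is $I$-restricted or model-free. The terminal case, where no node qualifies, gives outcome \ref{it_scheme_kill} or \ref{it_scheme_restricted} directly with $S=\emptyset$.
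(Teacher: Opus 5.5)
You have the right overall scheme, and it is essentially the paper's: hang the components of $G-X$ as leaves on a width-$t$ decomposition of the torso, take a deepest non-leaf node $s$ whose part $G_s$ still contains an obstruction, delete $\beta(s)$, and recurse on $G-V(G_s)$ with $X\setminus V(G_s)$. The torso of that set is a subgraph of the torso of $X$ in $G$, so the hypotheses carry over. The paper inducts on the maximum number of disjoint witnesses rather than on $k$, but that difference is cosmetic. With your first notion (a subgraph containing $(H,\psi)$ that is not contained in a single component of $G-X$), the argument goes through.

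\textbf{The patch in your last paragraph is wrong and breaks the argument.} Suppose goodness at $s$ means that some non-$I$-restricted component of $G_s-\beta(s)$ contains $(H,\psi)$, and $s$ is a deepest good node. Then that very component is a component of $G-\beta(s)$ and avoids $S'\subseteq V(G)\setminus V(G_s)$, so it survives in $G-S$. Finding it is not a contradiction; it is the defining property of $s$. Concretely, take $I=\{1\}$, $X=V(G)$, $G$ the single edge $ab$ with only $b$ colored, bags $\{a\}$ at the root and $\{a,b\}$ at its child, and $(H,\psi)$ a single vertex colored $1$. The root is the only good node, and deleting $\{a\}$ leaves $b$. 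The condition has to be imposed on $G_s$, not on $G_s-\beta(s)$.

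What you actually need, and should write instead, is the component-level argument:
\begin{enumerate}
\item A component $K$ of $G_s-\beta(s)$ is connected and avoids $\beta(s)$, so all its vertices lie in bags of a single child subtree $T_c$.
\item If $c$ is the leaf of a component $J$ of $G-X$, then $K\subseteq J$, so $K$ is $I$-restricted.
\item Otherwise $K\subseteq G_c$. If $K$ contains $(H,\psi)$, the maximality of the depth of $s$ forces $K$ into a single component of $G-X$, so $K$ is again $I$-restricted.
\end{enumerate}

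Three smaller corrections. Your sentence ``every model of $(H,\psi)$ inside $G_s-\beta(s)$ lives in a single component of $G-X$'' is false when $H$ is disconnected, since a model may use two different child subtrees. Only the statement about components is needed for outcomes (i) and (ii). Second, make explicit that $s$ ranges over the non-leaf nodes: a leaf bag $V(J)\cup N_G(J)$ has unbounded size, which is why the paper uses the adhesion there. Third, no component of $G-S$ can lie ``partly'' in $G_s-\beta(s)$, because $\beta(s)$ separates $V(G_s)\setminus\beta(s)$ from $V(G)\setminus V(G_s)$.
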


\begin{proof}
Throughout the proof we keep $(H,\psi)$ and $I$ fixed and, for every colorful graph $(G',\chi')$ with $G'\subseteq G,$ we call a subgraph $F$ of $G'$ a \emph{witness} in $(G',\chi')$ if $F$ is connected, the colorful graph $(F,\chi')$ is not $I$-restricted and $(H,\psi)$ is a colorful minor of $(F,\chi').$
We write $\mathsf{p}'(G',\chi')$ for the maximum number of pairwise vertex-disjoint witnesses in $(G',\chi')$ and $\mathsf{c}'(G',\chi')$ for the minimum size of a set $S\subseteq V(G')$ meeting every witness in $(G',\chi').$
Two observations will be used repeatedly.
First, since every witness is connected and since a component containing a witness is itself a witness, a set $S$ meets every witness in $(G',\chi')$ if and only if every component of $(G'-S,\chi')$ is $I$-restricted or does not contain $(H,\psi)$ as a colorful minor.
Second, every family of pairwise vertex-disjoint witnesses is a packing of $(H,\psi),$ so
\begin{align}
\label{eq_scheme_packing}
\text{if } \mathsf{p}'(G,\chi)\geq k, \text{ then \ref{it_scheme_pack} holds.}
\end{align}

\medskip\noindent\textbf{A tree decomposition adapted to $X.$}
We first record the decomposition that the hypothesis on $X$ provides.

\begin{claim}
\label{cl_scheme_decomposition}
There is a tree decomposition $(T,\beta)$ of $G$ of adhesion at most $t+1$ and a set $L$ of leaves of $T$ such that $|\beta(v)|\leq t+1$ for every $v\in V(T)\setminus L,$ the set $V(T)\setminus L$ is non-empty and, for every $d\in L$ with neighbor $v,$ the colorful graph $(G[\beta(d)\setminus\beta(v)],\chi)$ is $I$-restricted.
\end{claim}

\begin{proof}[Proof of the claim]
Let $(T_{0},\beta_{0})$ be a tree decomposition of $\torso(G,X)$ of width at most $t.$
For every component $C$ of $G-X,$ the set $N_{G}(C)$ is a clique of $\torso(G,X)$ and is therefore contained in $\beta_{0}(v_{C})$ for some $v_{C}\in V(T_{0}).$
Obtain $T$ from $T_{0}$ by adding, for every such component $C,$ a new leaf $d_{C}$ adjacent to $v_{C},$ and set $\beta(d_{C})\coloneqq V(C)\cup N_{G}(C)$ and $\beta(v)\coloneqq\beta_{0}(v)$ for every $v\in V(T_{0}).$
Then $(T,\beta)$ is a tree decomposition of $G$ and, setting $L\coloneqq\{d_{C}\mid C \text{ a component of } G-X\},$ we have $V(T)\setminus L\supseteq V(T_{0})\neq\emptyset$ and $|\beta(v)|\leq t+1$ for every $v\in V(T)\setminus L.$
Every bag of a node $d_{C}\in L$ meets the bag of its neighbor in $N_{G}(C),$ which has size at most $t+1,$ so the adhesion of $(T,\beta)$ is at most $t+1.$
Finally, $\beta(d_{C})\setminus\beta(v_{C})\subseteq V(C),$ so $(G[\beta(d_{C})\setminus\beta(v_{C})],\chi)$ is a subgraph of a component of $G-X$ and, as being $I$-restricted is preserved under taking subgraphs, it is $I$-restricted.
\end{proof}

\medskip\noindent\textbf{The key inequality.}
We prove that
\begin{align}
\label{eq_scheme_main}
\mathsf{c}'(G',\chi')\ \leq\ (t+1)\cdot\mathsf{p}'(G',\chi')
\end{align}
holds for every colorful graph $(G',\chi')$ with $G'\subseteq G$ that admits a tree decomposition and a set of leaves with the properties of \zcref{cl_scheme_decomposition}, by induction on $p\coloneqq\mathsf{p}'(G',\chi').$

If $p=0,$ then $(G',\chi')$ has no witness at all, so $S\coloneqq\emptyset$ meets every witness and $\mathsf{c}'(G',\chi')=0,$ as required.
Assume now that $p\geq1$ and let $(T,\beta)$ and $L$ be as in \zcref{cl_scheme_decomposition} for $(G',\chi').$
Root $T$ at a node of $V(T)\setminus L.$
For $v\in V(T),$ let $T_{v}$ be the subtree of $T$ rooted at $v$ and let $G'_{v}\coloneqq G'[\bigcup_{u\in V(T_{v})}\beta(u)].$
Since $G'_{\mathrm{root}}=G'$ contains a witness, we may choose $v\in V(T)$ of maximum depth such that $G'_{v}$ contains a witness of $(G',\chi'),$ and we fix such a witness $F_{0}\subseteq G'_{v}.$
Let
$$S_{0}\coloneqq\begin{cases} \beta(v) &\text{if } v\notin L,\\ \beta(v)\cap\beta(v') &\text{if } v\in L \text{ and } v' \text{ is the parent of } v,\end{cases}$$
so that $|S_{0}|\leq t+1,$ by the bound on the size of the bags of $V(T)\setminus L$ in the first case and by the bound on the adhesion of $(T,\beta)$ in the second.

\begin{claim}
\label{cl_scheme_nowitness}
No witness of $(G',\chi')$ is a subgraph of $G'_{v}-S_{0}.$
\end{claim}

\begin{proof}[Proof of the claim]
Suppose first that $v\notin L,$ so that $S_{0}=\beta(v),$ and let $F$ be a witness with $F\subseteq G'_{v}-\beta(v).$
Every component of $G'_{v}-\beta(v)$ is a subgraph of $G'_{c}$ for some child $c$ of $v$ and $F$ is connected, so $F\subseteq G'_{c}$ for some child $c$ of $v.$
This contradicts the choice of $v,$ as $c$ has larger depth than $v.$
Suppose now that $v\in L,$ with parent $v'.$
Then $G'_{v}-S_{0}=G'[\beta(v)\setminus\beta(v')]$ and $(G'[\beta(v)\setminus\beta(v')],\chi')$ is $I$-restricted by \zcref{cl_scheme_decomposition}.
As being $I$-restricted is preserved under taking subgraphs, no subgraph of $G'_{v}-S_{0}$ is a witness.
\end{proof}

Since $S_{0}$ separates $V(G'_{v})\setminus S_{0}$ from $V(G')\setminus V(G'_{v}),$ every connected subgraph of $G'-S_{0}$ is either a subgraph of $G'_{v}-S_{0}$ or vertex-disjoint from $G'_{v}.$
By \zcref{cl_scheme_nowitness}, no witness of $(G'-S_{0},\chi')$ is of the first kind, so every witness of $(G'-S_{0},\chi')$ is vertex-disjoint from $G'_{v}$ and hence from $F_{0}.$
Consequently, adding $F_{0}$ to any family of pairwise vertex-disjoint witnesses of $(G'-S_{0},\chi')$ yields a family of pairwise vertex-disjoint witnesses of $(G',\chi'),$ which gives
\begin{align}
\label{eq_scheme_drop}
\mathsf{p}'(G'-S_{0},\chi')\ \leq\ p-1.
\end{align}
Moreover, deleting the vertices of $S_{0}$ from every bag of $(T,\beta)$ yields a tree decomposition of $G'-S_{0}$ with the same set $L$ of leaves, whose adhesion and non-leaf bags only shrink and whose leaf parts, being subgraphs of the previous ones, remain $I$-restricted.
Hence the induction hypothesis applies to $(G'-S_{0},\chi')$ and, using \eqref{eq_scheme_drop},
$$\mathsf{c}'(G',\chi')\ \leq\ |S_{0}|+\mathsf{c}'(G'-S_{0},\chi')\ \leq\ (t+1)+(t+1)(p-1)\ =\ (t+1)p,$$
where the first inequality holds because, if $S_{1}$ meets every witness of $(G'-S_{0},\chi'),$ then $S_{0}\cup S_{1}$ meets every witness of $(G',\chi').$
This proves \eqref{eq_scheme_main}.

\medskip\noindent\textbf{Conclusion.}
By \eqref{eq_scheme_packing}, we may assume that $\mathsf{p}'(G,\chi)\leq k-1.$
Then \zcref{cl_scheme_decomposition} and \eqref{eq_scheme_main} provide a set $S\subseteq V(G)$ with $|S|\leq(t+1)(k-1)$ meeting every witness of $(G,\chi),$ that is, such that
\begin{align}
\label{eq_scheme_outcome}
\text{every component of $(G-S,\chi)$ is $I$-restricted or does not contain $(H,\psi)$ as a colorful minor.}
\end{align}
If no $I$-restricted colorful graph with palette a subset of $I$ contains $(H,\psi)$ as a colorful minor, then, by \eqref{eq_scheme_outcome}, no component of $(G-S,\chi)$ contains $(H,\psi)$ as a colorful minor and \ref{it_scheme_kill} holds.
Otherwise, $(H,\psi)$ is a colorful minor of some $I$-restricted colorful graph $(R,\psi')$ with $\psi'(R)\subseteq I.$
As $V(H)\neq\emptyset,$ also $V(R)\neq\emptyset$ and, as the only $\emptyset$-restricted colorful graph is the one with no vertices, we obtain $I\neq\emptyset.$
Moreover, $\psi(H)\subseteq\psi'(R)\subsetneq I$ and \eqref{eq_scheme_outcome} yields \ref{it_scheme_restricted}.
\end{proof}

\subsection{The connected case}
\label{subsec_EP_conn}

\begin{theorem}
\label{thm_EP_connected}
Every connected crucial colorful graph $(H,\psi)$ with $V(H)\neq\emptyset$ has the Erd\H{o}s-P{\'o}sa property.
\end{theorem}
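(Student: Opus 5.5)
The plan is to combine \zcref{lemma_EP_reduce} with \zcref{lemma_classic_scheme}, using $I\coloneqq Q=\psi(H)$, and to resolve the remaining outcome \ref{it_scheme_restricted} by induction on $|Q|$.

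Fix $k\in\Nbbb_{\geq1}$ and a host $(G,\chi)$. By \zcref{obs_colorRestriction} we may replace $(G,\chi)$ by $(G,\chi)\cap Q$, so that $\chi(G)\subseteq Q$; packings and coverings are unchanged. Apply \zcref{lemma_EP_reduce} to $(H,\psi)$ and $k$, which yields a constant $c$. If a packing of size $k$ exists we are done. Otherwise we obtain a set $X$ whose torso has treewidth at most $c$ and such that every component of $G-X$ is $Q$-restricted. Now apply \zcref{lemma_classic_scheme} with $I=Q$ and $t=c$.
\begin{itemize}
\item Outcome \ref{it_scheme_pack} gives a packing of size $k$.
\item Outcome \ref{it_scheme_kill} gives a set $S$ of size at most $(c+1)(k-1)$ such that no component of $G-S$ contains $(H,\psi)$. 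Since $H$ is connected, any model of $(H,\psi)$ lies inside a single component, so $(G-S,\chi)$ has no model and $S$ is a covering.
\item Outcome \ref{it_scheme_restricted} requires $\psi(H)\subsetneq I$. This is impossible because $I=Q=\psi(H)$.
\end{itemize}
Hence the gap is $f(k)=(c+1)(k-1)$, where $c$ depends on $(H,\psi)$ and $k$. It is finite for every $k$, which is all the definition of the EP-property requires.

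The connectedness assumption is used only in outcome \ref{it_scheme_kill}: a connected $(H,\psi)$ appears as a colorful minor of a graph if and only if it appears in some component. The choice $I=\psi(H)$ is what rules out \ref{it_scheme_restricted}, so no induction on colors is needed here.

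The main point to verify is the matching of hypotheses: \zcref{lemma_EP_reduce} states restriction for $(J,\chi)\cap Q$, while \zcref{lemma_classic_scheme} needs components of $(G-X,\chi)$ to be $Q$-restricted with $\chi(G)\subseteq Q$. After the reduction to $(G,\chi)\cap Q$ these coincide.

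There is one degenerate case. If $Q=\emptyset$, then "$\emptyset$-restricted" forces every component of $G-X$ to be empty, so $X=V(G)$. \zcref{lemma_classic_scheme} then applies with $L=\emptyset$ and recovers the classical planar argument.

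A last check concerns whether $\chi_Q(G)\subsetneq Q$ is permitted in the classic scheme. It is, since the scheme only asks that $\chi(G)\subseteq I$.
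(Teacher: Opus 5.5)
Your proposal is correct and is essentially the paper's own proof: you pass to $(G,\chi)\cap Q$, apply \zcref{lemma_EP_reduce} and then \zcref{lemma_classic_scheme} with $I=Q=\psi(H)$ so that outcome~\ref{it_scheme_restricted} cannot occur, and use the connectivity of $H$ to turn outcome~\ref{it_scheme_kill} into a covering, which gives the gap $f(k)=(c+1)(k-1)$. As you note yourself, the induction on $|Q|$ announced in your first sentence is never used, and the case $k=0$ is vacuous because the empty family is always a packing (the paper handles it by setting $f(0)=0$).
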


\begin{proof}
Set $Q\coloneqq\psi(H)$ and $f(0)\coloneqq0$ and, given $k\in\Nbbb_{\geq1},$ let $c$ be as in \zcref{lemma_EP_reduce} for $(H,\psi)$ and $k,$ and set $f(k)\coloneqq(c+1)(k-1).$
Let $(G,\chi)$ be a colorful graph without a packing of $(H,\psi)$ of size $k$; we prove that $(G,\chi)$ has a covering of $(H,\psi)$ of size at most $f(k).$
As the empty family is a packing of size $0,$ we have $k\geq1.$
By \zcref{obs_colorRestriction}, the colorful graph $(G,\chi_{Q})\coloneqq(G,\chi)\cap Q$ has no packing of $(H,\psi)$ of size $k$ either and it suffices to find a covering of $(H,\psi)$ in $(G,\chi_{Q})$ of size at most $f(k).$
We apply \zcref{lemma_EP_reduce} to $(H,\psi),$ $(G,\chi),$ and $k.$
Its outcome \ref{it_reduce_pack} is excluded, as $(G,\chi)$ has no packing of $(H,\psi)$ of size $k,$ so its outcome \ref{it_reduce_torso} provides a set $X\subseteq V(G)$ such that the treewidth of the torso of $X$ in $G$ is at most $c$ and every component of $(G-X,\chi_{Q})$ is $Q$-restricted.
We apply \zcref{lemma_classic_scheme}, with $I\coloneqq Q,$ to $(H,\psi),$ $(G,\chi_{Q}),$ $X,$ and $k.$
The outcome \ref{it_scheme_pack} is excluded, as there is no packing of $(H,\psi)$ in $(G,\chi_{Q})$ of size $k,$ and the outcome \ref{it_scheme_restricted} asserts that $\psi(H)\subsetneq Q,$ which is false.
Hence \ref{it_scheme_kill} holds and provides a set $S$ of size at most $(c+1)(k-1)$ such that no component of $(G-S,\chi_{Q})$ contains $(H,\psi)$ as a colorful minor.
As $H$ is connected and $V(H)\neq\emptyset,$ the union of the branch sets of any model of $(H,\psi)$ induces a connected subgraph and therefore lies in a single component of $G-S.$
Hence, by \zcref{obs_minorModels}, $(G-S,\chi_{Q})$ does not contain $(H,\psi)$ as a colorful minor and $S$ is a covering of $(H,\psi)$ in $(G,\chi_{Q})$ of size at most $f(k).$
\end{proof}
\subsection{The general case}
\label{subsec_EP_general}

We now lift \zcref{thm_EP_connected} to crucial colorful graphs with an arbitrary number of components.
The additional difficulty is that a copy of a disconnected pattern may be dispersed around the host: distinct components of $(H,\psi)$ may be realized in distinct components of $(G,\chi),$ so removing a set of vertices after whose deletion no single component contains all of $(H,\psi)$ --- which is what the machinery of the connected case provides --- does not yet prevent copies from being recombined out of pieces lying in different components.
The proof is by induction on the number $r$ of components of $H$ and revolves around the pieces of $(H,\psi),$ the sub-patterns formed by the groups of a partition of its components.
After a first deletion as above, every surviving copy of $(H,\psi)$ is split by such a partition into at least two pieces, each realized in its own component of the host.
The heart of the argument is based on the interleaving  between packings and coverings for these pieces.
If every piece that could still occur is contained in many components of the host, then $k$ pairwise disjoint copies of $(H,\psi)$ can be assembled out of disjoint copies of complementary pieces, which is impossible in the absence of a packing.
Hence some piece is contained in few components and, inside each of them, the induction hypothesis either produces $k$ disjoint copies of that piece in that single component --- copies which serve the assembly of copies of $(H,\psi)$ just as well --- or a small covering that eliminates the piece from that component.
Treating one piece per round and never revisiting a treated piece, the process examines fewer than $2^{r}$ pieces in total and, once no copy of $(H,\psi)$ survives, the accumulated deletions form the desired covering.

\begin{theorem}
\label{thm_EP_positive}
Every crucial colorful graph has the Erd\H{o}s-P{\'o}sa property.
\end{theorem}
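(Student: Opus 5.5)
The proof goes by induction on the number $r$ of components of $H$, following the scheme sketched just before the statement. The base case $r=1$ is \zcref{thm_EP_connected}. If $V(H)=\emptyset$, every $k$ admits a packing, so there is nothing to prove. By \zcref{obs_colorRestriction} we may work throughout in $(G,\chi)\cap Q$ with $Q\coloneqq\psi(H)$. We call a \emph{piece} of $(H,\psi)$ the disjoint union of the components in a non-empty proper subset $A$ of the components of $H$. Every piece is a colorful minor of $(H,\psi)$, hence crucial by \zcref{obs_CrucialClosed}, and has fewer than $r$ components. By induction, each piece $(H_A,\psi)$ has the EP-property with some gap $f_A$.

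The first step kills all connected copies. Apply \zcref{lemma_EP_reduce} and then \zcref{lemma_classic_scheme} with $I\coloneqq Q$, exactly as in the proof of \zcref{thm_EP_connected}. This either yields a packing of size $k$ or a set $S_0$ of size at most $(c+1)(k-1)$ such that no component of $G-S_0$ contains $(H,\psi)$. Outcome \ref{it_scheme_restricted} is excluded because $\psi(H)=Q$. Now any model of $(H,\psi)$ in $G-S_0$ is spread over several components of $G-S_0$. Grouping the components of $H$ by the host component containing their branch sets therefore partitions $H$ into at least two pieces, each realized inside a single component of $G-S_0$.

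The second step is an iterative cleaning procedure over pieces. Call a piece \emph{alive} if some component of the current host contains it. Fix a threshold $N\coloneqq rk$. Suppose every alive piece is contained in at least $N$ components of the host. Then we assemble $k$ disjoint copies of $(H,\psi)$ greedily. For each copy, take any surviving decomposition of $H$ into pieces, say $A_1,\dots,A_s$ with $s\le r$, and assign each piece to a host component not yet used. Since each copy consumes at most $r$ components and each relevant piece has at least $rk$ host components available, this succeeds, contradicting the absence of a packing. The argument must let different copies use the same decomposition. This is fine, because the decomposition only needs to be witnessed once in order to know that all of its pieces are alive.

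Otherwise some alive piece $A$ lies in fewer than $N$ components $C_1,\dots,C_{N'}$. In each $C_j$ we apply the EP-property of $(H_A,\psi)$ with parameter $k$. If some $C_j$ contains $k$ disjoint copies of $H_A$, we record this fact and mark $A$ as \emph{abundant*}: for assembly purposes, $C_j$ alone supplies $A$ $k$ times. Since we must still avoid using $C_j$ for other pieces of the same copy, we delete nothing and stop treating $A$. Otherwise we delete coverings of size at most $f_A(k)$ in every $C_j$, at total cost at most $N f_A(k)$, and $A$ is dead forever, since deletions only shrink the host.

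Each piece is treated at most once, so there are fewer than $2^r$ rounds. The total deletion is at most $(c+1)(k-1)+2^r\,rk\max_A f_A(k)$, which is a function of $k$ alone.

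The main obstacle is making the assembly step rigorous once "abundant*" pieces exist. A copy of $H$ using a decomposition in which some piece $A$ is supplied by a special component $C_j$ must place the other pieces in components distinct from $C_j$. Also, the $k$ copies of $A$ inside $C_j$ may be used by different copies of $H$, because they are disjoint. I would handle this by a counting or Hall-type argument: assign to each surviving decomposition, for each of its pieces, either $rk$ distinct host components or a single component carrying $k$ disjoint copies, and show that disjoint selections exist. I expect the subtle point to be that after deletions in round $i$, the abundance found earlier persists, while the "at least $N$ components" status of other pieces may drop. This is why the process must loop until, for every surviving decomposition, all pieces are either abundant in the above sense or the decomposition has been killed. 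The ordering and termination argument, namely that a dead piece cannot revive and that an abundant piece stays abundant, is the step I would verify most carefully.
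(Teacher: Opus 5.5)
Your architecture is the paper's: induction on $r$, a first deletion $S_{0}$ via \zcref{lemma_EP_reduce} and \zcref{lemma_classic_scheme} that kills every copy lying inside a single component, a scarce/abundant dichotomy on pieces with threshold $rk$, the Erd\H{o}s-P\'osa property of a scarce piece applied in the few components containing it, and an assembly argument. The gap is the step you yourself flag.

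\textbf{The treatment rule for a scarce piece is all-or-nothing.} As soon as one $C_{j}$ carries $k$ disjoint copies of $H_{A}$, you delete nothing at all. This leaves $H_{A}$ alive in every other $C_{j'}$, and those copies can combine with other pieces. The $k$ copies inside $C_{j}$ are useless when the complementary pieces are also available in only a few components, possibly including $C_{j}$ itself. For instance, with $r=2$, nothing in your argument excludes the following. Both $\{1\}$ and $\{2\}$ are scarce and are marked abundant through the same component $C$, which contains $k$ disjoint copies of each piece but no copy of $H$. Meanwhile a surviving copy of $(H,\psi)$ uses two other components $C',C''$ that contain the pieces only a few times. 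Then every piece has been treated, nothing has been deleted, $(H,\psi)$ survives, and there is nothing to assemble. No Hall-type argument can help, because the invariant the assembly needs was never established.

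\textbf{Abundance is measured in the current host.} Later coverings may be placed inside $C_{j}$ and destroy its copies there. So an ``abundant'' piece does not stay abundant in your sense.

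\textbf{The missing idea.} First, treat each of the fewer than $rk$ components $K$ of $G_{1}\coloneqq G-S_{0}$ that contain the scarce piece $R$ separately. Keep those with a packing of $(H_{R},\psi_{R})$ of size $k$, and delete a covering in each of the others.

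Second, measure everything --- components, packings, distinctness --- in $G_{1}$, not in the current host. This is legitimate because the assembled packing only needs to exist in $G$ to contradict the hypothesis, so it may use deleted vertices. A packing found in $K$ therefore never expires.

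Third, given a surviving copy, group its components according to the component of $G_{1}$ that contains them. This gives parts $R^{*}_{K}$ lying in pairwise distinct components $K$ of $G_{1}$, each a proper subset of $[r]$ by the choice of $S_{0}$.

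Every already treated part then lies in a component with $k$ disjoint copies of it, since otherwise it would have been covered there. Distinct parts use distinct components. Hence the assembly succeeds whenever every untreated part is contained in at least $rk$ components. This forces a new scarce piece in every round, and after at most $2^{r}-2$ rounds $(H,\psi)$ is dead, which is how the paper concludes.
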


\begin{proof}
Let $(H,\psi)$ be a crucial colorful graph.
If $V(H)=\emptyset,$ then $(H,\psi)$ is a colorful minor of every colorful graph, so no colorful graph has a covering of $(H,\psi);$ on the other hand, as observed in \zcref{subsec_EP_def}, every colorful graph then has packings of $(H,\psi)$ of every size.
The defining implication of the EP-property is therefore vacuous and $(H,\psi)$ has the EP-property with any gap.
Assume now that $V(H)\neq\emptyset,$ let $H_{1},\ldots,H_{r}$ be the components of $H,$ and proceed by induction on $r.$
If $r=1,$ the claim is \zcref{thm_EP_connected}.

Assume that $r\geq2.$
For every non-empty $R\subseteq[r],$ let $(H_{R},\psi_{R})$ be the disjoint union of the colorful graphs $(H_{i},\psi),$ $i\in R;$ in particular, $(H_{[r]},\psi_{[r]})=(H,\psi).$
Every $(H_{R},\psi_{R})$ is a colorful minor of $(H,\psi)$ and therefore, by \zcref{obs_CrucialClosed}, crucial.
Let $\Rcal$ be the set of all non-empty proper subsets of $[r],$ so that $|\Rcal|=2^{r}-2.$
For every $R\in\Rcal,$ the colorful graph $(H_{R},\psi_{R})$ has fewer than $r$ components, so the induction hypothesis provides a gap $g_{R}$ for its Erd\H{o}s-P{\'o}sa property.
Given $k\in\Nbbb_{\geq1},$ let $c$ be as in \zcref{lemma_EP_reduce} for $(H,\psi)$ and $k,$ and set
$$f(k)\coloneqq(c+1)(k-1)+2^{2r}\cdot r\cdot k\cdot\sum_{R\in\Rcal}g_{R}(k);$$
set also $f(0)\coloneqq0.$
Let $(G,\chi)$ be a colorful graph without a packing of $(H,\psi)$ of size $k$; we prove that $(G,\chi)$ has a covering of $(H,\psi)$ of size at most $f(k).$
As the empty family is a packing of size $0,$ we have $k\geq1.$
By \zcref{obs_colorRestriction}, the colorful graph $(G,\chi')\coloneqq(G,\chi)\cap Q,$ where $Q\coloneqq\psi(H),$ has no packing of $(H,\psi)$ of size $k$ either and the coverings of $(H,\psi)$ in $(G,\chi')$ of any given size are precisely the coverings of $(H,\psi)$ in $(G,\chi)$; we may therefore work with $(G,\chi').$
Exactly as in the proof of \zcref{thm_EP_connected}, \zcref{lemma_EP_reduce} and \zcref{lemma_classic_scheme} provide a set $S_{0}$ of size at most $(c+1)(k-1)$ such that no component of $(G-S_{0},\chi')$ contains $(H,\psi)$ as a colorful minor.
Set $(G_{1},\chi'_{1})\coloneqq(G-S_{0},\chi').$
For every $R\in\Rcal,$ let $\Kcal_{R}$ be the set of components of $(G_{1},\chi'_{1})$ that contain $(H_{R},\psi_{R})$ as a colorful minor.
The set $\Kcal_{R}$ measures how widespread the piece $(H_{R},\psi_{R})$ is in $G_{1},$ and the construction below hinges on whether these sets are large or small.

\medskip\noindent\textbf{The partition property.}
Since no component of $G_{1}$ contains $(H,\psi),$ every copy of $(H,\psi)$ that survives further deletions must be scattered over several components; the following makes this precise.
Let $S\subseteq V(G_{1})$ and suppose that $(H,\psi)$ is a colorful minor of $(G_{1}-S,\chi'_{1}).$
In a model of $(H,\psi),$ the branch sets of the vertices of each component $H_{i}$ induce a connected subgraph and therefore lie in a single component of $G_{1}-S.$
Grouping the indices of $[r]$ according to that component, we obtain a partition $\Pcal$ of $[r]$ into non-empty parts and pairwise distinct components $C_{R},$ $R\in\Pcal,$ of $G_{1}-S$ such that $(H_{R},\psi_{R})$ is a colorful minor of $(C_{R},\chi'_{1})$ for every $R\in\Pcal.$
As every component of $G_{1}-S$ is a subgraph of a component of $G_{1}$ and no component of $G_{1}$ contains $(H,\psi),$ the partition $\Pcal$ has at least two parts; in particular, every part of $\Pcal$ belongs to $\Rcal.$

\medskip\noindent\textbf{The assembly argument.}
The partition property splits surviving copies of $(H,\psi)$ into pieces; the fact recorded next runs in the opposite direction, assembling $k$ disjoint copies of $(H,\psi)$ out of disjoint copies of pieces.
It will be used twice below, both times in order to derive a contradiction with the absence of a packing.
Let $\Pcal'$ be a partition of $[r]$ all of whose parts belong to $\Rcal$ and let
$P^{j}_{R},$ $R\in\Pcal'$ and $j\in[k],$ be subgraphs of $G_{1}$ such that
\begin{enumerate}
\item\label{it_ass_contains} for every $R\in\Pcal'$ and every $j\in[k],$ the colorful graph
$(P^{j}_{R},\chi'_{1})$ contains $(H_{R},\psi_{R})$ as a colorful minor,
\item\label{it_ass_row} for every $R\in\Pcal',$ the subgraphs $P^{1}_{R},\ldots,P^{k}_{R}$ are
pairwise vertex-disjoint and
\item\label{it_ass_col} for every two parts $R\neq R'$ of $\Pcal'$ and all $i,j\in[k],$ the
subgraphs $P^{i}_{R}$ and $P^{j}_{R'}$ are contained in different components of $G_{1}.$
\end{enumerate}
We claim that no such family exists.
For this, we set, for every $j\in[k],$
\begin{align}
\label{eq_ass_union}
U^{j}\ \coloneqq\ \bigcup_{R\in\Pcal'}P^{j}_{R}
\end{align}
and we prove that $U^{1},\ldots,U^{k}$ is a packing of $(H,\psi)$ in $(G,\chi')$ of size $k,$ contrary to the fact that $(G,\chi')$ has no such packing.
It may help to picture the family as an array whose rows are indexed by the parts of $\Pcal'$ and whose columns are indexed by $[k].$
The condition \ref{it_ass_row} concerns the rows: the subgraphs $P^{1}_{R},\ldots,P^{k}_{R}$ are $k$ places where one and the same piece $(H_{R},\psi_{R})$ is found.
The union in \eqref{eq_ass_union} is taken along a column: $U^{j}$ collects one place for each piece and therefore has a chance of hosting all of $(H,\psi).$

\begin{claim}
\label{cl_ass_contains}
For every $j\in[k],$ the colorful graph $(U^{j},\chi'_{1})$ contains $(H,\psi)$ as a colorful minor.
\end{claim}

\begin{proof}[Proof of the claim]
Fix $j\in[k].$
By \ref{it_ass_contains} and \zcref{obs_minorModels}, we may choose, for every $R\in\Pcal',$ a model
$\Mcal_{R}$ of $(H_{R},\psi_{R})$ in $(P^{j}_{R},\chi'_{1}).$
The branch sets of $\Mcal_{R}$ lie in $P^{j}_{R}$ and, by \ref{it_ass_col}, the subgraphs $P^{j}_{R},$
$R\in\Pcal',$ are pairwise vertex-disjoint, so the branch sets of $\Mcal_{R}$ and those of
$\Mcal_{R'}$ are disjoint whenever $R\neq R'.$
Hence $\bigcup_{R\in\Pcal'}\Mcal_{R}$ is a family of pairwise disjoint connected sets of vertices of
$U^{j}.$
As $\Pcal'$ is a partition of $[r],$ the colorful graph $(H,\psi)$ is the disjoint union of the
colorful graphs $(H_{R},\psi_{R}),$ $R\in\Pcal',$ and every edge of $H$ joins two vertices of the same
component of $H,$ hence of the same $(H_{R},\psi_{R}).$
Therefore every adjacency and every color required by $(H,\psi)$ is already realized inside one of the
models $\Mcal_{R}$ and $\bigcup_{R\in\Pcal'}\Mcal_{R}$ is a model of $(H,\psi)$ in
$(U^{j},\chi'_{1}).$
\end{proof}

\begin{claim}
\label{cl_ass_disjoint}
The subgraphs $U^{1},\ldots,U^{k}$ are pairwise vertex-disjoint.
\end{claim}

\begin{proof}[Proof of the claim]
Let $j\neq j'$ and consider $P^{j}_{R}$ and $P^{j'}_{R'}$ for some $R,R'\in\Pcal'.$
If $R=R',$ these two subgraphs are vertex-disjoint by \ref{it_ass_row} and, if $R\neq R',$ they are
contained in different components of $G_{1}$ by \ref{it_ass_col} and are, in particular,
vertex-disjoint.
As $U^{j}$ and $U^{j'}$ are unions of such subgraphs, they are vertex-disjoint.
\end{proof}

By \zcref{cl_ass_contains,cl_ass_disjoint}, the subgraphs $U^{1},\ldots,U^{k}$ are pairwise
vertex-disjoint subgraphs of $G_{1},$ hence of $G,$ each containing $(H,\psi)$ as a colorful minor,
so they form a packing of $(H,\psi)$ in $(G,\chi')$ of size $k,$ a contradiction.
Consequently, no family satisfying \ref{it_ass_contains}, \ref{it_ass_row} and \ref{it_ass_col}
exists.

In the two applications below, the subgraphs $P^{1}_{R},\ldots,P^{k}_{R}$ attached to a part
$R\in\Pcal'$ are obtained in one of the following two ways.
Either a single component $K$ of $G_{1}$ has a packing of $(H_{R},\psi_{R})$ of size $k$ and we let
$P^{1}_{R},\ldots,P^{k}_{R}$ be the $k$ members of this packing, or $k$ distinct components of $G_{1}$
each contain $(H_{R},\psi_{R})$ as a colorful minor and we let $P^{1}_{R},\ldots,P^{k}_{R}$ be these
components.
In both ways the conditions \ref{it_ass_contains} and \ref{it_ass_row} hold automatically and every
subgraph attached to $R$ is contained in a component of $G_{1},$ namely in the single component $K$ in
the first way and in its own component in the second.
The condition \ref{it_ass_col} therefore amounts to requiring that no component of $G_{1}$ is used for
two distinct parts of $\Pcal',$ and this is the only condition that will have to be verified.

\medskip\noindent\textbf{The iterative construction.}
The assembly argument converts abundance into a packing: if every relevant piece is contained in many
components of $G_{1},$ disjoint copies of $(H,\psi)$ can be assembled, which is impossible.
Scarcity, in turn, is what the induction hypothesis can exploit: a piece contained in few components
can be eliminated from all of them at bounded cost.
The construction below alternates between these two mechanisms, treating one scarce piece per round.
We first observe that some piece is scarce at the outset.

\begin{claim}
\label{cl_constr_start}
There is some $R_{1}\in\Rcal$ with $|\Kcal_{R_{1}}|<rk.$
\end{claim}

\begin{proof}[Proof of the claim]
Suppose towards a contradiction that $|\Kcal_{R'}|\geq rk$ for every $R'\in\Rcal$ and fix an arbitrary
$R\in\Rcal.$
As $R$ is a non-empty proper subset of $[r],$ so is $[r]\setminus R,$ hence $[r]\setminus R\in\Rcal$
as well and $\Pcal'\coloneqq\{R,[r]\setminus R\}$ is a partition of $[r]$ all of whose parts belong to
$\Rcal.$
Recall also that $r\geq2,$ so that $rk\geq2k.$

We choose $k$ distinct components $P^{1}_{R},\ldots,P^{k}_{R}$ in $\Kcal_{R},$ which is possible as
$|\Kcal_{R}|\geq2k\geq k,$ and then $k$ distinct components
$P^{1}_{[r]\setminus R},\ldots,P^{k}_{[r]\setminus R}$ in $\Kcal_{[r]\setminus R}$ that are different
from all of the former.
The latter choice is possible as well: the two sets $\Kcal_{R}$ and $\Kcal_{[r]\setminus R}$ need not
be disjoint, so at most $k$ members of $\Kcal_{[r]\setminus R}$ have already been used and
$|\Kcal_{[r]\setminus R}|-k\geq2k-k=k$ members remain available.

The three conditions of the assembly argument hold for the family so obtained: the condition
\ref{it_ass_contains} holds by the definition of $\Kcal_{R}$ and of $\Kcal_{[r]\setminus R},$ the
condition \ref{it_ass_row} holds as the $k$ components chosen for a part are distinct components of
$G_{1}$ and are therefore pairwise vertex-disjoint and the condition \ref{it_ass_col} holds as the
$2k$ chosen components are pairwise distinct, so that a component chosen for $R$ and a component
chosen for $[r]\setminus R$ are different components of $G_{1}.$
This contradicts the assembly argument.
\end{proof}

We set $\Rcal_{1}\coloneqq\{R_{1}\}$ and construct, for $z=1,2,\ldots,$ a set
$\Rcal_{z}=\{R_{1},\ldots,R_{z}\}\subseteq\Rcal$ of pairwise distinct members, a set $\Qcal_{z}$ of
components of $(G_{1},\chi'_{1}),$ and a set $S_{z}\subseteq V(G_{1}),$ subject to the size bounds
\begin{align}
\label{eq_constr_sizes}
|\Qcal_{z}|\ \leq\ z\cdot rk
\qquad\text{and}\qquad
|S_{z}|\ \leq\ z^{2}\cdot rk\cdot\sum_{i\in[z]}g_{R_{i}}(k)
\end{align}
and maintaining the following two properties:
\begin{enumerate}
\item\label{it_constr_a} for every $R\in\Rcal_{z}$ and every $K\in\Qcal_{z},$ either $(K,\chi'_{1})$
has a packing of $(H_{R},\psi_{R})$ of size $k,$ or $(K-S_{z},\chi'_{1})$ does not contain
$(H_{R},\psi_{R})$ as a colorful minor and
\item\label{it_constr_b} for every $R\in\Rcal_{z}$ and every component $K$ of $G_{1}$ with
$K\notin\Qcal_{z},$ the colorful graph $(K,\chi'_{1})$ does not contain $(H_{R},\psi_{R})$ as a
colorful minor.
\end{enumerate}
The members of $\Rcal_{z}$ are the pieces treated so far; they are merely pairwise distinct subsets of
$[r]$ and may well intersect.
The set $\Qcal_{z}$ lists every component of $G_{1}$ that contains a treated piece, the property
\ref{it_constr_b} saying that no component outside the list does and $S_{z}$ is the accumulated
deletion, the property \ref{it_constr_a} saying that every listed component has either been cleaned of
each treated piece by $S_{z}$ or contains $k$ disjoint copies of it on its own --- and in the latter
case the component is an asset rather than a threat, as these copies feed the assembly argument.

\medskip\noindent\textit{The base of the construction.}
For $z=1,$ we set $\Qcal_{1}\coloneqq\Kcal_{R_{1}},$ so that $|\Qcal_{1}|<rk$ by
\zcref{cl_constr_start} and the first bound of \eqref{eq_constr_sizes} holds.
The property \ref{it_constr_b} holds by the definition of $\Kcal_{R_{1}},$ as a component of $G_{1}$
outside $\Kcal_{R_{1}}$ does not contain $(H_{R_{1}},\psi_{R_{1}})$ as a colorful minor.
For each of the fewer than $rk$ members $K$ of $\Qcal_{1},$ we apply the Erd\H{o}s-P{\'o}sa property
of $(H_{R_{1}},\psi_{R_{1}})$ to $(K,\chi'_{1})$ and $k,$ obtaining a packing of
$(H_{R_{1}},\psi_{R_{1}})$ of size $k$ or a covering of size at most $g_{R_{1}}(k),$ and we let
$S_{1}$ be the union of the coverings obtained in this way.
Then $|S_{1}|\leq rk\cdot g_{R_{1}}(k),$ which is the second bound of \eqref{eq_constr_sizes} for
$z=1,$ and the property \ref{it_constr_a} holds: for $K\in\Qcal_{1},$ either the first alternative was
returned, or the covering returned for $K$ is a subset of $S_{1},$ so that $(K-S_{1},\chi'_{1})$ does
not contain $(H_{R_{1}},\psi_{R_{1}})$ as a colorful minor.

\medskip\noindent\textit{The step of the construction.}
Assume that $\Rcal_{z},$ $\Qcal_{z},$ and $S_{z}$ have been constructed.
If $(H,\psi)$ is not a colorful minor of $(G_{1}-S_{z},\chi'_{1}),$ we stop.
In that case $G-(S_{0}\cup S_{z})=G_{1}-S_{z}$ and therefore $S_{0}\cup S_{z}$ is a covering of
$(H,\psi)$ in $(G,\chi')$ of size at most
$$(c+1)(k-1)+z^{2}\cdot rk\cdot\sum_{i\in[z]}g_{R_{i}}(k)\ \leq\ f(k),$$
where the inequality holds as $z\leq|\Rcal|\leq2^{r},$ so that $z^{2}\leq2^{2r},$ and as
$\sum_{i\in[z]}g_{R_{i}}(k)\leq\sum_{R\in\Rcal}g_{R}(k),$ the members $R_{1},\ldots,R_{z}$ of $\Rcal$
being pairwise distinct.

So suppose that $(H,\psi)$ is a colorful minor of $(G_{1}-S_{z},\chi'_{1});$ the remainder of the step
is carried out under this supposition.
By the partition property, applied with $S\coloneqq S_{z},$ there are a partition $\Pcal$ of $[r]$
into at least two parts from $\Rcal$ and pairwise distinct components $C_{R},$ $R\in\Pcal,$ of
$G_{1}-S_{z}$ such that $(H_{R},\psi_{R})$ is a colorful minor of $(C_{R},\chi'_{1})$ for every
$R\in\Pcal.$
Distinct parts of $\Pcal$ may well sit in distinct components of $G_{1}-S_{z}$ that belong to the same
component of $G_{1},$ so we coarsen $\Pcal$ accordingly.
For every $R\in\Pcal,$ let $K_{R}$ be the component of $G_{1}$ containing $C_{R},$ let $\Lcal$ be the
image of the map $R\mapsto K_{R},$ and, for every $K\in\Lcal,$ let
$$R^{*}_{K}\ \coloneqq\ \bigcup\{R\in\Pcal\mid K_{R}=K\}.$$

\begin{claim}
\label{cl_constr_coarse}
The sets $R^{*}_{K},$ $K\in\Lcal,$ form a partition of $[r]$ into non-empty parts and, for every
$K\in\Lcal,$ the colorful graph $(K-S_{z},\chi'_{1})$ contains $(H_{R^{*}_{K}},\psi_{R^{*}_{K}})$ as a
colorful minor, $R^{*}_{K}\in\Rcal,$ and $K\in\Kcal_{R^{*}_{K}}.$
\end{claim}

\begin{proof}[Proof of the claim]
Every part of $\Pcal$ is assigned to exactly one member of $\Lcal,$ namely to $K_{R},$ so the sets
$R^{*}_{K}$ are non-empty, pairwise disjoint, and their union is $\bigcup_{R\in\Pcal}R=[r].$
Fix $K\in\Lcal.$
The sets $C_{R},$ $R\in\Pcal$ with $K_{R}=K,$ are pairwise distinct components of $G_{1}-S_{z}$ and
are therefore pairwise vertex-disjoint subgraphs of $K-S_{z},$ each containing the corresponding
$(H_{R},\psi_{R})$ as a colorful minor; hence their union contains the disjoint union of these
colorful graphs, which is $(H_{R^{*}_{K}},\psi_{R^{*}_{K}}),$ as a colorful minor  and so does
$(K-S_{z},\chi'_{1}).$
In particular, $(K,\chi'_{1})$ contains $(H_{R^{*}_{K}},\psi_{R^{*}_{K}})$ as a colorful minor.
Were $R^{*}_{K}=[r],$ then, as $(H_{[r]},\psi_{[r]})=(H,\psi),$ the component $K$ of $G_{1}$ would
contain $(H,\psi)$ as a colorful minor, which is excluded by the choice of $S_{0}.$
Hence $R^{*}_{K}$ is a non-empty proper subset of $[r],$ that is, $R^{*}_{K}\in\Rcal,$ and
$K\in\Kcal_{R^{*}_{K}}$ by the definition of $\Kcal_{R^{*}_{K}}.$
\end{proof}

We call a member $K$ of $\Lcal$ \emph{old} if $R^{*}_{K}\in\Rcal_{z},$ that is, if its piece has
already been treated and \emph{new} otherwise.

\begin{claim}
\label{cl_constr_old}
For every old $K\in\Lcal$ we have $K\in\Qcal_{z}$ and $(K,\chi'_{1})$ has a packing of
$(H_{R^{*}_{K}},\psi_{R^{*}_{K}})$ of size $k.$
\end{claim}

\begin{proof}[Proof of the claim]
Let $K\in\Lcal$ be old.
By \zcref{cl_constr_coarse}, $(K,\chi'_{1})$ contains
$(H_{R^{*}_{K}},\psi_{R^{*}_{K}})$ as a colorful minor and, as $R^{*}_{K}\in\Rcal_{z},$ the property
\ref{it_constr_b} yields $K\in\Qcal_{z}.$
By the property \ref{it_constr_a}, either $(K,\chi'_{1})$ has a packing of
$(H_{R^{*}_{K}},\psi_{R^{*}_{K}})$ of size $k,$ or $(K-S_{z},\chi'_{1})$ does not contain
$(H_{R^{*}_{K}},\psi_{R^{*}_{K}})$ as a colorful minor; the second alternative contradicts
\zcref{cl_constr_coarse}.
\end{proof}

The next claim is the engine of the step: it locates the piece to be treated next.

\begin{claim}
\label{cl_constr_next}
There is some $R_{z+1}\in\Rcal\setminus\Rcal_{z}$ with $|\Kcal_{R_{z+1}}\setminus\Qcal_{z}|<rk.$
\end{claim}

\begin{proof}[Proof of the claim]
Suppose towards a contradiction that $|\Kcal_{R}\setminus\Qcal_{z}|\geq rk$ for every
$R\in\Rcal\setminus\Rcal_{z}.$
We build a family as in the assembly argument for the partition
$\Pcal'\coloneqq\{R^{*}_{K}\mid K\in\Lcal\}$ of $[r],$ whose parts belong to $\Rcal$ by
\zcref{cl_constr_coarse}.

For an old $K\in\Lcal,$ we let $P^{1}_{R^{*}_{K}},\ldots,P^{k}_{R^{*}_{K}}$ be the $k$ members of the
packing of $(H_{R^{*}_{K}},\psi_{R^{*}_{K}})$ inside $K$ provided by \zcref{cl_constr_old}; all of them
are contained in the single component $K,$ which belongs to $\Qcal_{z}.$
For the new members of $\Lcal$ we proceed one by one.
As the sets $R^{*}_{K}$ form a partition of $[r]$ into non-empty parts, we have $|\Lcal|\leq r,$ so at
most $(r-1)k$ components have been chosen when a new $K$ is treated; since
$|\Kcal_{R^{*}_{K}}\setminus\Qcal_{z}|\geq rk$ by our supposition, the piece $R^{*}_{K}$ being a
member of $\Rcal\setminus\Rcal_{z},$ at least $rk-(r-1)k=k$ members of
$\Kcal_{R^{*}_{K}}\setminus\Qcal_{z}$ are still unused and we may let
$P^{1}_{R^{*}_{K}},\ldots,P^{k}_{R^{*}_{K}}$ be $k$ of them, distinct from each other and from all
components chosen before.

We check the three conditions of the assembly argument.
The condition \ref{it_ass_contains} holds for old parts by \zcref{cl_constr_old} and for new parts by
the definition of $\Kcal_{R^{*}_{K}}.$
The condition \ref{it_ass_row} holds for old parts as the members of a packing are pairwise
vertex-disjoint and for new parts as distinct components of $G_{1}$ are pairwise vertex-disjoint.
For the condition \ref{it_ass_col}, every subgraph of the family is contained in a component of
$G_{1}$ and it suffices to check that no component of $G_{1}$ is used for two distinct parts of
$\Pcal'.$
Two old parts use the distinct components $K$ and $K'$ of $\Lcal;$ two new parts use components chosen
to be distinct from all components chosen before; and an old part uses a component of $\Qcal_{z},$
while a new part uses components of $\Kcal_{R^{*}_{K}}\setminus\Qcal_{z},$ which lie outside
$\Qcal_{z}.$
This contradicts the assembly argument.
\end{proof}

We set $\Rcal_{z+1}\coloneqq\Rcal_{z}\cup\{R_{z+1}\}$ and
$\Qcal_{z+1}\coloneqq\Qcal_{z}\cup\Kcal_{R_{z+1}},$ so that, by \zcref{cl_constr_next},
$$|\Qcal_{z+1}|\ =\ |\Qcal_{z}|+|\Kcal_{R_{z+1}}\setminus\Qcal_{z}|\ \leq\ z\cdot rk+rk\ =\ (z+1)\cdot
rk,$$
which is the first bound of \eqref{eq_constr_sizes} for $z+1.$
Note that only the number of \emph{new} members of $\Kcal_{R_{z+1}}$ is bounded, the set
$\Kcal_{R_{z+1}}$ itself possibly being much larger.
For every $K\in\Qcal_{z+1},$ we apply the Erd\H{o}s-P{\'o}sa property of
$(H_{R_{z+1}},\psi_{R_{z+1}})$ to $(K,\chi'_{1})$ and $k,$ obtaining a packing of size $k$ or a
covering of size at most $g_{R_{z+1}}(k),$ we let $S'_{z+1}$ be the union of the coverings obtained in
this way and we set $S_{z+1}\coloneqq S_{z}\cup S'_{z+1}.$
Then $|S'_{z+1}|\leq(z+1)\cdot rk\cdot g_{R_{z+1}}(k)$ and therefore
\begin{align*}
|S_{z+1}|\ \leq\ |S_{z}|+(z+1)\cdot rk\cdot g_{R_{z+1}}(k)
\ &\leq\ z^{2}\cdot rk\cdot\sum_{i\in[z]}g_{R_{i}}(k)+(z+1)\cdot rk\cdot g_{R_{z+1}}(k)\\
&\leq\ (z+1)^{2}\cdot rk\cdot\sum_{i\in[z+1]}g_{R_{i}}(k),
\end{align*}
where the last inequality uses $z^{2}\leq(z+1)^{2}$ and $z+1\leq(z+1)^{2}.$
This is the second bound of \eqref{eq_constr_sizes} for $z+1.$

It remains to verify the two properties for $\Rcal_{z+1},$ $\Qcal_{z+1},$ and $S_{z+1}.$
For the property \ref{it_constr_b}, let $R\in\Rcal_{z+1}$ and let $K$ be a component of $G_{1}$ with
$K\notin\Qcal_{z+1}.$
If $R\in\Rcal_{z},$ then $K\notin\Qcal_{z},$ as $\Qcal_{z}\subseteq\Qcal_{z+1},$ and the property
\ref{it_constr_b} for $z$ applies; if $R=R_{z+1},$ then $K\notin\Kcal_{R_{z+1}},$ as
$\Kcal_{R_{z+1}}\subseteq\Qcal_{z+1},$ which is the assertion.
For the property \ref{it_constr_a}, let $R\in\Rcal_{z+1}$ and $K\in\Qcal_{z+1}.$
If $R\in\Rcal_{z}$ and $K\in\Qcal_{z},$ the property held for $z$ and is preserved: the first
alternative does not refer to $S_{z},$ and the second is preserved as $S_{z}\subseteq S_{z+1}.$
If $R\in\Rcal_{z}$ and $K\in\Qcal_{z+1}\setminus\Qcal_{z},$ then $(K,\chi'_{1})$ does not contain
$(H_{R},\psi_{R})$ as a colorful minor by the property \ref{it_constr_b} for $z,$ so the second
alternative holds.
If $R=R_{z+1},$ the property holds for every $K\in\Qcal_{z+1}$ by the applications of the
Erd\H{o}s-P{\'o}sa property just made, the covering returned for $K$ being a subset of $S_{z+1}.$

\medskip\noindent\textit{Termination.}
As the members $R_{1},R_{2},\ldots$ are pairwise distinct elements of $\Rcal,$ the construction
performs at most $|\Rcal|$ steps.
Moreover, the whole analysis of a step, including \zcref{cl_constr_coarse,cl_constr_old,cl_constr_next},
was carried out under the supposition that $(H,\psi)$ is a colorful minor of $(G_{1}-S_{z},\chi'_{1}).$
If the construction reaches the step $z=|\Rcal|,$ then $\Rcal_{z}=\Rcal$ and the conclusion of
\zcref{cl_constr_next} is unsatisfiable, the set $\Rcal\setminus\Rcal_{z}$ being empty; hence the
supposition fails at that step, that is, $(H,\psi)$ is not a colorful minor of
$(G_{1}-S_{z},\chi'_{1}),$ and the construction stops.
In all cases the construction therefore stops at some $z\leq|\Rcal|$ with $(H,\psi)$ not a colorful
minor of $(G_{1}-S_{z},\chi'_{1})$ and, as observed at the beginning of the step, $S_{0}\cup S_{z}$ is
then a covering of $(H,\psi)$ in $(G,\chi')$ of size at most $f(k),$ hence, as noted at the outset,
also a covering of $(H,\psi)$ in $(G,\chi).$
\end{proof}

No attempt has been made to optimize the gap.
For a connected pattern it is $(c+1)(k-1),$ where $c$ is given by \zcref{lemma_EP_reduce} and therefore by the function $\sg$ of \zcref{prop_torsoGrid}, while for a general pattern the recursion on the number of components multiplies the gaps obtained for the pieces at every step.
The bound is thus inherited from the min-max duality between $I$-torso treewidth and segregated grids and improving it would require an improvement there first.

\section{The lower bound}
\label{sec_EP_to_crucial}

In this section we prove the remaining implication of \zcref{th_EP_single}, from \ref{it_s_ep} to \ref{it_s_cru}: a colorful graph that has the Erd\H{o}s-P{\'o}sa property is crucial.
Equivalently, and this is the form in which we argue, a colorful graph that is not crucial does not have the Erd\H{o}s-P{\'o}sa property.

Refuting the Erd\H{o}s-P{\'o}sa property of a colorful graph $(H,\psi)$ means exhibiting hosts in which packing and covering are as far apart as one wishes: a bound $c,$ depending on $(H,\psi)$ alone and colorful graphs of arbitrarily large covering number in which no more than $c$ copies of $(H,\psi)$ fit disjointly.
All our hosts are built from the pattern itself, by the following two-step recipe.
First, $(H,\psi)$ is drawn in a surface in such a way that its colored vertices all lie on the boundary of a single hole.
Second, $2k$ copies of that drawing are superimposed and the crossings between different copies are turned into vertices.
The superposition contains $2k$ copies of $(H,\psi)$, no vertex of which is used by more than two of them, so its covering number grows with $k;$ the whole difficulty is to show that its packing number does not.
That is where non-cruciality enters: each of the four ways in which a colorful graph can fail to be crucial forces the copies of $(H,\psi)$ inside the superposition to compete for a resource that the surface makes scarce.
For a non-color-facial pattern the scarce resource is Euler genus, since every disjoint copy consumes a fixed amount of it.
For the remaining three failures the pattern is color-facial, the surface is a disk, and the scarce resource is the boundary: disjoint copies of the pattern occupy pairwise different portions of the boundary and a disk admits only linearly many pairwise non-crossing objects attached to its boundary.
For a pattern that is not single-component bicolored the copies must additionally interleave two linkages of different colors and the scarce resource is the width of the region in which the two linkages cross.

\medskip
One warning about the shape of the argument is in order, as it dictates the organization below.
Non-cruciality is detected through \zcref{obs_CrucialMinimal}, which supplies a member $(Z,\zeta)$ of $\Ocal$ that is a colorful minor of $(H,\psi),$ and the geometry above is really geometry of the small graph $(Z,\zeta).$
It is therefore tempting to refute the Erd\H{o}s-P{\'o}sa property of $(Z,\zeta)$ and to transfer the refutation to $(H,\psi).$
This does not work and the reason is worth recording.
Containment of $(Z,\zeta)$ in $(H,\psi)$ makes copies of $(H,\psi)$ harder to find and easier to destroy, so it moves the two parameters in the same direction:
\begin{align}
\label{eq_lb_monotone}
\pack_{H,\psi}(G,\chi)\ \leq\ \pack_{Z,\zeta}(G,\chi)
\qquad\text{and}\qquad
\cover_{H,\psi}(G,\chi)\ \leq\ \cover_{Z,\zeta}(G,\chi)
\end{align}
for every colorful graph $(G,\chi).$
Only the first of these is of use to us: it lets an upper bound on the packing number of the obstruction be inherited by the pattern.
The second points the wrong way, so a lower bound on the covering number of the obstruction says nothing about the pattern and the class of colorful graphs with the Erd\H{o}s-P{\'o}sa property cannot be shown to be closed under colorful minors by this route; that it is closed is \zcref{cor_EPminorClosed}, which we obtain only as a consequence of \zcref{th_EP_single}.
Accordingly, every covering bound below is proved for the pattern $(H,\psi)$ directly and the obstruction $(Z,\zeta)$ is used only where its shape is genuinely needed, namely in the packing bounds.

\subsection{Preliminaries for the lower bound}
\label{subsec_lb_prelim}

\paragraph{Surfaces.}
A \emph{surface} is a compact connected $2$-manifold without boundary and the \emph{Euler genus} $\eg(\Sigma)$ of a surface $\Sigma$ is $2-\chi_{\text{E}}(\Sigma),$ where $\chi_{\text{E}}$ denotes the Euler characteristic; the sphere is the unique surface of Euler genus $0.$
The \emph{Euler genus} $\eg(G)$ of a graph $G$ is the minimum Euler genus of a surface in which $G$ embeds, so that $\eg(G)=0$ if and only if $G$ is planar.
A \emph{single-boundaried surface} is a surface from which the interior of a closed disk has been removed; the boundary of the removed disk is its \emph{boundary}.
We write $\Delta$ for a closed disk, which is the single-boundaried surface obtained from the sphere and we use the term \emph{disk} for both.

The following is a classical additivity property of the Euler genus; it is stated in
\cite[Theorem 4.4.3]{MoharT01Graphs}, where it is attributed to Stahl and Beineke and to
Miller and we use it in the following iterated form.
It says that non-planar pieces glued along a single vertex accumulate genus, so that a
surface of bounded genus can host only boundedly many of them.

\begin{proposition}[\cite{MoharT01Graphs}]\label{prop_genusAdds}%
Let $Z_{1},\ldots,Z_{r}$ be connected graphs, let $z_{i}\in V(Z_{i})$ for $i\in[r],$ and let
$Z'$ be the graph obtained from the disjoint union of $Z_{1},\ldots,Z_{r}$ by identifying
$z_{1},\ldots,z_{r}$ into a single vertex.
Then $\eg(Z')=\sum_{i\in[r]}\eg(Z_{i}).$
In particular, if every $Z_{i}$ is non-planar, then $\eg(Z')\geq r.$
\end{proposition}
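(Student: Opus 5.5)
The plan is to reduce to the case $r=2$ by induction, and then to prove the two inequalities $\eg(Z')\le\eg(Z_1)+\eg(Z_2)$ and $\eg(Z')\ge\eg(Z_1)+\eg(Z_2)$ separately. The induction is immediate: the graph obtained from $Z_1,\ldots,Z_{r-1}$ by identifying $z_1,\ldots,z_{r-1}$ is connected, and gluing it to $Z_r$ at the identified vertex yields $Z'$. The final clause then follows at once, because a non-planar graph has Euler genus at least $1$.

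For the upper bound I fix minimum-genus embeddings of $Z_1$ in $\Sigma_1$ and of $Z_2$ in $\Sigma_2$. I choose a small open disk in a face of each embedding, each incident to $z_i$ and meeting the graph only in a neighbourhood of $z_i$. Removing the two disks and gluing along the boundary circles forms the connected sum $\Sigma_1\#\Sigma_2$, whose Euler genus is $\eg(\Sigma_1)+\eg(\Sigma_2)$. In the connected sum, $z_1$ and $z_2$ lie on a common face. I draw a short arc between them inside that face and contract it; contracting an edge drawn in a face preserves the embedding, so this gives an embedding of $Z'$ in $\Sigma_1\#\Sigma_2$.

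For the lower bound I work with a minimum-genus embedding of $Z'$. Since $Z'$ is connected, I may assume the embedding is cellular, so it is described by a (signed) rotation system and the Euler formula $\eg=2-|V|+|E|-|F|$ applies. I restrict the rotation system to $Z_1$ and to $Z_2$, keeping at $z_1$, respectively $z_2$, the cyclic order that the rotation at $z$ induces on the edges of $Z_i$, together with the edge signatures. This gives cellular embeddings of $Z_1$ and $Z_2$ with face sets $F_1$ and $F_2$. We have $|V_1|+|V_2|=|V|+1$ and $|E_1|+|E_2|=|E|$. Substituting into the Euler formula shows that it suffices to prove $|F_1|+|F_2|\le|F|+1$.

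This face count is the main obstacle. Faces of $Z'$ whose boundary walks never switch between $Z_1$ and $Z_2$ at $z$ correspond one-to-one to faces of exactly one of the $Z_i$. The remaining faces pass through the corners at $z$ where the rotation switches from an edge of $Z_1$ to an edge of $Z_2$. Let $2m$ be the number of such switching corners, where $m\ge 1$ because $Z'$ is connected. I would show three things. First, these corners are distributed among at least one mixed face of $Z'$. Second, after restriction, the switching corners merge into one corner at $z_1$ and one corner at $z_2$. Third, every mixed face of $Z'$ splits into walks that rejoin into faces of $Z_1$ or $Z_2$ only through these merged corners. Together these should give that the mixed faces of $Z'$ produce at most (number of mixed faces)$+1$ faces in the restrictions. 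I would first try an induction on $m$. Reordering the rotation at $z$ so that two consecutive blocks of $Z_1$-edges become adjacent merges two switching corners, and I would check by local face tracing that the face count of $Z'$ drops by at most one while the genus does not increase. In the nonorientable case the same tracing must be done with signed walks, following Stahl–Beineke; this bookkeeping is where I expect the most care is needed.
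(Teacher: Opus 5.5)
Your upper bound (connected sum, then contracting an arc from $z_1$ to $z_2$ in the merged face), the reduction to $r=2$, and the passage to a cellular minimum-genus embedding and its restrictions are all fine. The paper itself gives no proof; it imports the statement from Mohar--Thomassen (Theorem 4.4.3). The lower bound, however, has a genuine gap, and it starts with a reversed inequality. Write $\Sigma_i$ for the surface of the restricted embedding of $Z_i$. Euler's formula with your counts gives
\[
\eg(\Sigma_{1})+\eg(\Sigma_{2})-\eg(\Sigma)=1+|F|-|F_{1}|-|F_{2}|,
\]
so the lower bound needs $|F_1|+|F_2|\geq|F|+1$. The inequality $|F_1|+|F_2|\leq|F|+1$ that you set out to prove yields only $\eg(\Sigma_1)+\eg(\Sigma_2)\geq\eg(Z')$. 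Since $\eg(Z_i)\leq\eg(\Sigma_i)$ also points upward, this gives no information about $\eg(Z_1)+\eg(Z_2)\leq\eg(Z')$. Your corner count is also off. If the rotation at $z$ has $m$ maximal blocks of $Z_1$-edges, restriction creates $m$ new corners at $z_1$ and $m$ at $z_2$, not one at each, unless $m=1$.

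More seriously, the correct inequality cannot come from local face tracing, because it fails for general cellular embeddings. Take $Z_1=K_4$ on $\{z,a,b,c\}$ and $Z_2=K_4$ on $\{z,a',b',c'\}$, with rotations $(za,za',zb,zb',zc,zc')$ at $z$, $(az,ab,ac)$, $(ba,bz,bc)$, $(cz,ca,cb)$, and the primed analogues. Face tracing gives $|F|=5$: three hexagons through $z$ and two triangles. This is a torus embedding of Euler genus $2$. Each restriction has only two faces and is also toroidal, so $|F_1|+|F_2|=4<6=|F|+1$ and $\eg(\Sigma_1)+\eg(\Sigma_2)=4>2$.

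Minimality of the embedding must therefore be used essentially, and in your plan it plays no role after the embedding is chosen. The missing ingredient is the re-embedding lemma at the heart of the Battle--Harary--Kodama--Youngs and Stahl--Beineke proofs. It says that one can pass, without increasing the Euler genus, to an embedding of $Z'$ in which the edges of $Z_1$ at $z$ form a single interval. Once $m=1$, your restriction argument gives exactly $|F|=|F_1|+|F_2|-1$, and the lower bound follows.

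Your sketch of that lemma does not work as stated, for two reasons.
\begin{enumerate}
\item Reordering at $z$ leaves $|V|$ and $|E|$ unchanged, so the genus does not increase exactly when the face count does not drop. ``The face count drops by at most one while the genus does not increase'' is therefore self-contradictory.
\item Merging blocks while keeping the cyclic orders induced on the two sides cannot succeed in general. In the example above, every rotation at $z$ with $m=1$ inducing $(za,zb,zc)$ and $(za',zb',zc')$ has $|F|=3$ and Euler genus $4$.
\end{enumerate}
Deciding which reordering, possibly changing the induced orders, preserves the genus of a minimum embedding is exactly the content of the cited theorem, and your proposal does not supply it.
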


\paragraph{Societies.}
A \emph{society} is a pair $(G,\Omega),$ where $G$ is a graph and $\Omega$ is a cyclic ordering of a subset $V(\Omega)$ of $V(G).$
A \emph{segment} of $\Omega$ is a set of vertices of $V(\Omega)$ that are consecutive in $\Omega.$
A \emph{transaction} in $(G,\Omega)$ is a collection $\Pcal$ of pairwise vertex-disjoint paths of $G$ for which there are two disjoint segments $A$ and $B$ of $\Omega$ such that every path of $\Pcal$ has one endpoint in $A,$ its other endpoint in $B,$ and no internal vertex in $V(\Omega).$
The \emph{depth} of $(G,\Omega)$ is the maximum size of a transaction in $(G,\Omega).$ 
Intuitively, the depth measures how much traffic can be routed across the society from one side of the cyclic ordering to the other; the last case of the proof below bounds a packing number by exhibiting the members of a packing as such traffic.

\paragraph{Realizations.}
Let $(G,\chi)$ be a colorful graph and let $(H,\psi)$ be a colorful minor of $(G,\chi).$
A \emph{realization} of $(H,\psi)$ in $(G,\chi)$ is a subgraph $D$ of $G$ such that $(D,\chi)$ minimally contains $(H,\psi)$ as a colorful minor, that is, such that no proper subgraph of $D$ does.
A packing of $(H,\psi)$ in $(G,\chi)$ of size $k$ may always be assumed to consist of realizations, as every member of a packing contains one.

We record the two elementary facts that were announced in \eqref{eq_lb_monotone} and the reformulation of the failure of the Erd\H{o}s-P{\'o}sa property that we shall verify in each case.

\begin{observation}
\label{obs_packMonotone}
Let $(Z,\zeta)$ and $(H,\psi)$ be colorful graphs with $(Z,\zeta)\leq(H,\psi).$
Then, for every colorful graph $(G,\chi),$ it holds that $\pack_{H,\psi}(G,\chi)\leq\pack_{Z,\zeta}(G,\chi)$ and $\cover_{H,\psi}(G,\chi)\leq\cover_{Z,\zeta}(G,\chi).$
\end{observation}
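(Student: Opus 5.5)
The two inequalities follow from one monotonicity principle: since $(Z,\zeta)\leq(H,\psi),$ every host that contains $(H,\psi)$ as a colorful minor also contains $(Z,\zeta)$ as one, by transitivity of the colorful minor relation. I would state this as a preliminary remark. If $(H,\psi)\leq(J,\chi)$ for some colorful subgraph $(J,\chi)$ of $(G,\chi),$ then $(Z,\zeta)\leq(H,\psi)\leq(J,\chi).$ Transitivity holds because colorful minors are defined through sequences of operations, and two such sequences can simply be concatenated.

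For the packing inequality, let $(G_{1},\ldots,G_{p})$ be a packing of $(H,\psi)$ in $(G,\chi)$ of maximum size $p=\pack_{H,\psi}(G,\chi).$ Each $(G_{i},\chi)$ contains $(H,\psi),$ hence contains $(Z,\zeta)$ by the remark. The $G_{i}$ are pairwise vertex-disjoint, so the same indexed family is a packing of $(Z,\zeta)$ of size $p.$ Therefore $\pack_{Z,\zeta}(G,\chi)\geq p.$

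If $V(H)=\emptyset,$ then packings of every size exist and $\pack_{H,\psi}$ is infinite. In that case $Z$ is also empty, since colorful minors cannot increase the vertex count, so $\pack_{Z,\zeta}$ is infinite as well. I would interpret the maxima in $\Nbbb\cup\{\infty\}$ and note this case explicitly.

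For the covering inequality, let $S$ be a covering of $(Z,\zeta)$ in $(G,\chi)$ of minimum size. Suppose $(G-S,\chi)$ contained $(H,\psi)$ as a colorful minor. By the remark it would then contain $(Z,\zeta),$ which contradicts the choice of $S.$ So $S$ is also a covering of $(H,\psi),$ and $\cover_{H,\psi}(G,\chi)\leq|S|=\cover_{Z,\zeta}(G,\chi).$

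If no covering of $(Z,\zeta)$ exists (for instance when $V(Z)=\emptyset$), the right-hand side is $\infty$ and the inequality holds trivially.

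There is no real obstacle here. The only points needing care are the degenerate empty-graph cases and the convention for infinite values, which I would settle by that one explicit sentence.
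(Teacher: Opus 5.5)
Your proof is correct and is essentially the paper's argument. In both, transitivity of the colorful minor relation makes every packing of $(H,\psi)$ a packing of $(Z,\zeta)$ of the same size, and every covering of $(Z,\zeta)$ a covering of $(H,\psi)$. Your explicit handling of the empty-graph cases and of infinite values is a harmless addition that the paper leaves implicit.
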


\begin{proof}
By the transitivity of the colorful minor relation, every subgraph of $G$ that contains $(H,\psi)$ as a colorful minor also contains $(Z,\zeta).$
Hence every packing of $(H,\psi)$ in $(G,\chi)$ is a packing of $(Z,\zeta)$ of the same size, which gives the first inequality and every set $S\subseteq V(G)$ such that $(G-S,\chi)$ does not contain $(Z,\zeta)$ is a set such that $(G-S,\chi)$ does not contain $(H,\psi),$ which gives the second.
\end{proof}

\begin{observation}
\label{obs_EPfails}
A colorful graph $(H,\psi)$ does not have the Erd\H{o}s-P{\'o}sa property if and only if there is some $c\in\Nbbb$ such that, for every $n\in\Nbbb,$ there is a colorful graph $(G,\chi)$ with $\pack_{H,\psi}(G,\chi)\leq c$ and $\cover_{H,\psi}(G,\chi)\geq n.$
\end{observation}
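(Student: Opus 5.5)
This is a direct unpacking of the definition of the Erd\H{o}s-P\'osa property via the functions $\pack_{H,\psi}$ and $\cover_{H,\psi}$. By \zcref{subsec_EP_def}, $(H,\psi)$ has the property exactly when there is $f\colon\Nbbb\to\Nbbb$ with $\cover_{H,\psi}(G,\chi)\leq f(\pack_{H,\psi}(G,\chi))$ for every colorful graph $(G,\chi)$. I would prove each direction by contraposition.

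First I would dispose of the degenerate case $V(H)=\emptyset$. There every host has packings of every size and no covering, so $\pack$ is unbounded and $\cover$ is undefined or infinite. Both sides of the equivalence should then be read with the convention from \zcref{subsec_EP_def} that the property holds vacuously. I would either note that the statement is meant for $V(H)\neq\emptyset$, as in \zcref{th_EP_single}, or check that both sides fail in the same way. Assume henceforth $V(H)\neq\emptyset$. Then $\pack_{H,\psi}(G,\chi)$ is finite, bounded by $|V(G)|$ since the members of a packing are nonempty and disjoint. Also $\cover_{H,\psi}(G,\chi)\leq|V(G)|$, since deleting all vertices kills every model.

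For the direction ($\Leftarrow$), suppose such a $c$ exists and, towards a contradiction, that $f$ is a gap. Then every host with $\pack\leq c$ has $\cover\leq\max_{p\leq c}f(p)$. Choosing $n$ larger than this maximum contradicts the hypothesis.

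For ($\Rightarrow$), suppose no $c$ works. Then for every $c$ there is some $n_{c}$ such that every host with $\pack\leq c$ has $\cover<n_{c}$. Define $f(p)\coloneqq n_{p}$. Any host $(G,\chi)$ has finite $p\coloneqq\pack_{H,\psi}(G,\chi)$ and therefore $\cover<n_{p}=f(p)$. This matches the packing and covering formulation: with no packing of size $k$ we have $p\leq k-1$, so after taking $f$ monotone via $\max_{p'\leq p} n_{p'}$ we get $\cover\leq f(k)$. So $f$ is a gap.

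There is no real obstacle. The only points needing care are the finiteness of $\pack$ for nonempty $H$ and making $f$ monotone so that the "no packing of size $k$" formulation and the $\pack$/$\cover$ formulation agree.
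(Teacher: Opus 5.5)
Your proposal is correct and follows the paper's proof. In one direction a gap $f$ would bound $\cover_{H,\psi}$ on all hosts with $\pack_{H,\psi}\leq c$, and in the other the finite bound on $\cover_{H,\psi}$ over such hosts serves as the gap; the paper takes the supremum, which is automatically non-decreasing, whereas you take a threshold $n_{c}$ and then make it monotone. Your bound $\max_{p\leq c}f(p)$ is also slightly more careful than the paper's ``this would force $f(c)\geq n$'', which tacitly assumes $f$ is non-decreasing.
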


\begin{proof}
If such a $c$ exists, then no function $f$ can satisfy $\cover_{H,\psi}\leq f\circ\pack_{H,\psi}$ on all colorful graphs, as this would force $f(c)\geq n$ for every $n.$
Conversely, if no such $c$ exists, then for every $c\in\Nbbb$ the supremum of $\cover_{H,\psi}(G,\chi)$ over all colorful graphs $(G,\chi)$ with $\pack_{H,\psi}(G,\chi)\leq c$ is finite and the function mapping $c$ to this supremum is a gap for $(H,\psi).$
\end{proof}

\paragraph{Non-crossing families in a disk.}
Three of the cases below reduce to the same planar counting, which we now isolate.
The point is that a disk can accommodate only linearly many pairwise disjoint objects, each of which reaches at least three prescribed stretches of its boundary; the bound follows from Euler's formula applied to the incidences between the objects and the stretches.

\begin{lemma}
\label{lemma_noncrossing}
Let $a\in\Nbbb_{\geq3},$ let $\Delta$ be a closed disk  and let $A_{1},\ldots,A_{n}$ be pairwise disjoint arcs of the boundary of $\Delta.$
Let $D_{1},\ldots,D_{\ell},$ where $\ell\geq1,$ be pairwise disjoint connected sets, each of which is drawn in $\Delta,$ meets the boundary of $\Delta$ only in points of $A_{1}\cup\ldots\cup A_{n},$ and meets at least $a$ of the arcs $A_{1},\ldots,A_{n}.$
Then
\begin{align}
\label{eq_noncrossing}
\ell\cdot(a-2)\ \leq\ 2n-4.
\end{align}
\end{lemma}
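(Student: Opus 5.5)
We prove \eqref{eq_noncrossing} by reducing it to Euler's formula for a planar bipartite incidence graph between the objects and the arcs. First we normalize the arcs. Let $B\subseteq[n]$ be the set of indices of arcs met by at least one $D_j$. Arcs outside $B$ only enlarge the right-hand side, so we may discard them; that is, we prove the bound with $n$ replaced by $|B|\le n$. Since $\ell\geq1$ and $a\geq3$, we have $|B|\geq3$.

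Next we build the graph. For every $j\in[\ell]$, choose a point $p_j$ in the interior of $\Delta$ lying in (or close to) $D_j$. For every $i\in B$, place a vertex $x_i$ outside $\Delta$, near the arc $A_i$. Let $\Gamma$ be the bipartite graph with parts $\{p_j\}$ and $\{x_i\}$, where $p_jx_i$ is an edge whenever $D_j$ meets $A_i$.

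We draw $\Gamma$ in the plane without crossings. Since $D_j$ is connected, for each arc $A_i$ that $D_j$ meets we can choose a curve inside $D_j$ from $p_j$ to a point of $D_j\cap A_i$, and then continue it outside $\Delta$ to $x_i$. Curves belonging to distinct $D_j$ are disjoint, because the sets $D_j$ are pairwise disjoint. Curves belonging to the same $D_j$ can be taken to form a tree inside $D_j$, and may be pulled apart in a small neighborhood so that they meet only at $p_j$. Outside $\Delta$, each $x_i$ is joined to points of its own arc $A_i$ by a local fan. The arcs are pairwise disjoint and appear in cyclic order along $\partial\Delta$, so these fans do not cross each other.

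This step, making the drawing rigorous, is the main obstacle: a connected set need not be path-connected, and a set can touch an arc in many points. I would first replace each $D_j$ by a thin closed neighborhood of a tree (a finite graph drawn in $D_j$) that hits each relevant arc in exactly one point. With this replacement the edges above are honest disjoint curves, and $\Gamma$ is a simple planar bipartite graph.

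The counting then goes as follows. $\Gamma$ has $\ell+|B|$ vertices and $m\geq a\ell$ edges. As a simple bipartite planar graph with at least $3$ vertices, it satisfies $m\le 2(\ell+|B|)-4$. This inequality holds face by face when $\Gamma$ is connected, and for disconnected $\Gamma$ by summing over components or by adding edges, which is fine as long as some component has at least $3$ vertices. Combining the two bounds gives $a\ell\le 2\ell+2|B|-4$, that is, $\ell(a-2)\le 2|B|-4\le 2n-4$.

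For the disconnected case it is cleaner to avoid the component-wise argument. Instead, we add the cycle $x_{i_1}x_{i_2}\cdots$ in cyclic order around the outside of $\Delta$; it can be drawn without crossings because the arcs are ordered along the boundary. The resulting graph is connected and planar, and the face-length count on it yields the same bound.
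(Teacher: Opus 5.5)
Your proof is correct and follows the paper's argument. Both build the bipartite incidence graph between the sets $D_j$ and the arcs, place the arc vertices in the complementary disk joined by local fans, and conclude from the bound $2v-4$ on the number of edges of a simple bipartite planar graph with $v\geq 3$ vertices; your routing of edges along a tree inside each $D_j$ is just an explicit form of the paper's contraction of each $D_j$ to a point, and discarding unmet arcs is harmless but unnecessary.

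One small caution about your optional last paragraph: the added cycle on the $x_i$ destroys bipartiteness, so faces such as $x_{i_s}p_jx_{i_{s+1}}$ can be triangles and the face count has to be redone (it still works, and even gives $m\leq 2\ell+|B|-2$). Nothing depends on it, though, since your component-wise argument already handles the disconnected case.
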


\begin{proof}
Let $\Delta'$ be the closed disk complementary to $\Delta$ in the sphere, so that $\Delta$ and $\Delta'$ meet exactly in their common boundary.
For every $j\in[n],$ we place a point $w_{j}$ in the interior of $\Delta'$ and, for every $i\in[\ell]$ such that $D_{i}$ meets $A_{j},$ we choose a point $z_{ij}\in D_{i}\cap A_{j}$ and we join $w_{j}$ to $z_{ij}$ by an arc whose interior lies in the interior of $\Delta'.$
For a fixed $j,$ the points $z_{ij}$ all lie on the arc $A_{j},$ so the arcs joining them to $w_{j}$ may be drawn pairwise disjoint apart from $w_{j}$ and inside an arbitrarily small neighborhood of $A_{j}$ in $\Delta'.$
As $A_{1},\ldots,A_{n}$ are pairwise disjoint, these $n$ neighborhoods may be chosen pairwise disjoint as well.
Together with the sets $D_{1},\ldots,D_{\ell},$ which are pairwise disjoint and lie in $\Delta,$ we thus obtain a family of objects drawn on the sphere no two of which cross.

Contracting each $D_{i}$ to a single point $u_{i},$ which preserves planarity as $D_{i}$ is connected, we obtain a plane graph $B$ with vertex set $\{u_{1},\ldots,u_{\ell}\}\cup\{w_{1},\ldots,w_{n}\}$ in which $u_{i}$ is adjacent to $w_{j}$ if and only if $D_{i}$ meets $A_{j}.$
The graph $B$ is simple and bipartite, it has $\ell+n$ vertices and every $u_{i}$ has degree at least $a,$ so it has at least $a\ell$ edges.
As $n\geq a\geq3,$ we have $\ell+n\geq3,$ and a simple bipartite planar graph on $m\geq3$ vertices has at most $2m-4$ edges.
Hence $a\ell\leq2(\ell+n)-4,$ which is \eqref{eq_noncrossing}.
\end{proof}

We shall only use \zcref{lemma_noncrossing} for $a=3,$ where it gives $\ell\leq2n-4.$

\subsection{The \texorpdfstring{$k$}{k}-multiplication}
\label{subsec_lb_multiplication}

We now describe the operation that produces our hosts.
The intuition is that of overlaying transparencies: one drawing of the pattern in a single-boundaried surface is copied $2k$ times, the copies are placed in general position so that any two of them meet only in finitely many points and each such meeting point is turned into a vertex.
The result contains $2k$ subdivided copies of the pattern   and the general position guarantees that no vertex serves more than two of them, so that few vertices cannot destroy all of them.

\paragraph{$k$-multiplication.}
Let $(G,\chi)$ be a colorful graph and let $\Sigma^{-}$ be a single-boundaried surface in which $G$ is drawn in such a way that all vertices carrying colors lie on the boundary of $\Sigma^{-}.$
The drawing is allowed to have crossings, in which case we require that no edge crosses itself, that no two edges cross more than once, and that no crossing point lies on the boundary.
We refer to the crossing points of this drawing as the \emph{old} crossing points, as further crossings will be created below.
The \emph{$k$-multiplication of $(G,\chi)$ in $\Sigma^{-}$} is the colorful graph $(G^{k},\chi^{k})$ obtained as follows.
We take $k$ drawings of $G$ in $\Sigma^{-},$ each of them a copy of the given one, chosen so that only finitely many points of $\Sigma^{-}$ belong to more than one of them, so that no point belongs to more than two of them, and so that any two distinct copies meet only in interior points of edges; the points belonging to two copies are the \emph{new} crossing points.
We insist, moreover, that no new crossing point lies on the boundary of $\Sigma^{-}$ and that the $k$ copies of a vertex of $G$ drawn on the boundary of $\Sigma^{-}$ are drawn consecutively on it.
Finally, we replace every new crossing point by a vertex of degree $4$ carrying no color and we let every vertex coming from a copy of $(G,\chi)$ inherit its palette.
The old crossing points are not replaced by vertices, so that each old crossing of $G$ gives rise to $k$ crossings in the drawing of $G^{k}.$
In particular, if the given drawing of $G$ has no crossings, then $G^{k}$ is drawn in $\Sigma^{-}$ without crossings.

\begin{claim}
\label{cl_bzsoc_cover}
Let $(G,\chi)$ be drawn in $\Sigma^{-}$ as above, let $(F,\varphi)$ be a colorful minor of $(G,\chi),$ let $k\in\Nbbb_{\geq1},$ and let $(G^{2k},\chi^{2k})$ be the $2k$-multiplication of $(G,\chi)$ in $\Sigma^{-}.$
Then $\cover_{F,\varphi}(G^{2k},\chi^{2k})\geq k.$
\end{claim}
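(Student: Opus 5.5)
The plan is to show that every set $S\subseteq V(G^{2k})$ with $|S|\leq k-1$ leaves at least one of the $2k$ copies of $(G,\chi)$ untouched, and that such an untouched copy still contains $(F,\varphi)$ as a colorful minor in $(G^{2k}-S,\chi^{2k})$. Transitivity of the colorful minor relation then gives $\cover_{F,\varphi}(G^{2k},\chi^{2k})\geq k$.

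We argue with \zcref{obs_minorModels} in mind, i.e.\ with models. For every $j\in[2k]$ let $G_{j}\subseteq G^{2k}$ be the subgraph formed by the $j$-th copy of the drawing. Its vertices are the copies of the vertices of $G$ together with all new crossing vertices lying on edges of the $j$-th copy. Each edge $uv$ of $G$ in the $j$-th copy is subdivided by the new crossing vertices that lie on it. Old crossings are not turned into vertices, so no further identifications occur. Hence $G_{j}$ is a subdivision of $G$, and in the colorful sense $(G_{j},\chi^{2k})$ is $(G,\chi)$ with every edge subdivided by uncolored vertices. Contracting these subdivision edges merges an uncolored vertex into an endpoint and leaves the palettes unchanged. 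Therefore $(G,\chi)\leq(G_{j},\chi^{2k})$, and so $(F,\varphi)\leq(G_{j},\chi^{2k})$.

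Next we count how many copies a vertex can belong to. A vertex of $G^{2k}$ that comes from a vertex of $G$ in the $j$-th copy belongs only to $G_{j}$. This holds because distinct copies meet only in interior points of edges, so no vertex copy is shared. A new crossing vertex is a point lying in exactly two copies, since by construction no point lies in more than two. Hence every vertex of $G^{2k}$ lies in at most two of the subgraphs $G_{1},\dots,G_{2k}$. It follows that a set $S$ with $|S|\leq k-1$ meets at most $2k-2<2k$ of them, so some $G_{j}$ is disjoint from $S$. Then $G_{j}\subseteq G^{2k}-S$, and $(F,\varphi)$ is a colorful minor of $(G^{2k}-S,\chi^{2k})$. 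Consequently no set of size less than $k$ is a covering, which is the claim.

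The only point needing care is that $G_{j}$ really is a subdivision of $G$. Two things must be checked. First, the copies inherit palettes exactly, so colors are preserved, and crossing vertices are uncolored, so they add no colors. Second, the conditions that no edge crosses itself and no two edges cross more than once are about old crossings, which are not vertices, so they do not interfere with the construction. The degenerate case $V(F)=\emptyset$ is immediate, because the covering number is then undefined or infinite and the bound holds vacuously. I expect no real obstacle beyond stating these facts cleanly.
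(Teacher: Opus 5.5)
Your proof is correct and follows the paper's argument: each copy $G_{j}$ is a subdivision of $G$ by uncolored crossing vertices, so it contains $(G,\chi)$ and hence $(F,\varphi)$; every vertex lies in at most two copies, so a set of at most $k-1$ vertices misses one of the $2k$ copies. The paper fixes a realization $D_{i}\subseteq G_{i}$ of $(F,\varphi)$ before counting, but counting with the whole copies $G_{j}$, as you do, works just as well.
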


\begin{claimproof}
The graph $G^{2k}$ contains subgraphs $G_{1},\ldots,G_{2k},$ where $G_{i}$ consists of the $i$-th copy of $G$ together with the vertices that replaced the new crossing points lying on it, so that $G_{i}$ is a subdivision of a copy of $G.$
A vertex of $G^{2k}$ coming from a copy of $G$ lies in exactly one of $G_{1},\ldots,G_{2k},$ and a vertex replacing a new crossing point lies in exactly two of them, so no vertex of $G^{2k}$ lies in more than two of $G_{1},\ldots,G_{2k}.$
The vertices that replaced new crossing points carry no color, so contracting them back into a neighbor shows that $(G_{i},\chi^{2k})$ contains $(G,\chi),$ and hence also $(F,\varphi),$ as a colorful minor; we fix a realization $D_{i}$ of $(F,\varphi)$ in $(G_{i},\chi^{2k})$ for every $i\in[2k].$
Now let $S\subseteq V(G^{2k})$ with $|S|\leq k-1.$
Every vertex of $S$ belongs to at most two of $G_{1},\ldots,G_{2k}$ and therefore meets at most two of $D_{1},\ldots,D_{2k},$ so $S$ meets at most $2(k-1)<2k$ of them and some $D_{i}$ is disjoint from $S.$
Hence $(G^{2k}-S,\chi^{2k})$ contains $(F,\varphi)$ as a colorful minor and $S$ is not a covering, which proves that $\cover_{F,\varphi}(G^{2k},\chi^{2k})\geq k.$
\end{claimproof}

\subsection{Proof of the lower bound}
\label{subsec_lb_proof}

\begin{theorem}\label{thm_EPiffCrucial}%
Every colorful graph that has the Erd\H{o}s-P{\'o}sa property is crucial.
\end{theorem}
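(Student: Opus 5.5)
We argue the contrapositive. Let $(H,\psi)$ be a colorful graph that is not crucial; if $V(H)=\emptyset$ it is crucial, so $V(H)\neq\emptyset$. By \zcref{obs_CrucialMinimal} there is some $(Z,\zeta)\in\Ocal$ with $(Z,\zeta)\leq(H,\psi)$. By \zcref{obs_EPfails} it suffices to produce a constant $c=c(H,\psi)$ and, for every $k$, a host $(G,\chi)$ with $\pack_{H,\psi}(G,\chi)\leq c$ and $\cover_{H,\psi}(G,\chi)\geq k$. In every case the host will be the $2k$-multiplication of $(H,\psi)$ itself in a suitable single-boundaried surface $\Sigma^{-}$, drawn so that all colored vertices lie on the boundary. \zcref{cl_bzsoc_cover}, applied with $(F,\varphi)\coloneqq(H,\psi)$, then gives the covering bound $k$ directly for the pattern, as the discussion around \eqref{eq_lb_monotone} demands. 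For the packing bound we use \zcref{obs_packMonotone}: it suffices to bound $\pack_{Z,\zeta}$ of the host. We split the argument according to which of $\Ocal^{0}\cup\tilde{\Ocal}^{1}$, $\tilde{\Ocal}^{2}$, $\Ocal^{3}$, $\Ocal^{4}$ the obstruction $(Z,\zeta)$ belongs to.

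\textbf{Case 1: $(Z,\zeta)\in\Ocal^{0}\cup\tilde{\Ocal}^{1}$, i.e.\ $(H,\psi)$ is not color-facial.}
Let $H^{+}$ be $H$ plus an apex adjacent to all colored vertices, and embed $H^{+}$ in a surface of Euler genus $g\coloneqq\eg(H^{+})$. Removing a small disk around the apex gives $\Sigma^{-}$, with all colored vertices on its boundary. The $2k$-multiplication is then crossing-free in $\Sigma^{-}$, so capping the boundary with a disk containing a new apex vertex gives a graph of Euler genus at most $g$.
Now take $\ell$ disjoint realizations of $(Z,\zeta)$ in the host. For each of them, the realization together with the apex contains a non-planar graph, namely $Z^{+}$, which is non-planar by the proof of \zcref{lemma_colorFacial}. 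These $\ell$ non-planar pieces share only the apex. Extracting from each a connected non-planar subgraph through the apex and applying \zcref{prop_genusAdds} (after splitting off the apex) gives $\ell\leq g$. So $c\coloneqq g$ works.

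\textbf{Cases 2 and 3: $(Z,\zeta)\in\tilde{\Ocal}^{2}\cup\Ocal^{3}$, with $(H,\psi)$ color-facial.}
Here $\Sigma^{-}=\Delta$ is a disk; reattaching the face containing the colored vertices as the outside of $\Delta$ puts all colored vertices on $\partial\Delta$. Partition $\partial\Delta$ into $n$ arcs, where $n$ is a function of $H$ only. The arcs are chosen as maximal stretches whose vertices carry the same color, using that copies of a boundary vertex are consecutive, so $n\leq2|V(H)|$.
Consider a realization $D$ of $(Z,\zeta)$ in the host.
\begin{itemize}
\item For $\Ocal^{3}$, $D$ is connected and carries three colors, so it meets at least three arcs.
\item For a member of $\tilde{\Ocal}^{2}$, $D$ is connected and carries two colors. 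A planarity argument like the one in \zcref{obs_generatorsCrucial} shows that $D$ cannot touch only two arcs while realizing the $C_{4}$, $K_{3}$ or $K_{1,3}$ pattern. Indeed, if $D$ met only two arcs, the two-apex trick would give a $K_{3,3}$ or $K_{5}$ in a planar graph.
\end{itemize}
Hence every realization meets at least $3$ arcs, and \zcref{lemma_noncrossing} with $a=3$ gives $\ell\leq2n-4$.

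\textbf{Case 4: $(Z,\zeta)\in\Ocal^{4}$, with $H$ component-wise bicolored and color-segmented.}
Now $(H,\psi)$ has two components carrying disjoint color pairs $\{i_{1},i_{2}\}$ and $\{i_{3},i_{4}\}$. We draw $H$ in $\Delta$ so that the boundary segments of these colors interleave, in the cyclic order $i_{1},i_{3},i_{2},i_{4}$. This forces crossings, which the multiplication definition allows (old crossings).
A realization of $(Z,\zeta)$ consists of a connected set joining the $i_{1}$- and $i_{2}$-arcs and a disjoint connected set joining the $i_{3}$- and $i_{4}$-arcs. Hence $\ell$ disjoint realizations yield two families of $\ell$ paths crossing one region: an $i_{1}$-to-$i_{2}$ transaction and an $i_{3}$-to-$i_{4}$ transaction. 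The crossings of the two families can only occur at the crossing points of the drawing, and there are only $O(|E(H)|^{2})$ old crossings, each duplicated $2k$ times. So we aim to bound $\ell$ through the depth of the society of the crossing region, using planarity of the region between crossings.

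\textbf{Main obstacle.} Case 4 is where I expect the real difficulty. The naive count grows with $k$, since the old crossings are multiplied $2k$ times. To get a bound independent of $k$, one must order the drawing carefully, for instance routing all crossing edges through a single narrow corridor of bounded width. The corridor's width is independent of $k$ only in the sense of bounded depth per copy, so I would first try to show that each of the two linkages forces a transaction of size $\ell$ through a society whose depth is bounded in terms of $H$ alone.
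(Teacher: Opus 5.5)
You have the paper's architecture: the contrapositive via \zcref{obs_CrucialMinimal}, hosts built by $2k$-multiplication, the covering bound from \zcref{cl_bzsoc_cover} applied to $(H,\psi)$ itself, and packing bounds via \zcref{obs_packMonotone} together with genus additivity or \zcref{lemma_noncrossing}. The disk cases, however, fail as written, because you multiply $(H,\psi)$ itself. Its boundary vertices may carry several colors, and then neither of your arc arguments holds. A connected subgraph carrying three colors need not meet three arcs. A branch set carrying both $c$ and $c'$ need not reach two different arcs, so the two-apex argument has nothing to work with. The simplest instance is the non-crucial $K_{1}$ whose vertex carries $\{1,2,3\}$, a member of $\Ocal^{3}$. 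Its $2k$-multiplication is just $2k$ isolated vertices with palette $\{1,2,3\}$, so the packing number is $2k$ and this host refutes nothing. Similarly, for $(K_{3},\sigma_{2})$ with all three vertices colored $\{1,2\}$, every boundary vertex of your host carries both colors, so your arcs do not separate the two colors at all. The missing step is the paper's normal form. Replace every colored vertex $v$ by a path of $|\psi(v)|$ vertices, each carrying exactly one color, drawn along $\partial\Delta$, and multiply the resulting $(H^{\circ},\psi^{\circ})$ instead. Since $(H,\psi)$ is a colorful minor of $(H^{\circ},\psi^{\circ})$, \zcref{cl_bzsoc_cover} still gives $\cover_{H,\psi}(H^{\circ2k},\psi^{\circ2k})\geq k$. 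Every color interval is now monochromatic, so your three-arc and two-apex arguments go through with at most $qh$ intervals.

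The $\Ocal^{4}$ case is, as you say, left open, and multiplying an interleaved drawing of $H$ does not settle it. In the multiplication an old crossing survives only between edges of the \emph{same} copy, while different copies meet in new vertices. Nothing in your sketch bounds the depth of the region containing the $2k$ surviving copies of each old crossing. The paper instead builds a dedicated host in four steps:
\begin{enumerate}
\item Color-segmentation (condition A) shows that in $C_{1,3}$ the vertices colored $1$ and $3$ occupy disjoint boundary arcs. Hence a finite edge cut $E_{1,3}$, drawn in a strip $\bar{\Delta}_{1,3}$, separates them; likewise $E_{2,4}$ in $C_{2,4}$.
\item Only the skeleton $(Z^{\circ},\zeta^{\circ})=2\cdot K_{2}$ is multiplied, drawn with the colors in cyclic order $1,2,3,4$. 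All crossings between $(1,3)$- and $(2,4)$-paths are confined to one disk whose society has depth at most $4$.
\item Each skeleton path is inflated by a copy of $C^{\circ}_{1,3}$ or $C^{\circ}_{2,4}$ through its cut. This raises the depth to at most $2(|E_{1,3}|+|E_{2,4}|)$.
\item Finally, $\ell$ disjoint realizations of $(Z,\zeta)$ are shown to yield a transaction of size $\ell-2$.
\end{enumerate}

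There are also two smaller slips. First, ``$(Z,\zeta)\in\Ocal^{0}\cup\tilde{\Ocal}^{1}$'' is not the same as ``not color-facial''. The $(K_{4},\pi_{2})$ colored $1,1,2,2$ is not color-facial, yet the only member of $\Ocal$ it contains is a $(C_{4},\sigma_{1})$, so it would land in your disk case. Split on color-faciality instead and use all of $\Ocal^{0}\cup\Ocal^{1}$, for which your genus argument works. Second, for $(Z,\zeta)\in\Ocal^{0}$ the realizations need not reach the apex. Use additivity of Euler genus over disjoint unions there, or the paper's $Q$-rainbow reduction to Robertson and Seymour.
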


\begin{proof}
Let $(H,\psi)$ be a colorful graph that is not crucial; we prove that $(H,\psi)$ does not have the Erd\H{o}s-P{\'o}sa property.
We set $Q\coloneqq\psi(H),$ $q\coloneqq|Q|,$ and $h\coloneqq|V(H)|.$
By \zcref{obs_CrucialMinimal}, some member of $\Ocal$ is a colorful minor of $(H,\psi).$
We first dispose of the patterns containing a member of $\Ocal^{0},$ then of those containing a member of $\Ocal^{1}.$
For the remaining ones the member supplied by \zcref{obs_CrucialMinimal} may be taken in $\tilde{\Ocal}^{2}\cup\Ocal^{3}\cup\Ocal^{4},$ and the last four cases are keyed on which of these three sets contains it.
Each case may assume that the previous ones do not apply.
In every case we produce a constant $c,$ depending on $(H,\psi)$ but not on $k,$ and colorful graphs whose packing number is at most $c$ and whose covering number is at least $k$ for every $k\in\Nbbb_{\geq1};$ by \zcref{obs_EPfails}, this is what has to be shown.

\medskip\noindent\textbf{Case 1: $(H,\psi)$ contains a member of $\Ocal^{0}$.} 
Then $H$ contains $K_{5}$ or $K_{3,3}$ as a minor and is therefore not planar.
By the result of Robertson and Seymour \cite{RobertsonS1986Grapha}, the graph $H$ does not have the Erd\H{o}s-P{\'o}sa property with respect to the minor relation, so, arguing as in \zcref{obs_EPfails} for graphs without colors, there is some $c\in\Nbbb$ such that, for every $k\in\Nbbb,$ there is a graph $G_{k}$ with $\pack_{H}(G_{k})\leq c$ and $\cover_{H}(G_{k})\geq k.$
Let $(G_{k},\chi_{k})$ be the $Q$-rainbow colorful graph on $G_{k},$ that is, $\chi_{k}(v)\coloneqq Q$ for every $v\in V(G_{k}).$
For every subgraph $D$ of $G_{k},$ the colorful graph $(D,\chi_{k})$ contains $(H,\psi)$ as a colorful minor if and only if $D$ contains $H$ as a minor: the forward direction is immediate and for the converse it suffices to observe that, in a model of $H$ in $D,$ every branch set is non-empty and hence carries the palette $Q,$ from which the colors not in $\psi(v)$ may be removed.
Therefore $\pack_{H,\psi}(G_{k},\chi_{k})=\pack_{H}(G_{k})\leq c$ and $\cover_{H,\psi}(G_{k},\chi_{k})=\cover_{H}(G_{k})\geq k,$ as required.

\medskip\noindent
For the remaining cases we may assume that $(H,\psi)$ excludes every member of $\Ocal^{0}$ as a colorful minor.

\medskip\noindent\textbf{Case 2: $(H,\psi)$ contains a member of $\Ocal^{1}$.}
The scarce resource here is Euler genus.
We draw $(H,\psi)$ in a surface of minimum Euler genus, remove a disk around the vertex that witnesses the non-planarity of the apexed graph and multiply; a packing of $(H,\psi)$ in the result would produce many non-planar subgraphs sharing a single vertex, which \zcref{prop_genusAdds} forbids.

Let $H^{+}$ be the graph obtained from $H$ by adding a new vertex $v_{\ast}$ adjacent to every vertex $u$ of $H$ with $\psi(u)\neq\emptyset.$
As $(H,\psi)$ contains a member of $\Ocal^{1},$ \zcref{lemma_colorFacial} implies that $(H,\psi)$ is not color-facial and therefore $H^{+}$ is not planar: an embedding of $H^{+}$ in the sphere would, after the removal of $v_{\ast},$ place all colored vertices of $(H,\psi)$ on the boundary of a single face.
Let $\Sigma$ be a surface of minimum Euler genus in which $H^{+}$ embeds and let $g\coloneqq\eg(\Sigma),$ so that $g>0.$
Fixing an embedding of $H^{+}$ in $\Sigma$ and deleting $v_{\ast}$ from it, 
we obtain an embedding of $H$ in $\Sigma$ together with a closed disk $\Delta\subseteq\Sigma$ whose interior is disjoint from the drawing and such that all colored vertices of $(H,\psi)$ lie on the boundary of $\Delta.$
We let $\Sigma^{-}\coloneqq\Sigma\setminus\operatorname{int}(\Delta)$ and, for every $k\in\Nbbb_{\geq1},$ we let $(H^{2k},\psi^{2k})$ be the $2k$-multiplication of $(H,\psi)$ in $\Sigma^{-}.$

By \zcref{cl_bzsoc_cover}, applied with $(F,\varphi)\coloneqq(H,\psi),$ we obtain
\begin{align}
\label{eq_lb_cover_genus}
\cover_{H,\psi}(H^{2k},\psi^{2k})\ \geq\ k\qquad\text{for every }k\in\Nbbb_{\geq1}.
\end{align}

\begin{claim}
\label{cl_lb_genus}
For every $k\in\Nbbb_{\geq1},$ it holds that $\pack_{H,\psi}(H^{2k},\psi^{2k})\leq g.$
\end{claim}

\begin{claimproof}
Set $r\coloneqq g+1$ and assume towards a contradiction that $H^{2k}$ contains pairwise vertex-disjoint realizations $D_{1},\ldots,D_{r}$ of $(H,\psi).$
Let $H^{2k+}$ be the graph obtained from $H^{2k}$ by adding a new vertex $v_{\ast}$ adjacent to all vertices of $H^{2k}$ drawn on the boundary of $\Sigma^{-}.$
As $\Delta$ is a disk disjoint from the drawing of $H^{2k}$ in $\Sigma^{-},$ the vertex $v_{\ast}$ may be drawn in the interior of $\Delta$ and joined to those vertices inside $\Delta,$ so $H^{2k+}$ embeds in $\Sigma$ and $\eg(H^{2k+})\leq g.$

For every $i\in[r],$ let $D_{i}'$ be the union of components of $D_{i}$ that contain at least one colored vertex and let $D_{i}'^{+}$ be obtained from $D_{i}'$ by adding $v_{\ast}$ joined to every vertex of $D_{i}'$ with a non-empty palette under $\psi^{2k}.$
Since $(H,\psi)$ contains a member of $\Ocal^{1}$ (which is connected and colored), its model lies entirely within the colored components of $D_{i},$ so $D_{i}'^{+}$ contains that colored minor.
Moreover $D_{i}'^{+}$ is not planar: were it planar, the removal of $v_{\ast}$ from a plane embedding would show that the colored part of $(D_{i},\psi^{2k})$ is color-facial, hence that $(H,\psi)$ is color-facial by \zcref{obs_colorFacialClosed}, which we have excluded.

The graphs $D_{1}',\ldots,D_{r}'$ are pairwise vertex-disjoint, so $v_{\ast}$ is the only vertex shared by any two of $D_{1}'^{+},\ldots,D_{r}'^{+},$ and their union is obtained from their disjoint union by identifying the $r$ copies of $v_{\ast}.$
Each $D_{i}'^{+}$ is connected (all colored components connect through $v_{\ast}$).
By \zcref{prop_genusAdds}, the union has Euler genus larger than $g,$ and being a subgraph of $H^{2k+}$ it yields $\eg(H^{2k+})>g,$ a contradiction.
\end{claimproof}

Together, \eqref{eq_lb_cover_genus} and \zcref{cl_lb_genus} give what is required, with $c\coloneqq g.$

\medskip\noindent
For the remaining cases we may assume that $(H,\psi)$ excludes every member of $\Ocal^{0}\cup\Ocal^{1}$ as a colorful minor, so that, by \zcref{lemma_colorFacial}, the colorful graph $(H,\psi)$ is color-facial.
As the members of $\tilde{\Ocal}^{1}$ belong to $\Ocal^{1},$ the member of $\Ocal$ supplied by \zcref{obs_CrucialMinimal} now belongs to $\tilde{\Ocal}^{2}\cup\Ocal^{3}\cup\Ocal^{4}.$

\paragraph{A normal form for color-facial patterns.}
Being color-facial, $H$ admits an embedding in a closed disk $\Delta$ in which all vertices with a non-empty palette are drawn on the boundary of $\Delta;$ pushing the remaining vertices into the interior, we may assume that the vertices drawn on the boundary of $\Delta$ are exactly the colored ones and, perturbing the drawing, that every edge meets the boundary of $\Delta$ only in its endpoints.
It is convenient to arrange in addition that every colored vertex carries exactly one color, so that the boundary of $\Delta$ is divided into monochromatic stretches and it becomes meaningful to ask which stretch a given vertex belongs to.
This is achieved by uncontracting.
For every vertex $v$ of $H$ with $\psi(v)=\{i_{1},\ldots,i_{s}\},$ where $s\geq1$ and $i_{1}<\ldots<i_{s},$ we consider a path $P_{v}$ on $s$ vertices, we identify one of its endpoints with $v,$ we draw $P_{v}$ along the boundary of $\Delta,$ and we assign the colors $i_{1},\ldots,i_{s}$ to the vertices of $P_{v}$ in this order, starting from $v.$
The resulting colorful graph $(H^{\circ},\psi^{\circ})$ contains $(H,\psi)$ as a colorful minor, obtained by contracting every $P_{v},$ it is drawn in $\Delta$ without crossings, all its colored vertices lie on the boundary of $\Delta,$ and each of them carries exactly one color.
Note that $H^{\circ}$ has at most $qh$ colored vertices.

For every $k\in\Nbbb_{\geq1},$ we let $(H^{\circ2k},\psi^{\circ2k})$ be the $2k$-multiplication of $(H^{\circ},\psi^{\circ})$ in $\Delta.$
As the drawing of $H^{\circ}$ in $\Delta$ has no crossings, $H^{\circ2k}$ is drawn in $\Delta$ without crossings, that is, it is a plane graph in $\Delta,$ and its vertices drawn on the boundary of $\Delta$ are exactly its colored ones.
By the construction of the multiplication, the $2k$ copies of a colored vertex of $H^{\circ}$ are drawn consecutively on the boundary of $\Delta$ and carry the same single color; we call the set of these $2k$ vertices a \emph{color interval} and, for each of them, we fix a closed arc of the boundary of $\Delta$ that contains exactly its vertices and no other vertex of $H^{\circ2k}.$
As distinct color intervals are separated on the boundary by stretches carrying no vertex, these arcs may be chosen pairwise disjoint and, as $H^{\circ}$ has at most $qh$ colored vertices,
\begin{align}
\label{eq_lb_intervalcount}
\text{the number of color intervals of }(H^{\circ2k},\psi^{\circ2k})\text{ is at most }qh.
\end{align}

Since $(H,\psi)$ is a colorful minor of $(H^{\circ},\psi^{\circ}),$ \zcref{cl_bzsoc_cover}, applied in $\Delta$ with $(H^{\circ},\psi^{\circ})$ in the role of the multiplied colorful graph and with $(F,\varphi)\coloneqq(H,\psi),$ yields
\begin{align}
\label{eq_EP_cover}
\cover_{H,\psi}(H^{\circ2k},\psi^{\circ2k})\ \geq\ k\qquad\text{for every }k\in\Nbbb_{\geq1}.
\end{align}
It therefore remains, in each of the three cases below, to bound the packing number of the colorful graph $(H^{\circ2k},\psi^{\circ2k})$ by a quantity that does not depend on $k.$
Let $(Z,\zeta)$ be the member of $\tilde{\Ocal}^{2}\cup\Ocal^{3}$ under consideration and set
\begin{align}
\label{eq_lb_pack}
\ell\ \coloneqq\ \pack_{Z,\zeta}(H^{\circ2k},\psi^{\circ2k}),
\end{align}
so that $\pack_{H,\psi}(H^{\circ2k},\psi^{\circ2k})\leq\ell$ by \zcref{obs_packMonotone}.
We may assume that $\ell\geq1,$ as otherwise the bound below is trivial and we let $D_{1},\ldots,D_{\ell}$ be pairwise vertex-disjoint realizations of $(Z,\zeta)$ in $(H^{\circ2k},\psi^{\circ2k}).$
Being vertex-disjoint subgraphs of a plane graph whose edges meet the boundary of $\Delta$ only in their endpoints, the $D_{i}$ are pairwise disjoint as drawings and each of them meets the boundary of $\Delta$ only in colored vertices, hence only in points of the arcs fixed above.
The three cases differ only in the following step, which the counting then turns into a bound on $\ell.$

\begin{claim}
\label{cl_lb_count}
If every $D_{i}$ meets at least three color intervals, then $\ell\leq2qh.$
\end{claim}

\begin{claimproof}
By \zcref{lemma_noncrossing}, applied in $\Delta$ with $a\coloneqq3$ and with the arcs fixed for the color intervals, we get $\ell\leq2n-4,$ where $n$ is the number of color intervals; by \eqref{eq_lb_intervalcount} we have $n\leq qh$ and hence $\ell\leq2qh-4\leq2qh.$
\end{claimproof}

\medskip\noindent\textbf{Case 3: $(Z,\zeta)=(C_{4},\sigma_{1})\in\tilde{\Ocal}^{2}$.}
Here $Z=C_{4},$ every vertex carries exactly one color  and, as $(Z,\zeta)\in\tilde{\Ocal}^{2},$ exactly two colors $c$ and $c'$ occur, the two vertices of each diagonal pair carrying the same one.

\begin{claim}
\label{cl_lb_C4}
Every $D_{i}$ meets at least three color intervals.
\end{claim}

\begin{claimproof}
Suppose that some $D\coloneqq D_{i}$ meets at most two of them and let $j,$ $j',$ and $D^{\ast}$ be as in the auxiliary construction.
Fix a model of $(Z,\zeta)$ in $(D,\psi^{\circ2k})$ and let $B_{1},B_{2},B_{3},B_{4}$ be its branch sets, indexed following the cycle $C_{4},$ so that $B_{1}$ and $B_{3}$ carry the color $c$ and $B_{2}$ and $B_{4}$ carry the color $c'.$
They are pairwise disjoint and connected, $B_{m}$ is adjacent to $B_{m+1}$ for every $m,$ indices modulo $4,$ and, by the auxiliary construction, $B_{1}$ and $B_{3}$ are adjacent to $j$ while $B_{2}$ and $B_{4}$ are adjacent to $j'.$
Contracting each of $B_{1},B_{2},B_{3},B_{4}$ to a single vertex $b_{1},b_{2},b_{3},b_{4}$ and contracting the edge $\omega j'$ into a single vertex $\beta,$ we obtain a minor of $D^{\ast}$ in which $b_{1}$ and $b_{3}$ are adjacent to each of $j,$ $b_{2},$ and $b_{4},$ and in which $\beta$ is adjacent to $j$ through $\omega$ and to each of $b_{2}$ and $b_{4}$ through $j'.$
Hence $\{b_{1},b_{3},\beta\}$ and $\{j,b_{2},b_{4}\}$ are the two sides of a $K_{3,3}$-minor of $D^{\ast},$ contradicting \eqref{eq_lb_aux_planar}; see \zcref{fig_C4Interval}.
\end{claimproof}

\begin{figure}[!ht]
\begin{center}
\scalebox{.67}{%
\begin{tikzpicture}[x=1pt,y=1pt,
    bnd/.style   ={draw=black!45,line width=.6pt,dash pattern=on 2pt off 2pt},
    arcM/.style  ={draw=clM,line width=2.8pt,line cap=round},
    arcB/.style  ={draw=clB,line width=2.8pt,line cap=round},
    edge/.style  ={draw=black,line width=1.9227pt,line join=round},
    aux/.style   ={draw=black!75,line width=1.9227pt,line join=round},
    idle/.style  ={draw=black!40,line width=1.9227pt,line join=round,dash pattern=on 2.4pt off 2.4pt},
    con/.style   ={draw=clG!85!black,line width=3.8454pt,line join=round},
    blob/.style  ={draw=black!70,line width=.7pt,fill=black!8},
    mdot/.style  ={circle,inner sep=0pt,minimum size=8pt,fill=clM,draw=black!85,line width=.55pt},
    bdot/.style  ={circle,inner sep=0pt,minimum size=8pt,fill=clB,draw=black!85,line width=.55pt},
    kdot/.style  ={circle,inner sep=0pt,minimum size=8pt,fill=black},
    gdot/.style  ={circle,inner sep=0pt,minimum size=8pt,fill=black!45},
    ldot/.style  ={circle,inner sep=0pt,minimum size=10pt,fill=black!10,draw=black!70,line width=.75pt},
    lab/.style   ={font=\fontsize{14.4}{17}\selectfont},
    slab/.style  ={font=\fontsize{12.8}{15}\selectfont,text=black!60}]

  \draw[bnd] (90,240) circle (64pt);
  \node[slab] at (42,266) {$\Delta$};
  \draw[arcM] (90,240) ++(55:64pt) arc (55:125:64pt);
  \draw[arcB] (90,240) ++(235:64pt) arc (235:305:64pt);
  \node[lab,text=clM] at (32,300) {$J$};
  \node[lab,text=clB] at (30,180) {$J'$};

  \node[blob,ellipse,minimum width=30pt,minimum height=20pt] (A1) at (50,240) {};
  \node[blob,ellipse,minimum width=30pt,minimum height=20pt] (A2) at (90,240) {};
  \node[blob,ellipse,minimum width=30pt,minimum height=20pt] (A3) at (130,240) {};

  \draw[edge] (A1) -- (A2);
  \draw[edge] (A2) -- (A3);
  \draw[edge] (50,230) .. controls (58,212) and (122,212) .. (130,230);
  \draw[edge] (A1) -- (66,299.3);
  \draw[edge] (A3) -- (114,299.3);
  \draw[edge] (A2) -- (90,304);
  \draw[edge] (A1) -- (66,180.7);
  \draw[edge] (A3) -- (114,180.7);
  \fill[white] (90,216.5) circle (4.6pt);
  \draw[edge] (90,230) -- (90,176);

  \node[mdot] at (66,299.3) {}; \node[mdot] at (90,304) {}; \node[mdot] at (114,299.3) {};
  \node[bdot] at (66,180.7) {}; \node[bdot] at (90,176) {}; \node[bdot] at (114,180.7) {};
  \node[lab] at (50,240) {$B_{1}$}; \node[lab] at (90,240) {$B_{2}$}; \node[lab] at (130,240) {$B_{3}$};

  \coordinate (j1)  at (90,332);
  \coordinate (jp1) at (90,148);
  \coordinate (om1) at (172,240);
  \draw[aux] (66,299.3) .. controls (63,320) and (76,332) .. (j1);
  \draw[aux] (90,304) -- (j1);
  \draw[aux] (114,299.3) .. controls (117,320) and (104,332) .. (j1);
  \draw[aux] (66,180.7) .. controls (63,160) and (76,148) .. (jp1);
  \draw[aux] (90,176) -- (jp1);
  \draw[aux] (114,180.7) .. controls (117,160) and (104,148) .. (jp1);
  \draw[con] (j1)  .. controls (140,332) and (172,300) .. (om1);
  \draw[aux] (jp1) .. controls (140,148) and (172,180) .. (om1);
  \node[kdot] at (j1) {}; \node[kdot] at (jp1) {}; \node[kdot] at (om1) {};
  \node[lab] at (74,339) {$j$};
  \node[lab] at (72,141) {$j'$};
  \node[lab] at (185,249) {$\omega$};
  \node[slab] at (90,120) {(i)};

  \coordinate (q1) at (277.2,260.4);
  \coordinate (q2) at (301.2,186.6);
  \coordinate (q3) at (378.8,186.6);
  \coordinate (q4) at (402.8,260.4);
  \coordinate (qb) at (340,306);
  \foreach \a/\b in {q1/q2,q1/q3,q1/q4,q1/qb,q2/q3,q2/q4,q2/qb,q3/q4,q3/qb,q4/qb}
    {\draw[edge] (\a) -- (\b);}
  \node[blob,rounded corners=7pt,minimum width=58pt,minimum height=26pt] at (340,306) {};
  \draw[con] (324,306) -- (356,306);
  \node[kdot] at (324,306) {}; \node[kdot] at (356,306) {};
  \node[slab] at (324,329) {$\omega$}; \node[slab] at (357,329) {$j$};
  \node[ldot] at (q1) {}; \node[ldot] at (q2) {}; \node[ldot] at (q3) {}; \node[ldot] at (q4) {};
  \node[lab] at (257,268) {$b_{1}$};
  \node[lab] at (292,172) {$b_{2}$};
  \node[lab] at (388,172) {$b_{3}$};
  \node[lab] at (420,268) {$j'$};
  \node[lab] at (296,306) {$\beta$};
  \node[slab] at (340,120) {(ii)};

  \draw[bnd] (90,10) circle (64pt);
  \node[slab] at (42,-30) {$\Delta$};
  \draw[arcM] (90,10) ++(55:64pt) arc (55:125:64pt);
  \draw[arcB] (90,10) ++(235:64pt) arc (235:305:64pt);
  \node[lab,text=clM] at (32,70) {$J$};
  \node[lab,text=clB] at (30,-50) {$J'$};

  \node[blob,ellipse,minimum width=30pt,minimum height=20pt] (C1) at (50,10) {};
  \node[blob,ellipse,minimum width=30pt,minimum height=20pt] (C2) at (90,10) {};
  \node[blob,ellipse,minimum width=30pt,minimum height=20pt] (C3) at (130,10) {};
  \node[blob,ellipse,minimum width=34pt,minimum height=20pt] (C0) at (78,46) {};

  \draw[edge] (63,41) -- (56,20);
  \draw[edge] (86,37) -- (90,20);
  \draw[edge] (50,20) -- (62,67.5);
  \draw[edge] (130,20) -- (122,65.4);
  \draw[edge] (98,18) -- (104,72.5);
  \fill[white] (101,43.5) circle (4.6pt);
  \draw[edge] (95,46) .. controls (106,44) and (114,32) .. (124,20);
  \draw[edge] (C1) -- (66,-49.3);
  \draw[edge] (C2) -- (90,-54);
  \draw[edge] (C3) -- (114,-49.3);

  \node[mdot] at (62,67.5) {}; \node[mdot] at (104,72.5) {}; \node[mdot] at (122,65.4) {};
  \node[bdot] at (66,-49.3) {}; \node[bdot] at (90,-54) {}; \node[bdot] at (114,-49.3) {};
  \node[lab] at (50,10) {$B_{1}$}; \node[lab] at (90,10) {$B_{2}$}; \node[lab] at (130,10) {$B_{3}$};
  \node[lab] at (78,46) {$B_{0}$};

  \coordinate (j2)  at (90,102);
  \coordinate (jp2) at (90,-82);
  \coordinate (om2) at (172,10);
  \draw[aux] (62,67.5) .. controls (58,90) and (76,102) .. (j2);
  \draw[aux] (104,72.5) .. controls (106,90) and (98,102) .. (j2);
  \draw[aux] (122,65.4) .. controls (126,90) and (104,102) .. (j2);
  \draw[aux] (66,-49.3) .. controls (63,-70) and (76,-82) .. (jp2);
  \draw[aux] (90,-54) -- (jp2);
  \draw[aux] (114,-49.3) .. controls (117,-70) and (104,-82) .. (jp2);
  \draw[idle] (j2)  .. controls (140,102) and (172,70) .. (om2);
  \draw[idle] (jp2) .. controls (140,-82) and (172,-50) .. (om2);
  \node[kdot] at (j2) {}; \node[kdot] at (jp2) {}; \node[gdot] at (om2) {};
  \node[lab] at (74,109) {$j$};
  \node[lab] at (72,-89) {$j'$};
  \node[lab,text=black!45] at (185,19) {$\omega$};
  \node[slab] at (90,-110) {(iii)};

  \coordinate (r1) at (295,72);
  \coordinate (r2) at (295,10);
  \coordinate (r3) at (295,-52);
  \coordinate (s0) at (400,72);
  \coordinate (sj) at (400,10);
  \coordinate (sp) at (400,-52);
  \foreach \l in {r1,r2,r3}{\foreach \r in {s0,sj,sp}{\draw[edge] (\l) -- (\r);}}
  \node[ldot] at (r1) {}; \node[ldot] at (r2) {}; \node[ldot] at (r3) {};
  \node[ldot] at (s0) {}; \node[ldot] at (sj) {}; \node[ldot] at (sp) {};
  \node[lab] at (274,72) {$b_{1}$};
  \node[lab] at (274,10) {$b_{2}$};
  \node[lab] at (274,-52) {$b_{3}$};
  \node[lab] at (421,72) {$b_{0}$};
  \node[lab] at (419,10) {$j$};
  \node[lab] at (419,-52) {$j'$};
  \node[slab] at (347,-110) {(iv)};
\end{tikzpicture}
}
\end{center}
\caption{The situation excluded by \zcref{cl_lb_K3K13}, for $(Z,\zeta)=(K_{3},\sigma_{2})$ in (i) and (ii) and for $(Z,\zeta)=(K_{1,3},\sigma_{3})$ in (iii) and (iv).
In (i) and (iii), a realization $D$ that meets only two color intervals, whose arcs $J$ and $J'$ carry the colors $c$ and $c'$ respectively, together with the vertices $j,$ $j',$ and $\omega$ added in the complementary disk.
The branch sets $B_{1},$ $B_{2},$ and $B_{3}$ carry both colors and therefore meet both arcs, so each of them is adjacent to $j$ and to $j';$ the two cases differ only in how these three sets are tied to one another, directly in (i) and through the branch set $F_{2}$ of the center of $K_{1,3}$ in (iii).
The drawings are schematic: one crossing is shown in each and it is precisely the impossibility of drawing these configurations in $\Delta$ without crossings that the claim establishes.
In (ii), the resulting $K_{5}$-minor on $\{b_{1},b_{2},b_{3},\beta,j'\},$ where $b_{m}$ is the contraction of $B_{m}$ and $\beta$ that of the green edge $\omega j;$ the vertex $\beta$ supplies the adjacency between $j$ and $j',$ the only one of the ten that is not already present.
In (iv), the resulting $K_{3,3}$-minor with sides $\{b_{1},b_{2},b_{3}\}$ and $\{b_{0},j,j'\};$ here $\omega,$ drawn in gray, is not used.}
\label{fig_K3K13Interval}
\end{figure}

\medskip\noindent\textbf{Case 4: $(Z,\zeta)\in\{(K_{3},\sigma_{2}),(K_{1,3},\sigma_{3})\}\subseteq\tilde{\Ocal}^{2}$.}
By the definitions of $\Ocal^{2}$ and of $\tilde{\Ocal}^{2},$ exactly two colors $c$ and $c'$ occur in $(Z,\zeta)$ and three vertices of $Z$ carry both of them: all three vertices of $K_{3}$ in the first case and the three leaves of $K_{1,3}$ in the second, the center of $K_{1,3}$ carrying the empty palette.
The two cases therefore share the same core, namely three pairwise disjoint connected sets each of which reaches both of the two color intervals and they differ only in how these three sets are tied to one another: directly in the first case and through a fourth set in the second.
Either way of tying them is one link too many for a disk  and this is what the next claim records; see \zcref{fig_K3K13Interval}.

\begin{claim}
\label{cl_lb_K3K13}
Every $D_{i}$ meets at least three color intervals.
\end{claim}

\begin{claimproof}
Suppose that some $D\coloneqq D_{i}$ meets at most two of them and let $j,$ $j',$ $\omega,$ and $D^{\ast}$ be as in the auxiliary construction.
Fix a model of $(Z,\zeta)$ in $(D,\psi^{\circ2k})$ and let $B_{1},B_{2},B_{3}$ be the branch sets of the three vertices of $Z$ that carry both $c$ and $c'.$
They are pairwise disjoint and connected and each of them carries both colors, so, by the auxiliary construction, each of them is adjacent in $D^{\ast}$ both to $j$ and to $j'.$
Contracting every $B_{m}$ to a single vertex $b_{m},$ we therefore obtain, in both cases alike, a minor of $D^{\ast}$ in which each of $b_{1},$ $b_{2},$ and $b_{3}$ is adjacent to $j$ and to $j'.$
The two cases differ only in what is available beyond this.

If $(Z,\zeta)=(K_{3},\sigma_{2}),$ then $B_{1},B_{2},B_{3}$ are pairwise adjacent, so $b_{1},b_{2},b_{3}$ are pairwise adjacent as well and among the ten pairs of vertices of $\{b_{1},b_{2},b_{3},j,j'\}$ only the pair $jj'$ is still missing.
It is supplied by $\omega,$ which is adjacent to both: contracting the edge $\omega j$ into a single vertex $\beta,$ we obtain a vertex adjacent to $b_{1},b_{2},b_{3}$ through $j$ and to $j'$ through $\omega.$
Hence $\{b_{1},b_{2},b_{3},\beta,j'\}$ carries a $K_{5}$-minor of $D^{\ast},$ contradicting \eqref{eq_lb_aux_planar}.

If $(Z,\zeta)=(K_{1,3},\sigma_{3}),$ then the branch set $B_{0}$ of the center of $K_{1,3}$ is non-empty, disjoint from $B_{1},B_{2},B_{3},$ and adjacent to each of them, so contracting it to a single vertex $b_{0}$ provides a third vertex adjacent to all of $b_{1},b_{2},b_{3}.$
Hence $\{b_{1},b_{2},b_{3}\}$ and $\{b_{0},j,j'\}$ are the two sides of a $K_{3,3}$-minor of $D^{\ast},$ again contradicting \eqref{eq_lb_aux_planar}.
Note that $\omega$ plays no role here; it is needed only in the previous case, to join $j$ to $j'.$
\end{claimproof}

\medskip\noindent
By \zcref{cl_lb_O3,cl_lb_C4,cl_lb_K3K13} and \zcref{cl_lb_count}, in each of the Cases 3, 4, and 5 we have $\pack_{H,\psi}(H^{\circ2k},\psi^{\circ2k})\leq\ell\leq2qh$ for every $k\in\Nbbb_{\geq1},$ which together with \eqref{eq_EP_cover} gives what is required, with $c\coloneqq2qh.$

\medskip\noindent
\medskip\noindent\textbf{Case 5: $(Z,\zeta)\in\Ocal^{3}$.}
Here $Z=K_{1}$ and $\zeta$ assigns three colors to its unique vertex.

\begin{claim}
\label{cl_lb_O3}
Every $D_{i}$ meets at least three color intervals.
\end{claim}

\begin{claimproof}
A realization of $(Z,\zeta)$ is a minimal connected subgraph carrying the three prescribed colors, so it contains three vertices carrying three distinct colors.
These are colored vertices, hence they lie on the boundary of $\Delta,$ and, as every color interval is monochromatic, they lie in three distinct color intervals.
\end{claimproof}

\medskip\noindent
The two remaining cases share the following auxiliary construction, which converts the failure of the claim into a forbidden minor.
Let $D$ be one of $D_{1},\ldots,D_{\ell}$ and suppose that $D$ meets at most two color intervals.
As $(Z,\zeta)$ carries exactly two colors, say $c$ and $c',$ and $D$ contains vertices carrying each of them, it meets exactly two color intervals, one of color $c$ and one of color $c';$ we let $J$ and $J'$ be the two disjoint arcs fixed for them.
Let $\Delta'$ be the closed disk complementary to $\Delta$ in the sphere and let $D^{\ast}$ be obtained from $D$ by adding three new vertices $j,$ $j',$ and $\omega,$ where $j$ is joined to every vertex of $D$ lying in $J,$ where $j'$ is joined to every vertex of $D$ lying in $J',$ and where $\omega$ is joined to $j$ and to $j'.$
All three new vertices and all new edges are drawn in the interior of $\Delta',$ which is possible without crossings: the neighbors of $j$ lie on the arc $J$ and those of $j'$ on the disjoint arc $J',$ so the two stars occupy disjoint neighborhoods of $J$ and of $J'$ in $\Delta',$ and $\omega$ may be placed in the region of $\Delta'$ that remains between them.
Hence
\begin{align}
\label{eq_lb_aux_planar}
D^{\ast}\text{ is planar.}
\end{align}
Moreover, every branch set of a model of $(Z,\zeta)$ in $(D,\psi^{\circ2k})$ that carries the color $c$ contains a vertex colored $c,$ which lies on the boundary of $\Delta$ and therefore in $J,$ so that branch set is adjacent to $j$ in $D^{\ast};$ and symmetrically for $c'$ and $j'.$

\begin{figure}[ht]
\begin{center}
\scalebox{.67}{%
\begin{tikzpicture}[x=1pt,y=1pt,
    bnd/.style   ={draw=black!45,line width=.6pt,dash pattern=on 2pt off 2pt},
    arcM/.style  ={draw=clM,line width=2.8pt,line cap=round},
    arcB/.style  ={draw=clB,line width=2.8pt,line cap=round},
    edge/.style  ={draw=black,line width=1.9227pt,line join=round},
    aux/.style   ={draw=black!75,line width=1.9227pt,line join=round},
    con/.style   ={draw=clG!85!black,line width=3.8454pt,line join=round},
    blob/.style  ={draw=black!70,line width=.7pt,fill=black!8},
    mdot/.style  ={circle,inner sep=0pt,minimum size=8pt,fill=clM,draw=black!85,line width=.55pt},
    bdot/.style  ={circle,inner sep=0pt,minimum size=8pt,fill=clB,draw=black!85,line width=.55pt},
    kdot/.style  ={circle,inner sep=0pt,minimum size=8pt,fill=black},
    ldot/.style  ={circle,inner sep=0pt,minimum size=10pt,fill=black!10,draw=black!70,line width=.75pt},
    lab/.style   ={font=\fontsize{14.4}{17}\selectfont},
    slab/.style  ={font=\fontsize{12.8}{15}\selectfont,text=black!60}]

  \draw[bnd] (85,88) circle (66pt);
  \node[slab] at (28,88) {$\Delta$};
  \node[slab] at (166,88) {$\Delta'$};

  \draw[arcM] (85,88) ++(55:66pt) arc (55:125:66pt);
  \draw[arcB] (85,88) ++(235:66pt) arc (235:305:66pt);
  \node[lab,text=clM] at (85,138) {$J$};
  \node[lab,text=clB] at (85, 38) {$J'$};

  \coordinate (p1) at (64.6,150.8);
  \coordinate (p3) at (105.4,150.8);
  \coordinate (p2) at (64.6, 25.2);
  \coordinate (p4) at (105.4, 25.2);

  \node[blob,ellipse,minimum width=38pt,minimum height=25pt] (B1) at (52,112) {};
  \node[blob,ellipse,minimum width=38pt,minimum height=25pt] (B3) at (118,112) {};
  \node[blob,ellipse,minimum width=38pt,minimum height=25pt] (B2) at (52,64) {};
  \node[blob,ellipse,minimum width=38pt,minimum height=25pt] (B4) at (118,64) {};

  \draw[edge] (B1) -- (B2);
  \draw[edge] (B3) -- (B4);
  \draw[edge] (B2) -- (B3);
  \fill[white] (85,88) circle (4.2pt);
  \draw[edge] (B4) -- (B1);

  \draw[edge] (B1) -- (p1);
  \draw[edge] (B3) -- (p3);
  \draw[edge] (B2) -- (p2);
  \draw[edge] (B4) -- (p4);
  \node[mdot] at (p1) {};
  \node[mdot] at (p3) {};
  \node[bdot] at (p2) {};
  \node[bdot] at (p4) {};

  \node[lab] at (52,112) {$B_{1}$};
  \node[lab] at (118,112) {$B_{3}$};
  \node[lab] at (52,64) {$B_{2}$};
  \node[lab] at (118,64) {$B_{4}$};

  \coordinate (j)  at (85,182);
  \coordinate (jp) at (85, -6);
  \coordinate (om) at (205,88);
  \draw[aux] (p1) .. controls (66,170) and (74,182) .. (j);
  \draw[aux] (p3) .. controls (104,170) and (96,182) .. (j);
  \draw[aux] (p2) .. controls (66,  6) and (74, -6) .. (jp);
  \draw[aux] (p4) .. controls (104,  6) and (96, -6) .. (jp);
  \draw[aux] (j)  .. controls (162,182) and (205,155) .. (om);
  \draw[con] (jp) .. controls (162, -6) and (205, 21) .. (om);
  \node[kdot] at (j) {};
  \node[kdot] at (jp) {};
  \node[kdot] at (om) {};
  \node[lab] at (69,188) {$j$};
  \node[lab] at (67,-13) {$j'$};
  \node[lab] at (218,97) {$\omega$};

  \node[slab] at (85,-44) {(i)};

  \coordinate (b1) at (290,155);
  \coordinate (b3) at (290, 88);
  \coordinate (be) at (294, 21);
  \coordinate (jj) at (405,155);
  \coordinate (b2) at (405, 88);
  \coordinate (b4) at (405, 21);

  \foreach \l in {b1,b3,be}{\foreach \r in {jj,b2,b4}{\draw[edge] (\l) -- (\r);}}

  \node[blob,rounded corners=7pt,minimum width=58pt,minimum height=26pt] at (278,21) {};
  \draw[con] (262,21) -- (294,21);
  \node[kdot] at (262,21) {};
  \node[kdot] at (294,21) {};
  \node[slab] at (262,44) {$j'$};
  \node[slab] at (295,44) {$\omega$};

  \node[ldot] at (b1) {}; \node[ldot] at (b3) {};
  \node[ldot] at (jj) {}; \node[ldot] at (b2) {}; \node[ldot] at (b4) {};

  \node[lab] at (269,155) {$b_{1}$};
  \node[lab] at (269, 88) {$b_{3}$};
  \node[lab] at (236, 21) {$\beta$};
  \node[lab] at (424,155) {$j$};
  \node[lab] at (426, 88) {$b_{2}$};
  \node[lab] at (426, 21) {$b_{4}$};

  \node[slab] at (347,-44) {(ii)};
\end{tikzpicture}
}
\end{center}
\caption{The situation excluded by \zcref{cl_lb_C4}.
In (i), the branch sets $B_{1},\ldots,B_{4}$ of a model of $(C_{4},\sigma_{1})$ inside a realization $D$ that meets only two color intervals, whose arcs $J$ and $J'$ carry the colors $c$ and $c'$ respectively.
The sets $B_{1}$ and $B_{3}$ carry $c$ and therefore meet $J,$ while $B_{2}$ and $B_{4}$ carry $c'$ and meet $J';$ the four sets are joined in a cycle in this cyclic order.
Outside the disk $\Delta$ lie the three vertices added by the auxiliary construction: $j,$ joined to the vertices of $D$ on $J,$ then $j',$ joined to those on $J',$ and finally $\omega,$ joined to $j$ and to $j'.$
The drawing is schematic: the two diagonals of the cycle are shown crossing and it is precisely the impossibility of drawing this configuration in $\Delta$ without crossings that the claim establishes.
In (ii), the resulting $K_{3,3}$-minor, obtained by contracting each $B_{m}$ to a vertex $b_{m}$ and the green edge $\omega j'$ to a vertex $\beta.$
Its two sides are $\{b_{1},b_{3},\beta\}$ and $\{j,b_{2},b_{4}\}:$ each of $b_{1}$ and $b_{3}$ reaches $j$ through $J$ and reaches $b_{2}$ and $b_{4}$ along the cycle, while $\beta$ reaches $j$ through $\omega$ and reaches $b_{2}$ and $b_{4}$ through $j'.$}
\label{fig_C4Interval}
\end{figure}

For the last case we may assume that $(H,\psi)$ excludes every member of $\Ocal^{0}\cup\Ocal^{1}\cup\tilde{\Ocal}^{2}\cup\Ocal^{3}$ as a colorful minor, so that the member of $\Ocal$ supplied by \zcref{obs_CrucialMinimal} belongs to $\Ocal^{4}.$

\begin{figure}[ht]
 \centering
 \scalebox{0.5019}{%
% [inline block 1: 1 envs, 144655 chars -> data_tex | \begin{tikzpicture}[x=1pt,y=1pt] \clip (0,0) rectangle (510.24,382.8);...]

}

 \caption{A drawing of $(Z^{\circ5},\zeta^{\circ5})$ used in the last part of the proof of \zcref{thm_EPiffCrucial}.}
 \label{fig_vortexConstruction}%
\end{figure}

\medskip\noindent\textbf{Case 6: $(Z,\zeta)\in\Ocal^{4}$.}
Here $Z=2\cdot K_{1}$ and $\zeta$ assigns two colors to each of the two vertices, four distinct colors in total; without loss of generality these are $\textcolor{HotMagenta}{1},$ $\textcolor{CornflowerBlue}{2},$ $\textcolor{ChromeYellow}{3},$ and $\textcolor{AppleGreen}{4},$ where one vertex receives $\textcolor{HotMagenta}{1}$ and $\textcolor{ChromeYellow}{3}$ and the other receives $\textcolor{CornflowerBlue}{2}$ and $\textcolor{AppleGreen}{4}.$
The scarce resource is now neither genus nor boundary but width.
A copy of $(Z,\zeta)$ in a host consists of a connected set joining a vertex colored $1$ to a vertex colored $3$ and a disjoint connected set joining a vertex colored $2$ to a vertex colored $4;$ if the four colors occur on the boundary of a disk in the cyclic order $1,2,3,4,$ then the two sets must cross.
We build hosts in which all such crossings are confined to one small region and bound the packing number by the amount of traffic that region admits.

\medskip
We first record the structure that the excluded obstructions impose.
As $(H,\psi)$ excludes $\Ocal^{3},$ it is component-wise bicolored by \zcref{obs_SinglecomBicolored}.
As it excludes $\tilde{\Ocal}^{2}$ and $\Ocal^{3},$ it excludes all of $\Ocal^{2},$ because by \zcref{lemma_Q12qNotMinimal} every member of $\Ocal^{2}\setminus\tilde{\Ocal}^{2}$ contains a member of $\Ocal^{3};$ hence it is color-segmented by \zcref{obs_colorSegmentation}.
As it excludes $\Ocal^{0}\cup\Ocal^{1},$ it is color-facial by \zcref{lemma_colorFacial} and the same then holds for each of its components.
Altogether:
\begin{align}
\label{eq_lb_structure}
\begin{minipage}{0.9\textwidth}
every component $C$ of $H$ carries at most two distinct colors and admits an embedding in a disk $\Delta_{C}$ in which the vertices drawn on the boundary are exactly the colored ones; and if $C$ carries two colors $i$ and $j,$ then there are no vertices $a,b,c,d$ appearing in this cyclic order on the boundary of $\Delta_{C}$ with $i\in\psi(a)\cap\psi(c)$ and $j\in\psi(b)\cap\psi(d).$
\end{minipage}
\end{align}
The last assertion is the condition A in the definition of color-segmented colorful graphs.

As $(Z,\zeta)$ is a colorful minor of $(H,\psi),$ and by \eqref{eq_lb_structure}, there are two distinct components of $H,$ which we denote by $C_{1,3}$ and $C_{2,4},$ such that $C_{1,3}$ carries the colors $1$ and $3$ and $C_{2,4}$ carries the colors $2$ and $4.$
We apply to $(H,\psi)$ the uncontraction of the previous cases, componentwise, obtaining $(H^{\circ},\psi^{\circ})$ with components $C^{\circ}_{1,3},$ $C^{\circ}_{2,4},$ and possibly others, each colored vertex carrying exactly one color and with $(H,\psi)$ a colorful minor of $(H^{\circ},\psi^{\circ}).$
We write $\Delta_{1,3}$ and $\Delta_{2,4}$ for the disks in which $C^{\circ}_{1,3}$ and $C^{\circ}_{2,4}$ are drawn as in \eqref{eq_lb_structure}.
The components of $H^{\circ}$ other than $C^{\circ}_{1,3}$ and $C^{\circ}_{2,4}$ play no role in the construction, but they must travel with it, as our hosts have to contain all of $(H,\psi);$ we therefore draw each of them, embedded in a disk as in \eqref{eq_lb_structure}, in the interior of $\Delta_{1,3},$ disjointly from $C^{\circ}_{1,3}$ and from one another.
From now on we let $C^{\circ}_{1,3}$ denote the resulting drawing in $\Delta_{1,3},$ so that $H^{\circ}$ is the union of the two drawings in $\Delta_{1,3}$ and in $\Delta_{2,4};$ note that the vertices drawn on the boundary of $\Delta_{1,3}$ are still exactly the vertices colored $1$ or $3.$

We also record the colorful graph $(Z^{\circ},\zeta^{\circ})$ obtained from $(Z,\zeta)$ by the same uncontraction: it satisfies $Z^{\circ}=2\cdot K_{2},$ one of its two components carrying the colors $1$ and $3$ on its two vertices and the other carrying $2$ and $4.$

\paragraph{Cutting the two components.}
Let $E_{1,3}$ be a set of edges of $C^{\circ}_{1,3}$ such that no component of $C^{\circ}_{1,3}-E_{1,3}$ contains vertices of both colors $1$ and $3,$ that is, an edge cut separating the vertices colored $1$ from those colored $3;$ we define $E_{2,4}$ analogously for $C^{\circ}_{2,4}.$
By \eqref{eq_lb_structure} the vertices colored $1$ and those colored $3$ occupy two disjoint arcs of the boundary of $\Delta_{1,3},$ so we may decompose $\Delta_{1,3}$ into three regions $R_{1},$ $\bar{\Delta}_{1,3},$ and $R_{3}$ such that $\bar{\Delta}_{1,3}$ is a disk contained in $\Delta_{1,3}$ whose removal leaves exactly the two components $R_{1}$ and $R_{3},$ such that the boundary of $R_{1}$ (respectively of $R_{3}$) meets the boundary of $\Delta_{1,3}$ exactly in the points where the vertices colored $1$ (respectively $3$) are drawn and such that every edge of $E_{1,3}$ is drawn in the interior of $\bar{\Delta}_{1,3}$ with one endpoint in $R_{1}\cap\bar{\Delta}_{1,3}$ and the other in $R_{3}\cap\bar{\Delta}_{1,3}.$
We define $R_{2},$ $\bar{\Delta}_{2,4},$ and $R_{4}$ inside $\Delta_{2,4}$ in the same way, with $2$ and $4$ in the roles of $1$ and $3.$

\begin{figure}[ht]
 \centering
 % [inline block 2: 3 envs, 69954 chars -> data_tex | \begin{tikzpicture} ...]

}} at (C.center);
 \end{pgfonlayer}
			
 \begin{pgfonlayer}{main}
 \node (C) [v:ghost] {};
 
 \end{pgfonlayer}
 
 \begin{pgfonlayer}{foreground}
 \end{pgfonlayer}

 \end{tikzpicture}
 };

 \node (Rlabel) [v:ghost,position=270:2.35cm from R] {(iii)};

 \node (Mlabel) [v:ghost,position=180:5cm from Rlabel] {(ii)};

 \node (Llabel) [v:ghost,position=180:5cm from Mlabel] {(i)}; 
 
 \end{pgfonlayer}
 
 \begin{pgfonlayer}{foreground}
 \end{pgfonlayer}

 \end{tikzpicture}

 \caption{Magnifications of three crucial parts of the construction of $(Z^{\circ5},\zeta^{\circ5})$ in the proof of \zcref{thm_EPiffCrucial}: (i) a copy of the subgraph with monochromatic boundary, (ii) the gadget inserted to resolve crossings inside the drawing, and (iii) the way large collections of pairwise disjoint paths cross in the \say{vortex}.}
 \label{fig_construction1}
\end{figure}

\paragraph{The skeleton.}
For $k\in\Nbbb_{\geq1},$ let $(Z^{\circ2k},\zeta^{\circ2k})$ be the $2k$-multiplication of $(Z^{\circ},\zeta^{\circ}),$ drawn in a disk so that its colored vertices lie on the boundary in the cyclic order $1,2,3,4,$ that is, so that first come all vertices colored $1,$ then all colored $2,$ then all colored $3,$ and finally all colored $4.$
This colorful graph is the superposition of $2k$ \emph{$(1,3)$-paths} and $2k$ \emph{$(2,4)$-paths}, where a $(1,3)$-path joins a vertex colored $1$ to one colored $3$ and a $(2,4)$-path is defined analogously; we index them following the cyclic order of their endpoints.
Since the four groups of endpoints occur in the order $1,2,3,4,$ every $(1,3)$-path crosses every $(2,4)$-path  and all these crossings can be confined to a single disk $\hat{\Delta}$ whose interior meets the drawing only in edges and whose boundary carries their endpoints; see \zcref{fig_vortexConstruction}.
We write $E_{k}$ for the set of these edges and $\Omega$ for the cyclic ordering in which their endpoints appear on the boundary of $\hat{\Delta},$ and we note that the society $((V(\Omega),E_{k}),\Omega)$ has depth at most $4.$

\begin{figure}[ht]
 \centering
 % [inline block 3: 3 envs, 111828 chars -> data_tex | \begin{tikzpicture} ...]

}} at (C.center);
 \end{pgfonlayer}
			
 \begin{pgfonlayer}{main}
 \node (C) [v:ghost] {};
 
 \end{pgfonlayer}
 
 \begin{pgfonlayer}{foreground}
 \end{pgfonlayer}

 \end{tikzpicture}
 };

 \node (Rlabel) [v:ghost,position=270:2.35cm from R] {(iii)};

 \node (Mlabel) [v:ghost,position=180:5cm from Rlabel] {(ii)};

 \node (Llabel) [v:ghost,position=180:5cm from Mlabel] {(i)}; 
 
 \end{pgfonlayer}
 
 \begin{pgfonlayer}{foreground}
 \end{pgfonlayer}

 \end{tikzpicture}

 \caption{Magnifications of the vortex boundary in the construction of $(Z^{\circ5},\zeta^{\circ5})$ in the proof of \zcref{thm_EPiffCrucial}: (i) the vortex boundary interacting with one of the monochromatic components, (ii) the way the vortex boundary interacts with a planarization gadget when both flows enter the vortex and (iii) the way the vortex boundary interacts with a planarization gadget when only one of the two flows enters the vortex.}
 \label{fig_construction2}
\end{figure}

\paragraph{The host.}
We now construct the host $(G_{k},\chi_{k}),$ drawn in a disk $\Delta^{\ast},$ by inflating the skeleton.
In the drawing of $(Z^{\circ2k},\zeta^{\circ2k}),$ every $(1,3)$-path is replaced by a copy of the drawing of $C^{\circ}_{1,3}$ in $\Delta_{1,3},$ in such a way that its endpoint colored $1$ is replaced by $R_{1},$ its edge is replaced by $\bar{\Delta}_{1,3},$ and its endpoint colored $3$ is replaced by $R_{3};$ every $(2,4)$-path is replaced by a copy of the drawing of $C^{\circ}_{2,4}$ in $\Delta_{2,4}$ in the same way.
The boundary of $\Delta^{\ast}$ then carries, in cyclic order, first all vertices colored $1,$ then all colored $2,$ then all colored $3,$ and finally all colored $4.$
The drawing is completed by resolving the crossings between two copies: each crossing point of two copies is replaced by a vertex of degree $4,$ as indicated in \zcref{fig_construction1}.(ii), while the crossings indicated in \zcref{fig_construction1}.(iii) are left as they are.
We call \emph{leveled} the vertices of degree $4$ created in this way.
For $i\in[2k],$ we write $W_{i}$ for the union of the copy of $C^{\circ}_{1,3}$ replacing the $i$-th $(1,3)$-path and of the copy of $C^{\circ}_{2,4}$ replacing the $i$-th $(2,4)$-path, together with the leveled vertices on them; see \zcref{fig_construction2}.
Then $G_{k}=\bigcup\{W_{i}\mid i\in[2k]\}.$
Finally, we let $\chi_{k}$ assign to every non-leveled vertex of $G_{k}$ the colors that the corresponding vertex of $H^{\circ}$ carries under $\psi^{\circ},$ and no color to every leveled vertex.

\begin{claim}
\label{cl_lb_vortex_cover}
For every $k\in\Nbbb_{\geq1},$ it holds that $\cover_{H,\psi}(G_{k},\chi_{k})\geq k.$
\end{claim}

\begin{claimproof}
For every $i\in[2k],$ the colorful graph $(W_{i},\chi_{k})$ is a subdivision of a copy of $(H^{\circ},\psi^{\circ})$ in which the subdividing vertices are leveled and hence carry no color, so it contains $(H^{\circ},\psi^{\circ}),$ and therefore also $(H,\psi),$ as a colorful minor; we fix a realization $D_{i}$ of $(H,\psi)$ inside it.
By construction every vertex of $G_{k}$ belongs to at most two of $W_{1},\ldots,W_{2k},$ so a set $S\subseteq V(G_{k})$ with $|S|\leq k-1$ meets at most $2(k-1)<2k$ of $D_{1},\ldots,D_{2k}$ and therefore misses one of them.
Hence no such $S$ is a covering of $(H,\psi)$ in $(G_{k},\chi_{k}).$
\end{claimproof}

\paragraph{Bounding the packing number.}
In $G_{k},$ the disk $\hat{\Delta}$ of the skeleton is replaced by a disk $\hat{\Delta}^{\ast},$ as indicated in \zcref{fig_construction2}, whose interior again meets the drawing only in edges.
Every edge of $E_{k}$ belonging to a $(1,3)$-path is replaced in $\hat{\Delta}^{\ast}$ by the $|E_{1,3}|$ edges of the corresponding copy of the cut $E_{1,3},$ and every edge of $E_{k}$ belonging to a $(2,4)$-path by the $|E_{2,4}|$ edges of the corresponding copy of $E_{2,4}.$
We let $\Omega^{\ast}$ be the cyclic ordering of the vertices of $G_{k}$ drawn on the boundary of $\hat{\Delta}^{\ast}$ and we let $G_{k}'$ be the part of $G_{k}$ drawn in $\hat{\Delta}^{\ast}.$
As every crossing of the skeleton is replaced by the crossings of $|E_{1,3}|$ edges with $|E_{2,4}|$ edges and as the society of the skeleton has depth at most $4,$ we obtain that
\begin{align}
\label{eq_lb_depth}
\text{the depth of }(G_{k}',\Omega^{\ast})\text{ is at most }2\bigl(|E_{1,3}|+|E_{2,4}|\bigr).
\end{align}

\begin{claim}
\label{cl_lb_vortex_pack}
For every $k\in\Nbbb_{\geq1},$ it holds that $\pack_{H,\psi}(G_{k},\chi_{k})\leq2\bigl(|E_{1,3}|+|E_{2,4}|\bigr)+2.$
\end{claim}

\begin{claimproof}
Set $\ell\coloneqq\pack_{Z,\zeta}(G_{k},\chi_{k}),$ so that $\pack_{H,\psi}(G_{k},\chi_{k})\leq\ell$ by \zcref{obs_packMonotone} and let $D_{1},\ldots,D_{\ell}$ be pairwise vertex-disjoint realizations of $(Z,\zeta)$ in $(G_{k},\chi_{k}).$
As every vertex of $G_{k}$ carries at most one color, each $D_{i}$ is the disjoint union of a connected subgraph joining a vertex colored $1$ to a vertex colored $3$ and of a connected subgraph joining a vertex colored $2$ to a vertex colored $4;$ by minimality these are paths.
Collecting them, we obtain a linkage $L_{1,3}$ of $\ell$ pairwise disjoint paths from vertices colored $1$ to vertices colored $3,$ and a linkage $L_{2,4}$ of $\ell$ pairwise disjoint paths from vertices colored $2$ to vertices colored $4,$ with $V(L_{1,3})\cap V(L_{2,4})=\emptyset.$
We may assume that $\ell\geq3,$ as otherwise the asserted bound holds trivially.

Order the paths $P_{1},\ldots,P_{\ell}$ of $L_{1,3}$ following the cyclic order of their endpoints colored $1$ on the boundary of $\Delta^{\ast}.$
Outside $\hat{\Delta}^{\ast}$ the drawing of $G_{k}$ has no crossings, so two disjoint paths may cross only inside $\hat{\Delta}^{\ast}.$
As the four color classes occur on the boundary of $\Delta^{\ast}$ in the cyclic order $1,2,3,4,$ a path of $L_{1,3}$ that avoided $\hat{\Delta}^{\ast}$ altogether would separate, inside $\Delta^{\ast},$ the vertices colored $2$ from those colored $4,$ and would therefore meet every path of $L_{2,4},$ which is impossible.
Consequently each of $P_{1},\ldots,P_{\ell}$ contains at least one vertex of $V(\Omega^{\ast}).$

Let $\Lambda$ be the region obtained from $\Delta^{\ast}$ by removing the interior of $\hat{\Delta}^{\ast}$ together with the drawings of $P_{1}$ and $P_{\ell}.$
Then $\Lambda$ has a connected component $\Delta^{\bullet}$ that is a disk and meets the drawing of $P_{2}.$
Let $a,$ $b,$ and $c$ be the first vertices of $V(\Omega^{\ast})$ met by $P_{1},$ $P_{2},$ and $P_{\ell},$ respectively, when each is traversed from its endpoint colored $1,$ and let $S$ be the maximal segment of $\Omega^{\ast}$ that contains $b$ and contains neither $a$ nor $c.$
Each of $P_{2},\ldots,P_{\ell-1}$ enters $\hat{\Delta}^{\ast}$ through $\Delta^{\bullet}$ and terminates outside $\Delta^{\bullet},$ so each of them meets $V(\Omega^{\ast})$ both inside $S$ and outside $S,$ and the subpaths of $P_{2},\ldots,P_{\ell-1}$ inside $\hat{\Delta}^{\ast}$ form a transaction of $(G_{k}',\Omega^{\ast})$ of size $\ell-2.$
By \eqref{eq_lb_depth}, we get $\ell-2\leq2(|E_{1,3}|+|E_{2,4}|),$ which is the assertion.
\end{claimproof}

As the bound of \zcref{cl_lb_vortex_pack} depends only on $(H,\psi)$ and not on $k,$ \zcref{cl_lb_vortex_cover,cl_lb_vortex_pack} give what is required, with $c\coloneqq2(|E_{1,3}|+|E_{2,4}|)+2.$
This exhausts all cases and completes the proof.
\end{proof}

We are now ready to derive our main result.

\begin{proof}[Proof of \zcref{th_EP_single}]
The equivalence of \ref{it_s_cru} and \ref{it_s_O} is \zcref{obs_CrucialMinimal} and the equivalence of \ref{it_s_cru} and \ref{it_s_U} is \zcref{lemma_U_equals_Q}.
The implication from \ref{it_s_cru} to \ref{it_s_ep} is \zcref{thm_EP_positive} and the implication from \ref{it_s_ep} to \ref{it_s_cru} is \zcref{thm_EPiffCrucial}.
\end{proof}

\begin{proof}[Proof of \zcref{cor_final_EP}]
\label{page_cor_final_EP}
By \zcref{th_EP_single}, $(H,\psi)$ has the Erd\H{o}s-P{\'o}sa property if and only if it is crucial, so it suffices to prove that a $[q]$-rainbow colorful graph $(H,\psi)$ with $V(H)\neq\emptyset$ is crucial if and only if the second, respectively the third, statement holds.
Notice first that, as every vertex of $H$ carries all colors of $[q],$ every component of $H$ carries exactly $q$ colors, so $(H,\psi)$ is component-wise bicolored if and only if $q\leq 2$ and, in this case, it is also single-component bicolored, as at most two distinct colors appear in total.
This proves that $q\leq2$ is necessary for cruciality, so assume from now on that $q\leq2.$

If $q=0,$ then $(H,\psi)$ has no colored vertices, so the conditions A, B, and C in the definition of color-segmented colorful graphs hold trivially and $(H,\psi)$ is color-facial if and only if $H$ is planar.
If $q=1,$ then only one color occurs in $(H,\psi),$ so $|\psi(B)|\leq 1$ for every $B\subseteq V(H)$ and the conditions B and C hold trivially, while the condition A holds as well, as every color $c_{\ell}$ that it offers equals $1,$ whence $\{c_{1},c_{3}\}\cap\{c_{2},c_{4}\}=\{1\}\neq\emptyset.$
Moreover, all vertices of $H$ are colored, so $(H,\psi)$ is color-facial if and only if $H$ is outerplanar.
Finally, if $q=2,$ then every vertex of $H$ carries two colors and therefore, by the condition B, $H$ is acyclic, that is, $H$ is a forest, while the condition C implies that every vertex of $H$ has at most two neighbors.
Hence, if $(H,\psi)$ is crucial then $H$ is a linear forest and, conversely, if $H$ is a linear forest, then $H$ has no cycles, so the conditions A and B hold trivially, the condition C holds as $H$ has maximum degree at most two and $(H,\psi)$ is color-facial, as every linear forest has a plane embedding where all vertices are on the boundary of the outer face.
This proves the equivalence of the first two statements.

For the third one, recall that a graph is planar if and only if it excludes $K_{5}$ and $K_{3,3}$ as minors and that it is outerplanar if and only if it excludes $K_{4}$ and $K_{2,3}$ as minors.
Moreover, a graph is a linear forest if and only if it excludes $K_{3}$ and $K_{1,3}$ as minors: a graph without $K_{3}$-minors is a forest and a forest without $K_{1,3}$-minors has maximum degree at most two, while a linear forest is acyclic and has maximum degree at most two.
As $K_{5-q}$ and $K_{3-q,3}$ are exactly the two excluded minors above for $q\in[0,2],$ the third statement is equivalent to the second one.
\end{proof}

\section{Beyond a single  colorful graph}
\label{sec_beyond}

We conclude by observing that \zcref{th_EP_single} does not extend to the exclusion of more than one colorful graph.
Given a finite set $\Fcal$ of colorful graphs, a \emph{packing of $\Fcal$ in $(G,\chi)$ of size $k$} is a family $(G_{1},\ldots,G_{k})$ of pairwise vertex-disjoint subgraphs of $G$ such that, for every $i\in[k],$ some member of $\Fcal$ is a colorful minor of $(G_{i},\chi),$ and a \emph{covering of $\Fcal$ in $(G,\chi)$ of size $k$} is a set $S\subseteq V(G)$ with $|S|\leq k$ such that no member of $\Fcal$ is a colorful minor of $(G-S,\chi).$
Accordingly, $\Fcal$ \emph{has the Erd\H{o}s-P{\'o}sa property} if there is a function $f\colon\Nbbb\to\Nbbb$ such that, for every $k\in\Nbbb,$ every colorful graph has a packing of $\Fcal$ of size $k$ or a covering of $\Fcal$ of size at most $f(k).$
For $\Fcal=\{(H,\psi)\},$ this is precisely the Erd\H{o}s-P{\'o}sa property of $(H,\psi).$

\subsection{Cruciality fails for many colorful graphs}

The next proposition shows that no analogue of \zcref{th_EP_single} can hold for finite sets of colorful graphs: the set that it provides contains a crucial colorful graph and even one whose palette is that of all its members and still fails to have the Erd\H{o}s-P{\'o}sa property.

\begin{proposition}
\label{prop_no_domain_EP}
There is a finite set $\Fcal$ of colorful graphs, all of palette $\{1\},$ that contains a crucial colorful graph and does not have the Erd\H{o}s-P{\'o}sa property.
\end{proposition}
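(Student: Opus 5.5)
The plan is to take $\Fcal\coloneqq\{(B,\beta),(A,\alpha)\}$, where $(B,\beta)$ is a non-crucial $\{1\}$-colorful graph whose failure of the Erd\H{o}s-P\'osa property is already witnessed by the hosts of \zcref{thm_EPiffCrucial}, and $(A,\alpha)$ is a crucial $\{1\}$-colorful graph chosen to be scarce in those hosts. For $(B,\beta)$ I would take the member $(K_{2,3},\pi_{4})$ of $\tilde{\Ocal}^{1}$ in which the three degree-$2$ vertices carry the color $1$; it is not color-facial, hence not crucial. Its hosts are those of Case~2 of the proof of \zcref{thm_EPiffCrucial}: draw $K_{2,3}^{+}$ in the projective plane (here $g=1$), cut a disk around $v_{\ast}$, and take the $2k$-multiplication $(G_{k},\chi_{k})$. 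By \zcref{cl_lb_genus} we have $\pack_{B,\beta}(G_{k},\chi_{k})\le 1$, and by \zcref{cl_bzsoc_cover} we have $\cover_{B,\beta}(G_{k},\chi_{k})\ge k$.

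For the family $\Fcal$, the covering number can only go up: any set meeting every member of $\Fcal$ in particular covers $(B,\beta)$, so $\cover_{\Fcal}(G_{k},\chi_{k})\ge k$ for free. Everything therefore rests on bounding $\pack_{\Fcal}(G_{k},\chi_{k})$. A packing of $\Fcal$ is a disjoint family in which each member contains $(B,\beta)$ or $(A,\alpha)$. At most $g=1$ members contain $(B,\beta)$, by the genus argument of \zcref{cl_lb_genus}. It then suffices to choose $(A,\alpha)$ crucial, with palette $\{1\}$, such that $\pack_{A,\alpha}(G_{k},\chi_{k})$ is bounded independently of $k$.

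My first choice for $(A,\alpha)$ is a one-colored, outerplanar, rooted graph that must use colored vertices from many different \emph{color intervals} of the boundary of $\Sigma^{-}$, for instance a cycle through $N$ vertices colored $1$, with $N$ larger than the number of colored vertices of $K_{2,3}$. The packing bound would then be modelled on \zcref{lemma_noncrossing} and \zcref{cl_lb_count}. Each realization of $(A,\alpha)$ meets at least three of the boundedly many arcs grouping the copies of the $3$ colored vertices. In the Möbius band, pairwise disjoint connected sets of this kind are boundedly many, because of the same Euler-characteristic incidence count that drives \zcref{lemma_noncrossing}, with genus $1$ contributing only an additive constant. I would formalize this by capping the hole with a vertex per arc, as in \zcref{lemma_noncrossing}, and applying Euler's formula on the projective plane to the resulting bipartite incidence graph, which has at most $2(\ell+3)-2$ edges.

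The main obstacle is the step showing that every realization of $(A,\alpha)$ reaches at least three distinct arcs. A single arc contains $2k$ colored vertices, so a cycle of length $N$ could a priori collect all its colored vertices inside one or two arcs. To prevent this I would replace the cycle by a $\{1\}$-colored gadget that is crucial but becomes non-color-facial once its colored vertices are forced into at most two boundary segments. The argument would follow the auxiliary-vertex trick of \zcref{cl_lb_C4,cl_lb_K3K13}: attach $j,j',\omega$ in the hole and derive a forbidden minor, now in the projective plane rather than the sphere, so that the forbidden minor must be a projective-plane obstruction. If such a gadget cannot be found, the fallback is to change $(B,\beta)$ to the $K_{3,3}^{-}$ or $K_{5}^{-}$ member of $\tilde{\Ocal}^{1}$ and redo the same count.
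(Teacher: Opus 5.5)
Your plan has a genuine gap, and it is not confined to the step you flag. The bound you need is that $\pack_{A,\alpha}(G_{k},\chi_{k})$ stays bounded independently of $k$. This fails for \emph{every} crucial $(A,\alpha)$ of palette $\{1\}$, so no gadget can exist.

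The cause is that $(B,\beta)=(K_{2,3},\pi_{4})$ is connected with $\beta(B)=\{1\}$. Suppose the $\{1\}$-torso treewidth of $(G_{k},\chi_{k})$ were at most $t$. Apply \zcref{lemma_classic_scheme} with $I=\{1\}$, pattern $(B,\beta)$ and packing size $2$. The packing outcome is excluded by $\pack_{B,\beta}(G_{k},\chi_{k})\leq 1$, and the restricted outcome is excluded because $\beta(B)=I$. The remaining outcome gives at most $t+1$ vertices whose deletion leaves no component containing $(B,\beta)$; since $B$ is connected, this is a covering, so $k\leq t+1$. Hence the $\{1\}$-torso treewidth of your hosts is at least $k-1$. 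Then \zcref{lemma_EP_reduce}, applied to the crucial $(A,\alpha)$, yields packings of $(A,\alpha)$ of any prescribed size once $k$ is large.

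Concretely, if the copies are small shifts of each other, the copies of the two edges at a colored vertex cross in a triangular piece of grid whose diagonal is a single color interval. So large $(\{1\},K)$-segregated grids sit next to one interval.

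The same two lemmas show that any finite set consisting of a crucial member and connected members, all of palette $\{1\}$, \emph{has} the Erd\H{o}s-P\'osa property. Cover the crucial member using its own gap, and each connected member by \zcref{lemma_classic_scheme} on the structure supplied by \zcref{lemma_EP_reduce}. Your fallback to the $K_{3,3}^{-}$ or $K_{5}^{-}$ member of $\tilde{\Ocal}^{1}$ therefore fails for the same reason, since those are connected too.

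The gadget is also impossible on its own terms. With one color, crucial means color-facial. If all colored vertices of a realization lie in one interval, contracting branch sets in the auxiliary graph yields only $A$ plus a vertex adjacent to some of its colored vertices, plus the pendant path through $\omega$ and $j'$. That graph is planar, so there is no forbidden minor to extract, and by the above such single-interval realizations do occur.

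The missing idea is that the non-crucial member must be \emph{disconnected}. Then its colored part and its non-planar part can be realized in different components of the host, where the componentwise cleanup of \zcref{lemma_classic_scheme} gives no covering. The hosts can then be chosen so that their colored vertices carry no structure at all, which makes the crucial member absent rather than merely scarce.

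This is what the paper does. It takes $\Fcal=\{(K_{3},\rho_{\{1\}}),(K_{1},\rho_{\{1\}})\sqcup(K_{5},\rho_{\emptyset})\}$. The hosts are an uncolored graph witnessing the failure of the Erd\H{o}s-P\'osa property for $K_{5}$, together with $n$ isolated vertices colored $1$. The triangle never occurs, and every packing of $\Fcal$ yields pairwise disjoint $K_{5}$-models in the uncolored part. Every covering must either delete all $n$ colored vertices or meet every $K_{5}$-model, so its size is at least $n$.
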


\begin{proof}
Let $F_{1}\coloneqq(K_{3},\rho_{\{1\}}),$ let $F_{2}$ be the disjoint union $(K_{1},\rho_{\{1\}})\sqcup(K_{5},\rho_{\emptyset}),$ and let $\Fcal\coloneqq\{F_{1},F_{2}\}.$
Both members have palette $\{1\}$ and $F_{1}$ is crucial: it is outerplanar, hence color-facial, it has three vertices only and carries a single color, so it is color-segmented and it is connected with a single color, so it is component-wise and single-component bicolored.

We now prove that $\Fcal$ does not have the Erd\H{o}s-P{\'o}sa property.
By the result of Robertson and Seymour \cite{RobertsonS1986Grapha}, the colorful graph $(K_{5},\rho_{\emptyset})$ does not have the Erd\H{o}s-P{\'o}sa property.
Hence, by \zcref{obs_EPfails}, there is some $k_{0}\in\Nbbb$ such that, for every $n\in\Nbbb,$ there is a graph $H_{n}$ without colors with $\pack_{K_{5},\rho_{\emptyset}}(H_{n})\leq k_{0}$ and $\cover_{K_{5},\rho_{\emptyset}}(H_{n})\geq n.$
Let $(G_{n},\chi_{n})$ be the disjoint union of $H_{n}$ and of $n$ isolated vertices, each carrying the color $1.$

First, $(G_{n},\chi_{n})$ contains no model of $F_{1}$: every vertex carrying the color $1$ is isolated, so a connected set containing one is a single vertex without neighbors, while a model of $F_{1}$ requires three pairwise adjacent branch sets, each carrying the color $1.$
Second, every subgraph of $(G_{n},\chi_{n})$ that contains a member of $\Fcal$ contains $F_{2}$ and hence a model of $K_{5},$ which lies in $H_{n},$ as the isolated colored vertices have no edges; consequently, every packing of $\Fcal$ in $(G_{n},\chi_{n})$ has size at most $\pack_{K_{5},\rho_{\emptyset}}(H_{n})\leq k_{0}.$
Third, if a set $S\subseteq V(G_{n})$ leaves some colored vertex and some model of $K_{5}$ in $H_{n}-S,$ then a model of $F_{2}$ survives in $(G_{n}-S,\chi_{n})$; hence every covering of $\Fcal$ either contains all $n$ colored vertices or meets every model of $K_{5}$ in $H_{n},$ so it has size at least $\min\{n,\cover_{K_{5},\rho_{\emptyset}}(H_{n})\}\geq n.$
Therefore, for $k\coloneqq k_{0}+1,$ the colorful graphs $(G_{n},\chi_{n})$ have no packing of $\Fcal$ of size $k$ and no covering of $\Fcal$ of size smaller than $n,$ so no gap $f(k)$ exists and $\Fcal$ does not have the Erd\H{o}s-P{\'o}sa property.
\end{proof}

\paragraph{Excluding an antichain.}
A set $\Fcal$ may always be assumed to be an antichain for the colorful minor relation: replacing $\Fcal$ by the set of its $\leq$-minimal members changes neither its packings nor its coverings, as every member contains a minimal one.
For $|\Fcal|=1,$ 
the Erd\H{o}s-P{\'o}sa 
is delineated by 
\zcref{th_EP_single}.
\zcref{prop_no_domain_EP} shows that the general case cannot be read in the same way: the presence of a crucial member, even one whose palette is that of the whole set, is not sufficient.
What is intriguing is that the two members it provides are individually as harmless as can be --- the first is crucial and the second becomes crucial once its uncolored component is deleted --- and that it is only their interaction that destroys the property.
A criterion for antichains must therefore be \emph{relational}: it has to compare the members with one another and no condition inspecting them separately can express it.
\medskip

\begin{figure}[!ht]
\begin{center}
\scalebox{.7}{%
\begin{tikzpicture}[x=1pt,y=1pt,
    edge/.style  ={draw=black,line width=.85pt,line join=round},
    hi/.style    ={draw=clG!85!black,line width=1.9pt,line join=round},
    ghost/.style ={draw=black!35,line width=.85pt,dash pattern=on 2.4pt off 2.4pt},
    cdot/.style  ={circle,inner sep=0pt,minimum size=8pt,fill=clM,draw=black!85,line width=.55pt},
    pdot/.style  ={circle,inner sep=0pt,minimum size=7pt,fill=black},
    lab/.style   ={font=\fontsize{14.4}{17}\selectfont}]

  \coordinate (t1) at (176,196);
  \coordinate (t2) at (150,150);
  \coordinate (t3) at (202,150);
  \draw[edge] (t1)--(t2)--(t3)--(t1);
  \node[cdot] at (t1) {}; \node[cdot] at (t2) {}; \node[cdot] at (t3) {};
  \node[lab] at (226,173) {$\sqcup$};
  \foreach \x in {0,1,2} \foreach \y in {0,1,2}
    \coordinate (g\x\y) at ({250+26*\x},{147+26*\y});
  \foreach \y in {0,1,2} \draw[edge] (g0\y)--(g2\y);
  \foreach \x in {0,1,2} \draw[edge] (g\x0)--(g\x2);
  \foreach \x in {0,1,2} \foreach \y in {0,1,2} \node[pdot] at (g\x\y) {};
  \node[lab] at (236,114) {\Large $A$};

  \coordinate (c1) at (20,60); \coordinate (c2) at (60,60);
  \coordinate (c3) at (60,20); \coordinate (c4) at (20,20);
  \draw[edge] (c1)--(c2); \draw[edge] (c2)--(c3); \draw[edge] (c3)--(c4);
  \draw[hi] (c4)--(c1);
  \foreach \p in {c1,c2,c3,c4} \node[cdot] at (\p) {};
  \node[lab] at (86,40) {$\sqcup$};
  \foreach \i in {0,...,4} \coordinate (p\i) at ({135+32*cos(90+72*\i)},{40+32*sin(90+72*\i)});
  \foreach \i/\j in {0/1,0/2,0/3,0/4,1/2,1/3,1/4,2/3,2/4,3/4} \draw[edge] (p\i)--(p\j);
  \foreach \i in {0,...,4} \node[pdot] at (p\i) {};
  \node[lab] at (95,-24) {\Large $B_{1}$};

  \coordinate (d1) at (250,60); \coordinate (d2) at (290,60);
  \coordinate (d3) at (290,20); \coordinate (d4) at (250,20);
  \draw[edge] (d1)--(d2); \draw[edge] (d2)--(d3); \draw[edge] (d3)--(d4);
  \draw[ghost] (d4)--(d1);
  \foreach \p in {d1,d2,d3,d4} \node[cdot] at (\p) {};
  \node[lab] at (316,40) {$\sqcup$};
  \foreach \i in {0,...,4} \coordinate (q\i) at ({365+32*cos(90+72*\i)},{40+32*sin(90+72*\i)});
  \foreach \i/\j in {0/1,0/2,0/3,0/4,1/2,1/3,1/4,2/3,2/4,3/4} \draw[edge] (q\i)--(q\j);
  \foreach \i in {0,...,4} \node[pdot] at (q\i) {};
  \node[lab] at (325,-24) {\Large $B_{2}$};
\end{tikzpicture}
}
\end{center}
\caption{Two pairs of annotated colorful graphs that lie on opposite sides of the Erd\H{o}s-P{\'o}sa boundary and differ in a single edge.
The crucial member $A,$ the disjoint union of the $\{1\}$-rainbow triangle and of the uncolored $(3\times3)$-grid, is the same in both and so is the uncolored component $K_{5}$ of the other member; the colored components are the $\{1\}$-rainbow $C_{4}$ in $B_{1}$ and the $\{1\}$-rainbow $P_{4}$ in $B_{2},$ the edge drawn in green in $B_{1}$ being the only difference and appearing ghosted in $B_{2}.$
Contracting that edge merges two adjacent branch sets of the rainbow $C_{4}$ into a rainbow triangle, so the colored part of $A$ is a colorful minor of the colored part of $B_{1};$ no such relation holds for $B_{2},$ where a triangle is not a minor of a path and $K_{5}$ is not a minor of $P_{4}.$}
\label{fig_OneEdge}
\end{figure}

\subsection{One color is already enough to be open}
The difficulty is not created by the number of colors.
Call a colorful graph \emph{annotated} if its palette is a subset of $\{1\},$ so that every vertex either carries the color $1$ or carries no color at all; this is the setting of rooted minors and a colorful graph of this kind is crucial exactly when it is color-facial.
Even for pairs of annotated colorful graphs the question of \zcref{prob_domains} is open and the following pair of pairs shows how fine the boundary is; see \zcref{fig_OneEdge}.

Write $A\coloneqq(K_{3},\rho_{\{1\}})\sqcup(\Gamma,\rho_{\emptyset}),$ where $\Gamma$ denotes the $(3\times3)$-grid and observe that $A$ is crucial.
Let
$$B_{1}\coloneqq(C_{4},\rho_{\{1\}})\sqcup(K_{5},\rho_{\emptyset})
\qquad\text{and}\qquad
B_{2}\coloneqq(P_{4},\rho_{\{1\}})\sqcup(K_{5},\rho_{\emptyset}).$$
The pairs $\{A,B_{1}\}$ and $\{A,B_{2}\}$ differ in the single edge that closes the path into a cycle and they fall on opposite sides of the boundary: $\{A,B_{1}\}$ satisfies the Erdős-Pósa property, while 
$\{A,B_{2}\}$ does not.
The reason for the difference is relational and it is visible in the figure.
Merging two adjacent branch sets of the rainbow $C_{4}$ yields a rainbow triangle, so the colored part of $A$ is a colorful minor of the colored part of $B_{1},$ and the two members of the first pair are linked; in the second pair no link of either kind is available, as a triangle is not a minor of a path and $K_{5}$ is not a minor of $P_{4}.$
The pair of \zcref{prop_no_domain_EP} is of the second kind: its crucial member is the colored part of $A,$ its other member is $B_{2}$ with the colored component shrunk to a single vertex and no link is available there either.
In particular, no analogue of \zcref{cor_EPminorClosed} can be expected for sets: as $B_{2}$ is obtained from $B_{1}$ by deleting one edge of the rainbow $C_{4},$ it is a colorful minor of $B_{1},$ so replacing a member of $\{A,B_{1}\}$ by a colorful minor of it destroys the property.
The failure for $\{A,B_{2}\}$ is unconditional, while the property for $\{A,B_{1}\}$ rests on the hypothesis on $A$ mentioned above, so this last statement is conditional on that hypothesis.
No condition inspecting $A,$ $B_{1},$ and $B_{2}$ one at a time can distinguish the two pairs, since the member that changes, from $B_{1}$ to $B_{2},$ is on its own neither crucial nor far from being so in either case.

\begin{problem}
\label{prob_domains}
Characterize the finite sets of colorful graphs that have the Erd\H{o}s-P{\'o}sa property.
\end{problem}

\noindent
By the discussion above, the first case to settle is that of sets with a disconnected member and it is already open for pairs of annotated  graphs.

\section*{Declaration on the use of generative AI}
During the preparation of this work the authors used Claude (Anthropic) for
LaTeX editing, for consistency and cross-reference checking, for the independent
machine verification of the counting claims of \zcref{sec_obstructions}, and for
producing the final TikZ versions of some of the the figures. The results of this paper, their
proofs, and the constructions they rest on are the authors' own. All text
prepared with the assistance of the tool was reviewed and edited by the authors,
who take full responsibility for the content of this paper.

%\bibliographystyle{plainurl}
%\bibliography{literature_colorful}

\end{document}